\documentclass{smfart}
\usepackage{lmodern}
\usepackage[latin1]{inputenc}
\usepackage[OT2,T1]{fontenc}
\usepackage[francais]{babel}
\usepackage{amsmath,amssymb,amsfonts,amsthm}
\usepackage[pdfborder={0 0 0}]{hyperref}
\usepackage{mathrsfs}
\newcommand{\mylabel}[1]{{\phantomsection\label{#1}}}
\newcommand{\mylangle}{\xy(0,0),(1.17,1.485)**[|(1.025)]@{-},(0,0),(1.17,-1.485)**[|(1.025)]@{-}\endxy\mkern2mu}
\newcommand{\myrangle}{\mkern2mu\xy(0,0),(-1.17,1.485)**[|(1.025)]@{-},(0,0),(-1.17,-1.485)**[|(1.025)]@{-}\endxy}
\makeatletter
\@mparswitchfalse
\makeatother
\newcount\myxyoldpv
\newcount\myxyoldpe
\def\myxyin{\myxyoldpv=\catcode`;\catcode`;=12\myxyoldpe=\catcode`!\catcode`!=12}
\def\myxyout{\catcode`;=\myxyoldpv\catcode`!=\myxyoldpe}
\makeatletter
\newenvironment{proofnoskip}[1][\proofname]{\par \normalfont
  \topsep6\p@\@plus6\p@ \trivlist \itemindent\z@ 
  \def\@proofhead{\normalfont\itshape #1}%
  \sbox\@tempboxa{\@proofhead}%
  \item[\hskip\labelsep
          \unhbox\@tempboxa\pointrait]%
  \ignorespaces
}{%
  \MakeQed\endtrivlist
}
\makeatother
\newcommand{\demicrochet}{

\mathcode`A="7041 \mathcode`B="7042 \mathcode`C="7043 \mathcode`D="7044
\mathcode`E="7045 \mathcode`F="7046 \mathcode`G="7047 \mathcode`H="7048
\mathcode`I="7049 \mathcode`J="704A \mathcode`K="704B \mathcode`L="704C
\mathcode`M="704D \mathcode`N="704E \mathcode`O="704F \mathcode`P="7050
\mathcode`Q="7051 \mathcode`R="7052 \mathcode`S="7053 \mathcode`T="7054
\mathcode`U="7055 \mathcode`V="7056 \mathcode`W="7057 \mathcode`X="7058
\mathcode`Y="7059 \mathcode`Z="705A

\usepackage[ps,dvips,arrow,matrix,tips,line,curve]{xy}
\entrymodifiers={+!!<0pt,\the\fontdimen22\textfont2>}
\SelectTips{cm}{10}
\newdir^{ (}{{}*!/-3.5pt/\dir^{(}}
\newdir_{ (}{{}*!/-3.5pt/\dir_{(}}

\title[Zéro-cycles sur les fibrations]{Zéro-cycles sur les fibrations au-dessus d'une courbe de genre quelconque}

\date{18 octobre 2010; révisé le 21 janvier 2012}

\author[O. Wittenberg]{Olivier Wittenberg}
\email{wittenberg@dma.ens.fr}
\address{D\'epartement de math\'ematiques et applications\\
\'Ecole normale sup\'erieure\\
45~rue d'Ulm\\
75230 Paris Cedex 05\\
France}

\renewcommand{\phi}{\varphi}
\newcommand{\Htilde}{\widetilde{H}}
\newcommand{\RHom}{R\mathscr{H}\mkern-4muom}

\newcommand{\Sm}{\mathrm{Sm}}
\newcommand{\pr}{\mathrm{pr}}
\newcommand{\abs}[1]{\left| \mskip1mu #1 \right|}
\newcommand{\fchapeau}{{\mkern2mu\widehat{\mkern-2muf\mkern2mu}\mkern-.1mu}}
\newcommand{\uchapeau}{{\widehat{u}}}
\newcommand{\zchapeau}{{\widehat{z}}}
\newcommand{\ychapeau}{{\widehat{y}}}

\newcommand{\Psub}[1]{P_{\mkern-2mu#1}}
\newcommand{\Ysub}[1]{Y_{\mkern-2mu#1}}
\newcommand{\Wsub}[1]{W_{\mkern-2mu#1}}
\newcommand{\Tsub}[1]{T_{\mkern-2mu#1}}
\newcommand{\eff}{{\mathrm{eff}}}
\newcommand{\ab}{{\mathrm{ab}}}
\newcommand{\CH}{{\mathrm{CH}}}
\newcommand{\Gm}{\mathbf{G}_\mathrm{m}}
\newcommand{\sU}{{\mathscr{U}}}

\newcommand{\sF}{{\mathscr{F}}}
\newcommand{\sG}{{\mathscr{G}}}
\newcommand{\sR}{{\mathscr{R}}}
\newcommand{\sH}{{\mathscr{H}}}
\newcommand{\sB}{{\mathscr{B}}}
\newcommand{\sD}{{\mathscr{D}}}
\newcommand{\sC}{{\mathscr{C}}}
\newcommand{\sX}{{\mathscr{X}}}
\newcommand{\sS}{{\mathscr{S}}}
\newcommand{\sV}{{\mathscr{V}}}
\newcommand{\sO}{{\mathscr{O}}}
\newcommand{\sM}{{\mathscr{M}}}

\newcommand{\kbar}{{\mkern1mu\overline{\mkern-1mu{}k\mkern-1mu}\mkern1mu}}
\newcommand{\etaT}{{\eta_{\mkern1muT}}}
\newcommand{\etabar}{{\mkern1mu\overline{\mkern-1mu{}\eta\mkern-1mu}\mkern1mu}}
\newcommand{\mmu}{{\boldsymbol{\mu}}}
\newcommand{\tors}[2]{{\vphantom{#2}}_{#1}{#2}}
\newcommand{\Spec}{\mathop\mathrm{Spec}\nolimits}
\newcommand{\Gal}{\mathop\mathrm{Gal}\nolimits}

\newcommand{\Card}{\mathop\mathrm{Card}\nolimits}
\newcommand{\Pic}{\mathop\mathrm{Pic}\nolimits}
\newcommand{\Sym}{\mathop\mathrm{Sym}\nolimits}

\newcommand{\Br}{\mathop\mathrm{Br}\nolimits}
\newcommand{\Hom}{\mathop\mathrm{Hom}\nolimits}
\renewcommand{\Im}{\mathop\mathrm{Im}\nolimits}
\newcommand{\Ker}{\mathop\mathrm{Ker}\nolimits}
\newcommand{\Coker}{\mathop\mathrm{Coker}\nolimits}
\newcommand{\Res}{{\mathrm{Res}}}
\newcommand{\A}{{\mathbf A}}
\newcommand{\Q}{{\mathbf Q}}
\newcommand{\F}{{\mathbf F}}
\newcommand{\Fv}{{\F_{\mkern-2muv}}}
\newcommand{\Z}{{\mathbf Z}}
\newcommand{\R}{{\mathbf R}}
\renewcommand{\vert}{{\mathrm{vert}}}
\newcommand{\uplet}[2]{#1, \mskip2.5mu \ldots \mskip-1mu, \mskip2.5mu #2}
\renewcommand{\P}{{\mathbf P}}
\renewcommand{\leq}{\leqslant}
\renewcommand{\geq}{\geqslant}
\renewcommand{\emptyset}{\varnothing}
\DeclareMathOperator{\inv}{inv}
\newcommand{\isoto}{\myxrightarrow{\,\sim\,}}

\makeatletter
\def\myrightarrow{{\setbox\z@\hbox{$\rightarrow$}\dimen0\ht\z@\multiply\dimen0 6\divide\dimen0 10\ht\z@\dimen0\box\z@}}
\def\myrightarrowfill@{\arrowfill@\relbar\relbar\myrightarrow}
\newcommand{\myxrightarrow}[2][]{\ext@arrow 0359\myrightarrowfill@{#1}{#2}}
\makeatother

\newcommand{\chapeau}{{\rlap{\smash{\hbox{\lower4pt\hbox{\hskip1pt$\widehat{\phantom{u}}$}}}}}}
\newcommand{\CHzchapeau}{\CH_{0\phantom{,}}^\chapeau}
\newcommand{\CHzAchapeau}{\CH_{0,\A}^\chapeau}
\newcommand{\CHzA}{\CH_{0,\A}}
\newcommand{\petitplus}{+}
\newcommand{\Picchapeau}{\Pic^{{\smash{\hbox{\lower4pt\hbox{\hskip0.4pt$\widehat{\phantom{u}}$}}}}}}

\newcommand{\PicAchapeau}{\Picchapeau_\A}
\newcommand{\Picpluschapeau}{\Pic_\petitplus^{{\smash{\hbox{\lower4pt\hbox{\hskip0.4pt$\widehat{\phantom{u}}$}}}}}}
\newcommand{\PicplusAchapeau}{\Pic_{\petitplus,\A}^{{\smash{\hbox{\lower4pt\hbox{\hskip.4pt$\widehat{\phantom{u}}$}}}}}}
\newcommand{\Picplus}{\Pic_\petitplus}

\newcommand{\Brplus}{\Br_\petitplus}
\newcommand{\vplus}{\underline{v}_\petitplus}

\numberwithin{equation}{section}
\theoremstyle{plain}
\newtheorem{prop}{Proposition}[section]
\newtheorem{thm}[prop]{Théorème}
\newtheorem{cor}[prop]{Corollaire}
\newtheorem{lem}[prop]{Lemme}
\newtheorem{rappel}[prop]{Rappel}
\theoremstyle{remark}

\newtheorem*{conjE*}{Conjecture~$(E)$}
\newtheorem*{conjEzero*}{Conjecture~$(E_0)$}
\newtheorem*{conjEun*}{Conjecture~$(E_1)$}
\newtheorem*{rmq*}{Remarque}
\newtheorem{rmq}[prop]{Remarque}
\newtheorem{rmqs}[prop]{Remarques}

\hyphenpenalty=500
\pretolerance=515

\input cyracc.def
\DeclareFontFamily{U}{russian}{}
\DeclareFontShape{U}{russian}{m}{n}
        { <5><6> wncyr5
        <7><8><9> wncyr7
        <10><10.95><12><14.4><17.28><20.74><24.88> wncyr10 }{}
\DeclareSymbolFont{Russian}{U}{russian}{m}{n}
\DeclareSymbolFontAlphabet{\mathcyr}{Russian}
\makeatletter
\let\@math@cyr\mathcyr
\renewcommand{\mathcyr}[1]{\@math@cyr{\cyracc #1}}
\makeatother
\newcommand{\Sha}{{\mathcyr{Sh}}}
\edef\textsection{\textsection\penalty10000\hskip3.4pt}

{\setbox0\hbox{$ $}}\fontdimen16\textfont2=\fontdimen17\textfont2

\makeatletter
\def\@setaddresses{\par\nobreak
  \begingroup
  \parindent-2em\leftskip2em
  \rightskip=0pt plus 20pt
  \emergencystretch .5\textwidth
  \exhyphenpenalty=-100
  \interlinepenalty\@M
  \def\baselinestretch{1}\normalfont\footnotesize
  \def\\{\unskip, \penalty-10\ignorespaces}%
  \def\cond@bullet {\unskip
         {\discretionary{}{}{\hbox{\ $\bullet$\ }}}}
  \def\author##1{\ifhmode\par\nobreak \vskip\smallskipamount\fi
      {\scshape ##1}\let\address\firstaddress}%
  \def\firstaddress##1##2{\unskip, \let\address\otheraddress
         \penalty-20\ignorespaces##2}%
  \def\otheraddress##1##2{\cond@bullet \ignorespaces##2}%
  \def\curraddr{\address}
  \let\address\firstaddress
  \def\email##1##2{\@ifnotempty{##2}%
        {\cond@bullet
         \hbox{\itshape Courriel~:}~{\ttfamily\ignorespaces ##2}}}%
  \def\urladdr##1##2{\@ifnotempty{##2}%
        {\cond@bullet
         {\itshape Url~:}~{\ttfamily\ignorespaces ##2}\par}}%
  \addresses
  \par\endgroup
}
\makeatother

\setcounter{tocdepth}{2}

\begin{document}
\begin{abstract}
Soit~$X$ une vari\'et\'e propre et lisse sur un corps de nombres~$k$.
Des conjectures sur l'image du groupe de Chow des z\'ero-cycles de~$X$ dans le produit des m\^emes groupes sur tous les compl\'et\'es de~$k$ ont \'et\'e propos\'ees par Colliot-Th\'el\`ene, Kato et Saito.
Nous d\'emontrons ces conjectures pour l'espace total de fibrations en vari\'et\'es rationnellement connexes v\'erifiant l'approximation faible, au-dessus de courbes dont le groupe de Tate--Shafarevich est fini, sous une
hypoth\`ese d'ab\'elianit\'e sur les fibres singuli\`eres. 
\end{abstract}

\maketitle

\section*{Introduction}

Soit~$X$ une variété propre et lisse sur un corps de nombres~$k$.
À~défaut de comprendre la structure du groupe de Chow~$\CH_0(X)$
des zéro-cycles sur~$X$ à équivalence
rationnelle près, dont on ne sait pas, notamment, s'il est de type fini (conjecture de Beilinson et Bloch),
on peut s'interroger sur l'image et le noyau de l'application naturelle $\CH_0(X) \to \prod_{v \in \Omega} \CH_0(X \otimes_k k_v)$, où~$\Omega$ désigne l'ensemble des places de~$k$ et~$k_v$ le complété de~$k$ en $v \in \Omega$.

Le premier résultat dans cette direction est dû à Cassels.  Celui-ci démontra dans~\cite{casselsdual} que si~$E$ est une courbe elliptique sur~$k$
et si le groupe de Tate--Shafarevich de~$E$ ne contient pas d'élément infiniment divisible non nul,
alors l'adhérence de~$E(k)$ dans le groupe des points adéliques $E(\A_k)$
coïncide avec le noyau de l'homomorphisme naturel $E(\A_k) \to \Hom(H^1(k,E),\Q/\Z)$,
à condition de munir~$E(\A_k)$ d'une topologie rendue plus grossière aux places infinies.
Rappelons que dans ce contexte on a $\CH_0(E)=E(k)\oplus \Z$ et $\CH_0(E \otimes_k k_v)=E(k_v)\oplus \Z$
et que le groupe de Tate--Shafarevich de~$E$ est conjecturalement fini.

Soit $A_0(X) \subset \CH_0(X)$ le sous-groupe des classes de $0$-cycles de degré~$0$.
Lorsque~$X$ est une surface (géométriquement) rationnelle, une conjecture de Colliot-Thélène et Sansuc~\cite[Conjecture~A]{ctsduke} postule l'existence d'une suite exacte canonique de groupes abéliens finis
\begin{align}
\mylabel{eq:sects}
\xymatrix@C=1.5em{
0 \ar[r] & \Sha^1(k,S) \ar[r] & A_0(X) \ar[r] & \displaystyle \bigoplus_{v \in \Omega} A_0(X \otimes_k k_v) \ar[r] & \Hom(H^1(k,S^*), \Q/\Z)
}
\end{align}
où $\Sha^1(k,S)=\Ker(H^1(k,S) \to \prod_{v \in \Omega} H^1(k_v,S))$
et
où~$S$ désigne le tore sur~$k$ dont le groupe des caractères est $S^*=\Pic(X \otimes_k \kbar)$.
De plus, si $H^1(k,S^*)=0$, l'existence d'un $0$\nobreakdash-cycle de degré~$1$ sur $X \otimes_k k_v$ pour chaque $v \in \Omega$ devrait entraîner celle d'un $0$\nobreakdash-cycle de degré~$1$ sur~$X$
\cite[Conjecture~C]{ctsduke}.

Notons $\Br(X)=H^2(X,\Gm)$ le groupe de Brauer cohomologique de~$X$.
Quelle que soit la variété~$X$, il résulte de la loi de réciprocité de la théorie du corps de classes global
que l'image de l'application $\CH_0(X) \to \prod_{v \in \Omega} \CH_0(X \otimes_k k_v)$ est incluse dans le noyau à droite de l'accouplement
dit \og{}de Brauer--Manin\fg{}
\begin{align}
\mylabel{eq:accbm}
\Br(X) \times \prod_{v \in \Omega} \CH_0(X \otimes_k k_v) \to \Q/\Z
\end{align}
défini par Manin~\cite{maninicm}.
En termes de cet accouplement, des conjectures généralisant aux variétés propres et lisses arbitraires aussi bien le théorème de Cassels que les conjectures~A et~C de~\cite{ctsduke}, exception
faite de la description du noyau de la flèche $A_0(X) \to \prod_{v \in \Omega} A_0(X\otimes_k k_v)$, furent proposées par Kato et Saito~\cite[\textsection7]{katosaitocontemp}
et par Colliot-Thélène~\cite[Conjectures~1.5]{ctbordeaux}.  Celles-ci affirment notamment:

\begin{conjEzero*}
Soit~$X$ une variété propre et lisse sur un corps de nombres~$k$.
Le complexe
$$
\xymatrix{
\displaystyle\varprojlim_{n} A_0(X)/n \ar[r] & \displaystyle\prod_{v \in \Omega} \varprojlim_{n} A_0(X \otimes_k k_v)/n \ar[r] & \Hom(\Br(X),\Q/\Z)\rlap{\text{,}}
}
$$
dans lequel la seconde flèche est induite par~(\ref{eq:accbm}), est une suite exacte.
\end{conjEzero*}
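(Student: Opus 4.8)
Le plan est d'\'etablir la Conjecture~$(E_0)$ --- ainsi que les conjectures apparent\'ees $(E)$ et $(E_1)$ --- pour l'espace total d'une fibration $f : X \to C$ en vari\'et\'es rationnellement connexes v\'erifiant l'approximation faible, au-dessus d'une courbe propre et lisse~$C$ dont la jacobienne~$J$ a un groupe de Tate--Shafarevich fini, sous l'hypoth\`ese d'ab\'elianit\'e sur les fibres singuli\`eres; la conjecture en toute g\'en\'eralit\'e reste ouverte. On combinerait deux ingr\'edients: d'une part la validit\'e de l'\'enonc\'e analogue pour la courbe~$C$ elle-m\^eme, qui r\'esulte des travaux de Saito sous la finitude de $\Sha(J)$; d'autre part la m\'ethode des fibrations pour les z\'ero-cycles d\'evelopp\'ee par Colliot-Th\'el\`ene au-dessus de~$\P^1$, ici adapt\'ee \`a une base de genre quelconque, l'absence de point rationnel sur~$C$ ne posant pas de probl\`eme puisqu'on ne manipule que des z\'ero-cycles.

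On commencerait par un d\'evissage du groupe de Brauer. Comme~$X$ est lisse, on peut d\'ecrire $\Br(X)$ \`a l'aide du groupe de Brauer de la fibre g\'en\'erique $X_\eta$ --- vari\'et\'e propre, lisse et rationnellement connexe sur $k(C)$ --- et de r\'esidus le long des points ferm\'es de~$C$, s\'eparant ainsi une partie \og{}horizontale\fg{} d'une partie \og{}verticale\fg{}. On ram\`enerait ensuite l'\'enonc\'e \`a sa forme approximative usuelle (via un lemme formel \`a la Harari): pour toute famille ad\'elique de z\'ero-cycles locaux orthogonale \`a $\Br(X)$ et tout entier~$n$, il s'agit de construire un z\'ero-cycle global sur~$X$ congru \`a la donn\'ee locale modulo~$n$ et modulo des voisinages convenables aux places archim\'ediennes, la Conjecture~$(E_0)$ s'en d\'eduisant par passage \`a la limite sur~$n$.

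Le point central de la preuve serait l'\'etape de fibration. On pousserait la famille de z\'ero-cycles locaux sur~$C$; gr\^ace \`a l'orthogonalit\'e aux classes verticales, au cas d\'ej\`a connu de la courbe, et \`a un argument de d\'eplacement \`a la Rosenlicht permettant de supposer ces z\'ero-cycles port\'es par des fibres lisses ayant les bons points locaux, on construirait un point ferm\'e $\theta \in C$, de corps r\'esiduel~$L$, tel que $X_\theta$ soit lisse sur~$L$, poss\`ede en chaque place de~$L$ un z\'ero-cycle approchant la donn\'ee prescrite, et tel que la classe de~$\theta$ ne diff\`ere dans $\CH_0(C)$ que d'une classe contr\^ol\'ee. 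C'est ici qu'interviennent les deux hypoth\`eses: la finitude de $\Sha(J)$ fournit la dualit\'e arithm\'etique (Poitou--Tate, Cassels--Tate) n\'ecessaire pour r\'esoudre le probl\`eme local-global sur~$C$ au niveau des diviseurs, et l'hypoth\`ese d'ab\'elianit\'e sur les fibres singuli\`eres permet, par un argument de corps de classes (Chebotarev dans des extensions ab\'eliennes), de satisfaire les conditions impos\'ees aux places de mauvaise r\'eduction de~$f$. Enfin, sur la fibre lisse $X_\theta$ --- rationnellement connexe et v\'erifiant l'approximation faible sur le corps de nombres~$L$ --- l'approximation faible, jointe \`a la structure connue de $A_0$ des fibres sur les corps locaux, fournirait un z\'ero-cycle congru modulo~$n$ \`a la donn\'ee voulue; on corrigerait l'\'ecart r\'esiduel (vertical et de degr\'e) \`a l'aide d'un z\'ero-cycle auxiliaire port\'e par une fibre totalement d\'eploy\'ee.

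La difficult\'e principale sera pr\'ecis\'ement cette \'etape de fibration: coupler l'arithm\'etique de la base \`a celle des fibres. Au-dessus de~$\P^1$ on disposait de l'approximation forte \og{}\`a bon march\'e\fg{}, alors qu'au-dessus d'une courbe de genre quelconque le groupe d'obstruction aux z\'ero-cycles --- b\^ati sur $\Sha(J)$ et un accouplement de type Brauer --- doit \^etre contr\^ol\'e avec soin, et les fibres non d\'eploy\'ees imposent un argument d\'elicat de th\'eorie du corps de classes, ce qui est la raison d'\^etre de l'hypoth\`ese d'ab\'elianit\'e. Une difficult\'e secondaire sera la comptabilit\'e avec les limites projectives $\varprojlim_n (\,\cdot\,)/n$: il faudra ma\^{\i}triser uniform\'ement les d\'enominateurs et la partie universellement divisible de $A_0$, ce qui requiert les \'enonc\'es de finitude et de divisibilit\'e connus pour les fibres.
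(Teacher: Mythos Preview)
L'\'enonc\'e en question est une \emph{conjecture}; tu as raison de pr\'eciser d'embl\'ee que le but est de l'\'etablir sous les hypoth\`eses des th\'eor\`emes principaux de l'article (fibration $f:X\to C$ \`a fibres rationnellement connexes v\'erifiant l'approximation faible, hypoth\`ese d'ab\'elianit\'e sur les fibres singuli\`eres, finitude de~$\Sha(J)$). Tu identifies aussi correctement plusieurs ingr\'edients effectivement utilis\'es par l'article: le cas de la courbe~$C$ d\^u \`a Saito, le r\^ole des \'enonc\'es de finitude \`a la Saito--Sato pour $A_0$ des fibres sur les corps locaux, et le traitement des places r\'eelles \`a la van Hamel.

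En revanche, ton plan manque l'id\'ee centrale de l'article et suit une voie que l'auteur abandonne explicitement. Tu proposes de travailler \emph{directement} au-dessus de~$C$: pousser les z\'ero-cycles locaux sur~$C$, utiliser l'arithm\'etique de la jacobienne et Chebotarev dans des extensions ab\'eliennes pour d\'egager un point ferm\'e~$\theta\in C$ \`a fibre lisse ayant les bons points locaux, puis conclure par approximation faible sur~$X_\theta$. C'est essentiellement la strat\'egie de Colliot-Th\'el\`ene~\cite{ctreglees} et Frossard~\cite{frossard}, qui remplace la division euclidienne dans~$k[t]$ par Riemann--Roch sur~$C$. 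Or l'article \'ecrit d\`es l'introduction qu'il fait \og{}table rase\fg{} de cette approche: elle n'a \'et\'e men\'ee \`a bien que pour des fibr\'es en vari\'et\'es de Severi--Brauer d'indice sans facteur carr\'e, et ce au prix d'outils sp\'ecifiques (Merkurjev--Suslin, th\'eor\`eme de Kato sur~$H^3_{\mathrm{nr}}$, g\'eom\'etrie fine des mod\`eles d'Artin, effectivit\'e \`a la Salberger). Pour des fibres rationnellement connexes g\'en\'erales, ton \'etape cl\'e --- construire~$\theta$ en couplant l'arithm\'etique de~$C$ aux contraintes impos\'ees par les fibres singuli\`eres --- n'est pas justifi\'ee, et tu reconnais toi-m\^eme que c'est \og{}la difficult\'e principale\fg{} sans indiquer comment la surmonter.

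L'id\'ee neuve de l'article est tout autre: r\'eduire au cas $C=\P^1_k$ par \emph{restriction des scalaires \`a la Weil}. On choisit avec soin un pinceau dans~$|y|$, donc un morphisme fini $\pi:C\to\P^1_k$, puis on pose $W=\sR_{C/\P^1_k}X$ et l'on note $p:W\to\P^1_k$ le morphisme structural. Les z\'ero-cycles locaux effectifs~$z_v$ d\'efinissent alors des points $[z_v]\in W(k_v)$, et le point technique crucial (Proposition~\ref{prop:pointorthogonal}) consiste \`a montrer que la famille $([z_v])_{v\in\Omega}$ est orthogonale \`a $\Br_\vert(W'/\P^1_k)$; c'est pr\'ecis\'ement ici qu'intervient le formalisme $\Picplus$/$\Brplus$ \`a la van Hamel pour traiter les places r\'eelles. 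Une fois cela acquis, on applique le th\'eor\`eme de \cite{ctsksd98} sur~$\P^1_k$ \`a~$p'$ --- les hypoth\`eses (a) et (b') se transmettent de~$f$ \`a~$p'$ via les propositions~\ref{prop:rwproprietes} et~\ref{prop:rwtransmissionhyp} --- et l'on rapatrie le z\'ero-cycle obtenu sur~$W$ vers~$X$ par le morphisme d'adjonction. Cette r\'eduction \'evite enti\`erement le substitut de division euclidienne sur~$C$ et tous les outils sp\'ecifiques aux vari\'et\'es de Severi--Brauer; c'est ce qui permet \`a l'article de couvrir des fibrations en quadriques ou en vari\'et\'es de Severi--Brauer d'indice quelconque, bien au-del\`a de ce que ta strat\'egie, m\^eme men\'ee \`a bien, fournirait.
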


\begin{conjEun*}
Soit~$X$ une variété propre et lisse sur un corps de nombres~$k$.  S'il existe une famille $(z_v)_{v \in \Omega} \in \prod_{v \in \Omega} \CH_0(X \otimes_k k_v)$
orthogonale à~$\Br(X)$ pour l'accouplement de Brauer--Manin et telle que $\deg(z_v)=1$ pour tout $v \in \Omega$,
alors il existe un $0$\nobreakdash-cycle de degré~$1$ sur~$X$.
\end{conjEun*}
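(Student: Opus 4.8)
Since conjecture~$(E_1)$ is established here only under the hypotheses of the abstract, I sketch the plan for the total space $X$ of a fibration $f\colon X\to C$ over a smooth projective geometrically connected curve $C/k$ whose Jacobian has finite Tate--Shafarevich group, the smooth fibres of $f$ being geometrically rationally connected and satisfying weak approximation, the singular fibres satisfying an abelianity hypothesis. Fix a family $(z_v)_{v\in\Omega}$ of local zero-cycles of degree~$1$ on $X$ orthogonal to $\Br(X)$, and a dense open $U\subseteq C$ over which $f$ is smooth and proper with rationally connected fibres; put $Z=C\setminus U$. The pushforwards $f_\ast z_v$ are local zero-cycles of degree~$1$ on $C$ orthogonal to $\Br(C)\subseteq\Br(X)$. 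It will suffice to exhibit two closed points $M_1,M_2$ of $U$, of coprime degrees over~$k$, whose fibres $X_{M_1},X_{M_2}$ each carry a rational point over the corresponding residue field: for $P_i\in X_{M_i}(k(M_i))$ and integers $a,b$ with $a\deg(M_1)+b\deg(M_2)=1$, the zero-cycle $a[P_1]+b[P_2]$ has degree~$1$ on~$X$.

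\emph{Brauer classes and local solubility of fibres.} The basic difficulty is that, for a closed point $M\in U$, the fibre $X_M$ need not be locally soluble at every place of $k(M)$ even where the model of $f$ has good reduction, since $M$ may specialise into the degeneration locus of a special fibre; what governs this are the \emph{vertical} Brauer classes on $X$. I would first show that $\Br(X)$ is, modulo $\Br(C)$ and a finite subgroup of such vertical classes, finite over $\Br(k)$ --- using the vanishing of the geometric Brauer group of a smooth proper rationally connected variety, so that $\Br(X_\eta)/\Br(k(C))$ is finite, together with the abelianity hypothesis, by which the residues of the vertical classes at the points of $Z$ are split by \emph{abelian} extensions of the residue fields $k(P)$. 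Then the condition on a closed point $M\in U$ that the retained classes not obstruct local solubility of $X_M$ becomes a congruence on the Frobenius of $k(M)/k$, imposable by Chebotarev; this is the ingredient replacing Schinzel's hypothesis, available because for zero-cycles one need only constrain residue fields, not exhibit a rational point of $C$ on a one-parameter subfamily. Fix a finite set $S$ of places of $k$ containing the archimedean places, the bad places of a chosen model of $f$ over $\sO_{k,S}$, and enough places to carry the above finite quotient of $\Br(X)$.

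\emph{Choice of the two points, and conclusion.} One now invokes the finiteness of $\Sha$ of the Jacobian of $C$, in the form of the local--global principle for zero-cycles on $C$ that it entails (conjecture~$(E)$ being known for curves with finite Tate--Shafarevich group of the Jacobian, in the spirit of Cassels' theorem recalled above): since $f_\ast z_v$ is orthogonal to $\Br(C)$, one may realise inside $U$ a degree-one zero-cycle on $C$ respecting the $z_v$ at the places of $S$. Combining this with a formal-lemma argument absorbing the finitely many vertical Brauer classes, and using orthogonality of the $z_v$ to $\Br(X)$, I would choose $M_1,M_2$ of coprime degrees over $k$ such that, for each~$i$: at each $v\in S$ the fibre $X_{M_i}$ has a local point approximating the one carried by $z_v$; and away from $S$, $M_i$ avoids the degeneration loci of the special fibres, so $X_{M_i}$ has local points there too. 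Each $X_{M_i}$ thus has an adelic point and, being smooth proper geometrically rationally connected, satisfies weak approximation by hypothesis, whence $X_{M_i}(k(M_i))\neq\emptyset$; the construction of the first paragraph then produces a zero-cycle of degree~$1$ on~$X$. Running the same argument while also prescribing, at a chosen place $v_0$, approximation of $z_{v_0}$ to an arbitrary modulus yields the exactness of the complexes in $(E_0)$ and in the full conjecture $(E)$.

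\emph{Main obstacle.} The heart of the matter is to make the formal lemma and the global reciprocity law cooperate with the arithmetic of $C$ rather than with that of $\P^1$: over the projective line one moves a rational point freely, whereas over a curve of positive genus the degree-one zero-cycle on the base is genuinely non-effective, and one must globalise it using only the finiteness of $\Sha$ of the Jacobian --- which says nothing by itself about the support or the residue fields of the points one may choose --- while simultaneously imposing the Brauer and degeneration-avoidance conditions along that support and keeping control at the places of $S$. Reconciling these requirements, and handling the vertical Brauer classes by Chebotarev, is precisely where the abelianity hypothesis on the singular fibres intervenes; I see no way to dispense with it by this approach.
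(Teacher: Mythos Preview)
Your proposal identifies the obstacle correctly but does not solve it, and the route you sketch is not the one the paper takes.

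You try to run the Colliot-Th\'el\`ene--Skorobogatov--Swinnerton-Dyer machine directly over~$C$: isolate finitely many vertical Brauer classes, use Chebotarev/formal-lemma arguments to choose closed points $M_1,M_2$ of~$U$ of coprime degrees with locally soluble fibres, and conclude. Over~$\P^1_k$ this works because one can \emph{move} the support of a zero-cycle freely via Dirichlet/strong approximation in~$\A^1$, and the formal lemma is stated for Brauer classes on open subsets of~$\P^1$. Over a curve of positive genus there is no such freedom: the finiteness of~$\Sha$ only produces a class in~$\Pic(C)$, not an effective divisor with prescribed splitting behaviour at infinitely many places, and the step ``choose $M_1,M_2$ such that away from~$S$ they avoid the degeneration loci of the special fibres and the residual Brauer obstructions vanish'' is exactly where the argument over~$\P^1$ uses its special structure. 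You acknowledge this in your last paragraph, but you give no mechanism to close the gap; the papers of Colliot-Th\'el\`ene and Frossard that did work directly over~$C$ needed heavy fibre-specific tools (Merkurjev--Suslin, Kato's theorem, explicit Artin models) and a Riemann--Roch substitute for Euclidean division, and the present paper explicitly abandons that line.

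The paper's device is entirely different and is absent from your sketch: one \emph{reduces to the case $C=\P^1_k$} by Weil restriction. After making the $z_v$ effective and arranging that the divisors $f_*z_v$ all lie on a single line~$L$ in a linear system~$|y|$ on~$C$ in general position, that line defines a finite morphism $\pi:C\to\P^1_k$, and one forms $p:W=\sR_{C/\P^1_k}X\to\P^1_k$. The effective $z_v$ then tautologically yield $k_v$-points $[z_v]$ of~$W$. The substantive work is (1)~checking that the fibres of~$p$ inherit hypotheses~(a) and~(b) from those of~$f$ (Propositions~\ref{prop:rwproprietes} and~\ref{prop:rwtransmissionhyp}), and (2)~proving that orthogonality of $(z_v)$ to~$\Br_\vert(X/C)$ forces orthogonality of $([z_v])$ to~$\Br_\vert(W'/\P^1_k)$ (Proposition~\ref{prop:pointorthogonal}); for totally imaginary~$k$ this follows from a trace argument and the vanishing of~$H^2(k,\Pic^0(C_{\kbar}))$, while real places require the~$\Picplus/\Brplus$ formalism of~\S\ref{sec:vanhamel}. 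Once this is in place, the known theorem over~$\P^1_k$ produces a zero-cycle of degree~$1$ on~$W'$, and pushing it back through the adjunction $W\times_{\P^1_k}C\to X$ gives the zero-cycle on~$X$. None of the steps you outline (Chebotarev for vertical classes, direct choice of $M_1,M_2$ on~$C$) appears in the actual proof.
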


\begin{rmq*}
Lorsque~$X$ est une surface (géométriquement) rationnelle,
 la suite spectrale de Hochschild--Serre induit une surjection canonique $\Br(X) \to H^1(k,S^*)$,
les
groupes $A_0(X)$ et $A_0(X \otimes_k k_v)$ sont finis
et $A_0(X \otimes_k k_v)$ s'annule pour tout~$v$ hors d'un ensemble fini de places
(cf.~\cite[Théorème~2.9]{ctbordeaux}).
La conjecture~$(E_0)$ équivaut donc, dans ce cas, à l'exactitude de la partie droite de~(\ref{eq:sects}).
\end{rmq*}

On ne dispose pas à l'heure actuelle de conjecture généralisant à toutes les variétés propres et lisses la partie gauche de la suite exacte~(\ref{eq:sects}). Voir~\cite{suresh} à ce sujet,
ainsi que~\cite{gonzalezaviles}.

Les conjectures~$(E_0)$ et~$(E_1)$ furent prouvées pour les courbes par Saito~\cite{saito}
lorsque le groupe de Tate--Shafarevich de la jacobienne ne contient pas d'élément infiniment divisible non nul.
Une démonstration simplifiée est donnée dans~\cite[\textsection3]{ctconj}.
Le cas des courbes de genre~$1$, au moins pour ce qui est de~$(E_1)$, était déjà connu grâce aux travaux de Cassels reformulés par Manin~\cite{maninicm}.

En dimension supérieure, Salberger~\cite{salberger} établit, dans un article fondateur, les conjectures~$(E_0)$ et~\cite[Conjecture~A]{ctsduke}
pour les surfaces rationnelles fibrées en coniques au-dessus de~$\P^1_k$, ainsi que la conjecture~$(E_1)$ sous l'hypothèse additionnelle que $H^1(k,\Pic(X \otimes_k \kbar))=0$.
Cette hypothèse additionnelle est supprimée dans~\cite{salbergertorseurs} et, indépendamment, dans~\cite{ctsd94}.

Le résultat de Salberger fut généralisé dans deux directions.
Tout d'abord, la technique employée dans~\cite{salberger} permit de confirmer les conjectures~$(E_0)$ et~$(E_1)$ pour les variétés fibrées au-dessus de~$\P^1_k$ en quadriques de dimension~$2$ ou en
variétés de Severi--Brauer (cf.~\cite[\textsection6]{ctsd94}).
Colliot-Thélène, Skorobogatov et Swinnerton-Dyer~\cite[\textsection4]{ctsksd98} dégagèrent par la suite un énoncé tout général portant sur les fibrations au-dessus de~$\P^1_k$:
si $f:X \to \P^1_k$ est un morphisme dont la fibre générique est géométriquement irréductible,
la conjecture~$(E_1)$, ainsi qu'une version affaiblie de~$(E_0)$, vaut pour~$X$ dès que les fibres lisses de~$f$ vérifient
le principe de Hasse pour l'existence de $0$\nobreakdash-cycles de degré~$1$ et que les fibres singulières de~$f$ contiennent chacune une composante irréductible de multiplicité~$1$
déployée par une extension abélienne de leur corps de base.
Ces deux hypothèses sont notamment satisfaites lorsque la fibre générique de~$f$ est une quadrique de dimension~$2$ ou une variété de Severi--Brauer
(cf.~\emph{op.\ cit.}, Examples~1.6).

Une seconde série d'articles étudia les fibrations au-dessus de courbes de genre non nul.
La division euclidienne des polynômes dans~$k[t]$ joue un rôle décisif dans l'argument de~\cite{salberger}.
Colliot-Thélène~\cite{ctreglees} montra qu'il est possible de lui substituer le théorème de Riemann--Roch (cf.~\emph{op.\ cit.}, Lemme principal~5.2).
Il~prouva ainsi la conjecture~$(E_1)$ lorsque~$X$ est une surface munie d'une fibration en coniques $f:X \to C$ et que le corps~$k$ est totalement imaginaire,
sous les hypothèses supplémentaires
que l'application $f^*:\Br(C)\to\Br(X)$ est surjective,
que la courbe~$C$ est de genre~$1$ et
que le groupe de Tate--Shafarevich de sa jacobienne n'a pas d'élément infiniment divisible non nul.
En outre, sans supposer ces hypothèses supplémentaires satisfaites, il établit une version relative de la conjecture~$(E_0)$.
Frossard~\cite{frossard} étendit ces résultats en démontrant les conjectures~$(E_0)$ et~$(E_1)$, ainsi que la version relative de~$(E_0)$ apparue dans~\cite{ctreglees},
lorsque~$X$ est l'espace total d'une fibration en variétés de Severi--Brauer d'indice sans facteur carré
au-dessus d'une courbe~$C$ de genre quelconque sur un corps de nombres totalement imaginaire, en supposant, pour~$(E_0)$ et~$(E_1)$ mais non pour la version relative de~$(E_0)$,
que le groupe de Tate--Shafarevich de la jacobienne de~$C$ ne contient
pas d'élément infiniment divisible non nul.  Enfin, à l'aide d'un formalisme cohomologique élaboré,
van~Hamel~\cite{vanhamel} supprima de ces théorèmes l'hypothèse que~$k$ est totalement imaginaire.

Les arguments de~\cite{ctreglees} et~\cite{frossard} reposent sur
les liens qui unissent variétés de Severi--Brauer et cohomologie galoisienne
et sur
plusieurs outils spécifiques aux variétés de Severi--Brauer.
Interviennent notamment:
un théorème de Merkurjev et~Suslin
concernant les algèbres simples centrales d'indice sans facteur carré
combiné à un théorème de Kato affirmant la nullité du groupe de cohomologie non ramifiée $H^3_{\mathrm{nr}}(k_v(C),\Z/n\Z(2))$ pour $v \in \Omega$ finie (cf.~\cite[p.~88]{frossard});
l'étude de la géométrie et du groupe de Chow des $0$\nobreakdash-cycles
des fibres singulières des modèles d'Artin des variétés de Severi--Brauer d'indice sans facteur carré sur le corps des fonctions d'une courbe (cf.~\emph{idem}, p.~85 et p.~91);
un théorème d'effectivité, dû à Salberger, pour les $0$\nobreakdash-cycles de degré assez grand sur l'espace total d'une fibration en variétés de Severi--Brauer d'indice premier au-dessus d'une courbe (cf.~\emph{id.}, p.~91).

Dans cet article, faisant table rase de l'approche mise en \oe{}uvre dans~\cite{ctreglees} et dans~\cite{frossard},
nous montrons que l'on peut se dispenser à la fois des outils
mentionnés au paragraphe précédent et du substitut de division euclidienne introduit dans~\cite{ctreglees}:
étant donnée une fibration $f:X\to C$ au-dessus d'une courbe de genre quelconque, nous
réduisons les conjectures~$(E_0)$ et~$(E_1)$ pour~$X$ aux mêmes conjectures pour l'espace total de certaines fibrations au-dessus de~$\P^1_k$
construites à partir de~$f$.
Cette réduction repose
sur l'étude de la restriction des scalaires à la Weil de~$f$ le long d'un morphisme
$\pi:C \to \P^1_k$ bien choisi.  Pour traiter la conjecture~$(E_0)$ en toute généralité, nous aurons besoin d'un énoncé de finitude
concernant le groupe de Chow des $0$\nobreakdash-cycles de variétés sur les corps $p$\nobreakdash-adiques issu des résultats récents
de Saito et Sato~\cite{saitosato}.  Des idées inspirées de van Hamel~\cite{vanhamel} nous permettront d'autre part de résoudre les difficultés liées aux places réelles.
Ainsi, grâce à l'article~\cite[\textsection4]{ctsksd98}, qui traite le cas où $C=\P^1_k$,
nous obtiendrons
des résultats pour l'espace total de fibrations au-dessus d'une courbe de genre quelconque
dont les fibres ne sont soumises qu'aux hypothèses générales de \emph{loc.\ cit.}, Theorem~4.1.
Ces résultats contiennent tous ceux cités jusqu'ici au sujet des conjectures~$(E_0)$ et~$(E_1)$;
ils vont également plus loin puisqu'ils s'appliquent par exemple
aux fibrations en quadriques ou en variétés de Severi--Brauer d'indice quelconque au-dessus d'une courbe de genre quelconque, sur un corps de nombres quelconque.

Plus précisément, soit $f:X \to C$ un morphisme dont la fibre générique est géométriquement irréductible,
dont les fibres lisses vérifient le principe de Hasse pour l'existence de
$0$\nobreakdash-cycles de degré~$1$ et dont les fibres singulières
contiennent chacune une composante irréductible de multiplicité~$1$ déployée par
une extension abélienne de leur corps de base.
Si le groupe de Tate--Shafarevich de la jacobienne de~$C$ ne contient pas d'élément infiniment divisible non nul,
nous prouvons la conjecture~$(E_1)$ pour~$X$ (théorème~\ref{th:principalEun}).
Si de plus la fibre générique géométrique~$X_\etabar$ de~$f$ est simplement connexe et vérifie $A_0(X_\etabar \otimes K)=0$ pour tout corps~$K$ algébriquement clos
(hypothèses satisfaites par exemple lorsque~$X_\etabar$ est rationnellement connexe
au sens de Kollár, Miyaoka et Mori~\cite{kollarlivre})
et si les fibres lisses vérifient la propriété d'approximation faible pour les $0$\nobreakdash-cycles de degré~$1$,
nous établissons la conjecture~$(E_0)$ pour~$X$ (théorème~\ref{th:principalE}).
De façon parallèle à~\cite{ctreglees} et~\cite{frossard},
nous obtenons également la version relative de la conjecture~$(E_0)$ sans faire d'hypothèse sur le groupe de Tate--Shafarevich de la jacobienne de~$C$ (théorème~\ref{th:principalErelatif}).
Les résultats de~\cite[\textsection5 et~6]{ctsd94}, \cite[\textsection4]{ctsksd98} et ceux de~\cite{ctreglees}, \cite{frossard}, \cite{vanhamel} sont donc simultanément couverts.

Indiquons brièvement la structure des preuves des théorèmes~\ref{th:principalEun}, \ref{th:principalE} et~\ref{th:principalErelatif}.
La~première difficulté consiste à se réduire à une question portant sur des familles $(z_v)_{v \in \Omega}
\in \prod_{v \in \Omega}Z_0(X\otimes_kk_v)$ de
$0$\nobreakdash-cycles locaux \emph{effectifs}.
Nous devrons aussi nous assurer de l'existence d'un diviseur $y \in Z_0(C)$
tel que les images directes $f_*z_v$ soient linéairement équivalentes à~$y$ et soient toutes situées sur une même
droite~$L$ à l'intérieur de l'espace projectif $|y|=\P(H^0(C,\sO(y)))$, cette droite étant de plus en position générale.
Une fois cela accompli, notons
 $p:W \to \P^1_k$ la restriction des scalaires à la Weil
de~$f$ le long du revêtement $\pi:C \to \P^1_k$ correspondant au pinceau~$L$,
puis $[z_v]\in W(k_v)$, pour $v \in \Omega$, le point défini par le $0$\nobreakdash-cycle effectif~$z_v$.
La~seconde difficulté
consiste à vérifier que
si la famille $(z_v)_{v \in \Omega}$ est orthogonale à~$\Br(X)$ pour l'accouplement de Brauer--Manin,
alors la famille $([z_v])_{v \in \Omega}$ est orthogonale au groupe de Brauer d'une compactification lisse~$W'$ de~$W$.
Nous montrerons seulement que $([z_v])_{v \in \Omega}$ est orthogonale au sous-groupe $\Br_\vert(W/\P^1_k) = \Br(W') \cap p^*\Br(k(\P^1))$, et ce, seulement si~$k$ est totalement imaginaire.
Lorsque le corps de nombres~$k$ est formellement réel,
les idées de~\cite{vanhamel}
nous permettront d'établir cette propriété à condition d'avoir choisi les~$z_v$ aux places~$v$ réelles avec soin au début de l'argument.
De l'orthogonalité de $([z_v])_{v \in \Omega}$ à $\Br_\vert(W/\P^1_k)$, on déduit enfin, grâce à~\cite{ctsksd98}, l'existence d'un $0$\nobreakdash-cycle de degré~$1$ sur~$W'$ vérifiant,
le cas échéant, des conditions locales en un nombre fini de places.  Celui-ci donne formellement naissance
à un $0$\nobreakdash-cycle $z \in Z_0(X)$, soumis aux mêmes conditions locales et tel que~$f_*z$ soit linéairement équivalent à~$y$.
La construction de~$z$ est le point clef des preuves des théorèmes~\ref{th:principalEun}, \ref{th:principalE} et~\ref{th:principalErelatif}.

Le texte est organisé comme suit. Le \textsection\ref{sec:prelim} introduit les notations, énonce les théorèmes principaux et établit
divers lemmes généraux concernant d'une part l'équivalence rationnelle des $0$\nobreakdash-cycles pour une variété projective et lisse
définie sur un corps local
et d'autre part
les groupes abéliens.  Au \textsection\ref{sec:groupechow}, à l'aide des résultats de Saito et Sato~\cite{saitosato}, nous démontrons que si
$C$ est une courbe propre et lisse sur un corps de nombres et si $f:X \to C$ est un morphisme propre
dont la fibre générique géométrique~$X_\etabar$ est irréductible, simplement connexe et vérifie
$A_0(X_\etabar \otimes K)=0$ pour tout corps~$K$ algébriquement clos (ces hypothèses étant notamment satisfaites lorsque~$X_\etabar$ est rationnellement connexe),
alors l'application $f_*:\CH_0(X \otimes_k k_v) \to \CH_0(C \otimes_k k_v)$ est un isomorphisme pour toute place~$v$ hors d'un ensemble fini.
Cette assertion est un ingrédient des preuves des théorèmes~\ref{th:principalE} et~\ref{th:principalErelatif}; en revanche, le théorème~\ref{th:principalEun} n'en dépend pas.
Les démonstrations des théorèmes~\ref{th:principalEun}, \ref{th:principalE} et~\ref{th:principalErelatif} sont données
aux \textsection\ref{sec:reductionP} et~\ref{sec:demPS}, à l'exception de la proposition~\ref{prop:pointorthogonal}, que nous ne
prouvons au \textsection\ref{sec:demPS} que si~$k$ est totalement imaginaire.
Au \textsection\ref{sec:vanhamel},
après avoir développé un formalisme cohomologique inspiré de~\cite{vanhamel}, nous établissons la proposition~\ref{prop:pointorthogonal} sans restriction sur~$k$.

On renvoie aux articles récents de Yongqi Liang pour des progrès subséquents au présent travail.  Liang établit
dans~\cite{liang} une version des théorèmes~\ref{th:principalEun} et~\ref{th:principalE}
pour des fibrations au-dessus de~$\P^n_k$.  D'autre part, dans le cas des fibrations $f:X \to C$
au-dessus d'une courbe de genre quelconque, les
théorèmes~\ref{th:principalEun} et~\ref{th:principalE} contiennent une hypothèse
de nature arithmétique
portant sur les
fibres lisses de~$f$;
en adaptant les preuves de ces deux théorèmes données ci-dessous,
Liang montre dans~\emph{op.\ cit.} que leur conclusion est encore valable si cette hypothèse n'est supposée satisfaite que par les fibres de~$f$ au-dessus d'un ensemble hilbertien généralisé de points fermés de~$C$.

\bigskip
Je suis reconnaissant à Jean-Louis Colliot-Thélène et au rapporteur pour de nombreuses suggestions ayant permis d'améliorer la rédaction de cet article.
Je remercie Yongqi Liang pour une remarque sur la démonstration du théorème~\ref{th:variantectsksd}.

\section{Préliminaires}
\mylabel{sec:prelim}

\subsection{Notations et conjectures}

Soit~$M$ un groupe abélien.
Si~$n$ est un entier, notons respectivement $\tors{n}M$, $nM$ et $M/n$ le noyau, l'image et le conoyau de l'endomorphisme de~$M$ de multiplication par~$n$.
Posons $\widehat{M}=\varprojlim_{n>0} M/n$.
Nous dirons
 que le groupe~$M$ est \emph{divisible} si $M/n=0$ pour tout~$n>0$
et qu'un élément de~$M$ est \emph{infiniment divisible} si son image dans~$\widehat{M}$ est nulle.

Tous les groupes de cohomologie considérés dans cet article sont des groupes de cohomologie étale (ou galoisienne).
Notons $\Br(X)=H^2(X,\Gm)$ le groupe de Brauer cohomologique d'un schéma~$X$.  Rappelons que si~$X$ est intègre et régulier de corps des fonctions~$K$, la flèche de restriction $\Br(X) \to \Br(K)$ est injective
(cf.~\cite[II, Example~2.22]{milneet}); par conséquent~$\Br(X)$ est un groupe de torsion.
Si $f:X\to Y$ est un morphisme entre schémas intègres et réguliers et si~$L$ désigne le corps des fonctions de~$Y$,
notons $\Br_\vert(X/Y)=\Br(X) \cap \mkern2muf^*\mkern-2mu\Br(L)$ le \emph{groupe de Brauer vertical} de~$X$ sur~$Y$, l'intersection étant prise dans~$\Br(K)$.

Soit~$X$ une variété lisse sur un corps~$k$. Notons $Z_0(X)$ le groupe des $0$\nobreakdash-cycles sur~$X$ et $Z_0^\eff(X) \subset Z_0(X)$ l'ensemble des $0$\nobreakdash-cycles effectifs.

Supposons dorénavant~$X$ propre sur~$k$. Notons $\CH_0(X)$ le quotient de $Z_0(X)$ par le sous-groupe des $0$\nobreakdash-cycles rationnellement équivalents à~$0$
et~$A_0(X)$ le noyau de l'application degré $\CH_0(X) \to \Z$ (cf.~\cite[\textsection1]{fulton}).
Il existe un accouplement canonique
\begin{align}
\mylabel{eq:accbrch}
\mylangle -, - \myrangle : \Br(X) \times \CH_0(X) \to \Br(k)
\end{align}
caractérisé par la propriété suivante: pour tout $A \in \Br(X)$ et tout point fermé $P \in X$, la classe $\mylangle A,P\myrangle \in \Br(k)$
est égale à la corestriction de~$k(P)$ à~$k$ de $A(P) \in \Br(k(P))$
(cf.~\cite[Théorème~9]{maninicm}).

Supposons que~$k$ soit un corps de nombres. Notons~$\Omega_f$ (resp.~$\Omega_\infty$) l'ensemble de ses places finies (resp.~infinies) et posons $\Omega=\Omega_f \amalg \Omega_\infty$.
Pour $v \in \Omega$, notons~$k_v$ le complété de~$k$ en~$v$ et~$\kbar_v$ une clôture algébrique de~$k_v$.
Pour $v \in \Omega_f$, notons~$\sO_v$ l'anneau des entiers de~$k_v$ et~$\Fv$ le corps résiduel de~$\sO_v$.
Enfin, pour $S \subset \Omega$, notons~$\sO_S$ l'anneau des $S$\nobreakdash-entiers de~$k$
(éléments de~$k$ entiers en toute place finie hors de~$S$).

Soit $\CHzA(X)$ le groupe
$$
\prod_{v \in \Omega_f} \CH_0(X \otimes_k k_v) \times \prod_{v \in \Omega_\infty}\Coker\mkern-1mu\left(N_{\kbar_v/k_v}\!: \CH_0(X \otimes_k \kbar_v) \to \CH_0(X \otimes_k k_v)\right)\!\rlap{\text{,}}
$$
où $N_{\kbar_v/k_v}$ désigne l'application norme.
Remarquons que les facteurs correspondant aux places infinies sont tués par~$2$
(cf.~\cite[Example~13.12]{fulton})
et que si~$k$ est totalement imaginaire, on a simplement $\CHzA(X)=\prod_{v \in \Omega_f} \CH_0(X\otimes_k k_v)$.
Remarquons enfin que, conformément à la notation employée, le groupe $\CHzA(X)$ ne dépend pas du corps~$k$ mais seulement du schéma~$X$; on aurait pu remplacer~$k$ par~$\Q$ dans la définition ci-dessus.

Pour $v \in \Omega$, notons $\inv_v:\Br(k_v)  \hookrightarrow \Q/\Z$
l'invariant de la théorie du corps de classes local.
La formule $(A,(z_v)_{v \in \Omega}) \mapsto \sum_{v\in \Omega}\inv_v \mylangle A, z_v \myrangle$ définit un accouplement
\begin{align}
\mylabel{eq:brchzaacc}
\Br(X) \times \CHzA(X) \to \Q/\Z
\end{align}
(cf.~\cite[\textsection3.1]{ctsd94}).
Posons $\CHzchapeau(X)=\widehat{\CH_0(X)}$ et $\CHzAchapeau(X)=\widehat{\CHzA(X)}$.
Comme~$\Br(X)$ est de torsion, l'accouplement~(\ref{eq:brchzaacc}) induit à son tour
un accouplement
\begin{align}
\mylabel{eq:brchzacacc}
\Br(X) \times \CHzAchapeau(X) \to \Q/\Z\rlap{\text{.}}
\end{align}

Le groupe $\CHzAchapeau(X)$ intervient dans la formulation, due à van~Hamel (cf.~\cite[Theorem~0.2]{vanhamel}),
d'un énoncé englobant les conjectures~$(E_0)$ et~$(E_1)$ et néanmoins
plausible sur un corps de nombres formellement réel:

\begin{conjE*}
Pour toute variété~$X$ propre et lisse sur un corps de nombres, le complexe
\begin{align}
\mylabel{eq:seconje}
\xymatrix{
\CHzchapeau(X) \ar[r] & \CHzAchapeau(X) \ar[r] & \Hom(\Br(X),\Q/\Z)\rlap{\text{,}}
}
\end{align}
dans lequel la seconde flèche est induite par~(\ref{eq:brchzacacc}),
est une suite exacte.
\end{conjE*}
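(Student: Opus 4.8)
The plan is not to attack Conjecture~$(E)$ in full generality, which is out of reach, but to establish it (and hence both~$(E_0)$ and~$(E_1)$, which it subsumes) for the total space of a fibration $f:X\to C$ over a curve of arbitrary genus, under the hypotheses announced in the introduction, by reducing it to the already known case $C=\P^1_k$ of \cite[\textsection4]{ctsksd98}.

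First I would reduce the exactness of~(\ref{eq:seconje}) at its middle term to a concrete statement about \emph{effective} local zero-cycles: given a family $(z_v)_{v\in\Omega}\in\prod_{v\in\Omega}Z_0(X\otimes_kk_v)$ orthogonal to $\Br(X)$ for~(\ref{eq:brchzacacc}), one wants to produce a global zero-cycle on~$X$ inducing the prescribed class in $\CHzAchapeau(X)$, after the appropriate finite approximation. Using the lemmas on rational equivalence of zero-cycles over local fields and the elementary lemmas on abelian groups announced in \textsection\ref{sec:prelim}, one replaces each $z_v$ by an effective zero-cycle, arranges that the pushforwards $f_*z_v$ be linearly equivalent to one fixed effective divisor $y\in Z_0(C)$, and moreover that they all lie on a single line $L$ in general position inside the projective space $|y|=\P(H^0(C,\sO(y)))$.

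Next, let $\pi:C\to\P^1_k$ be the morphism of degree $\deg(y)$ attached to the pencil~$L$, let $p:W\to\P^1_k$ be the Weil restriction of~$f$ along~$\pi$, and let $[z_v]\in W(k_v)$ be the point determined by the effective zero-cycle~$z_v$. The core of the argument is to show that if $(z_v)_{v\in\Omega}$ is orthogonal to $\Br(X)$, then $([z_v])_{v\in\Omega}$ is orthogonal to $\Br_\vert(W/\P^1_k)=\Br(W')\cap p^*\Br(k(\P^1))$ for a smooth compactification~$W'$ of~$W$. Over a totally imaginary~$k$ this should come from a direct comparison of the two Brauer--Manin pairings through the adjunction between~$f$ and its Weil restriction; over a formally real~$k$ the real places obstruct such a comparison, and, following~\cite{vanhamel}, I would instead work with a finer cohomological formalism, which in turn forces a careful choice of the~$z_v$ at the real places back at the first step. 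This transfer of orthogonality, and the handling of the real places it requires, is the step I expect to be the main obstacle.

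Finally, knowing $([z_v])_{v\in\Omega}$ to be orthogonal to $\Br_\vert(W/\P^1_k)$, I would invoke the main theorem of~\cite[\textsection4]{ctsksd98} --- whose hypotheses on the fibres of~$p$ are inherited from those imposed on the fibres of~$f$ --- to obtain a zero-cycle of degree~$1$ on~$W'$ subject to prescribed conditions at finitely many places; by general position of~$L$ it is supported on~$W$, so it gives formally rise to a zero-cycle $z\in Z_0(X)$ with $f_*z$ linearly equivalent to~$y$ and satisfying the same local conditions, and this construction is the key point. For the strong form of~$(E_0)$, which is what controls $\CHzAchapeau(X)$, one additionally needs the finiteness input extracted from Saito and Sato~\cite{saitosato} and recalled in \textsection\ref{sec:groupechow}: under the hypotheses that $X_\etabar$ is simply connected and $A_0(X_\etabar\otimes K)=0$ for every algebraically closed~$K$, the map $f_*:\CH_0(X\otimes_kk_v)\to\CH_0(C\otimes_kk_v)$ is an isomorphism for almost all~$v$. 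Assembling these pieces yields the exactness of~(\ref{eq:seconje}) in the middle, hence Conjecture~$(E)$ --- and therefore~$(E_0)$ and~$(E_1)$ --- for~$X$.
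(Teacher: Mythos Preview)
Your proposal is correct and follows essentially the same strategy as the paper: the reduction to effective cycles lying over a pencil in $|y|$, the passage to the Weil restriction $p:W\to\P^1_k$ along the associated $\pi:C\to\P^1_k$, the transfer of Brauer--Manin orthogonality from $(z_v)$ to $([z_v])$ (direct when~$k$ is totally imaginary, via the $\Picplus/\Brplus$ formalism of \textsection\ref{sec:vanhamel} otherwise), and the final appeal to \cite[\textsection4]{ctsksd98} together with the Saito--Sato input from \textsection\ref{sec:groupechow} --- all of this is exactly what the paper does. One small imprecision: the zero-cycle on~$W'$ produced by \cite{ctsksd98} is not supported on~$W$ ``by general position of~$L$''; rather, general position of~$L$ guarantees that~$W$ is smooth and that the fibres of~$p'$ inherit the required properties from those of~$f$ (Proposition~\ref{prop:rwproprietes}), and one then either moves the cycle into~$W$ by the standard lemma or, for~$(\Psub{S})$, arranges that it lands in a smooth fibre $\Wsub{t}$ over a closed point~$t$ of large enough degree.
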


\begin{rmqs}
\mylabel{rqs:conje}
(i) La conjecture~$(E)$ est vraie pour $X=\Spec(k)$ puisque
le complexe~(\ref{eq:seconje}) s'identifie dans ce cas à celui obtenu en appliquant le foncteur $\Hom(-,\Q/\Z)$ à la suite exacte
\begin{align}
\mylabel{eq:sebrglobal}
\xymatrix{
0 \ar[r] & \Br(k) \ar[r] & \displaystyle\bigoplus_{v \in \Omega} \Br(k_v) \ar[r] & \Q/\Z \ar[r] & 0 \rlap{\text{.}}
}
\end{align}
On peut donc voir la conjecture~$(E)$ comme une généralisation partielle, en dimension supérieure, de la suite exacte~(\ref{eq:sebrglobal}).

(ii) Pour $v \in \Omega_\infty$, quel que soit~$X$,
le groupe $\varprojlim A_0(X \otimes_k k_v)/n$ s'identifie
au conoyau de $N_{\kbar_v/k_v}:A_0(X \otimes_k \kbar_v)\to A_0(X \otimes_k k_v)$ (cf.~\cite[Théorèmes~1.1~(a) et~1.3~(a)(b)]{ctbordeaux}).
D'autre part,
le noyau de l'application degré $\CHzchapeau(X) \to \widehat{\Z}$ s'identifie à~$\widehat{A_0(X)}$.
Il s'ensuit que~$(E)$ implique~$(E_0)$.

(iii) Voici comment déduire~$(E_1)$ de~$(E)$.
Soit $(z_v)_{v \in \Omega}$ une famille de $0$\nobreakdash-cycles de degré~$1$ orthogonale à~$\Br(X)$.
Soit $P \in X$ un point fermé.  Posons $n=\deg(P)$.
Si la conjecture~$(E)$ est vraie, l'image de $(z_v)_{v \in \Omega}$ dans $\CHzAchapeau(X)$ provient de $\CHzchapeau(X)$;
en particulier, il existe
$z \in Z_0(X)$ ayant même image que~$z_v$ dans $\CH_0(X \otimes_k k_v)/n$ pour toute place finie~$v$.
Comme $\deg(z)$ est congru à~$1$ modulo~$n$, une combinaison linéaire de~$z$ et de~$P$ est de degré~$1$.

(iv) Si~$X$ est une courbe
et si le groupe de Tate--Shafarevich de la jacobienne de~$X$ ne contient pas d'élément infiniment divisible non nul,
la conjecture~$(E)$ est vraie pour~$X$, d'après Saito~\cite[(7-1) et~(7-5)]{saitoarithmeticsurfaces} et Colliot-Thélène~\cite[\textsection3]{ctconj}.
Comme cet énoncé ne figure pas explicitement dans la littérature, nous indiquons ici comment le déduire de~\cite{ctconj}.
Soit $\zchapeau_\A \in \CHzAchapeau(X)$ un élément orthogonal à~$\Br(X)$.
La fonctorialité covariante de la suite exacte~(\ref{eq:seconje}) par rapport au morphisme structural $X \to \Spec(k)$ et la remarque~\ref{rqs:conje}~(i)
entraînent l'existence d'un $d \in \widehat{\Z}$ tel que la composante $v$\nobreakdash-adique de~$\zchapeau_\A$ soit de degré~$d$ (resp.~$d$ modulo~$2$)
pour toute place~$v$ finie (resp.~réelle).  Soit $P \in X$ un point fermé.
Quitte à remplacer $\zchapeau_\A$ par $\zchapeau_\A + rP$ pour un $r \in \widehat{\Z}$ bien choisi, on peut supposer que $d \in \Z$.
D'après le rappel~\ref{rappel:chcourbe} ci-dessous, il existe alors une famille $(z_v)_{v \in \Omega} \in \prod_{v \in \Omega} \CH_0(X\otimes_k k_v)$ ayant
$\zchapeau_\A$
pour image
dans $\CHzAchapeau(X)$.  La démonstration de la proposition~3.3 de \emph{op.\ cit.} appliquée à $(z_v)_{v \in \Omega}$ fournit maintenant un $\zchapeau \in \CHzchapeau(X)$
qui s'envoie sur~$\zchapeau_\A$ dans $\CHzAchapeau(X)$.

\gdef\citemilneadtrappel{\cite[Ch.~I, Lemma~3.3]{milneadt}}
\begin{rappel}[{\rm{cf.}}~\citemilneadtrappel]
\mylabel{rappel:chcourbe}
Soit~$C$ une courbe propre et lisse sur un corps $p$\nobreakdash-adique.
Le sous-groupe de torsion de $\CH_0(C)$ est fini.
L'application $\CH_0(C) \to \CHzchapeau(C)$ est injective et son image coïncide avec
l'image réciproque de~$\Z$ par l'application degré $\CHzchapeau(C) \to \widehat{\Z}$.
\end{rappel}

(v) Si~$X$ est une courbe, l'application $\CHzchapeau(X) \to \CHzAchapeau(X)$ est injective.
En effet, notant~$J$ la jacobienne de~$X$, la flèche
$$
\widehat{J(k)} \longrightarrow \prod_{v \in \Omega_f} \widehat{J(k_v)}
$$
est injective
d'après~\cite[Ch.~I, Corollary~6.23~(b)]{milneadt}; de plus, comme~$J(k)$ est un groupe abélien de type fini (théorème de Mordell--Weil), la flèche
$\widehat{A_0(X)} \to \widehat{J(k)}$ induite par
l'inclusion $A_0(X) \subset J(k)$ est elle aussi injective.
En revanche, en dimension supérieure,
l'application $\CHzchapeau(X) \to \CHzAchapeau(X)$
peut avoir un noyau non nul, même lorsque~$X$ est une surface fibrée en coniques au-dessus de~$\P^1_k$
 (cf.~(\ref{eq:sects}) et~\cite{salberger}).

(vi) Si~$k$ est un corps de caractéristique~$0$, les groupes~$\Br(V)$ et~$\CH_0(V)$ sont des invariants birationnels des variétés~$V$ propres et lisses sur~$k$
(cf.~\cite[\textsection7]{grbr3}, \cite[Example~16.1.11]{fulton}).
Les conjectures~$(E)$, $(E_0)$ et~$(E_1)$ ne dépendent donc de~$X$ qu'à équivalence birationnelle près.
\end{rmqs}

\subsection{Énoncés principaux}

Cet article est consacré à la démonstration des trois théorèmes suivants.

\begin{thm}
\mylabel{th:principalEun}
Soit~$X$ une variété irréductible, propre et lisse sur un corps de nombres~$k$,
munie d'un morphisme $f:X \to C$
de fibre générique géométriquement irréductible, où~$C$ est une courbe lisse et géométriquement irréductible.
Supposons que le groupe de Tate--Shafarevich de la jacobienne de~$C$ ne contienne pas d'élément infiniment divisible non nul.
Supposons de plus
les deux hypothèses suivantes satisfaites
pour tout point fermé $c \in C$:
\begin{enumerate}
\item[(a)] la fibre~$X_c$ possède
une composante irréductible~$Y$ de multiplicité~$1$ dans le corps des fonctions de laquelle la fermeture algébrique de~$k(c)$ est une extension abélienne
de~$k(c)$;
\item[(b)] si~$X_c$ est lisse et si $X_c(\A_{k(c)})\neq \emptyset$ alors~$X_c$ admet un $0$\nobreakdash-cycle de degré~$1$ sur~$k(c)$.
\end{enumerate}
Alors~$X$ vérifie l'énoncé de la conjecture~$(E_1)$.
\end{thm}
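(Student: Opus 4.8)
The plan is to deduce the statement from the case $C=\P^1_k$, settled in \cite[\textsection4]{ctsksd98}, by passing to a Weil restriction of $f$ along a well-chosen finite morphism $\pi\colon C\to\P^1_k$. So let $(z_v)_{v\in\Omega}\in\prod_{v\in\Omega}\CH_0(X\otimes_k k_v)$ be a family of classes of degree~$1$, orthogonal to $\Br(X)$ for the Brauer--Manin pairing; one wants a $0$-cycle of degree~$1$ on~$X$. The first step, carried out in \textsection\ref{sec:reductionP}, is a reduction: using the orthogonality hypothesis and a moving argument based on the Riemann--Roch theorem on the curve~$C$ --- in the spirit of the ``lemme principal'' of \cite{ctreglees} --- together with the reciprocity law of global class field theory (which controls the effect of the permitted modifications on orthogonality), one reduces to the situation where every $z_v$ is an effective $0$-cycle, where there is a $k$-rational effective divisor~$y$ on~$C$ of large degree such that $f_*z_v$ is linearly equivalent to $y\otimes_k k_v$ for every~$v$, and where all the $f_*z_v$, viewed as points of the projective space $\abs y=\P(H^0(C,\sO(y)))$, lie on one and the same line~$L$, this line being moreover in general position. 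At this stage the cycles $z_v$ at the archimedean places must already have been chosen with some care, in anticipation of \textsection\ref{sec:vanhamel}.

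Let $\pi\colon C\to\P^1_k$ be the finite morphism attached to the pencil~$L$, and let $p\colon W\to\P^1_k$ be the Weil restriction of $f\colon X\to C$ along~$\pi$. Because $L$ is in general position, $\pi$ is generically \'etale, the generic fibre of~$p$ is geometrically irreducible, and over each closed point $\theta\in\P^1_k$ one has $W_\theta=\prod_{c\in\pi^{-1}(\theta)}R_{k(c)/k(\theta)}X_c$. One may further impose on~$L$ that each point~$c$ of the finite locus $S\subset C$ over which~$f$ has a non-smooth fibre be unramified in~$\pi$ with unchanged residue field and have a $\pi$-fibre that is otherwise reduced and supported away from~$S$; then $W_{\pi(c)}\cong X_c\times(\text{a smooth geometrically irreducible variety})$ for $c\in S$, while $W_\theta$ is smooth and geometrically irreducible for every other~$\theta$ (using that a Weil restriction of a smooth geometrically irreducible variety along an extension of fields, or along a jet-type local ring, remains smooth and geometrically irreducible). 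Hypotheses~(a) and~(b) for~$f$ then yield the analogous hypotheses for~$p$. Finally, each effective $0$-cycle $z_v$ with $f_*z_v\in L$ linearly equivalent to~$y$ determines a point $[z_v]\in W(k_v)$.

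The crux is then the transfer of orthogonality (Proposition~\ref{prop:pointorthogonal}): if $(z_v)_{v\in\Omega}$ is orthogonal to $\Br(X)$, then $([z_v])_{v\in\Omega}$ is orthogonal, for the Brauer--Manin pairing on a smooth compactification $W'$ of~$W$, to the vertical subgroup $\Br_\vert(W/\P^1_k)=\Br(W')\cap p^*\Br(k(\P^1))$. When~$k$ is totally imaginary, a class in $\Br_\vert(W/\P^1_k)$ can be compared, through residues along the fibres of~$p$ and corestriction, with a class on~$X$; the orthogonality is then inherited from that of $(z_v)$. When~$k$ has a real place, the cokernels $\Coker(N_{\kbar_v/k_v})$ entering $\CHzA(X)$ at the archimedean places introduce $2$-torsion discrepancies that this argument cannot absorb; these are handled by the cohomological formalism developed after van~Hamel~\cite{vanhamel} in \textsection\ref{sec:vanhamel}, whence the need for the careful choice of the $z_v$ at real places made in the first step.

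It remains to conclude. By \cite[\textsection4]{ctsksd98} applied to $p\colon W\to\P^1_k$, whose hypotheses have been verified, the orthogonality of $([z_v])_{v\in\Omega}$ to $\Br_\vert(W/\P^1_k)$ yields a $0$-cycle of degree~$1$ on~$W'$, subject moreover to prescribed congruences, modulo any fixed integer, against the $[z_v]$ at finitely many finite places. Moving it away from the bad locus and unwinding the Weil restriction, this $0$-cycle gives back a $0$-cycle $z\in Z_0(X)$ with $f_*z$ linearly equivalent to~$y$ and congruent to~$z_v$ modulo the chosen integer at the chosen places; combining~$z$ with a fixed closed point of~$X$ to correct the degree, exactly as in Remark~\ref{rqs:conje}~(iii), produces a $0$-cycle of degree~$1$ on~$X$. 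I expect the main obstacle to be the transfer of orthogonality --- Proposition~\ref{prop:pointorthogonal} --- and, within it, the treatment of the real places, which is precisely what forces both the van~Hamel-style cohomological machinery of \textsection\ref{sec:vanhamel} and the delicate interplay, already in the reduction step, between the choice of the local cycles $z_v$ at the archimedean places and the moving argument on~$C$.
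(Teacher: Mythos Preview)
Your outline is essentially the paper's own strategy: reduce to effective local cycles whose pushforwards all lie on a single line in $\lvert y\rvert$, take the Weil restriction of $f$ along the corresponding pencil $\pi\colon C\to\P^1_k$, transfer Brauer--Manin orthogonality to the adelic point $([z_v])$ of $W$ (this is Proposition~\ref{prop:pointorthogonal}, handled in the totally imaginary case by a corestriction argument and in general via the $\Picplus/\Brplus$ formalism of \S\ref{sec:vanhamel}), and then invoke \cite[\S4]{ctsksd98} on $p'\colon W'\to\P^1_k$.

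Two small points deserve sharpening. First, you do not say where the Tate--Shafarevich hypothesis is used: it enters in the reduction of \S\ref{sec:reductionP}, where the existence of a global $y\in\CH_0(C)$ with $f_*z_v=y$ in $\CH_0(C\otimes_k k_v)$ for all $v$ is obtained from the validity of conjecture~$(E)$ for the curve~$C$ (Saito's theorem, cf.\ Remark~\ref{rqs:conje}~(iv)); Riemann--Roch and the lemmas borrowed from \cite{ctreglees} are used only afterwards, for effectivity and positioning inside $\lvert y\rvert$. Second, your concluding step overclaims: under hypothesis~(b) alone, \cite[\S4]{ctsksd98} gives a $0$-cycle of degree~$1$ on~$W'$ but \emph{not} congruences at finitely many places --- that refinement needs~(b'). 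For $(E_1)$ this is harmless: the paper arranges the auxiliary class $y_1$ to have degree~$1$ already in \S\ref{sec:reductionP}, so the $z$ produced by $(P_\emptyset)$ satisfies $\deg(z)=\deg(y_1)=1$ directly, and no degree correction (hence no congruence input) is required.
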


\begin{thm}
\mylabel{th:principalE}
Mêmes hypothèses que dans le théorème~\ref{th:principalEun}, à ceci près que~(b) est renforcée en:
\begin{enumerate}
\item[(b')] si~$X_c$ est lisse, alors pour tout $n>0$ et tout ensemble fini~$S$ de places de~$k(c)$,
l'image de $\CH_0(X_c)$ dans $\prod_{w \in S} \CH_0(X_c \otimes_{k(c)} k(c)_w)/n$
contient l'image de $X_c(\A_{k(c)})$ dans ce même groupe.
\end{enumerate}
Supposons de plus que $A_0(X_\etabar \otimes K)=0$ et $H^1(X_\etabar,\Q/\Z)=0$,
où~$K$ désigne une clôture algébrique du corps des fonctions de la fibre générique géométrique~$X_\etabar$ de~$f$.
Alors~$X$ vérifie l'énoncé de la conjecture~$(E)$.
\end{thm}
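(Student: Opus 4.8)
Le plan est d'\'etablir directement la conjecture~$(E)$ pour~$X$, d'o\`u d\'ecoulent~$(E_0)$ et~$(E_1)$ d'apr\`es les remarques~\ref{rqs:conje}~(ii) et~(iii). Soit $\zchapeau_\A \in \CHzAchapeau(X)$ un \'el\'ement orthogonal \`a~$\Br(X)$; on cherche \`a le relever dans $\CHzchapeau(X)$. En raisonnant comme dans la remarque~\ref{rqs:conje}~(iv) --- fonctorialit\'e covariante de~\eqref{eq:seconje} le long de $X \to \Spec(k)$, exactitude du dual de~\eqref{eq:sebrglobal}, puis ajout d'un multiple convenable d'un point ferm\'e de~$X$ --- on se ram\`ene au cas o\`u les composantes locales de~$\zchapeau_\A$ ont toutes m\^eme degr\'e $d \in \Z$. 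Le c\oe{}ur de la preuve sera alors de construire, pour tout $n>0$ et tout ensemble fini $S \subset \Omega$, un $0$\nobreakdash-cycle $z \in Z_0(X)$ de degr\'e congru \`a~$d$ modulo~$n$ dont l'image dans $\CH_0(X \otimes_k k_v)/n$ co\"incide, pour $v \in S$, avec celle d'un rel\`evement local pr\'ealablement fix\'e de~$\zchapeau_\A$; un argument de compacit\'e --- licite gr\^ace \`a la finitude de $\CH_0(X \otimes_k k_v)/n$ pour~$v$ finie, qui d\'ecoule des travaux de Saito et Sato~\cite{saitosato} --- permettra ensuite de conclure, comme dans la preuve de la proposition~3.3 de~\cite{ctconj}. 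Contrairement au th\'eor\`eme~\ref{th:principalEun}, o\`u ne servent que des $0$\nobreakdash-cycles de degr\'e~$1$, ce contr\^ole modulo~$n$ utilisera de fa\c{c}on essentielle les hypoth\`eses $A_0(X_\etabar \otimes K)=0$ et $H^1(X_\etabar,\Q/\Z)=0$: via~\cite{saitosato}, celles-ci entra\^inent (comme on le montrera au~\textsection\ref{sec:groupechow}) que $f_* : \CH_0(X \otimes_k k_v) \to \CH_0(C \otimes_k k_v)$ est un isomorphisme pour toute place~$v$ hors d'un ensemble fini~$\Sigma$, ce qui, joint au rappel~\ref{rappel:chcourbe} appliqu\'e \`a~$C$, ram\`ene, hors de $S \supseteq \Sigma$, tout le probl\`eme \`a celui d'un diviseur sur~$C$.

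La premi\`ere difficult\'e sera de se ramener \`a des familles effectives situ\'ees sur un pinceau. Aux places $v \in S$, on rel\`evera la composante $v$\nobreakdash-adique de~$\zchapeau_\A$ en une classe de $\CH_0(X \otimes_k k_v)$, que l'on repr\'esentera par un $0$\nobreakdash-cycle \emph{effectif} $z_v$ --- \`a l'aide de lemmes de d\'eplacement des $0$\nobreakdash-cycles sur une vari\'et\'e projective et lisse sur un corps local (cf.~\textsection\ref{sec:prelim}) --- de sorte que tous les $f_* z_v$ soient lin\'eairement \'equivalents \`a un m\^eme diviseur $y \in Z_0(C)$ et que, vus dans l'espace projectif $|y| = \P(H^0(C,\sO(y)))$, ils reposent sur une m\^eme droite~$L$ en position g\'en\'erale. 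Soient $\pi : C \to \P^1_k$ le rev\^etement associ\'e au pinceau~$L$, $p : W \to \P^1_k$ la restriction des scalaires \`a la Weil de~$f$ le long de~$\pi$ et $W'$ une compactification lisse de~$W$; le $0$\nobreakdash-cycle effectif~$z_v$ d\'efinit alors un point $[z_v] \in W(k_v)$. Les fibres de~$p$ au-dessus des points ferm\'es de~$\P^1_k$ h\'eriteront, par la restriction des scalaires, de l'hypoth\`ese~(a) --- une composante irr\'eductible de multiplicit\'e~$1$ d\'eploy\'ee par une extension ab\'elienne du corps de base --- et de~(b'); c'est le renforcement de~(b) en~(b') qui sera d\'ecisif ici, car on aura besoin non seulement de l'existence d'un $0$\nobreakdash-cycle global, mais aussi de son approximation aux places de~$S$.

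La seconde difficult\'e, qui sera le principal obstacle, est de v\'erifier que l'orthogonalit\'e de $(z_v)_{v \in \Omega}$ \`a~$\Br(X)$ entra\^ine celle de la famille $([z_v])_{v \in \Omega}$ au groupe de Brauer vertical $\Br_\vert(W/\P^1_k) = \Br(W') \cap p^*\Br(k(\P^1))$. C'est l'objet de la proposition~\ref{prop:pointorthogonal}: sa d\'emonstration sera donn\'ee au~\textsection\ref{sec:demPS} lorsque~$k$ est totalement imaginaire; lorsque~$k$ est formellement r\'eel, elle exigera au~\textsection\ref{sec:vanhamel} un formalisme cohomologique inspir\'e de van~Hamel~\cite{vanhamel} et ne fonctionnera qu'\`a condition d'avoir choisi avec soin, d\`es le d\'ebut de l'argument, les composantes de~$\zchapeau_\A$ aux places r\'eelles.

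Cette propri\'et\'e acquise, l'orthogonalit\'e de $([z_v])_{v \in \Omega}$ \`a $\Br_\vert(W/\P^1_k)$ permettra, via une variante de~\cite[\textsection4]{ctsksd98} (le th\'eor\`eme~\ref{th:variantectsksd}), de trouver un $0$\nobreakdash-cycle de degr\'e~$1$ sur~$W'$ arbitrairement proche de~$[z_v]$ aux places $v \in S$. Par le formalisme de la restriction des scalaires, ce $0$\nobreakdash-cycle donnera naissance \`a un $0$\nobreakdash-cycle $z \in Z_0(X)$ v\'erifiant $f_* z \sim y$ et proche de~$z_v$ en chaque $v \in S$; comme $\CH_0(X \otimes_k k_v)/n$ est fini (\cite{saitosato}) et que la classe modulo~$n$ d'un $0$\nobreakdash-cycle en d\'epend de fa\c{c}on localement constante, $z$ aura, aux places de~$S$, m\^eme image que~$z_v$ --- donc que~$\zchapeau_\A$ --- dans $\CH_0(X \otimes_k k_v)/n$. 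En passant enfin \`a la limite sur les couples~$(n,S)$, on obtiendra l'\'el\'ement cherch\'e de $\CHzchapeau(X)$. La construction de~$z$ \`a partir de~$W'$ est le point clef de la preuve, commun au th\'eor\`eme~\ref{th:principalEun}.
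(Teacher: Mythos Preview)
Your overall architecture --- Weil restriction along a pencil $\pi:C\to\P^1_k$, orthogonality of $([z_v])_{v\in\Omega}$ to $\Br_\vert(W'/\P^1_k)$ via the van~Hamel formalism at real places, then a variant of~\cite[\textsection4]{ctsksd98} --- is exactly the paper's. The gap is in the endgame.

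The ``argument de compacit\'e'' you invoke is not justified, and the paper in fact avoids any such limit. Producing for each~$n$ a $z^{(n)}\in\CH_0(X)$ with the correct image in $\CHzA(X)/n$ does not by itself yield an element of $\CHzchapeau(X)=\varprojlim_n\CH_0(X)/n$: one would need the fibres --- cosets of the \emph{global} kernel $\Ker\!\big(\CH_0(X)/n\to\CHzA(X)/n\big)$ --- to satisfy a Mittag--Leffler condition, and the finiteness of the \emph{local} groups $\CH_0(X\otimes_k k_v)/n$ that you cite from Saito--Sato says nothing about this global object. (A related, smaller point: the lifting of the $v$-adic component of~$\zchapeau_\A$ to an honest class in $\CH_0(X\otimes_k k_v)$, which you take for granted, is not automatic when $\dim X>1$.) The paper proceeds differently (Proposition~\ref{prop:reduc}~(iii) and Lemma~\ref{lem:ameliorationps}). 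It first shows, using Lemmas~\ref{lem:conoyauchapeau} and~\ref{lem:noyauchapeau} applied to $f_*:\CH_0(X\otimes_k k_v)\to\CH_0(C\otimes_k k_v)$ --- whose kernel and cokernel have finite exponent by Lemmas~\ref{lem:decompdiag} and~\ref{lem:conoyaumultisection} --- that $\zchapeau_\A$ actually comes from a genuine $z_\A\in\CHzA(X)$ with $f_*z_\A=y\in\CH_0(C)$. Then a \emph{single} application of~$(\Psub{S})$ with the well-chosen modulus $N_1N_2$, where $N_1$ kills $\Ker(f_*)$ and $N_2$ kills the torsion of $\CH_0(C\otimes_k k_v)$ for $v\in S\cap\Omega_f$, already yields $z\in\CH_0(X)$ with $z=z_v$ \emph{exactly} in $\CH_0(X\otimes_k k_v)$ at every finite~$v$: at $v\notin S$ this follows from $f_*z=y=f_*z_v$ and the isomorphism of Theorem~\ref{th:isochow}; at $v\in S\cap\Omega_f$ the congruence $z\equiv z_v\pmod{N_1N_2}$ together with $f_*z=f_*z_v$ forces $N_1N_2z_{2,v}=0$. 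No passage to the limit over~$n$ is required.
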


\begin{rmqs}
\mylabel{rq:hypothesesth}
(i) Un argument similaire à celui de la remarque~\ref{rqs:conje}~(iii) montre que l'hypothèse~(b') implique~(b) (prendre pour~$n$ le degré d'un point fermé de~$X_c$).

(ii) Il résulte du lemme~\ref{lem:continuite} ci-dessous que
si $X_c(k(c))$ est dense dans $X_c(\A_{k(c)})$ (propriété d'approximation faible), alors
l'hypothèse~(b') est satisfaite.

(iii) L'hypothèse~(b') devrait aussi être satisfaite par des classes de variétés qui ne vérifient pas l'approximation faible.
Ce devrait par exemple être le cas si~$X_c$ est une intersection complète lisse de dimension~$\geq 3$ dans un espace projectif.
En effet, l'application naturelle $\Br(k(c)) \to \Br(X_c)$ est alors surjective (cf.~\cite[Appendix~A]{poonenvoloch}), de sorte
que l'hypothèse~(b') serait impliquée par la validité de la conjecture~$(E)$ pour~$X_c$.  D'après les conjectures de Lang, de telles variétés
ne devraient pas vérifier l'approximation faible si leur degré n'est pas trop petit (cf.~\cite[Conjecture~5.7]{langconj}).

(iv)
Si la fibre générique géométrique~$X_\etabar$ est rationnellement connexe (par exemple si elle est unirationnelle, cf.~\cite[Chapter~4]{debarrehigherdim}),
les conditions $A_0(X_\etabar \otimes K)=0$ et $H^1(X_\etabar,\Q/\Z)=0$ sont satisfaites (cf.~\emph{op.\ cit.}, Corollary~4.18~(b) pour la seconde).

(v)
D'après la remarque~\ref{rqs:conje}~(vi) et le théorème des fonctions implicites, les conditions~(b) et~(b') des théorèmes~\ref{th:principalEun}
et~\ref{th:principalE} sont des invariants birationnels de la variété propre et lisse~$X_c$.
De plus, les groupes $A_0(X_\etabar \otimes K)$ et $H^1(X_\etabar,\Q/\Z)$ sont des invariants birationnels de la variété propre et lisse~$X_\etabar$
(cf.~remarque~\ref{rqs:conje}~(vi) et \cite[Exp.~X, Corollaire~3.4 et Exp.~XI, \textsection5]{sga1}).
Notons que la condition $H^1(X_\etabar,\Q/\Z)=0$ équivaut à ce que $\Pic(X_\etabar)$ soit sans torsion.
\end{rmqs}

Conjecturalement, le groupe de Tate--Shafarevich d'une variété abélienne sur un corps de nombres est toujours fini
et ne contient donc jamais d'élément infiniment divisible non nul.
Des calculs numériques permettent de vérifier sur de nombreux exemples de courbes~$C$ qu'au moins le sous-groupe de torsion $\ell$\nobreakdash-primaire du groupe de Tate--Shafarevich
de la jacobienne de~$C$ est fini, pour de petits nombres premiers~$\ell$.  Lorsque cette condition est satisfaite pour un~$\ell$ fixé,
la preuve du théorème~\ref{th:principalE} entraîne encore, inconditionnellement, l'exactitude du complexe obtenu en remplaçant dans~(\ref{eq:seconje})
les groupes $\CHzchapeau(X)$ et $\CHzAchapeau(X)$ par les complétés $\ell$\nobreakdash-adiques de $\CH_0(X)$ et de $\CHzA(X)$.
Il convient d'autre part de souligner que l'absence
d'élément infiniment divisible non nul dans le groupe de Tate--Shafarevich de la jacobienne de~$C$
est une condition \emph{nécessaire} à la validité de la conjecture~$(E)$ pour~$X=C$,
du moins lorsque $C(k)\neq\emptyset$ (cf.~\cite[Ch.~I, Theorem~6.26~(b)]{milneadt}).

En l'absence de toute hypothèse sur le groupe de Tate--Shafarevich de la jacobienne de~$C$, nous démontrerons:

\begin{thm}
\mylabel{th:principalErelatif}
Soit~$X$ une variété irréductible, propre et lisse sur un corps de nombres~$k$,
munie d'un morphisme $f:X \to C$
de fibre générique géométriquement irréductible, où~$C$ est une courbe lisse et géométriquement irréductible.
Supposons l'hypothèse~(a) du théorème~\ref{th:principalEun}
et l'hypothèse~(b') du théorème~\ref{th:principalE} satisfaites
pour tout point fermé $c \in C$.
Supposons de plus que $A_0(X_\etabar \otimes K)=0$ et $H^1(X_\etabar,\Q/\Z)=0$.
Alors le complexe
\begin{align}
\mylabel{diag:complexeErelatif}
\xymatrix{
\CH_0(X/C) \ar[r] & \CHzA(X/C) \ar[r] & \Hom(\Br(X)/f^*\Br(C),\Q/\Z)
}
\end{align}
est une suite exacte, où $\CH_0(X/C)$ et $\CHzA(X/C)$ désignent les noyaux respectifs de $f_*:\CH_0(X) \to \CH_0(C)$ et de $f_*:\CHzA(X) \to \CHzA(C)$.
\end{thm}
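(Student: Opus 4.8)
The plan is to deduce this relative statement by carrying out, over $C$, the same construction that proves Theorem~\ref{th:principalE} (indeed the three principal theorems are established together, in \textsection\ref{sec:reductionP} and~\textsection\ref{sec:demPS}). First, that~\eqref{diag:complexeErelatif} is a complex is formal: by the projection formula, $f^*\Br(C)$ pairs trivially via~\eqref{eq:accbrch} with $\CH_0(X/C)$ (the kernel of $f_*$), so~\eqref{eq:accbrch} induces a pairing $\bigl(\Br(X)/f^*\Br(C)\bigr)\times\CH_0(X/C)\to\Br(k)$, and likewise~\eqref{eq:brchzaacc} induces a pairing $\bigl(\Br(X)/f^*\Br(C)\bigr)\times\CHzA(X/C)\to\Q/\Z$; the exact sequence~\eqref{eq:sebrglobal} then shows that the composite of the two maps in~\eqref{diag:complexeErelatif} vanishes.

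For exactness at $\CHzA(X/C)$, I would start from a family $(z_v)_{v\in\Omega}$ representing an element orthogonal to $\Br(X)/f^*\Br(C)$; by the projection formula again, together with $f_*z_v\sim 0$, it is in fact orthogonal to all of $\Br(X)$. Since $X_\etabar$ is simply connected with $A_0(X_\etabar\otimes K)=0$, \textsection\ref{sec:groupechow} yields a finite set $S\supset\Omega_\infty$ such that $f_*\colon\CH_0(X\otimes_k k_v)\to\CH_0(C\otimes_k k_v)$ is an isomorphism for $v\notin S$; hence $z_v\sim 0$ for $v\notin S$, and, by~\ref{rappel:chcourbe}, the natural map $\CH_0(X\otimes_k k_v)\to\CHzchapeau(X\otimes_k k_v)$ is injective there. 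Thus $(z_v)_{v\in\Omega}$ is concentrated on the finite set $S$; this is why~\eqref{diag:complexeErelatif} carries no profinite completion, and why---contrary to Theorems~\ref{th:principalEun} and~\ref{th:principalE}---no hypothesis on $\Sha$ of the jacobian of $C$ is needed, the obstruction coming from it being visible only in the part of $\CHzA$ killed by $f_*$.

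Next I would run the reduction of \textsection\ref{sec:reductionP}: using hypothesis~(b'), replace the $z_v$ by linearly equivalent effective cycles such that the divisors $f_*z_v$ are all linearly equivalent to a single effective divisor $y$ on $C$ and the points $[f_*z_v]$ lie on a single line $L\subset|y|=\P(H^0(C,\sO(y)))$ in general position. Let $\pi\colon C\to\P^1_k$ be the morphism attached to the pencil $L$, let $p\colon W\to\P^1_k$ be the Weil restriction of $f$ along $\pi$, let $W'$ be a smooth compactification of $W$ with $p$ extended to $W'$, and let $[z_v]\in W(k_v)\subset W'(k_v)$ be the point defined by the effective cycle $z_v$. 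By Proposition~\ref{prop:pointorthogonal}, the family $([z_v])_{v\in\Omega}$ is orthogonal to $\Br_\vert(W/\P^1_k)$. The fibration $p\colon W'\to\P^1_k$ falls within the scope of \cite[Theorem~4.1]{ctsksd98}: its smooth fibres, obtained from the smooth fibres of $f$ by restriction of scalars, satisfy the Hasse principle for zero-cycles of degree~$1$ (hypothesis~(b), which follows from~(b') by Remark~\ref{rq:hypothesesth}(i)), and hypothesis~(a) for $f$ provides in every singular fibre of $p$ a component of multiplicity~$1$ split by an abelian extension of its residue field. Applying \cite{ctsksd98} to $([z_v])_{v\in\Omega}$, imposing the $z_v$ themselves at the places of $S$, produces a zero-cycle $z'$ on $W'$ meeting the generic fibre of $p$ and realizing the required local classes at $S$; pulling $z'$ back to $W\times_{\P^1_k}C$ and pushing it forward to $X$ along the evaluation morphism $W\times_{\P^1_k}C\to X$ then yields a zero-cycle $z\in Z_0(X)$ with $f_*z$ linearly equivalent to $y$ and with the prescribed local classes at the places of $S$. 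Subtracting the global effective correction introduced at the outset gives an element of $\CH_0(X/C)$ whose image in $\CHzA(X/C)$ is $(z_v)_{v\in\Omega}$.

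I expect the main obstacle to be Proposition~\ref{prop:pointorthogonal}, which propagates Brauer--Manin orthogonality on $X$ to vertical-Brauer orthogonality on $W'$: it can only be proved for $\Br_\vert(W/\P^1_k)$ rather than for all of $\Br(W')$, and even then it is delicate at the real places. Handling the real places is what will force, in \textsection\ref{sec:vanhamel}, a cohomological formalism in the spirit of \cite{vanhamel}, together with a careful choice of the cycles $z_v$ at real places already during the reduction of \textsection\ref{sec:reductionP}. A subsidiary difficulty is to check that $p\colon W'\to\P^1_k$ genuinely satisfies the hypotheses of \cite{ctsksd98}, which amounts to controlling the irreducible components and their splitting fields in the fibres of a Weil restriction of $f$.
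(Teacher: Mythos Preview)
Your outline follows the paper's strategy: Theorem~\ref{th:principalErelatif} is deduced from property~$(P_S)$ via Proposition~\ref{prop:reduc}(ii), and $(P_S)$ is established in \textsection\ref{sec:demPS} through the Weil restriction construction, Proposition~\ref{prop:pointorthogonal}, and the variant Theorem~\ref{th:variantectsksd} of \cite[Theorem~4.1]{ctsksd98}. There is, however, one genuine gap. The method of \cite{ctsksd98}, combined with the continuity Lemma~\ref{lem:continuite}, only produces a cycle $z$ whose class in $\CH_0(X\otimes_kk_v)/n$ agrees with that of $z_v$ for $v\in S$, where $n>0$ is an integer fixed \emph{beforehand}; it does not ``impose the $z_v$ themselves''. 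To conclude that the image of $z$ in $\CHzA(X)$ is exactly the original family, the paper (Lemma~\ref{lem:ameliorationps}) chooses $n=N_1N_2$, with $N_1$ annihilating $\Ker\bigl(f_*\colon\CH_0(X\otimes_kk_v)\to\CH_0(C\otimes_kk_v)\bigr)$ for all~$v$ (Lemma~\ref{lem:decompdiag}) and $N_2$ annihilating the torsion of $\CH_0(C\otimes_kk_v)$ for $v\in S\cap\Omega_f$ (Rappel~\ref{rappel:chcourbe}). Writing $z=z_v+N_1N_2\,z_{2,v}$ and using $f_*z=f_*z_v$, one finds that $f_*z_{2,v}$ is torsion, hence $N_2f_*z_{2,v}=0$, hence $N_2z_{2,v}\in\Ker f_*$, hence $N_1N_2z_{2,v}=0$; so $z=z_v$ exactly in $\CH_0(X\otimes_kk_v)$. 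This bootstrap is precisely what allows~\eqref{diag:complexeErelatif} to be stated without profinite completion, and your sketch omits it.

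A minor correction: hypothesis~(b') is not what makes the $z_v$ effective---that step uses Lemmas~\ref{lem:cyclerelevelocal}--\ref{lem:effectiflocal}. Rather, (b') enters through Proposition~\ref{prop:rwtransmissionhyp}(ii), which transmits the approximation property from the smooth fibres $X_c$ of~$f$ to the smooth fibres $W_t$ of~$p'$, as required in the application of Theorem~\ref{th:variantectsksd}.
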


Le corollaire suivant généralise d'une part~\cite[Theorems~5.1, 6.2]{ctsd94} et d'autre part~\cite[Théorèmes~0.3, 0.4, 0.5]{frossard}, \cite[Theorem~0.2, Corollary~0.3]{vanhamel}:

\begin{cor}
\mylabel{cor:corthpr}
Soit~$X$ une variété propre et lisse sur un corps de nombres~$k$.
Supposons que~$X$ admette une structure de
fibration en variétés de Severi--Brauer,
en quadriques ou encore en variétés de Severi--Brauer généralisées au sens de~\cite[\textsection2]{ctsd94},
au-dessus d'une courbe~$C$ propre et lisse sur~$k$.
Alors le complexe~(\ref{diag:complexeErelatif}) est une suite exacte.
Si de plus le groupe de Tate--Shafarevich de la jacobienne de~$C$ ne possède pas d'élément infiniment
divisible non nul, les conjectures~$(E)$, $(E_1)$ et~$(E_0)$ valent pour~$X$.
\end{cor}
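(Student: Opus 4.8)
Le plan est de d\'eduire le corollaire des th\'eor\`emes~\ref{th:principalE} et~\ref{th:principalErelatif}, dont il suffit de v\'erifier les hypoth\`eses relatives aux fibres de~$f$ pour chacune des trois classes de fibrations consid\'er\'ees. Quitte \`a traiter s\'epar\'ement les composantes connexes de~$X$ puis \`a \'etendre les scalaires de~$k$ \`a la fermeture alg\'ebrique de~$k$ dans~$k(C)$, ce qui n'alt\`ere ni le sch\'ema~$X$, ni le morphisme~$f$, ni par cons\'equent la validit\'e de~$(E)$, de~$(E_0)$, de~$(E_1)$ ou de l'exactitude du complexe~(\ref{diag:complexeErelatif}), on peut supposer~$X$ et~$C$ g\'eom\'etriquement irr\'eductibles. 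La fibre g\'en\'erique de~$f$ est alors une vari\'et\'e de Severi--Brauer (\'eventuellement g\'en\'eralis\'ee) ou une quadrique lisse de dimension~$\geq 1$ sur~$k(C)$, donc g\'eom\'etriquement irr\'eductible, et la fibre g\'en\'erique g\'eom\'etrique~$X_\etabar$ est un espace projectif, une grassmannienne ou une quadrique lisse g\'eom\'etriques, donc une vari\'et\'e rationnelle: la remarque~\ref{rq:hypothesesth}~(iv) fournit alors $A_0(X_\etabar \otimes K)=0$ et $H^1(X_\etabar,\Q/\Z)=0$.

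Restent les hypoth\`eses~(a) et~(b'). L'hypoth\`ese~(a) est de nature locale sur~$C$: elle porte sur les composantes irr\'eductibles des fibres~$X_c$. Pour les fibrations en vari\'et\'es de Severi--Brauer et en quadriques, leur description figure dans~\cite[Examples~1.6]{ctsksd98}; pour celles en quadriques de dimension~$2$ et en vari\'et\'es de Severi--Brauer g\'en\'eralis\'ees, dans~\cite[\textsection5 et~6]{ctsd94}. On y constate que toute fibre contient une composante de multiplicit\'e~$1$ d\'eploy\'ee par une extension au plus quadratique (donc ab\'elienne) de son corps de base; pour les quadriques, la lissit\'e de l'espace total~$X$ interdit les d\'eg\'en\'erescences de rang~$1$, seule source possible de composante multiple. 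Au besoin, je remplacerais d'abord~$f$ par un mod\`ele birationnellement \'equivalent dont l'espace total est lisse et propre, ce qui est loisible car les conjectures~$(E)$, $(E_0)$ et~$(E_1)$ ne d\'ependent de~$X$ qu'\`a \'equivalence birationnelle pr\`es (remarque~\ref{rqs:conje}~(vi)).

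Pour l'hypoth\`ese~(b'), je m'appuierais sur la remarque~\ref{rq:hypothesesth}~(ii): il suffit que les fibres lisses de~$f$ v\'erifient l'approximation faible. Or une fibre lisse~$X_c$ telle que $X_c(\A_{k(c)})\neq\emptyset$ poss\`ede un point rationnel sur~$k(c)$ --- par l'injectivit\'e de $\Br(k(c))\hookrightarrow\prod_w\Br(k(c)_w)$ dans le cas d'une vari\'et\'e de Severi--Brauer, par le th\'eor\`eme de Hasse--Minkowski dans le cas d'une quadrique --- et est alors $k(c)$-rationnelle, donc v\'erifie l'approximation faible; dans le cas des vari\'et\'es de Severi--Brauer g\'en\'eralis\'ees, l'approximation faible des fibres lisses est \'etablie dans~\cite[\textsection2]{ctsd94}.

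Les hypoth\`eses du th\'eor\`eme~\ref{th:principalErelatif} \'etant ainsi r\'eunies, le complexe~(\ref{diag:complexeErelatif}) est exact. Si de plus le groupe de Tate--Shafarevich de la jacobienne de~$C$ ne poss\`ede pas d'\'el\'ement infiniment divisible non nul, le th\'eor\`eme~\ref{th:principalE} donne la conjecture~$(E)$ pour~$X$, d'o\`u~$(E_0)$ et~$(E_1)$ par les remarques~\ref{rqs:conje}~(ii) et~(iii); les r\'esultats de~\cite{ctsd94}, \cite{frossard} et~\cite{vanhamel} s'en d\'eduisent aussit\^ot. L'obstacle principal --- et le seul ingr\'edient \emph{non formel} --- est la v\'erification de l'hypoth\`ese~(a) aux fibres d\'eg\'en\'er\'ees, c'est-\`a-dire l'analyse des d\'eg\'en\'erescences des quadriques et des vari\'et\'es de Severi--Brauer (g\'en\'eralis\'ees) sur un anneau de valuation discr\`ete, analyse d\'ej\`a men\'ee dans les r\'ef\'erences invoqu\'ees.
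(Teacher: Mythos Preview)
Your approach is essentially the same as the paper's: verify that the hypotheses of Theorems~\ref{th:principalE} and~\ref{th:principalErelatif} hold for these three classes of fibrations, then conclude. The paper's proof is much terser, citing \cite[p.~117--118]{owlnm} for hypothesis~(a) and \cite[Propositions~2.1.4~(i), 2.1.5]{ctsd94} together with Remarks~\ref{rq:hypothesesth}~(ii) and~(iv) for the rest, whereas you supply more of the argument explicitly and invoke \cite[Examples~1.6]{ctsksd98} and \cite[\textsection5--6]{ctsd94} instead for~(a).

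One small inaccuracy: the claim that every fiber contains a component of multiplicity~$1$ split by an extension \emph{au plus quadratique} is not correct for Severi--Brauer fibrations of arbitrary index. The residue of the Brauer class at a closed point~$c$ is a character of $\Gal(\overline{k(c)}/k(c))$, and the relevant component is split by the corresponding \emph{cyclic} extension, whose degree can exceed~$2$; what matters is that it is abelian. Also, the sentence about replacing~$f$ by a birational model with smooth proper total space is superfluous, since~$X$ is already propre et lisse by hypothesis. Neither point affects the validity of your argument.
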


\begin{proof}
L'hypothèse~(a) du théorème~\ref{th:principalEun} est satisfaite d'après~\cite[p.~117--118]{owlnm}.
Les autres hypothèses du théorème~\ref{th:principalErelatif} sont satisfaites en vertu des
remarques~\ref{rq:hypothesesth}~(ii) et~(iv) et de~\cite[Proposition~2.1.4~(i), 2.1.5]{ctsd94}.
\end{proof}

\subsection{Un lemme de continuité pour l'équivalence rationnelle des zéro-cycles sur un corps local}

Si~$X$ est une variété sur un corps~$k$, notons $\Sym_{X/k}$ la réunion disjointe des produits symétriques $\smash[t]{\Sym^d_{X/k}}$ pour $d\geq 1$.
Le lemme suivant est probablement bien connu; ne l'ayant pas trouvé dans la littérature, nous en fournissons une preuve ci-dessous.
Il~servira au paragraphe~\ref{sec:preuveP}.

\begin{lem}
\mylabel{lem:continuite}
Soit~$k$ une extension finie de~$\Q_p$ ou de~$\R$.
Soit~$X$ une variété projective et lisse sur~$k$.
Pour tout entier $n>0$, l'application
$\Sym_{X/k}(k) \to \CH_0(X)/n$
qui à un $0$\nobreakdash-cycle effectif sur~$X$ associe sa classe dans $\CH_0(X)/n$ est localement constante
(si l'on munit $\Sym_{X/k}(k)$ de la topologie induite par celle de~$k$).
\end{lem}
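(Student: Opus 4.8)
The plan is to reduce the statement to the case of a smooth projective curve, by fibring a neighbourhood of a given $0$-cycle through a sufficiently ample family of curves on $X$; the curve case itself rests on the fact that, over a local field of characteristic zero, multiplication by $n$ is an open map on the $k$-points of a Jacobian.

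\emph{Preliminary reductions.} If $\dim X=0$ then $\Sym^d_{X/k}(k)$ is finite for every $d$ and the assertion is trivial, so we may assume $m=\dim X\geq1$. Since $\Sym_{X/k}=\coprod_{d\geq1}\Sym^d_{X/k}$, it suffices to treat each $\Sym^d_{X/k}$ separately. Fix $d\geq1$ and $z_0\in\Sym^d_{X/k}(k)$; we identify $z_0$ with an effective $0$-cycle of degree $d$ and write $S$ for its support. It is enough to produce a neighbourhood of $z_0$ on which the class in $\CH_0(X)/n$ is constant, equal to that of $z_0$.

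\emph{The curve case.} Let $Y$ be a smooth geometrically connected projective curve over $k$ with Jacobian $J$, and let $z_0$ be an effective divisor of degree $d$ on $Y$. As $\CH_0(Y)=\Pic(Y)$ and the degree map is locally constant, to see that $\Sym^d_{Y/k}(k)\to\CH_0(Y)/n$ is locally constant it suffices to see that the continuous map $z\mapsto\sO_Y(z-z_0)$ from $\Sym^d_{Y/k}(k)$ to $\Pic^0(Y)$ becomes locally constant after composition with $\Pic^0(Y)\to\Pic^0(Y)/n$. Now $\Pic^0(Y)$ is a finite-index subgroup of $J(k)=\Pic^0_{Y/k}(k)$, the quotient being a subquotient of $\Br(k)$ killed by the index of $Y$; and a finite-index subgroup of $J(k)$ is open — for $k$ non-archimedean because $J(k)$ is a topologically finitely generated profinite group, for $k$ archimedean because such a subgroup contains the identity component. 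Since $k$ has characteristic zero, multiplication by $n$ on the abelian variety $J$ is étale, so $nJ(k)$ is open in $J(k)$; combining the two, $n\Pic^0(Y)$ is open in $\Pic^0(Y)$, hence $\Pic^0(Y)/n$ is discrete and the map is locally constant. Pushing forward along any morphism $Y\to X$, it follows that whenever $z_0$ lies on a curve $Y\subseteq X$ as above, the map $\Sym^d_{Y/k}(k)\to\CH_0(X)/n$ is locally constant at $z_0$.

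\emph{Reduction to the curve case.} Embed $X$ in $\P^N_k$ by a very ample invertible sheaf, re-embedding by a Veronese if necessary so that $N\geq2d+m$. Let $B$ be the locally closed subscheme of the Grassmannian of linear subspaces $\Lambda\subseteq\P^N$ of codimension $m-1$ parametrising those $\Lambda$ that contain $S$ and are such that $Y_\Lambda=X\cap\Lambda$ is a smooth geometrically connected curve: by Bertini's theorem over the infinite field $k$ this is a nonempty smooth variety, and for the embedding chosen positive enough the points of $S$ impose independent conditions, so that $\dim B\geq(m-1)d$. Fix $b_0\in B(k)$; then $S\subseteq Y_{b_0}$ and $z_0$ is an effective divisor of degree $d$ on the smooth curve $Y_{b_0}$. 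Let $\mathscr Y\to B$ be the tautological family of curves, $\mathscr S=\Sym^d_{\mathscr Y/B}$ its relative $d$-th symmetric power, and consider the natural morphism $\mathscr S\to\Sym^d_{X/k}$. The crux of the argument, which I expect to be the main obstacle, is to check that for the embedding sufficiently positive this morphism is smooth at $(b_0,z_0)$: concretely, one must show that the infinitesimal deformations of the divisor $z_0$ inside the varying curves $Y_b$ surject onto the infinitesimal deformations of the $0$-cycle $z_0$ in $X$, which reduces to the vanishing of a suitable $H^1$ and holds once $\sO_X(1)$ is positive enough (so that the curves through $S$, and the divisors on them, deform as freely as possible). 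Granting this, the implicit function theorem over the local field $k$ shows that $\mathscr S(k)\to\Sym^d_{X/k}(k)$ is open at $(b_0,z_0)$; hence any $z\in\Sym^d_{X/k}(k)$ close enough to $z_0$ arises from an effective divisor of degree $d$ on some curve $Y_b$ with $b$ close to $b_0$, lying close to $z_0$ in $\Sym^d_{Y_b/k}(k)$ — the morphism $\mathscr S\to B$ being proper, $\mathscr S(k)\to B(k)$ is locally a fibre bundle near $(b_0,z_0)$. Since every $Y_b$ contains $S$, the divisor $z_0$ lies on every $Y_b$, and the curve case applied to $Y_b$ with reference divisor $z_0$ gives that $z$ and $z_0$ have the same class in $\CH_0(X)/n$. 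This proves local constancy at the arbitrary point $z_0$, whence the lemma. When $m=1$ the family is superfluous and the argument reduces to the curve case applied to $X$ itself.
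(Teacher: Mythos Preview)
Your reduction to the curve case has a genuine error: the morphism $\mathscr{S}\to\Sym^d_{X/k}$ is \emph{not} smooth at $(b_0,z_0)$, and in fact $\mathscr{S}(k)\to\Sym^d_{X/k}(k)$ is not open there, no matter how positive the embedding. The reason is precisely your requirement that every $Y_b$ contain the support $S$ of $z_0$. Take $d=1$ and $z_0=P$ a single $k$-point, so that $\mathscr{S}=\mathscr{Y}$: since all curves $Y_b$ pass through $P$, the section $b\mapsto(b,P)$ of $\mathscr{Y}\to B$ maps to the constant $P$ in $X$, so deforming $b$ contributes nothing to $T_PX$; the image of the differential of $\mathscr{Y}\to X$ at $(b_0,P)$ is therefore only $T_PY_{b_0}$, of dimension $1<m$. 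Concretely, for $X=\P^2_k$ and $B$ the pencil of lines through $P$, the image of a small neighbourhood of $(\ell_0,P)$ consists of those $Q$ near $P$ whose direction from $P$ is close to that of $\ell_0$ --- a narrow bowtie, not a neighbourhood of $P$. No amount of positivity repairs this: the pinching of the family along $S$ is intrinsic to your construction. You flag this step as ``the main obstacle'', and indeed it is one your argument does not clear.

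The paper's proof gets around this by \emph{induction on $\dim X$}. It takes a pencil $p:X'\to H$ of curve sections that are \emph{not} required to pass through the support~$Z$ of the given cycle~$z$, and instead chooses a multisection $F$ of~$p$, \'etale over the base, containing~$Z$. For $a$ near~$z$ one writes
\[
a-z=(a-\psi(a))+(\psi(a)-z),
\]
where $\psi(a)$ is obtained by sliding each point of~$a$ along its curve fibre onto~$F$ (this uses the implicit function theorem for $F\to V$, which \emph{is} \'etale). The first summand is a sum of degree-$0$ cycles on smooth curve fibres, divisible by~$n$ because $nJ(k')$ is open in~$J(k')$ --- essentially your curve case, run in the relative Jacobian. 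The second summand lives on a smooth compactification~$\hat F$ of~$F$, of dimension $m-1$, and is handled by the induction hypothesis. The induction is exactly what absorbs the directions transverse to $Y_{b_0}$ that your one-step argument cannot reach.
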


\begin{proof}
Nous allons établir le lemme par récurrence sur la dimension de~$X$.  Si $\dim(X)=0$, il n'y a rien à démontrer.  Supposons que $\dim(X)\geq 1$.
Fixons un plongement $X \subset \P^N_k$ et un $0$\nobreakdash-cycle effectif $z \in Z_0^\eff(X)$.
Notons $Z \subset X$ le support de~$z$, vu comme schéma réduit.
D'après Altman et Kleiman~\cite[(7)]{altmankleiman}, on peut supposer,
quitte à remplacer le plongement donné $X \subset \P^N_k$ par sa composée avec un plongement de Veronese de degré assez élevé,
qu'il existe un sous-espace linéaire $L \subset \P^N_k$ de codimension~$\dim(X)-1$, contenant~$Z$,
tel que le schéma $L \cap X$ soit une courbe lisse.  Soit $D \subset L$ un sous-espace linéaire de codimension~$1$ dans~$L$, disjoint de~$Z$,
tel que le schéma $D \cap X$ soit étale sur~$k$.
Soit $H \subset \P^N_k$ un sous-espace linéaire de dimension~$\dim(X)-1$ disjoint de~$D$.
Notons $\pi: X' \to X$ la variété obtenue en faisant éclater $D \cap X$ dans~$X$
et $p:X' \to H$ le morphisme de projection de centre~$D$ dans~$\P^N_k$.
Ses fibres sont les sections de~$X$ par les sous-espaces linéaires de~$\P^N_k$ de codimension~$\dim(X)-1$ contenant~$D$.
Il existe donc un point $h \in H(k)$ tel que
$L \cap X = p^{-1}(h)$.
Comme $L \cap X$ est une courbe lisse, il existe un ouvert $V \subset H$ contenant~$h$
tel que le morphisme $p^{-1}(V) \to V$ induit par~$p$
soit projectif, lisse, de dimension relative~$1$.
La composante neutre du foncteur de Picard relatif $\mathbf{Pic}^0_{p^{-1}(V)/V}$ est alors représentée par un schéma abélien $J \to V$
(cf.~\cite[8.4/3 et~8.4/4]{blr}).

Comme~$p$ est lisse aux points de~$Z$ et que~$Z$ est disjoint de~$D$,
il existe une sous-variété fermée $F \subset p^{-1}(V)$ contenant~$Z$, disjointe de~$D$ et étale sur~$V$ aux points de~$Z$
(cf.~\cite[p.~193]{ega44}).  Quitte à rétrécir~$V$, on peut supposer~$F$ étale sur~$V$,
auquel cas~$F$ est une variété lisse.  Fixons, à l'aide de Hironaka,
une compactification lisse $F \subset \hat{F}$ telle que l'inclusion $F \subset X'$ se prolonge en un morphisme $\hat{F} \to X'$.
La variété~$\hat{F}$ est de dimension $\dim(X)-1$; par hypothèse de récurrence,
il existe donc un ouvert $\sU_F \subset \Sym_{F/k}(k) \subset \Sym_{\hat{F}/k}(k)$ contenant le point~$z$, tel que l'application
$\Sym_{\hat{F}/k}(k) \to \CH_0(\hat{F})/n$ soit constante sur~$\sU_F$.
L'application $\sU_F \to \CH_0(X)/n$ obtenue par composition avec le morphisme d'image directe $\CH_0(\hat{F})/n \to \CH_0(X)/n$ est donc elle aussi
constante.

Soit $K/k$ une extension finie galoisienne dans laquelle se plongent toutes les extensions finies de degré~$\leq d$ de~$k$, où $d=\deg(z)$.
Posons $G=\Gal(K/k)$.
Comme~$F$ est étale sur~$V$, le théorème d'inversion locale~\cite[Part~II, Ch.~III, \textsection9, Theorem~2]{serreharvard}
entraîne l'existence d'un voisinage ouvert $\sV \subset V(K)$ de~$h$
et, pour chaque $x \in Z(K)$, d'un voisinage ouvert $\sB_x \subset X(K)$ de~$x$,
tels que les
ensembles~$\sB_x$ soient deux à deux disjoints
et que les
applications $\sB_x \cap F(K) \to \sV$ induites par~$p$ soient des isomorphismes de variétés analytiques.
Notons $s_x : \sV \to \sB_x \cap F(K)$ les isomorphismes inverses.
Quitte à rétrécir~$\sV$ et à remplacer chaque~$\sB_x$ par l'intersection des $\sigma^{-1}(\sB_{\sigma(x)})$ pour $\sigma\in G$,
on peut supposer que $\sigma(\sB_x)=\sB_{\sigma(x)}$ pour tout $\sigma \in G$ et tout $x \in Z(K)$.
Alors~$\sV$ et la réunion~$\sB$ des~$\sB_x$ sont stables sous l'action de~$G$,
chaque~$\sB_x$ est stable sous l'action du stabilisateur $G_x \subset G$ de~$x$ et enfin l'application~$s_x$ est $G_x$\nobreakdash-équivariante
puisque~$s_x^{-1}$ l'est.
Si $k' \subset K$ est un sous-corps contenant~$k$, notons $nJ(k')$ l'image de~$J(k')$ par l'endomorphisme de multiplication par~$n$ de $J \to V$.
Cet endomorphisme étant étale, le théorème d'inversion locale entraîne que~$nJ(k')$ est un ouvert de~$J(k')$.
Ainsi, quitte à rétrécir à nouveau les ouverts~$\sB_x$ et~$\sV$, on peut supposer que pour tout
corps intermédiaire $k \subset k' \subset K$ et tout $x \in Z(k')$,
l'application continue $\sB_x \cap X(k') \to J(k')$ qui à~$b$ associe $b-s_x(p(b))$ est à valeurs dans $nJ(k')$.  Cette application est bien définie parce que~$s_x$
est $G_x$\nobreakdash-équivariante.

Notons $\phi:\sB \to F(K)$ la réunion des applications $s_x \circ p : \sB_x \to F(K)$.
C'est une application continue et $G$\nobreakdash-équivariante.
Elle induit donc une application continue et $G$\nobreakdash-équivariante
$\Sym^d(\phi):\Sym^d(\sB) \to \Sym^d(F(K))$ entre les produits symétriques de ces espaces topologiques.
Comme les sous-espaces de $\Sym^d(X(K))$ et de $\Sym^d(F(K))$ constitués des éléments invariants par~$G$ s'identifient respectivement
à $\Sym^d_{X/k}(k)$ et à $\Sym^d_{F/k}(k)$,
l'application $\Sym^d(\phi)$ induit
à son tour une application continue $\psi: \sU_0 \to \Sym^d_{F/k}(k)$,
où $\sU_0 \subset \Sym^d_{X/k}(k)$ désigne l'ensemble des éléments de $\Sym^d(\sB)$ invariants par~$G$.
Posons finalement $\sU = \psi^{-1}(\sU_F)$.

L'ensemble~$\sU$ est un ouvert de $\Sym_{X/k}(k)$ contenant le point~$z$.
Vérifions que pour tout $a \in \sU$, la classe de~$a$ dans $\CH_0(X)/n$ est égale à celle de~$z$.
Pour tout $a \in \sU$, on a l'égalité de $0$\nobreakdash-cycles sur~$X$
$$a-z = \left(a - \psi(a)\right) + \left(\psi(a)-z\right)\!\mkern1mu\rlap{\text{.}}$$
Comme $\psi(a)$ et~$z$ appartiennent à~$\sU_F$, la classe dans $\CH_0(X)/n$ de $\psi(a)-z$ est nulle.
Quant au~$0$\nobreakdash-cycle $a-\psi(a)$, il s'écrit comme une somme de cycles sur~$X$ de la forme $N_{k'/k}(b-s_x(p(b)))$
pour divers corps intermédiaires $k \subset k' \subset K$, divers $x \in Z(k')$ et divers $b \in \sB_x \cap X(k')$,
où~$N_{k'/k}$ désigne la norme de~$k'$ à~$k$.
Or le cycle $b-s_x(p(b))$ est supporté par la courbe propre et lisse $p^{-1}(p(b))$ et sa classe dans
$\Pic(p^{-1}(p(b)))$ est divisible par~$n$ par construction; sa classe dans $\CH_0(X \otimes_k k')$ est donc elle aussi divisible par~$n$.
Il s'ensuit que la classe de $a-\psi(a)$ dans $\CH_0(X)/n$ est nulle.
\end{proof}

\Subsection{Quelques lemmes sur les groupes abéliens}

\medskip
\begin{lem}
\mylabel{lem:chapeausurjectif}
Soit $f:A\to B$ un homomorphisme de groupes abéliens.
Si~$f$ est surjectif, alors l'homomorphisme $\fchapeau:\widehat{A} \to \widehat{B}$ induit par~$f$ l'est aussi.
\end{lem}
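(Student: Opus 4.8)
Le plan est le suivant : montrer que chaque application induite $A/n \to B/n$ est surjective, que les noyaux $K_n$ de ces applications forment un syst\`eme projectif dont les morphismes de transition sont eux aussi surjectifs, puis relever terme \`a terme les \'el\'ements de~$\widehat{B}$ le long d'une suite cofinale.

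Je commencerais par fixer une suite cofinale $n_1 \mid n_2 \mid \cdots$ d'entiers strictement positifs, par exemple $n_k = k!$, de sorte que $\widehat{M} = \varprojlim_k M/n_k$ pour tout groupe ab\'elien~$M$; un \'el\'ement de~$\widehat{M}$ n'est alors qu'une famille compatible $(x_k)_{k \geq 1}$ avec $x_k \in M/n_k$ et $x_{k+1} \mapsto x_k$. Comme~$f$ est surjectif, chaque application $f_n : A/n \to B/n$ qu'il induit est surjective; notons $K_n = \Ker(f_n)$. En appliquant le foncteur exact \`a droite $-\otimes_{\Z}\Z/n\Z$ \`a la suite exacte $0 \to N \to A \to B \to 0$, o\`u $N = \Ker(f)$, on obtient une suite exacte $N/n \to A/n \to B/n \to 0$; autrement dit $K_n$ est l'image de l'application naturelle $N/n \to A/n$. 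Pour $m \mid n$, le morphisme de transition $N/n \to N/m$ est surjectif, et comme il est compatible aux applications $N/\bullet \to A/\bullet$, l'application induite $K_n \to K_m$ sur les images l'est aussi.

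Soit ensuite $(b_k)_{k \geq 1} \in \widehat{B}$. On construit par r\'ecurrence sur~$k$ une famille compatible $(a_k)$ avec $a_k \in A/n_k$ et $f_{n_k}(a_k) = b_k$ : pour $k = 1$, on choisit $a_1 \in A/n_1$ relevant~$b_1$; supposant~$a_k$ construit, on choisit un rel\`evement quelconque $a' \in A/n_{k+1}$ de~$b_{k+1}$ et l'on note $a'' \in A/n_k$ l'image de~$a'$. Comme $b_{k+1} \mapsto b_k$, on a $f_{n_k}(a'') = b_k = f_{n_k}(a_k)$, donc $a'' - a_k \in K_{n_k}$; gr\^ace \`a la surjectivit\'e de $K_{n_{k+1}} \to K_{n_k}$, on rel\`eve $a'' - a_k$ en $c \in K_{n_{k+1}} \subset A/n_{k+1}$ et l'on pose $a_{k+1} = a' - c$. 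Alors $f_{n_{k+1}}(a_{k+1}) = b_{k+1}$ puisque $c \in \Ker(f_{n_{k+1}})$, et l'image de~$a_{k+1}$ dans $A/n_k$ est $a'' - (a'' - a_k) = a_k$. La famille $(a_k)$ d\'efinit donc un \'el\'ement de $\widehat{A}$ dont l'image par~$\fchapeau$ est $(b_k)$.

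Le point d\'ecisif est la surjectivit\'e des morphismes de transition $K_{n_{k+1}} \to K_{n_k}$ : c'est elle qui permet de corriger pas \`a pas les rel\`evements, et sans elle l'\'enonc\'e serait faux, les limites projectives ne pr\'eservant pas la surjectivit\'e en g\'en\'eral. De fa\c{c}on \'equivalente, le syst\`eme $(K_n)_n$ v\'erifie la condition de Mittag-Leffler, donc $\varprojlim^1 K_n = 0$, et la conclusion r\'esulte aussi de la suite exacte $0 \to \varprojlim_n K_n \to \widehat{A} \to \widehat{B} \to \varprojlim^1 K_n$. On peut du reste \'etablir cette surjectivit\'e sans passer par $N\otimes_{\Z}\Z/n\Z$, par exemple via le lemme du serpent appliqu\'e aux suites exactes courtes $0 \to K_\bullet \to A/\bullet \to B/\bullet \to 0$ pour $\bullet \in \{m,n\}$, en utilisant que $mA/nA \to mB/nB$ est surjectif.
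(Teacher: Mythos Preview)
Your proof is correct and follows essentially the same approach as the paper: both arguments rest on the surjectivity of the transition maps $K_n \to K_m$ (for $m \mid n$) to conclude that the projective limit preserves the short exact sequences $0 \to K_n \to A/n \to B/n \to 0$. The paper simply asserts this surjectivity and invokes preservation of exactness, whereas you supply the details --- identifying $K_n$ as the image of $N/n$ in $A/n$ and unwinding the Mittag--Leffler argument into an explicit term-by-term lifting along a cofinal chain --- so the difference is one of exposition rather than strategy.
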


\begin{proof}
Notons $K_n$ le noyau de $f/n:A/n\to B/n$.
On vérifie sans peine que les morphismes de transition du système projectif formé par les groupes~$K_n$ sont surjectifs.
L'exactitude de la suite
$$
\xymatrix{
0 \ar[r] & K_n \ar[r] & A/n \ar[r] & B/n \ar[r] & 0
}
$$
est donc préservée par passage à la limite projective.
\end{proof}

\begin{lem}
\mylabel{lem:divisible}
Soit~$M$ un groupe abélien.  Le conoyau de la flèche naturelle $M \to \widehat{M}$ est divisible.
\end{lem}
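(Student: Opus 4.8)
Le plan est de se ramener, pour chaque entier $n>0$, \`a l'\'egalit\'e $\widehat M = \Im(M \to \widehat M) + n\widehat M$, qui exprime que la multiplication par~$n$ est surjective sur $\Coker(M \to \widehat M)$. Comme tout \'el\'ement de~$M/n$ se rel\`eve dans~$M$, l'application compos\'ee $M \to \widehat M \to M/n$ est surjective; il suffit donc d'\'etablir l'inclusion $\Ker(\widehat M \to M/n) \subset n\widehat M$, l'inclusion r\'eciproque \'etant claire.

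Pour cela, je partirais, pour tout entier $k>0$, de la suite exacte
$$0 \longrightarrow K_k \longrightarrow M/k \longrightarrow M/nk \longrightarrow M/n \longrightarrow 0\rlap{\text{,}}$$
o\`u la fl\`eche $M/k \to M/nk$ est induite par la multiplication par~$n$ et o\`u $K_k$ d\'esigne son noyau, puis je la scinderais en les deux suites exactes courtes $0 \to K_k \to M/k \to nM/nkM \to 0$ et $0 \to nM/nkM \to M/nk \to M/n \to 0$, la notation $nM/nkM$ d\'esignant l'image de $M/k \to M/nk$, c'est-\`a-dire le noyau de $M/nk \to M/n$. Le point cl\'e consiste \`a v\'erifier que les morphismes de transition des quatre syst\`emes projectifs $(K_k)_k$, $(M/k)_k$, $(M/nk)_k$ et $(nM/nkM)_k$, index\'es par les entiers $>0$ ordonn\'es par divisibilit\'e, sont surjectifs; pour $(K_k)_k$, cela r\'esulte de l'isomorphisme $K_k \simeq \tors{n}M/(\tors{n}M \cap kM)$, sous lequel les fl\`eches de transition deviennent des surjections canoniques entre quotients du sous-groupe de $n$-torsion $\tors{n}M$ de~$M$, les trois autres cas \'etant imm\'ediats.

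Comme dans la preuve du lemme~\ref{lem:chapeausurjectif}, cette surjectivit\'e assure que l'exactitude des deux suites exactes courtes est pr\'eserv\'ee par passage \`a la limite projective sur~$k$. Les multiples de~$n$ \'etant cofinaux, on a $\varprojlim_k M/nk = \widehat M$, d'o\`u une suite exacte $0 \to N_n \to \widehat M \to M/n \to 0$ avec $N_n = \varprojlim_k nM/nkM = \Ker(\widehat M \to M/n)$, ainsi qu'une surjection $\widehat M \to N_n$ induite par les fl\`eches $M/k \to nM/nkM$. Une v\'erification directe composante par composante montre que la compos\'ee $\widehat M \to N_n \subset \widehat M$ n'est autre que la multiplication par~$n$; on en d\'eduit $n\widehat M = N_n = \Ker(\widehat M \to M/n)$, ce qui, compte tenu du d\'ebut de l'argument, ach\`eve la d\'emonstration. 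L'obstacle principal est donc la surjectivit\'e des morphismes de transition de $(K_k)_k$: c'est \`a cet endroit qu'intervient de fa\c{c}on essentielle le sous-groupe de $n$-torsion $\tors{n}M$, lequel absorbe les d\'efauts de compatibilit\'e que l'on rencontrerait en tentant de construire directement, dans $\widehat M$, un ant\'ec\'edent par la multiplication par~$n$ d'un \'el\'ement donn\'e de $\Ker(\widehat M \to M/n)$.
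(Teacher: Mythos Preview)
Your proof is correct and follows essentially the same route as the paper: both arguments reduce to showing $\Ker(\widehat{M}\to M/n)=n\widehat{M}$, and both do so by identifying this kernel with $\widehat{nM}=\varprojlim_k nM/nkM$ and exhibiting a surjection $\widehat{M}\twoheadrightarrow\widehat{nM}$ whose composite with the inclusion back into~$\widehat{M}$ is multiplication by~$n$. The paper's version is slightly more streamlined: it factors multiplication by~$n$ on~$M$ as $i\circ s$ with $s:M\twoheadrightarrow nM$ and $i:nM\hookrightarrow M$, invokes the previous lemma directly to get $\widehat{s}$ surjective, and observes the exactness of $\widehat{nM}\xrightarrow{\widehat{i}}\widehat{M}\xrightarrow{p}M/n$; your explicit computation of the transition maps of $(K_k)_k$ amounts to reproving that lemma in this particular case.
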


\begin{proof}
Soit $n$ un entier~$>0$.
Notons respectivement $s:M \twoheadrightarrow nM$, $i:nM \hookrightarrow M$ et $p:\widehat{M}\to M/n$ la surjection, l'injection et la projection canoniques.
D'après le lemme~\ref{lem:chapeausurjectif}, le morphisme $\widehat{s} : \widehat{M} \to \widehat{nM}$ induit par~$s$
est surjectif.  Compte tenu de
l'exactitude de la suite
$$
\xymatrix{
\widehat{nM} \ar[r]^{\widehat{i}} & \widehat{M} \ar[r]^(.43)p & M/n
}
$$
et de l'égalité $\widehat{i} \circ \widehat{s} = \widehat{i \circ s} =n$,
il s'ensuit que $\widehat{M}/n$ s'injecte dans $M/n$.  Cela implique finalement que $\Coker(M \to \widehat{M})/n=0$.
\end{proof}

\begin{lem}
\mylabel{lem:conoyauchapeau}
Soit $f:A\to B$ un homomorphisme de groupes abéliens dont le conoyau est d'exposant fini.
Notant $\fchapeau:\widehat{A} \to \widehat{B}$ l'homomorphisme induit par~$f$,
la flèche naturelle $\Coker(f) \to \Coker(\fchapeau)$ est injective.
\end{lem}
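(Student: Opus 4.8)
The plan is to prove the injectivity directly, reducing it to a congruence modulo a single integer that annihilates $\Coker(f)$. First I would set up notation: let $C=\Coker(f)$, fix an integer $e>0$ with $eC=0$, and put $A'=\Im(f)$, so that $eB\subseteq A'$. Writing $\iota_A\colon A\to\widehat A$ and $\iota_B\colon B\to\widehat B$ for the canonical morphisms, naturality of $M\mapsto\widehat M$ gives $\iota_B\circ f=\fchapeau\circ\iota_A$; hence the natural map of the statement is the well-defined homomorphism $B/A'\to\widehat B/\Im(\fchapeau)$ sending $b+A'$ to $\iota_B(b)+\Im(\fchapeau)$, and what must be shown is that $b\in B$ together with $\iota_B(b)\in\Im(\fchapeau)$ forces $b\in A'$.

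The key step is to project $\widehat B=\varprojlim_n B/n$ onto its factor $B/e$. A direct check on compatible systems shows that the composite of $\fchapeau$ with this projection coincides with the composite $\widehat A\to A/e\to B/e$, in which the first arrow is the canonical projection and the second is induced by $f$ — both carry a system $(\alpha_n)_n$ to the class of $\alpha_e$ in $B/e$. Since $A\to\widehat A\to A/e$ is the quotient map, $\widehat A\to A/e$ is onto, so the image of $\widehat A\to B/e$ equals the image of $A/e\to B/e$, which is $(A'+eB)/eB=A'/eB$, the last equality because $eB\subseteq A'$. Applying the projection $\widehat B\to B/e$ to the relation $\iota_B(b)\in\Im(\fchapeau)$ then yields $b+eB\in A'/eB$, that is, $b\in A'$, as desired.

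I do not expect any real obstacle here: the only slightly delicate points are the factorization just used of the composite $\widehat A\to B/e$ through $A/e$, and the well-definedness of the natural map, both of which are immediate from the definition of the completion functor $M\mapsto\widehat M$. Alternatively, one could begin by reducing to the case where $f$ is injective — using Lemme~\ref{lem:chapeausurjectif} to note that $\Im(\fchapeau)=\Im(\widehat{A'}\to\widehat B)$, since $A\twoheadrightarrow A'$ — but the argument above makes that reduction unnecessary.
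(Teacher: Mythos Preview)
Your argument is correct and is essentially an unwinding of the paper's proof: the paper observes that since $C=\Coker(f)$ has finite exponent one has $C\isoto\widehat{C}$, and then reads injectivity off the commutative square in which $B\to C$ sits over the complex $\widehat{A}\to\widehat{B}\to\widehat{C}$. Your projection $\widehat{B}\to B/e$ followed by $B/e\to C$ is precisely the map $\widehat{B}\to\widehat{C}\cong C$, so the two proofs coincide once one identifies $\widehat{C}$ with $C$; the paper's packaging is just a line shorter.
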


\begin{proof}
En effet, si $C=\Coker(f)$, on a un diagramme commutatif
$$
\xymatrix@R=3.5ex{
& B \ar[d] \ar[r] & C \ar[d]^(.4)\wr \\
\widehat{A} \ar[r] & \widehat{B} \ar[r] & \widehat{C}
}
$$
où la seconde ligne est un complexe.
\end{proof}

\begin{lem}
\mylabel{lem:noyauchapeau}
Soit $f:A \to B$ un homomorphisme de groupes abéliens dont le noyau et le conoyau sont d'exposant fini.
Si $\tors{n}B$ est fini pour tout $n>0$, la flèche naturelle $\Ker(f) \to \Ker(\fchapeau)$ est surjective.
\end{lem}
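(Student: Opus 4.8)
La~d\'emonstration repose sur la factorisation $f=j\circ g$, avec $g\colon A\to I$ surjectif et $j\colon I\to B$ injectif, o\`u $I=\Im(f)$, et sur l'\'etude s\'epar\'ee de $\widehat g$ et de~$\widehat j$. Notons $K=\Ker(f)$ et $C=\Coker(f)$; soit $M>0$ un entier tuant~$K$ et $N>0$ un entier tuant~$C$. Comme $I$ se plonge dans~$B$, le groupe $\tors{n}{I}$ est un sous-groupe de $\tors{n}{B}$, donc fini, pour tout~$n>0$.

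On v\'erifie d'abord que $\widehat j\colon\widehat I\to\widehat B$ est injectif. Le lemme du serpent appliqu\'e \`a la multiplication par~$n$ dans la suite $0\to I\to B\to C\to 0$ montre que le noyau $R_n$ de $I/n\to B/n$ est l'image d'un morphisme de bord $\partial_n\colon\tors{n}{C}\to I/n$; c'est donc un quotient de $\tors{n}{C}$, tu\'e par~$N$. Pour $n\mid m$, le morphisme de transition $R_m\to R_n$ s'identifie, via ces morphismes de bord et la description des transitions entre les $\tors{\bullet}{C}$, au morphisme induit par la multiplication par $m/n$; en particulier $R_{nN}\to R_n$ est nul, d'o\`u $\varprojlim_n R_n=0$. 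En passant \`a la limite projective dans $0\to R_n\to I/n\to (I/n)/R_n\to 0$, on obtient une injection $\widehat I\hookrightarrow\varprojlim_n (I/n)/R_n$ qui, compos\'ee avec l'injection $\varprojlim_n (I/n)/R_n\hookrightarrow\widehat B$ (limite projective des injections $(I/n)/R_n\hookrightarrow B/n$), n'est autre que~$\widehat j$. Comme $\fchapeau=\widehat j\circ\widehat g$, il s'ensuit que $\Ker(\fchapeau)=\Ker(\widehat g)$.

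Il reste \`a voir que la fl\`eche naturelle $K\to\Ker(\widehat g)$ est surjective. Le lemme du serpent appliqu\'e \`a la multiplication par~$n$ dans $0\to K\to A\to I\to 0$ fournit une suite exacte $0\to W_n\to A/n\to I/n\to 0$ avec $W_n=\Coker\bigl(\partial'_n\colon\tors{n}{I}\to K/n\bigr)$; en passant \`a la limite projective, $\Ker(\widehat g)=\varprojlim_n W_n$. Restreignons-nous aux entiers~$n$ multiples de~$M$, qui forment un ensemble cofinal: pour un tel~$n$ on a $K/n=K$ et $W_n=K/D_n$, o\`u $D_n:=\Im(\partial'_n)$ est un sous-groupe \emph{fini} de~$K$, comme quotient de~$\tors{n}{I}$. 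La compatibilit\'e de $\partial'_n$ aux transitions donne $D_m\subset D_n$ d\`es que $n\mid m$; cette suite d\'ecroissante de sous-groupes finis de~$K$ est stationnaire, de sorte qu'il existe~$n_0$ avec $D_n=D_{n_0}$ pour tout multiple~$n$ de~$n_0$. Ces derniers entiers \'etant cofinaux, $\varprojlim_n W_n=K/D_{n_0}$ et la fl\`eche $K\to\varprojlim_n W_n$ s'identifie \`a la projection canonique $K\to K/D_{n_0}$, qui est surjective. Jointe \`a l'\'egalit\'e $\Ker(\fchapeau)=\Ker(\widehat g)$, cette surjectivit\'e conclut.

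Le point sur lequel tout repose est la finitude des groupes~$D_n$, qui provient de celle des $\tors{n}{B}$: en son absence, la limite projective $\varprojlim_n K/D_n$ peut \^etre strictement plus grande que~$K$, et la fl\`eche $K\to\Ker(\widehat g)$ n'est alors pas surjective en g\'en\'eral.
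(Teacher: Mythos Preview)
Your proof is correct and follows essentially the same route as the paper's: factor $f$ through its image $I$, apply the snake lemma to the two short exact sequences $0\to K\to A\to I\to 0$ and $0\to I\to B\to C\to 0$, and deduce that $\widehat{I}\to\widehat{B}$ is injective and that $K\to\Ker(\widehat{A}\to\widehat{I})$ is surjective. The only difference is cosmetic: where the paper invokes a Mittag--Leffler argument (the groups $S_n,T_n$ are finite as subquotients of ${}_nB$, so the limit preserves exactness of the four-term sequences), you argue more explicitly---showing directly that the transitions $R_{nN}\to R_n$ vanish, and that the finite subgroups $D_n\subset K$ stabilise along the cofinal set of multiples of~$M$.
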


\begin{proof}
Notons $I=\Im(f)$, $K=\Ker(f)$ et $C=\Coker(f)$.
Pour $n>0$, le lemme du serpent fournit des suites exactes
\begin{align}
\mylabel{eq:snknanin}
\xymatrix{
0 \ar[r] & S_n \ar[r] & K/n \ar[r] & A/n \ar[r] & I/n
}
\end{align}
et
\begin{align}
\mylabel{eq:tnncinbn}
\xymatrix{
0 \ar[r] & T_n \ar[r] & \tors{n}C \ar[r] & I/n \ar[r] & B/n
}
\end{align}
où~$S_n$ et~$T_n$ sont des sous-quotients de~$\tors{n}B$.  Comme $\tors{n}B$ est fini, il en va de même
des groupes~$S_n$ et~$T_n$ pour tout~$n$; ainsi le passage à la limite projective préserve-t-il l'exactitude de ces suites
(cf.~\cite[Proposition~3.5.7]{weibel}).
D'autre part, on a $\widehat{K}=K$ et $\varprojlim \tors{n}C=0$
puisque~$K$ et~$C$ sont d'exposant fini.
De~(\ref{eq:snknanin}) et de~(\ref{eq:tnncinbn})
on tire donc, à la limite, l'exactitude des suites $K \to \widehat{A} \to \widehat{I}$
et $0 \to \widehat{I} \to \widehat{B}$; d'où l'exactitude de la suite $K \to \widehat{A} \to \widehat{B}$.
\end{proof}

\begin{rmq}
On ne peut supprimer l'hypothèse que~$\tors{n}B$ est fini pour tout $n>0$ dans le lemme~\ref{lem:noyauchapeau}.
En effet, si $A = \bigoplus_{n \geq 1} \Z/2^n\Z$, $B=2A$ et si~$f$ est l'application induite par la multiplication par~$2$ sur~$A$,
alors $\Ker(f)=\bigoplus_{n \geq 1} \Z/2\Z$ mais $\Ker(\fchapeau)=\prod_{n\geq 1}\Z/2\Z$.
\end{rmq}

\section{Groupe de Chow des zéro-cycles d'une fibration en variétés rationnellement connexes au-dessus d'une courbe}
\mylabel{sec:groupechow}

\smallskip
Si~$X$ est une variété projective, lisse et rationnellement connexe sur un corps de nombres~$k$,
Kollár et Szabó~\cite[Corollary~9]{kollarszabo} ont démontré qu'il existe un ensemble fini $S \subset \Omega$ tel que
$A_0(X \otimes_k k_v)=0$ pour toute place $v \notin S$.
Dans ce paragraphe, nous établissons un énoncé analogue pour les fibrations
en variétés rationnellement connexes au-dessus d'une courbe.
Cet énoncé interviendra dans les preuves des théorèmes~\ref{th:principalE} et~\ref{th:principalErelatif}, qui concernent la conjecture~$(E)$.
Le lecteur intéressé uniquement par la conjecture~$(E_1)$ peut passer directement au \textsection\ref{sec:reductionP}.

\begin{thm}
\mylabel{th:isochow}
Soient~$k$ un corps de nombres et $f:X \to C$ un morphisme de fibre générique géométriquement irréductible entre variétés propres, géométriquement irréductibles et lisses sur~$k$.
Supposons que~$C$ soit une courbe.
Supposons de plus que $A_0(X_\etabar \otimes K)=0$ et $H^1(X_\etabar,\Q/\Z)=0$,
où~$K$ désigne une clôture algébrique du corps des fonctions de la fibre générique géométrique~$X_\etabar$ de~$f$.
Alors il existe un ensemble fini de places $S \subset \Omega$ tel que l'application
$$f_* : \CH_0(X \otimes_k k_v) \to \CH_0(C \otimes_k k_v)$$
soit un isomorphisme pour tout $v \notin S$.
\end{thm}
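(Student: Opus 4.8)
The plan is to spread $f$ out over an open part of $\Spec(\sO_k)$, reduce the statement to showing that $f_*$ is an isomorphism over $k_v$ for every $v$ outside a suitable finite set $S$, and then treat surjectivity and injectivity separately. Concretely, I would fix a finite $S_0\subset\Omega$ and a proper flat morphism $\mathfrak f\colon\mathfrak X\to\mathfrak C$ of smooth $\sO_{S_0}$-schemes, with $\mathfrak C$ proper over $\sO_{S_0}$, whose generic fibre is $f$; after enlarging $S_0$ I may assume there is a dense open $\mathfrak C^\circ\subseteq\mathfrak C$ with complement finite over $\sO_{S_0}$ such that $\mathfrak f$ is smooth with geometrically irreducible fibres over $\mathfrak C^\circ$ and such that the two hypotheses on the geometric generic fibre — vanishing of $A_0$ over every algebraically closed field, and $H^1(-,\Q/\Z)=0$ — hold for every fibre of $\mathfrak f$ over $\mathfrak C^\circ$ (both being open conditions on the base). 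I would also record, using Lang--Weil on $\mathfrak C_{\Fv}$ and Hensel's lemma, that $C(k_v)\neq\emptyset$ for $v\notin S_0$, and invoke the rappel~\ref{rappel:chcourbe} for the structure of $\CH_0(C\otimes_k k_v)$. Fix $v\notin S_0$.

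For surjectivity: the Zariski closure in $X$ of a closed point of the generic fibre of $f$ is, after normalisation, a multisection of $f$ of some degree $\delta$ over $C$, so the projection formula gives $\delta\,[c]\in\Im(f_*)$ for every closed point $c$ of $C\otimes_k k_v$, whence $\Coker(f_*)$ has finite exponent. On the other hand, for a closed point $c$ whose closure in $\mathfrak C$ meets the special fibre inside $\mathfrak C^\circ$, the fibre $X_c$ has a smooth proper model over the ring of integers of $k_v(c)$ whose closed fibre is a smooth projective variety over a finite field with trivial geometric $A_0$; by Esnault's theorem this closed fibre has a rational point, which lifts by Hensel, so $[c]\in\Im(f_*)$ for all such ``good'' $c$. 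Combining the density of good closed points of bounded degree (Lang--Weil on $\mathfrak C^\circ_{\Fv}$) with the $p$-adic Abel--Jacobi map for $C\otimes_k k_v$, one checks that for $v$ outside a finite set the classes of good closed points generate $\CH_0(C\otimes_k k_v)$; a divisible group of finite exponent being trivial, this yields $\Coker(f_*)=0$.

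The heart of the matter is the injectivity of $f_*$, i.e.\ $\Ker(f_*)=0$ over $k_v$. Here I would apply the Bloch--Srinivas method: the vanishing of $A_0(X_\etabar\otimes K)$ for all algebraically closed $K$ — together with its characteristic-zero consequences, namely $H^i(X_\etabar,\sO)=0$ for $i>0$ and, via $H^1(X_\etabar,\Q/\Z)=0$, triviality of $\Pic^0$ and torsion-freeness of $\NS$ of $X_\etabar$ — yields an integer $N>0$ and a decomposition of the diagonal $N\,[\Delta_{X_\eta}]=[Z_1]+[Z_2]$ in the Chow group of $X_\eta\times_{k(C)}X_\eta$, with $Z_1$ supported on $\sD\times_{k(C)}X_\eta$ for a divisor $\sD\subsetneq X_\eta$ and $Z_2$ supported on $X_\eta\times_{k(C)}W$ for a finite $W\subset X_\eta$. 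Spreading this relation out over a dense open $U\subseteq C$ and letting it act on a $0$-cycle $z\in\Ker(f_*)$ — which, after moving $z$ to avoid the complement of $X_U$ and the spreading-out $\sD_U$ (both of codimension one in $X$), satisfies $(Z_1|_U)_*z=0$ and $(Z_2|_U)_*z=(\deg z)\cdot(\text{fixed cycle})=0$, since $z\in\Ker(f_*)$ has degree $0$ — shows that $Nz$ already vanishes in $\CH_0(X_U)$. A localisation-sequence chase then bounds $N\cdot\Ker(f_*)$ in terms of $A_0$ of the finitely many fibres of $f$ outside $U$; invoking the finiteness theorem of Saito and Sato~\cite{saitosato} for $\CH_0$ of (smooth models of) these fibres over the relevant $p$-adic fields, and enlarging $S$ to absorb their contribution, one gets that $\Ker(f_*)$ is of finite exponent. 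Combined with the divisibility of $\Ker(f_*)$ — obtained from Lemma~\ref{lem:continuite} by moving the effective components of a representative $v$-adically so as to collect them in a single residue disc, which makes the class divisible by $n$ modulo $n$ for every $n$ — and the triviality of a divisible group of finite exponent, this gives $\Ker(f_*)=0$ for $v\notin S$.

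The hard part will be the injectivity, and within it the control of the finitely many singular fibres of $f$: these need not have geometrically trivial $A_0$, so the diagonal decomposition does not apply to them directly, and it is precisely here that the $p$-adic finiteness statement extracted from~\cite{saitosato} is indispensable in order to pass from ``$N$-divisible'' (or of finite exponent) to actually zero. One must also ensure that the two properties played against each other — divisibility and finiteness of exponent of $\Ker(f_*)$ — are simultaneously available over the \emph{same} finite set of places, which is what ultimately dictates how large $S$ has to be chosen.
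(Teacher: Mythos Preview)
Your divisibility claim for $\Ker(f_*)$ is the gap. Lemma~\ref{lem:continuite} only says that the map $\Sym_{X/k_v}(k_v)\to\CH_0(X\otimes_k k_v)/n$ is locally constant; it does not let you conclude that a class $z\in\Ker(f_*)$ is divisible by~$n$. Writing $z=z^+-z^-$ with $z^\pm$ effective, you would need to move $z^+$ and $z^-$ \emph{within their rational equivalence classes on~$X$} until they lie in the same residue discs, but rational equivalence on~$X$ does not allow arbitrary $v$\nobreakdash-adic displacement, and there is no reason the two cycles can be brought close to one another. Indeed $\Ker(f_*)$ is not divisible for~$v$ in the finite exceptional set, so any argument for divisibility must use something specific to~$v\notin S$, and the continuity lemma does not. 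Your use of~\cite{saitosato} is also misplaced: the singular fibres of~$f$ need not have smooth models over~$\sO_{k(c)_w}$, so one cannot invoke finiteness for their~$\CH_0$ in the way you describe; in any case the Bloch--Srinivas step (this is Lemma~\ref{lem:decompdiag}) already gives that the whole kernel is killed by a fixed integer~$n$, uniformly in~$v$, with no localisation sequence needed.

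The paper proceeds differently. After recording that $\Ker(f_*)$ and $\Coker(f_*)$ are killed by a fixed~$n$ (Lemmas~\ref{lem:decompdiag} and~\ref{lem:conoyaumultisection}), it chooses~$S$ so that~$X$ and~$C$ spread to smooth projective $\sO_S$\nobreakdash-schemes $\sX,\sC$ with~$n$ invertible, and then shows directly that
\[
f_*:\CH_0(X\otimes_k k_v)/m \longrightarrow \CH_0(C\otimes_k k_v)/m
\]
is an isomorphism for every $m=n^r$ and every $v\notin S$. This is done by combining two ingredients: Saito--Sato~\cite[Corollary~0.10]{saitosato} identifies $\CH_0(\sV\otimes k_v)/m$ with $\CH_0(\sV\otimes\Fv)/m$ for a smooth projective $\sO_v$\nobreakdash-scheme~$\sV$ (applied to $\sV=\sX_{\sO_v}$ and $\sV=\sC_{\sO_v}$, not to fibres of~$f$), and Kato--Saito's higher class field theory identifies $\CH_0(\sV\otimes\Fv)/m$ with $\pi_1^{\ab}(\sV\otimes\Fv)/m$. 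One is thus reduced to showing that $f^*:H^1(\sC\otimes\Fv,\Z/m\Z)\to H^1(\sX\otimes\Fv,\Z/m\Z)$ is an isomorphism, which follows from a residue-sequence diagram using both hypotheses on~$X_\etabar$: the vanishing of $A_0(X_\etabar\otimes K)$ forces the gcd of multiplicities of each fibre to be~$1$ (via~\cite{ctvoisin}), and $H^1(X_\etabar,\Z/m\Z)=0$ controls the generic fibre. From the isomorphism modulo~$m$ one reads off $\Coker(f_*)=0$ immediately, and then the snake lemma gives a surjection ${}_{m}\CH_0(C\otimes_k k_v)\twoheadrightarrow\Ker(f_*)$; passing to the inverse limit over~$r$ and using that the torsion of $\CH_0$ of a curve over a $p$\nobreakdash-adic field has no nonzero infinitely divisible elements (rappel~\ref{rappel:chcourbe}) yields $\Ker(f_*)=0$. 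This replaces your divisibility argument entirely.
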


D'après la remarque~\ref{rq:hypothesesth}~(iv), il s'ensuit en particulier:

\begin{cor}
Soit $f:X \to C$ un morphisme dominant entre variétés propres, irréductibles et lisses sur un corps de nombres~$k$.
Si~$C$ est une courbe et si la fibre générique géométrique de~$f$ est rationnellement connexe,
il existe un ensemble fini de places $S \subset \Omega$ tel que l'application
$$f_* : \CH_0(X \otimes_k k_v) \to \CH_0(C \otimes_k k_v)$$
soit un isomorphisme pour tout $v \notin S$.
\end{cor}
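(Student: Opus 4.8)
The plan is to spread $f$ out over a ring of $S$-integers and then to prove, place by place for $v$ outside a suitable finite set $S \subset \Omega$, that $f_*$ is bijective, by reducing this to a statement about zero-cycles on the fibres of $f$ over $p$-adic fields and invoking the finiteness theorem of Saito and Sato. Concretely I would fix $S$ together with a proper flat morphism $p : \sX \to \sC$ of $\sO_S$-schemes restricting to $f$, with $\sC$ smooth and proper over $\sO_S$, with $\sX$ regular, and with a dense open $\sU \subseteq \sC$ over which $p$ is smooth with geometrically irreducible fibres and such that $\sC \setminus \sU$ is finite over $\sO_S$. Enlarging $S$, one may moreover arrange — using $A_0(X_\etabar \otimes K) = 0$ for all algebraically closed $K$ (hence, by specialisation of Chow groups and the decomposition of the diagonal, the same triviality for every geometric fibre of $p$ above $\sU$), using $H^1(X_\etabar,\Q/\Z) = 0$, and spreading out over the arithmetic surface $\sC$ the finitely many Brauer classes that govern the indices and the Chow groups of the fibres of $f$ — that for every $v \notin S$ and every closed point $c$ of $C_v = C \otimes_k k_v$ the fibre $X_c$ has a zero-cycle of degree $1$ over $k(c)$ and satisfies $A_0(X_c) = 0$. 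For the closed points $c$ reducing into $\sU$, these two assertions come from Esnault's theorem over finite fields, Hensel's lemma, and \cite[Corollary~9]{kollarszabo}; for the remaining closed points, whose fibres may have bad reduction, they also rely on the $p$-adic finiteness theorem of \cite{saitosato}.

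Granting this, surjectivity of $f_* : \CH_0(X_v) \to \CH_0(C_v)$ for $v \notin S$ is immediate: $\CH_0(C_v)$ is generated by the classes of closed points $c$, and a zero-cycle of degree $1$ on $X_c$ is mapped by $f_*$ to $[c]$.

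Injectivity is the heart of the matter. Let $T \subset C$ be the finite set of closed points above which $f$ is not smooth, put $U = C \setminus T$ and $U_v = U \otimes_k k_v$, and for $v \notin S$ let $T_v \subset C_v$ be the finite set of closed points above $T$. The localisation exact sequence $\bigoplus_{c \in T_v} \CH_0(X_c) \to \CH_0(X_v) \to \CH_0(f^{-1}(U_v)) \to 0$ is compatible, through $f_*$, with the sequence $\bigoplus_{c \in T_v} \Z[c] \to \CH_0(C_v) \to \CH_0(U_v) \to 0$ for the curve. As $A_0(X_c) = 0$ for $c \in T_v$, a diagram chase reduces the injectivity of $f_*$ on $\CH_0(X_v)$ to that of $f_* : \CH_0(f^{-1}(U_v)) \to \CH_0(U_v)$, which in turn I would deduce from a further localisation on the curve $U_v$ together with the identity $\deg : \CH_0(X_c) \isoto \Z$ (i.e. $A_0(X_c) = 0$) for every closed point $c$ of $U_v$ — precisely the input secured when choosing $S$.

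I expect the main difficulty to lie in this control of $\CH_0(X_c)$ over the $p$-adic fields $k(c)$. A comparison with the special fibre at a place of good reduction yields only the vanishing of the prime-to-$p$ part of $A_0(X_c)$; the $p$-primary part, as well as the fibres of bad reduction — whose natural models have singular special fibre and for which no analogue of hypothesis~(a) of Theorem~\ref{th:principalEun} is assumed here — genuinely require the finiteness results of \cite{saitosato}. The second delicate point is to check that a single finite $S$ can be chosen that simultaneously absorbs the singular locus of $f$, the ramification of the auxiliary Brauer classes, and the loci where the fibrewise Chow groups are not yet trivial, so that only finitely many places are excluded; once these two points are settled, the deduction of Theorem~\ref{th:isochow} via the localisation sequences above is formal.
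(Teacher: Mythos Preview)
Your injectivity argument has a gap: knowing that $\deg:\CH_0(X_c)\isoto\Z$ for every closed point $c$ of $C_v=C\otimes_kk_v$ does \emph{not} yield the injectivity of $f_*:\CH_0(X_v)\to\CH_0(C_v)$ (with $X_v=X\otimes_kk_v$) by localisation. In your diagram chase, once a class in $\Ker(f_*)$ is lifted to $\sum_c y_c$ with $y_c\in\CH_0(X_c)$, the relation $\sum_c\deg(y_c)[c]=0$ in $\CH_0(C_v)$ is witnessed by a rational function on~$C_v$ and does \emph{not} force $\deg(y_c)=0$ for each~$c$; and even after replacing each~$y_c$ by $\deg(y_c)$ times a degree-$1$ cycle on~$X_c$, the resulting cycle has no reason to be rationally trivial on~$X_v$, because the degree-$1$ cycles on the various fibres do not assemble into a section of~$f$. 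Rational equivalence on~$X_v$ involves horizontal curves, which fibrewise data alone cannot produce. The same objection applies to your ``further localisation on~$U_v$'': removing closed points one by one leads to the generic fibre $X_\eta$ over $k_v(C)$, for which $A_0(X_\eta)=0$ is \emph{not} known --- rational connectedness of~$X_\etabar$ gives $A_0(X_\etabar\otimes K)=0$ only over algebraically closed~$K$. (A~secondary issue: for~$c$ above the bad locus $T\subset C$, the fibre~$X_c$ may be singular, and neither~\cite{kollarszabo} nor~\cite{saitosato} yields $A_0(X_c)=0$ in that generality; your allusion to ``Brauer classes governing the Chow groups'' does not supply an argument.)

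The paper avoids any analysis of the fibres of~$f$ over closed points of~$C_v$. A~decomposition-of-the-diagonal argument on the generic fibre (Lemme~\ref{lem:decompdiag}) shows that $\Ker(f_*)$ is killed by a single integer~$n$ independent of~$v$; a multisection does the same for the cokernel (Lemme~\ref{lem:conoyaumultisection}). For $v\notin S$ one then proves that~$f_*$ is an isomorphism modulo $m=n^r$ by applying the Saito--Sato specialisation isomorphism $\CH_0(\cdot)/m\isoto\CH_0(\text{special fibre})/m$ and the Kato--Saito isomorphism $\CH_0/m\isoto\pi_1^\ab/m$ over~$\Fv$ to the \emph{total spaces} $\sX_{\sO_v}$ and~$\sC_{\sO_v}$ (both smooth over~$\sO_v$), reducing to the bijectivity of $H^1(\sC\otimes\Fv,\Z/m\Z)\to H^1(\sX\otimes\Fv,\Z/m\Z)$; this last point is checked directly from $H^1(X_\etabar,\Q/\Z)=0$ and the fact, implied by $A_0(X_\etabar)=0$, that the gcd of the multiplicities in every fibre of~$f$ equals~$1$. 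Letting $r\to\infty$ and confronting the mod-$n^r$ isomorphism with the $n$-torsion bound then forces the kernel and cokernel to vanish.
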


L'injectivité de $f_*:\CH_0(X\otimes_k k_v) \to \CH_0(C\otimes_k k_v)$ pour toute place~$v$ hors d'un ensemble fini était connue,
avant la publication de~\cite{kollarszabo} et de~\cite{saitosato},
dans le cas des fibrations en quadriques de dimension~$2$ et des fibrations en variétés de Severi--Brauer d'indice sans facteur carré
(cf.~\cite[Theorem~3.4~(b)]{ctbordeaux}, \cite[Théorème~4.8]{frossardchow}).

Le lemme suivant donne un renseignement sur le noyau de~$f_*$ aux places $v \in S$.
Il~servira dans les preuves des théorèmes~\ref{th:isochow} et~\ref{prop:reduc}.

\begin{lem}
\mylabel{lem:decompdiag}
Soit $f:X \to Y$ un morphisme propre et dominant entre variétés irréductibles et lisses sur un corps~$k$.
Supposons que la fibre générique géométrique~$X_{\etabar}$ de~$f$ soit irréductible et lisse
et vérifie $A_0(X_{\etabar} \otimes K)=0$, où~$K$ désigne une clôture algébrique de son corps des fonctions.
Alors il existe un entier $n>0$ tel que pour toute extension~$k'/k$,
le noyau de $f_*:\CH_0(X \otimes_k k') \to \CH_0(Y \otimes_k k')$ soit annulé par~$n$.
\end{lem}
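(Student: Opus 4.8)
The approach is a standard specialization-of-correspondences argument. The hypothesis $A_0(X_{\etabar}\otimes K)=0$ says that over the algebraic closure of the function field $L=k(Y)$, the degree map $\CH_0(X_{\etabar}\otimes K)\to\Z$ is injective; since $X_{\etabar}$ is a variety over $L$, this property is already witnessed over some finite extension. More precisely, let $X_\eta$ denote the generic fibre of $f$ (a variety over $L$), pick a closed point of $X_\eta$ of some degree $d$ over $L$ to get a zero-cycle $\delta$ of degree $d$; then for any zero-cycle $z$ of degree $0$ on $X_\eta\otimes\bar L$ we have $z\sim 0$, and in particular $d\cdot[\text{any point}] \sim [\delta]$ after base change to $\bar L$, hence already over a finite subextension. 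This is the input that will produce the integer $n$.

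First I would make this effective. Choose a closed point $P_0\in X_\eta$ and set $d=[k(P_0):L]$. For any other closed point $P\in X_\eta$, the zero-cycle $[k(P):L]\cdot P_0 - d\cdot P$ has degree $0$, so it becomes rationally equivalent to $0$ on $X_\eta\otimes\bar L$; by a limit argument (rational equivalence over $\bar L$ is detected over a finite extension, and there are finitely many ``generators'' to control — actually one reduces to the universal case) there is a single integer $n$, a multiple of $d$, such that $n\cdot\big([k(P):L]\cdot P_0 - d\cdot P\big)\sim 0$ already on $X_\eta$ itself, for every closed point $P$; equivalently $n$ kills the kernel of $\deg:\CH_0(X_\eta)\to\Z$ composed suitably, i.e.\ $n$ kills $A_0(X_\eta)/(\text{stuff coming from }P_0)$. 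The cleanest formulation: there is $n>0$ such that for every closed point $P$ of $X_\eta$, $nd\cdot P \sim n[k(P):L]\cdot P_0$ in $\CH_0(X_\eta)$. I expect the extraction of a \emph{uniform} $n$ (independent of $P$) to be the main point: it follows because $A_0$ of a variety over an algebraically closed field being zero is equivalent, by a theorem going back to Bloch--Srinivas type decomposition of the diagonal, to the existence of a rational equivalence $\Delta_{X_\eta} \sim X_\eta\times P_0 + (\text{cycle supported over a divisor})$ over $\bar L$, hence over a finite extension $L'/L$ of some degree; pushing this decomposition of the diagonal back down along $\Spec L' \to \Spec L$ and spreading out gives the integer $n$ valid over $L$, and base change along any $k'/k$ then inherits it.

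Second, I would spread out and specialize. Replace $Y$ by a dense open $U$ over which everything (the decomposition of the diagonal, the point $P_0$, the auxiliary divisor) is defined and $f$ restricted to $f^{-1}(U)\to U$ is nice; the complement $Y\setminus U$ has smaller dimension, and one argues by Noetherian induction on $\dim Y$ (the zero-dimensional strata being trivial). Over $U$, the decomposition of the diagonal shows that for any $y\in U(k')$ and any zero-cycle $z$ on the fibre $X_y$, the class $f^*f_*$ acts so that $n\cdot(z - (\deg z/?)\,P_0|_y)$ is controlled; combined with the moving lemma one gets that $\ker\big(f_*:\CH_0(X\otimes_k k')\to\CH_0(Y\otimes_k k')\big)$ is killed by $n$ for points landing in $U$. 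For the boundary $Y\setminus U$ one uses localization sequences for Chow groups and the induction hypothesis applied to (a resolution of) each irreducible component of the complement together with its preimage. Since we only need annihilation by a \emph{single} $n$ and the induction runs through finitely many strata, taking the lcm of the finitely many integers produced at each stage gives the desired $n$ working uniformly for all $k'/k$ simultaneously.

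The genuine obstacle is the first step — getting the integer $n$ to be independent of the field extension $k'$ and of the cycle. This is exactly where one needs the decomposition of the diagonal rather than a naive cycle-by-cycle limit argument: a limit argument over $\bar L$ would a priori give an $n$ depending on the cycle, whereas the diagonal argument produces one universal $n$ attached to $X_\eta/L$ that survives every base change. Everything after that (spreading out, localization sequences, Noetherian induction on $\dim Y$) is routine.
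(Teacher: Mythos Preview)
Your overall strategy---decompose the diagonal of the generic fibre, then spread out over a dense open of $Y$---is the right one and is exactly what the paper does. The gap is in how you treat the complement $Y\setminus U$. Your Noetherian induction does not go through: the hypothesis of the lemma concerns only the \emph{generic} geometric fibre of $f$, and there is no reason the restriction of $f$ over an irreducible component of $Y\setminus U$ (or a resolution of its preimage) should again have an irreducible, smooth generic geometric fibre with trivial $A_0$. So the lemma is simply not available on the boundary strata, and the lcm you take at the end has no meaning.

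The fix is much simpler than the induction and is what the paper uses. Since $X$ itself is smooth over $k$, the elementary moving lemma on $X$ lets you replace any zero-cycle $z$ on $X\otimes_k k'$ by one rationally equivalent to it whose support lies entirely in $f^{-1}(Y^0)\otimes_k k'$, where $Y^0\subset Y$ is the dense open over which the diagonal decomposition spreads out and over which a chosen multisection $Z\subset X$ is finite flat of degree $d$, say with $\pi=f|_Z$. Then $dz-\pi^*f_*z$ is a finite sum of degree-zero cycles on fibres $X_y$ with $y\in Y^0$; the spread-out decomposition of the diagonal shows each such summand is killed by $nd$, while $\pi^*f_*z\sim 0$ because $f_*z\sim 0$. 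Hence $nd^2z\sim 0$, with $nd^2$ independent of $k'$; no stratification or localization sequence is needed.
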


La preuve du lemme~\ref{lem:decompdiag} consiste à adapter en famille l'argument de \og{}décomposition de la diagonale\fg{} (cf.~\cite[p.~1.20]{blochlectures}, \cite[Proposition~11]{ctfinitudechow}).

\begin{proof}
Soit $Z \subset X$ une sous-variété fermée génériquement finie sur~$Y$, de degré $d>0$.  Notons $\pi:Z \to Y$ la restriction de~$f$ à~$Z$
et~$Z_\eta$ (resp.~$X_\eta$) la fibre générique de~$\pi$ (resp.~de~$f$).  Ainsi~$Z_\eta$ est un $0$\nobreakdash-cycle de degré~$d$ sur~$X_\eta$.

Soit~$K_0$ le corps des fonctions de~$X_\eta$.
Comme $A_0(X_\etabar \otimes K)=0$,
tout $0$\nobreakdash-cycle de degré~$0$ sur $X_\eta \otimes K_0$ devient rationnellement équivalent à~$0$ sur une extension finie de~$K_0$.
Il s'ensuit que le groupe $A_0(X_\eta \otimes K_0)$ est de torsion (cf.~\cite[Example~1.7.4]{fulton}).
Notant
 $\Delta_\eta \subset X_\eta \times_\eta X_\eta$ la diagonale de~$X_\eta$
et $j: X_\eta \otimes K_0 \to X_\eta \times_\eta X_\eta$ l'inclusion, dans $X_\eta \times_\eta X_\eta$, de la fibre générique de la seconde projection
$X_\eta\times_\eta X_\eta \to X_\eta$,
il existe donc $n>0$ tel que $nj^*(d \Delta_\eta - (Z_\eta \times_\eta X_\eta))$ soit rationnellement équivalent à~$0$ sur $X_\eta \otimes K_0$.
Autrement dit, il existe un
 ouvert dense~$U_\eta$ de~$X_\eta$
et un cycle~$w_\eta$ sur $X_\eta \times_\eta X_\eta$, supporté par le complémentaire
de $X_\eta \times_\eta U_\eta$, tels que l'on ait une
équivalence rationnelle
$$nd \Delta_\eta \sim n (Z_\eta \times_\eta X_\eta) + w_\eta$$
sur $X_\eta \times_\eta X_\eta$.
Cette équivalence rationnelle s'étend et se spécialise au-dessus d'un voisinage de~$\eta$ dans~$Y$:
il existe des ouverts denses $Y^0 \subset Y$ et $U \subset X$ tels que le morphisme~$f$ (resp.~$\pi$) soit lisse (resp.~fini et plat) au-dessus de~$Y^0$ et tels que
pour tout $y \in Y^0$, l'ouvert $U_y \subset X_y$ soit dense et l'on ait une équivalence rationnelle
\begin{align*}
nd \Delta_y \sim n (Z_y \times_{k(y)} X_y) + w_y
\end{align*}
sur $X_y \times_{k(y)} X_y$, où $Z_y=\pi^{-1}(y)$,
où $\Delta_y$ est la diagonale de $X_y \times_{k(y)} X_y$ et où~$w_y$ est un cycle supporté par le complémentaire
de $X_y \times_{k(y)} U_y$.
Cette relation implique
que pour tout $y \in Y^0$ et toute extension $k''/k(y)$,
le groupe $A_0(X_y \otimes_{k(y)} k'')$ est annulé par~$nd$; en effet,
la correspondance $nd \Delta_y - n (Z_y \times_{k(y)} X_y) - w_y$ agit sur
$\CH_0(X_y \otimes_{k(y)} k'')$ par $z \mapsto ndz - n\deg(z)[Z_y]$.

Soient~$k'/k$ une extension et~$z$ un $0$\nobreakdash-cycle sur $X \otimes_k k'$ tel que $f_*z \sim 0$.
Comme~$X$ est lisse, 
un lemme de déplacement bien connu
permet de supposer le support de~$z$
inclus dans $f^{-1}(Y^0 \otimes_k k')$,
quitte à remplacer~$z$ par un $0$\nobreakdash-cycle qui lui est rationnellement équivalent (cf.~\cite[p.~599]{ctfinitudechow}).
Le cycle $dz - \pi^* f_* z$ sur $X \otimes_k k'$ s'écrit alors comme une somme de $0$\nobreakdash-cycles de degré~$0$
supportés par $X_y \otimes_{k(y)} k''$ pour divers $y \in Y^0$ et diverses extensions $k''/k(y)$.
Il~s'ensuit que $nd(dz - \pi^* f_* z)$ est rationnellement équivalent à~$0$ sur $X \otimes_k k'$.
Or $\pi^* f_* z$ est lui-même rationnellement équivalent à~$0$ puisque $f_* z \sim 0$ (cf.~\cite[8.1]{fulton});
d'où $nd^2 z \sim 0$ sur $X \otimes_k k'$. Ainsi l'entier $nd^2$ annule-t-il le noyau de $f_* : \CH_0(X \otimes_k k') \to \CH_0(Y \otimes_k k')$.
\end{proof}

L'assertion analogue pour le conoyau de~$f_*$ ne présente aucune difficulté:

\begin{lem}
\mylabel{lem:conoyaumultisection}
Soit $f:X \to Y$ un morphisme propre et dominant entre variétés irréductibles et lisses sur un corps~$k$.
Il existe un entier $n>0$ tel que pour toute extension~$k'/k$,
le conoyau de $f_*:\CH_0(X \otimes_k k') \to \CH_0(Y \otimes_k k')$ soit annulé par~$n$.
\end{lem}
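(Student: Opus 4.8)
Le plan est d'exhiber une multisection de~$f$ et de s'en servir pour construire, modulo \'equivalence rationnelle, un ant\'ec\'edent par~$f_*$ d'un multiple fix\'e, ind\'ependant de l'extension de corps, de tout $0$\nobreakdash-cycle sur~$Y$.

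Comme~$f$ est dominant, le point g\'en\'erique~$\eta$ de~$Y$ appartient \`a l'image de~$f$, donc la fibre g\'en\'erique~$X_\eta$ est non vide. Je choisirais un point ferm\'e~$P$ de~$X_\eta$ et poserais $Z=\overline{\{P\}}\subset X$, muni de sa structure r\'eduite: c'est une sous-vari\'et\'e ferm\'ee int\`egre de~$X$ dont la restriction $\pi:Z\to Y$ de~$f$ est dominante et g\'en\'eriquement finie, de degr\'e disons $d>0$. Utilisant que~$f$, donc~$\pi$, est propre, je fixerais un ouvert dense $Y^0\subset Y$ au-dessus duquel~$\pi$ induit un morphisme \emph{fini et plat} $\pi^0:Z^0\to Y^0$ de degr\'e~$d$: le lieu o\`u les fibres de~$\pi$ sont finies est ouvert et contient~$\eta$, sur cet ouvert~$\pi$ est propre et quasi-fini donc fini, et l'on r\'etr\'ecit encore par platitude g\'en\'erique.

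Soit alors~$k'/k$ une extension arbitraire. L'ouvert $Y^0\otimes_k k'$ \'etant dense dans $Y\otimes_k k'$, la suite exacte de localisation \cite[Proposition~1.8]{fulton} montrerait que tout $0$\nobreakdash-cycle sur $Y\otimes_k k'$ est rationnellement \'equivalent \`a un $0$\nobreakdash-cycle~$y$ support\'e par $Y^0\otimes_k k'$. L'image inverse plate $(\pi^0\otimes_k k')^*y$ serait un $0$\nobreakdash-cycle bien d\'efini sur $Z^0\otimes_k k'$, que l'on peut regarder comme un $0$\nobreakdash-cycle~$x$ sur $X\otimes_k k'$, les points ferm\'es de $Z^0\otimes_k k'$ \'etant des points ferm\'es de $X\otimes_k k'$. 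Comme $\pi^0\otimes_k k'$ est fini et plat de degr\'e~$d$, on aurait l'\'egalit\'e de $0$\nobreakdash-cycles $f_*x=(\pi^0\otimes_k k')_*(\pi^0\otimes_k k')^*y=dy$ sur $Y\otimes_k k'$. La classe de~$d$ fois le $0$\nobreakdash-cycle de d\'epart appartient donc \`a l'image de $f_*:\CH_0(X\otimes_k k')\to\CH_0(Y\otimes_k k')$, et l'entier~$d$ annule le conoyau de cette application.

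Le seul point m\'eritant attention est l'\emph{uniformit\'e} de~$d$ par rapport \`a~$k'$: elle r\'esulte de ce que~$Z$, $Y^0$ et~$d$ sont choisis une fois pour toutes sur~$k$, la finitude, la platitude et le degr\'e de~$\pi^0$ \'etant pr\'eserv\'es par le changement de base $-\otimes_k k'$. Aucune autre difficult\'e n'est \`a pr\'evoir, conform\'ement \`a la phrase qui pr\'ec\`ede l'\'enonc\'e; notons d'ailleurs que l'argument n'utilise ni la lissit\'e ni l'irr\'eductibilit\'e g\'eom\'etrique de~$X$ et~$Y$.
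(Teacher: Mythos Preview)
Your approach is correct and rests on the same idea as the paper's: pick a multisection $Z\subset X$ generically finite of degree~$d$ over~$Y$, and show that $d$ times any class in $\CH_0(Y\otimes_k k')$ lies in the image of~$f_*$. The paper achieves this in one line via the projection formula $f_*(f^*y\cdot[Z])=y\cdot f_*[Z]=dy$ (Fulton, Example~8.1.7), using the Gysin pullback~$f^*$ which exists because~$Y$ is smooth. Your version unfolds this: you restrict to an open~$Y^0$ where $Z\to Y$ is finite flat, move the cycle into~$Y^0$, then use flat pullback and the identity $(\pi^0)_*(\pi^0)^*=d$.

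Two small corrections. First, the citation of \cite[Proposition~1.8]{fulton} is off: the localization sequence only says that restriction $\CH_0(Y\otimes_kk')\to\CH_0(Y^0\otimes_kk')$ is surjective, not that every class on $Y\otimes_kk'$ has a representative supported in $Y^0\otimes_kk'$. What you need is the moving lemma for $0$\nobreakdash-cycles on a smooth variety (cf.~\cite[p.~599]{ctfinitudechow}, as invoked in the paper's proof of the preceding lemma). Second, and relatedly, your closing remark that smoothness is not used is inaccurate: it enters precisely through this moving lemma (and, in the paper's formulation, through the existence of the Gysin pullback~$f^*$).
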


\begin{proof}
Soit $Z \subset X$ une sous-variété fermée irréductible génériquement finie sur~$Y$.  Notons $n>0$ le degré de~$Z$ sur~$Y$.
D'après la formule de projection (cf.~\cite[Example~8.1.7]{fulton}), on a $f_*(f^*y \cdot [Z])=y \cdot f_*[Z]=ny$ dans $\CH_0(Y \otimes_k k')$ pour tout $y \in \CH_0(Y \otimes_k k')$.
\end{proof}

Nous sommes maintenant en position d'établir le théorème~\ref{th:isochow}.

\begin{proof}[Démonstration du théorème~\ref{th:isochow}]
Grâce au lemme de Chow (cf.~\cite[5.6.1]{ega2}) et à Hironaka, on peut supposer~$X$ projective sur~$k$.
Les hypothèses et la conclusion du théorème~\ref{th:isochow} sont en effet
des invariants birationnels pour les variétés propres et lisses
(cf.~remarques~\ref{rqs:conje}~(vi) et~\ref{rq:hypothesesth}~(v)).

D'après les lemmes~\ref{lem:decompdiag} et~\ref{lem:conoyaumultisection}, il existe un entier $n>0$ annulant le noyau et le conoyau de
$f_*:\CH_0(X \otimes_k k_v) \to \CH_0(C \otimes_k k_v)$
pour toute place~$v$.
Soit~$S$ un ensemble fini de places de~$k$ contenant les places archimédiennes et les places divisant~$n$,
assez grand pour que~$X$ et~$C$ s'étendent en des $\sO_S$\nobreakdash-schémas projectifs et lisses~$\sX$ et~$\sC$ à fibres géométriques irréductibles,
pour que~$f$ s'étende en un $\sO_S$\nobreakdash-morphisme plat $f:\sX \to \sC$ et pour que pour tout $v \notin S$, la fibre de~$f$ au-dessus du point générique de $\sC \otimes_{\sO_S} \Fv$ soit lisse
et géométriquement irréductible.

Soient~$r>0$ un entier et $v \notin S$.  Posons $m=n^r$, $\sX_{\sO_v} = \sX \otimes_{\sO_S} \sO_v$,
$\sC_{\sO_v}=\sC\otimes_{\sO_S}\sO_v$ et notons $\eta_v=\Spec(k_v(C))$ le point générique de $\sC_{\sO_v}$.
Si~$T$ est un schéma, notons $T^{(1)}$ l'ensemble des points de~$T$ de codimension~$1$.
Rappelons que pour tout schéma irréductible régulier~$T$ sur lequel~$m$ est inversible, la suite spectrale de Leray pour
l'inclusion $j:\etaT \to T$ du point générique~$\etaT$ de~$T$ fournit une suite exacte
$$
\xymatrix{
0 \ar[r] & H^1(T,\Z/m\Z) \ar[r] & H^1(\etaT, \Z/m\Z) \ar[r] & \displaystyle\bigoplus_{t \in T^{(1)}} H^0(t, \Z/m\Z(-1))\rlap{\text{,}}
}
$$
où $\Z/m\Z(-1)=\Hom(\mmu_N,\Z/m\Z)$ (cf.~\cite[Ch.~III, Theorem~1.18]{milneet} et \cite[Lemma~2.2]{brusseltengan}).
Considérant cette suite pour $T=\sC_{\sO_v}$ et pour $T=\sX_{\sO_v}$, on obtient un diagramme commutatif
\\\vbox to 23.5ex{
\begin{align}
\raisetag{-10.5ex}
\mylabel{diag:coholoc}
\begin{aligned}
\vbox to 3.4ex{
\demicrochet
\xymatrix@C=3ex@R=8ex{
H^1(\sX_{\sO_v},\Z/m\Z) \ar@{^{ (}->}[r] & H^1(X \times_C \eta_v,\Z/m\Z) \ar[r] \vphantom{\sX_{\sO_v}} &
\vphantom{\sX_{\sO_v}}\smash[b]{\displaystyle\bigoplus_{P \in \sC_{\sO_v}^{(1)}} \; {\bigoplus_{\substack{Q \in \sX_{\sO_v}^{(1)}\\f(Q)=P}} H^0(Q,\Z/m\Z(-1))}}\\
\ar[u] H^1(\sC_{\sO_v},\Z/m\Z) \ar@{^{ (}->}[r] & H^1(\eta_v,\Z/m\Z) \ar[u] \ar[r] & \vphantom{H^1(\sC_{\sO_v})}\smash{\displaystyle\bigoplus_{P \in \sC_{\sO_v}^{(1)}} H^0(P,\Z/m\Z(-1)) \ar[u]_(.46){\times e_{Q/P}}}
}
}
\end{aligned}
\end{align}
}
dont les lignes sont exactes, où~$e_{Q/P}$ désigne la multiplicité, dans la fibre $f^{-1}(P)$, de la composante irréductible contenant~$Q$.

La nullité de $A_0(X_\etabar\otimes K)$ implique
celle de $H^i(X_\eta,\sO_{X_\eta})$ pour $i>0$ (cf.~\cite[Corollaire~22.18]{voisinlivre}).
D'après~\cite[Proposition~7.3~(iii)]{ctvoisin},
il s'ensuit
que pour~$P$ fixé,
le pgcd des entiers $e_{Q/P}$ apparaissant ci-dessus est égal à~$1$.
Par conséquent, la flèche verticale de droite de~(\ref{diag:coholoc}) est injective.
Le morphisme $f_{\sO_v}:\sX_{\sO_v} \to \sC_{\sO_v}$ déduit de~$f$ par changement de base étant plat, surjectif et à fibres géométriquement connexes,
on a $(f_{\sO_v})_*\Z/m\Z=\Z/m\Z$; compte tenu de la suite spectrale de Leray pour~$f_{\sO_v}$, la flèche verticale de gauche de~(\ref{diag:coholoc}) est donc elle aussi injective.
D'autre part,
comme $H^1(X_\etabar,\Q/\Z)=0$, on a $H^1(X_\etabar,\Z/m\Z)=0$.
La suite spectrale de Leray entraîne maintenant que
la flèche verticale du milieu de~(\ref{diag:coholoc}) un isomorphisme.

Il résulte de tout cela que la flèche verticale de gauche de~(\ref{diag:coholoc}) est un isomorphisme.
En vertu du théorème de changement de base propre,
celle-ci s'identifie à l'application $f_{\Fv}^*:H^1(\sC \otimes_{\sO_v}\Fv,\Z/m\Z)\to H^1(\sX\otimes_{\sO_v}\Fv,\Z/m\Z)$.
Par dualité de Pontrjagin, il s'ensuit que la flèche naturelle
$(f_{\Fv})_*:\pi_1^\ab(\sX\otimes_{\sO_v}\Fv)/m \to \pi_1^\ab(\sC\otimes_{\sO_v}\Fv)/m$ est un isomorphisme.

Si~$\sV$ désigne un $\sO_v$-schéma projectif et lisse à fibres géométriquement irréductibles,
un théorème de Saito et Sato~\cite[Corollary~0.10]{saitosato}
assure que le morphisme de spécialisation
$\CH_0(\sV \otimes_{\sO_v} k_v)/m \to \CH_0(\sV \otimes_{\sO_v} \Fv)/m$ est un isomorphisme
(l'inversibilité de~$m$ dans~$\sO_v$ est ici cruciale).
De plus, d'après un théorème de Kato et Saito~\cite[Theorem~1]{katosaito},
il existe un isomorphisme canonique et fonctoriel $\CH_0(\sV \otimes_{\sO_v} \Fv)/m \isoto \pi_1^\ab(\sV \otimes_{\sO_v} \Fv)/m$.
Appliquant cela à $\sV=\sX_{\sO_v}$ et $\sV=\sC_{\sO_v}$, on conclut que l'application
\begin{align}
\mylabel{eq:fetoile}
f_*:\CH_0(X \otimes_k k_v)/m \to \CH_0(C \otimes_k k_v)/m
\end{align}
est elle aussi un isomorphisme.

Notons~$N$ et~$N'$ le noyau et le conoyau de
$f_*:\CH_0(X \otimes_k k_v) \to \CH_0(C \otimes_k k_v)$.
Comme~$m$ annule~$N'$, la surjectivité de~(\ref{eq:fetoile}) assure que $N'=0$.
L'injectivité de~(\ref{eq:fetoile}) et
la suite exacte
$$
\xymatrix{
0 \ar[r] & N \ar[r] & \CH_0(X \otimes_k k_v) \ar[r]^{f_*} & \CH_0(C \otimes_k k_v) \ar[r] & 0
}
$$
fournissent maintenant une surjection $\tors{m}\CH_0(C \otimes_k k_v)\twoheadrightarrow N/m=N$,
grâce au lemme du serpent.
Rappelons que $m=n^r$.  Comme~$C$ est une courbe, le groupe
$\tors{m}\CH_0(C \otimes_k k_v)$ est fini; par conséquent
l'application $\varprojlim_{r>0} \left(\tors{n^r}\CH_0(C\otimes_k k_v)\right) \rightarrow N$
obtenue par passage à la limite est encore surjective.
Or le sous-groupe de torsion de $\CH_0(C \otimes_k k_v)$ ne contient pas d'élément infiniment
divisible non nul puisque~$C$ est une courbe et~$k_v$ un corps $p$\nobreakdash-adique (cf.~rappel~\ref{rappel:chcourbe}).
Ainsi $\varprojlim_{r>0} \left(\tors{n^r}\CH_0(C\otimes_k k_v)\right)=0$, d'où $N=0$.
\end{proof}

\begin{rmqs}
\mylabel{rem:pgcdmult}
(i) Les hypothèses du théorème~\ref{th:isochow} devraient être satisfaites par des classes de fibrations dont la fibre générique n'est pas rationnellement connexe.
Par exemple, le théorème~\ref{th:isochow} devrait s'appliquer si~$X_\etabar$ est une surface de Barlow (cf.~\cite[Ch.~VII, \textsection10.7]{barthetc}), ou plus généralement
une surface de type général simplement connexe telle que $H^i(X_\etabar,\sO_{X_\etabar})=0$ pour $i \in \{1,2\}$.
La nullité de $A_0(X_\etabar \otimes K)$ résulterait dans ce cas d'une conjecture de Bloch~\cite[Lecture~1]{blochlectures}.

(ii) La preuve du théorème~\ref{th:isochow} ne donne aucun contrôle sur l'ensemble fini~$S$ puisque celui-ci dépend de l'entier~$n$ fourni par le lemme~\ref{lem:decompdiag}.
Cependant, 
un raffinement des techniques de déformation de courbes rationnelles employées dans~\cite{kollarszabo} permet d'établir,
au prix d'une démonstration plus délicate que celle du théorème~\ref{th:isochow},
que si la fibre générique de~$f$ est rationnellement connexe,
l'ensemble~$S$ peut être choisi égal à l'ensemble des places archimédiennes et des places finies en lesquelles~$f$ ne se réduit pas
en un morphisme de fibre générique géométrique séparablement rationnellement connexe
entre variétés propres et lisses.
\end{rmqs}

\section{Réduction des théorèmes~\ref{th:principalEun} et~\ref{th:principalE} à l'existence de zéro-cycles au-dessus d'une classe d'équivalence linéaire fixée}
\mylabel{sec:reductionP}

\medskip
Le but de ce paragraphe est de démontrer la

\begin{prop}
\mylabel{prop:reduc}
Soit~$X$ une variété irréductible, propre et lisse sur un corps de nombres~$k$,
munie d'un morphisme $f:X \to C$ de fibre générique géométriquement irréductible, où~$C$ est une courbe lisse et géométriquement irréductible.
Supposons que pour tout $c \in C$, le pgcd des multiplicités des composantes irréductibles de la fibre~$X_c$
soit égal à~$1$.
Considérons la propriété suivante, qui dépend d'un ensemble fini $S \subset \Omega$:
\begin{enumerate}
\item[$(\Psub{S})$]
Soient $y \in \CH_0(C)$ et $(z_v)_{v \in \Omega} \in \prod_{v \in \Omega} \CH_0(X \otimes_k k_v)$.  Si la famille $(z_v)_{v \in \Omega}$ est orthogonale à $\Br_\vert(X/C)$
pour l'accouplement de Brauer--Manin
et si $f_*z_v = y$ dans $\CH_0(C \otimes_k k_v)$ pour tout $v \in \Omega$,
alors pour tout entier $n>0$, il existe
$z \in \CH_0(X)$ tel que $f_*z = y$ dans $\CH_0(C)$ et tel que pour tout $v \in S$, on ait $z=z_v$ dans $\CH_0(X \otimes_k k_v)/n$ si~$v$ est finie
et $z=z_v+N_{\kbar_v/k_v}(u_v)$ dans $\CH_0(X \otimes_k k_v)$ pour un $u_v \in \CH_0(X \otimes_k \kbar_v)$ si~$v$ est réelle.
\end{enumerate}
On a alors:
\begin{enumerate}
\item[(i)] Si la propriété~$(\Psub{\emptyset})$ est satisfaite et si le groupe de Tate--Shafarevich de la jacobienne de~$C$ ne contient pas d'élément infiniment divisible non nul,
la variété~$X$ vérifie
l'énoncé de la conjecture~$(E_1)$.

\item[(ii)]
Notons~$K$ une clôture algébrique du corps des fonctions de la fibre générique géométrique~$X_\etabar$ de~$f$.
Si
$A_0(X_\etabar \otimes K)=0$ et $H^1(X_\etabar,\Q/\Z)=0$
et si la propriété~$(\Psub{S})$ est satisfaite pour tout ensemble fini $S \subset \Omega$,
le complexe~(\ref{diag:complexeErelatif}) est une suite exacte.

\item[(iii)] Si les hypothèses de~{\rm(ii)} sont satisfaites et si de plus
le groupe de Tate--Shafarevich de la jacobienne de~$C$ ne contient pas d'élément infiniment divisible non nul, la variété~$X$ vérifie l'énoncé de la conjecture~$(E)$.
\end{enumerate}
\end{prop}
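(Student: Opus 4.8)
\medskip
Le plan est de traiter les trois assertions s\'epar\'ement, en se servant de la propri\'et\'e~$(\Psub{S})$ comme d'une bo\^\i te noire. Les ingr\'edients auxiliaires seront: le th\'eor\`eme~\ref{th:isochow}, qui contr\^ole $f_*$ sur les groupes de Chow locaux hors d'un ensemble fini de places; la conjecture~$(E)$ pour la courbe~$C$, valable sous l'hypoth\`ese sur le groupe de Tate--Shafarevich (remarque~\ref{rqs:conje}~(iv)); la d\'ecomposition en produit $\CHzAchapeau(X)=\prod_{v \in \Omega_f}\CHzchapeau(X \otimes_k k_v)\times\prod_{v\in\Omega_\infty}\Coker(N_{\kbar_v/k_v})$ (cons\'equence de ce que limite projective et passage au quotient commutent aux produits, et que les facteurs archim\'ediens sont tu\'es par~$2$); la formule de projection $\mylangle A,f_*z\myrangle=\mylangle f^*A,z\myrangle$ pour l'accouplement de Brauer--Manin; et enfin le fait que $f_*$ pr\'eserve le degr\'e et que, la fibre g\'en\'erique de~$f$ \'etant g\'eom\'etriquement irr\'eductible, $f_*:\CH_0(X)\to\CH_0(C)$ est surjective (seules un nombre fini de fibres ne sont pas g\'eom\'etriquement irr\'eductibles, et une vari\'et\'e g\'eom\'etriquement irr\'eductible sur un corps de nombres porte un $0$\nobreakdash-cycle de degr\'e~$1$, tandis que $\CH_0(C)$ est engendr\'e par les points ferm\'es situ\'es dans un ouvert dense fix\'e de~$C$).

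\emph{Assertion~(i).} \'Etant donn\'ee une famille $(z_v)_{v\in\Omega}$ de $0$\nobreakdash-cycles de degr\'e~$1$ orthogonale \`a~$\Br(X)$, la formule de projection montrera que $(f_*z_v)_v$ est orthogonale \`a~$\Br(C)$; la conjecture~$(E)$ pour~$C$ fournira alors un \'el\'ement de $\CHzchapeau(C)$ d'image $(f_*z_v)_v$ dans $\CHzAchapeau(C)$, et comme il est de degr\'e~$1$ le rappel~\ref{rappel:chcourbe} en fera l'image d'un $y\in\CH_0(C)$. Le m\^eme rappel (joint \`a la trivialit\'e des facteurs archim\'ediens modulo~$2$) donnera que $y$ et $f_*z_v$ ont m\^eme image dans $\CH_0(C \otimes_k k_v)$ aux places finies et dans $\Coker(N_{\kbar_v/k_v})$ aux places r\'eelles; en ajoutant \`a~$z_v$, aux places r\'eelles, un cycle de la forme $N_{\kbar_v/k_v}(u_v)$ --- invisible pour l'accouplement de Brauer--Manin en~$v$ --- on se ram\`enera \`a $f_*z_v=y$ dans $\CH_0(C \otimes_k k_v)$ pour tout~$v$. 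La famille \'etant \emph{a fortiori} orthogonale \`a $\Br_\vert(X/C)$, la propri\'et\'e~$(\Psub{\emptyset})$ (avec $n=1$) livrera $z\in\CH_0(X)$ tel que $f_*z=y$, d'o\`u $\deg(z)=\deg(y)=1$.

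\emph{Assertion~(ii).} Que~(\ref{diag:complexeErelatif}) soit un complexe r\'esulte de la loi de r\'eciprocit\'e. Partant de $(z_v)_v\in\CHzA(X/C)$ orthogonale \`a $\Br(X)/f^*\Br(C)$, donc \`a $\Br(X)$ et \`a $\Br_\vert(X/C)$, on prendra l'ensemble fini $S_0\supset\Omega_\infty$ du th\'eor\`eme~\ref{th:isochow}: hors de~$S_0$, $f_*$ est un isomorphisme, donc $z_v=0$. La propri\'et\'e~$(\Psub{S_0})$, appliqu\'ee avec $y=0$ et un entier~$n$ \`a pr\'eciser, fournira $z\in\CH_0(X/C)$ \'egal \`a~$z_v$ modulo~$n$ aux places finies de~$S_0$, \'egal \`a~$z_v$ dans $\Coker(N_{\kbar_v/k_v})$ aux places r\'eelles de~$S_0$, et nul $=z_v$ aux places finies hors de~$S_0$. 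Il restera \`a promouvoir les congruences en \'egalit\'es: le noyau $N_v$ de $f_*:\CH_0(X\otimes_kk_v)\to\CH_0(C\otimes_kk_v)$ est tu\'e par un entier ind\'ependant de~$v$ (lemme~\ref{lem:decompdiag}), et un \'enonc\'e de finitude issu de~\cite{saitosato} (finitude de la torsion de $\CH_0$ des vari\'et\'es propres et lisses sur un corps $p$\nobreakdash-adique) assurera que $N_v$ est fini et s'injecte dans $\CHzchapeau(X\otimes_kk_v)$, donc dans $\CH_0(X\otimes_kk_v)/n$ pour~$n$ assez divisible; comme $z-z_v\in N_v$, un tel choix de~$n$ donnera $z=z_v$ partout, de sorte que~$z$ rel\`eve $(z_v)_v$.

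\emph{Assertion~(iii).} Que~(\ref{eq:seconje}) soit un complexe r\'esulte de la loi de r\'eciprocit\'e et de la densit\'e de l'image de $\CH_0(X)$ dans $\CHzchapeau(X)$. Partant de $\zchapeau_\A\in\CHzAchapeau(X)$ orthogonal \`a~$\Br(X)$, la formule de projection et la conjecture~$(E)$ pour~$C$ feront de $f_*\zchapeau_\A$ l'image d'un \'el\'ement de $\CHzchapeau(C)$; quitte \`a retrancher \`a~$\zchapeau_\A$ un multiple $p$\nobreakdash-adique d'un point ferm\'e de~$X$ (ce qui conserve l'orthogonalit\'e \`a~$\Br(X)$), on pourra le supposer de degr\'e entier, donc, par le rappel~\ref{rappel:chcourbe}, image d'un $y\in\CH_0(C)$; la surjectivit\'e de~$f_*$ donnera $z_0\in\CH_0(X)$ avec $f_*z_0=y$, puis $f_*(\zchapeau_\A-z_0)=0$ dans $\CHzAchapeau(C)$. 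La d\'ecomposition en produit de $\CHzAchapeau(X)$ et le lemme~\ref{lem:noyauchapeau} appliqu\'e place par place \`a~$f_*$ (noyau et conoyau d'exposant fini d'apr\`es les lemmes~\ref{lem:decompdiag} et~\ref{lem:conoyaumultisection}, torsion finie du but d'apr\`es le rappel~\ref{rappel:chcourbe}) identifieront le noyau de $f_*:\CHzAchapeau(X)\to\CHzAchapeau(C)$ \`a l'image de $\CHzA(X/C)$; \'ecrivant $\zchapeau_\A-z_0$ comme l'image de $(w_v)_v\in\CHzA(X/C)$ --- famille orthogonale \`a~$\Br(X)$, puisque $z_0$ provient de $\CH_0(X)$, donc \`a $\Br(X)/f^*\Br(C)$ --- l'assertion~(ii) la rel\`evera en $w\in\CH_0(X/C)$, et $\zchapeau_\A$ se trouvera \^etre l'image de $z_0+w$. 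Le point que je m'attends \`a voir r\'esister est, dans l'assertion~(ii), le passage des congruences modulo~$n$ aux \'egalit\'es v\'eritables, qui repose crucialement sur la finitude de la torsion du groupe de Chow des $0$\nobreakdash-cycles sur les corps $p$\nobreakdash-adiques tir\'ee de~\cite{saitosato} --- ingr\'edient dont l'assertion~(i), relative \`a la seule conjecture~$(E_1)$, est ind\'ependante.
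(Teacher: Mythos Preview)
Il y a une erreur r\'ecurrente: dans~(i) et dans~(iii), vous invoquez le rappel~\ref{rappel:chcourbe} pour relever un \'el\'ement de $\CHzchapeau(C)$ de degr\'e entier en un \'el\'ement de $\CH_0(C)$, mais ce rappel ne porte que sur les courbes sur un corps $p$\nobreakdash-adique. Sur le corps de nombres~$k$, l'application $\CH_0(C)\to\CHzchapeau(C)$ est loin d'\^etre surjective en restriction aux \'el\'ements de degr\'e entier (penser \`a une courbe elliptique de rang~$\geq 1$: $\widehat{E(k)}$ contient strictement~$E(k)$). De m\^eme, votre affirmation que $f_*:\CH_0(X)\to\CH_0(C)$ est surjective est fausse en g\'en\'eral: une vari\'et\'e g\'eom\'etriquement irr\'eductible sur~$k(c)$ n'a pas toujours de $0$\nobreakdash-cycle de degr\'e~$1$ (une conique sans point suffit); seul le lemme~\ref{lem:conoyaumultisection} (conoyau d'exposant fini) est disponible. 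Enfin, dans~(ii), la \og{}finitude de la torsion de $\CH_0$ sur un corps $p$\nobreakdash-adique\fg{} que vous attribuez \`a~\cite{saitosato} n'est pas ce que ce travail \'etablit et n'est pas connue en g\'en\'eral pour une vari\'et\'e propre et lisse arbitraire.

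L'article contourne ces obstacles sans chercher \`a relever~$\ychapeau$ exactement. Pour~(i), il s'appuie sur le lemme~\ref{lem:finitudebrvert} (il existe $B\subset\Br_\vert(X/C)$ fini, d'exposant~$N_2$, tel que $\Br_\vert(X/C)=B+f^*\Br(C)$; c'est ici qu'intervient l'hypoth\`ese de pgcd~$1$): on ne rel\`eve~$\ychapeau$ que modulo $N_1N_2N_3$, puis l'on modifie $(z_v)$ par un multiple de~$N_2N_3$ d'une famille auxiliaire et par un multiple d'un point ferm\'e de fa\c con que $f_*z_{1,v}=y_1\in\CH_0(C)$ exactement; la famille modifi\'ee reste orthogonale \`a~$B$ (tu\'e par~$N_2$) et \`a $f^*\Br(C)$ (formule de projection), donc \`a $\Br_\vert(X/C)$. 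Pour~(iii), le rel\`evement direct de~$\ychapeau$ est remplac\'e par la surjectivit\'e de $(y,\zchapeau)\mapsto y+f_*\zchapeau:\CH_0(C)\times\CHzchapeau(X)\to\CHzchapeau(C)$ (lemme~\ref{lem:chCchzXchzC}, tir\'e du lemme~\ref{lem:divisible} et du lemme~\ref{lem:conoyaumultisection}), puis par le lemme~\ref{lem:conoyauchapeau} pour relever~$y$ dans $\CHzA(X)$; votre emploi du lemme~\ref{lem:noyauchapeau} place par place est en revanche correct et correspond \`a l'argument de l'article. Pour~(ii), l'article choisit $n=N_1N_2$ avec $N_1$ tuant $\Ker(f_*)$ (lemme~\ref{lem:decompdiag}) et $N_2$ tuant la torsion de $\CH_0(C\otimes_kk_v)$ pour les $v\in S$ finis (rappel~\ref{rappel:chcourbe}, appliqu\'e cette fois localement): si $z=z_v+N_1N_2w$ avec $f_*z=f_*z_v$, alors $f_*w$ est de torsion, donc $N_2f_*w=0$, donc $N_2w\in\Ker(f_*)$, donc $N_1N_2w=0$ --- aucun r\'esultat de finitude sur $\CH_0(X\otimes_kk_v)$ n'est requis.
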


Afin d'établir le théorème~\ref{th:principalEun}
(resp.~\ref{th:principalE} ou~\ref{th:principalErelatif}), il nous suffira donc d'établir~$(\Psub{\emptyset})$ (resp.~$(\Psub{S})$ pour tout~$S$) sous les
hypothèses dudit théorème.

\begin{proof}[Démonstration de la proposition~\ref{prop:reduc}]
D'après Saito et Colliot-Thélène, la seconde ligne
du diagramme commutatif
\begin{align}
\mylabel{diag:EXEC}
\begin{aligned}
\xymatrix{
\CHzchapeau(X) \ar[r] \ar[d]^{f_*} & \CHzAchapeau(X) \ar[r] \ar[d]^{f_*} & \Hom(\Br(X),\Q/\Z) \ar[d]^{\Hom(f^*,\Q/\Z)} \\
\CHzchapeau(C) \ar[r] & \CHzAchapeau(C) \ar[r] & \Hom(\Br(C),\Q/\Z)
}
\end{aligned}
\end{align}
est exacte si le groupe de Tate--Shafarevich de la jacobienne de~$C$ ne contient pas d'élément infiniment divisible non nul
(cf.~remarque~\ref{rqs:conje}~(iv)).

Afin de démontrer~(i), supposons la propriété $(\Psub{\emptyset})$ satisfaite et la seconde ligne de~(\ref{diag:EXEC}) exacte.  Fixons une famille $(z_v)_{v \in \Omega} \in \prod_{v \in \Omega} \CH_0(X\otimes_k k_v)$
orthogonale à $\Br(X)$, vérifiant $\deg(z_v)=1$ pour tout $v \in \Omega$. Notons~$z_\A$ son
image dans~$\CHzA(X)$.
D'après l'exactitude de la seconde ligne de~(\ref{diag:EXEC}), il existe $\ychapeau \in \CHzchapeau(C)$ tel que
$f_*z_\A=\ychapeau$ dans $\CHzAchapeau(C)$.
Le~conoyau de $f_*:\CHzA(X) \to \CHzA(C)$ est d'exposant fini (cf.~lemme~\ref{lem:conoyaumultisection});
notons~$N_1$ son exposant.

\gdef\citectsk{\cite[Lemma~3.1]{ctskrevisited}}%
\begin{lem}[{\rm{cf.}}~\citectsk]
\mylabel{lem:finitudebrvert}
Soit $f:X\to Y$ un morphisme
de fibre générique géométriquement irréductible entre variétés irréductibles et lisses sur un corps~$k$ de caractéristique~$0$.
Supposons que le pgcd des multiplicités des composantes irréductibles de la fibre de~$f$ au-dessus de chaque point de codimension~$1$ de~$Y$ soit égal à~$1$.
Alors il existe un sous-groupe fini $B \subset \Br_\vert(X/Y)$ tel que $\Br_\vert(X/Y)=B + f^*\Br(Y)$.
\end{lem}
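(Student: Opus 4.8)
The plan is to analyse the residues of vertical classes along the codimension-one points of~$Y$ and to bound the number of possibilities. Write $L=k(Y)$, $K=k(X)$ and let $\Lambda=\{\alpha\in\Br(L):f^*\alpha\in\Br(X)\}$; since $f\colon X\to Y$ is a morphism of schemes one has $\Br(Y)\subset\Lambda$, and by the very definition of the vertical Brauer group $\Br_\vert(X/Y)=f^*\Lambda$. It therefore suffices to show that $\Lambda/\Br(Y)$ is finite: if $\alpha_1,\dots,\alpha_n\in\Lambda$ have classes generating this quotient, the group $B=\langle f^*\alpha_1,\dots,f^*\alpha_n\rangle\subset\Br(X)$ is a finitely generated subgroup of a torsion group, hence finite, and $B+f^*\Br(Y)=f^*\bigl(\langle\alpha_1,\dots,\alpha_n\rangle+\Br(Y)\bigr)=f^*\Lambda=\Br_\vert(X/Y)$.

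To bound $\Lambda/\Br(Y)$ I would invoke purity for the Brauer group on the smooth (hence regular) $k$-schemes $Y$ and~$X$, which is legitimate since $\mathrm{char}(k)=0$: a class of $\Br(L)$ belongs to $\Br(Y)$ if and only if all its residues $\partial_P\colon\Br(L)\to H^1(\kappa(P),\Q/\Z)$ at the points $P\in Y^{(1)}$ vanish, and likewise for~$X$; moreover each class of $\Br(L)$ has only finitely many non-zero residues. Fix $\alpha\in\Lambda$ and $P\in Y^{(1)}$, and let $\pi$ be a uniformiser at~$P$. Near the fibre over~$P$ the scheme $X_P$ is the zero locus of the single function $f^*\pi$ on the regular scheme~$X$, hence is pure of codimension one in~$X$; thus every irreducible component of $X_P$ is a point $Q\in X^{(1)}$ lying over~$P$, with multiplicity $e_{Q/P}=v_Q(f^*\pi)$ in~$X_P$. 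The standard functoriality of residues under a ramified extension of discrete valuations gives $\partial_Q(f^*\alpha)=e_{Q/P}\cdot\mathrm{res}_{\kappa(Q)/\kappa(P)}(\partial_P\alpha)$, so the requirement $f^*\alpha\in\Br(X)$ forces $\partial_P\alpha$ into the subgroup
$$G_P=\bigl\{\chi\in H^1(\kappa(P),\Q/\Z):e_{Q/P}\cdot\mathrm{res}_{\kappa(Q)/\kappa(P)}(\chi)=0\text{ for every component }Q\text{ of }X_P\bigr\}.$$
Passing to residues therefore yields an injection $\Lambda/\Br(Y)\hookrightarrow\bigoplus_{P\in Y^{(1)}}G_P$, and one is reduced to proving that each $G_P$ is finite and that $G_P=0$ for all but finitely many~$P$.

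The vanishing of $G_P$ away from a finite set is immediate: by generic smoothness in characteristic~$0$ and the spreading-out of geometric irreducibility of the generic fibre, there is a dense open $Y^0\subset Y$ over which $f$ is smooth with geometrically irreducible fibres; for $P\in(Y^0)^{(1)}$ the fibre $X_P$ is then a single geometrically integral component~$Q$ of multiplicity~$1$, so $\kappa(P)$ is algebraically closed in $\kappa(Q)$, the map $H^1(\kappa(P),\Q/\Z)\to H^1(\kappa(Q),\Q/\Z)$ is injective, and $G_P=0$. Only the finitely many $P\in Y\setminus Y^0$ remain.

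The finiteness of an individual $G_P$ is the main point. It rests on the remark that for any finitely generated field extension $\kappa(Q)/\kappa(P)$ the kernel $R_Q=\ker\bigl(\mathrm{res}\colon H^1(\kappa(P),\Q/\Z)\to H^1(\kappa(Q),\Q/\Z)\bigr)$ is \emph{finite}: it consists of the continuous characters of the absolute Galois group $\Gamma_{\kappa(P)}$ that are trivial on the image of~$\Gamma_{\kappa(Q)}$, a subgroup of finite index, hence trivial on its normal core, which also has finite index. Put $R_P=\sum_Q R_Q$, a finite subgroup of $H^1(\kappa(P),\Q/\Z)$, the sum running over the finitely many components~$Q$ of~$X_P$. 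Given $\chi\in G_P$, let $m_Q$ be the order of $\mathrm{res}_{\kappa(Q)/\kappa(P)}(\chi)$; then $m_Q\mid e_{Q/P}$, while $\mathrm{res}_{\kappa(Q)/\kappa(P)}(m_Q\chi)=0$ shows $m_Q\chi\in R_Q\subset R_P$. Since $\gcd_Q m_Q$ divides $\gcd_Q e_{Q/P}=1$, a Bézout relation $\sum_Q n_Q m_Q=1$ gives $\chi=\sum_Q n_Q(m_Q\chi)\in R_P$. Hence $G_P\subset R_P$ is finite, which completes the argument. The only non-formal ingredients are the purity statements and the residue formula; once these are available everything reduces to elementary manipulations with characters of Galois groups, the one step genuinely exploiting the hypothesis being the Bézout argument, where $\gcd_Q e_{Q/P}=1$ enters.
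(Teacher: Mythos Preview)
Your proof is correct and follows essentially the same route as the paper's: reduce to the finiteness of $\Lambda/\Br(Y)$ via the purity/residue sequences, use the residue formula $\partial_Q(f^*\alpha)=e_{Q/P}\cdot\mathrm{res}(\partial_P\alpha)$, observe that only the finitely many bad $P$ contribute, and bound $G_P$ using $\gcd_Q e_{Q/P}=1$. The only cosmetic difference is in the final step: the paper fixes a single finite extension $K_m/k(m)$ containing all the $K_{m,i}$ and notes that $e_{m,i}\cdot\mathrm{res}_{K_m}(\chi)=0$ for all $i$ forces $\mathrm{res}_{K_m}(\chi)=0$, whereas you run the B\'ezout argument directly on the orders $m_Q$ to land in $\sum_Q R_Q$; both arguments are equivalent.
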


\begin{proof}
Soit $Y^0 \subset Y$ un ouvert dense au-dessus duquel les fibres de~$f$ sont géométriquement irréductibles et lisses.
Notons~$M$ l'ensemble (fini) des points de codimension~$1$ de~$Y$ n'appartenant pas à~$Y^0$. Pour $m \in M$,
notons $(X_{m,i})_{i \in I_m}$ la famille des composantes irréductibles de~$f^{-1}(m)$ et
pour chaque $i \in I_m$, notons~$e_{m,i}$ la multiplicité de $X_{m,i}$ dans $f^{-1}(m)$ et
$\Res:H^1(k(m),\Q/\Z) \to H^1(k(X_{m,i}),\Q/\Z)$
l'application de restriction de~$k(m)$ au corps des fonctions $k(X_{m,i})$ de~$X_{m,i}$.
On dispose d'un diagramme commutatif
\begin{align}
\mylabel{diag:finitudebrvert}
\begin{aligned}
\xymatrix{
0 \ar[r] & \ar[d]^{f^*} \Br(Y) \ar[r] & \ar[d]^{f^*} \Br(Y^0) \ar[r]^(.4)\delta &
\smash[b]{\displaystyle\bigoplus_{m\in M} H^1(k(m),\Q/\Z)}\vphantom{\Br(Y)}
\ar[d]^{\bigoplus e_{m,i}\Res} \\
0 \ar[r] & \Br(X) \ar[r] & \Br(f^{-1}(Y^0)) \ar[r] & \displaystyle \bigoplus_{m\in M} \bigoplus_{\;i \in I_m} H^1(k(X_{m,i}),\Q/\Z)
}
\end{aligned}
\end{align}
dont les lignes sont exactes (cf.~\cite[Théorème~6.1, p.~134]{grbr3}
et~\cite[Proposition~1.1.1]{ctsd94}).
Notons~$K_{m,i}$ la fermeture algébrique de~$k(m)$ dans $k(X_{m,i})$ et fixons, pour chaque $m \in M$, une extension finie $K_m/k(m)$ dans laquelle se plongent les~$K_{m,i}$.
Le noyau de la flèche verticale de droite de~(\ref{diag:finitudebrvert}) s'identifie au noyau de
$$
\bigoplus_{m \in M, \mkern2mu i \in I_m} e_{m,i}\Res : \bigoplus_{m \in M} H^1(k(m),\Q/\Z) \longrightarrow \bigoplus_{m \in M}\bigoplus_{i \in I_m} H^1(K_{m,i},\Q/\Z) \rlap{\text{,}}
$$
qui est inclus dans celui de
$$
\bigoplus_{m \in M}\Res : \bigoplus_{m \in M} H^1(k(m),\Q/\Z) \longrightarrow \bigoplus_{m \in M} H^1(K_m,\Q/\Z)
$$
puisque les $e_{m,i}$ pour~$m$ fixé sont premiers entre eux.
L'extension finie $K_m/k(m)$ ne contenant qu'un nombre fini de sous-extensions cycliques, il s'ensuit que le noyau de la flèche verticale de droite de~(\ref{diag:finitudebrvert}) est fini.
Soit $\uplet{b_1}{b_n} \in \Br(Y^0)$ un système de représentants modulo $\Br(Y)$ de l'image réciproque de ce noyau par~$\delta$.
Comme le groupe $\Br(Y^0)$ est de torsion,
il résulte du diagramme~(\ref{diag:finitudebrvert}) que le sous-groupe $B \subset \Br_\vert(X/Y)$ engendré par les $f^*b_i$ vérifie les conditions requises,
compte tenu que $\Br_\vert(X/Y)=\Br(X) \cap f^*\mkern-2mu\Br(Y^0)$.
\end{proof}

Soit $B \subset \Br_\vert(X/C)$ un sous-groupe fini satisfaisant la conclusion du lemme~\ref{lem:finitudebrvert}.  Soit~$N_2$ son exposant.  Soit enfin~$N_3$ le degré d'un point fermé $P \in X$.
Choisissons un relèvement $y \in \CH_0(C)$ de l'image de $\ychapeau$ dans $\CH_0(C)/N_1N_2N_3$.
On a alors $f_*z_\A=y$ dans $\CHzA(C)/N_1N_2N_3$; d'où
\begin{align}
\mylabel{eq:defzzeroA}
f_*z_\A = y+N_2N_3f_*z_{0,\A}
\end{align}
dans $\CHzA(C)$ pour un $z_{0,\A} \in \CHzA(X)$.
Notons~$d$ le degré de la composante $v$\nobreakdash-adique de~$z_{0,\A}$ pour un $v \in \Omega_f$ (cet entier ne dépend pas de~$v$ d'après~(\ref{eq:defzzeroA}))
puis posons $z_{1,\A}=z_\A-N_2N_3z_{0,\A}+ dN_2P$
et $y_1 = y + dN_2f_*P$, de sorte que
\begin{align}
\mylabel{eq:flzy}
f_*z_{1,\A}=y_1
\end{align}
dans $\CHzA(C)$.

Comme $y_1 \in \CH_0(C)$, il résulte de~(\ref{eq:flzy}) et de la commutativité du diagramme~(\ref{diag:EXEC})
que la famille~$z_{1,\A}$ est orthogonale à $f^*\Br(C)$ pour l'accouplement de Brauer--Manin.
D'autre part elle est orthogonale à~$B$ puisque~$z_\A$ est elle-même orthogonale à~$B$, que~$N_2$ annule~$B$
et que l'image de~$P$ dans $\CHzA(X)$ provient de $\CH_0(X)$.
Par conséquent~$z_{1,\A}$ est orthogonale à $\Br_\vert(X/C)$.
Soit $(z_{1,v})_{v \in \Omega} \in \prod_{v \in \Omega} \CH_0(X \otimes_k k_v)$ un relèvement de $z_{1,\A} \in \CHzA(X)$.
Comme l'application $f_*:\CH_0(X \otimes_k \kbar_v) \to \CH_0(C \otimes_k \kbar_v)$ est surjective pour tout $v \in \Omega_\infty$,
on peut supposer que
$f_*z_{1,v}=y_1$ dans $\CH_0(C \otimes_k k_v)$ pour tout $v \in \Omega$
quitte à modifier le choix des relèvements~$z_{1,v}$ pour $v \in \Omega_\infty$.

La~propriété~($\Psub{\emptyset}$) entraîne maintenant l'existence de $z \in \CH_0(X)$ tel que $f_*z=y_1$,
ce qui conclut la démonstration de~$(E_1)$ puisque $\deg(z)=\deg(y_1)=1$.  L'assertion~(i) de la proposition est donc établie.

Avant de démontrer~(ii) et~(iii), prouvons le

\begin{lem}
\mylabel{lem:ameliorationps}
Supposons la propriété~$(\Psub{S})$ satisfaite pour tout ensemble fini $S \subset \Omega$. Supposons
que $A_0(X_\etabar \otimes K)=0$ et $H^1(X_\etabar,\Q/\Z)=0$.
Soit
$z_\A \in \CHzA(X)$
orthogonale à $\Br_\vert(X/C)$.
Soit $y \in \CH_0(C)$.
Si~$y$ et~$z_\A$ ont même image dans $\CHzA(C)$,
il existe $z \in \CH_0(X)$ ayant pour images~$y$ dans~$\CH_0(C)$ et~$z_\A$ dans $\CHzA(X)$.
\end{lem}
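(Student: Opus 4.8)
The plan is to bootstrap from property~$(\Psub{S})$, which only controls the local classes modulo a fixed integer. Theorem~\ref{th:isochow} reduces the gap between a congruence modulo~$n$ and an equality of local zero-cycle classes to finitely many places, and at each such place the finiteness of the torsion of the Chow group of a curve over a local field forces the kernel of $f_*$ on $\CH_0(X\otimes_kk_v)$ to contain no nonzero infinitely divisible element, so that a congruence modulo a suitably divisible integer becomes an equality. This local statement is the main obstacle; the rest is bookkeeping.

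First I would fix, using Lemmas~\ref{lem:decompdiag} and~\ref{lem:conoyaumultisection}, an integer $n_0>0$ annihilating the kernel and the cokernel of $f_*\colon\CH_0(X\otimes_kk')\to\CH_0(C\otimes_kk')$ for every field extension $k'/k$; in particular $n_0$ kills $\Ker(f_*\colon\CH_0(X)\to\CH_0(C))$ and, for each $v\in\Omega$, the group $N_v=\Ker(f_*\colon\CH_0(X\otimes_kk_v)\to\CH_0(C\otimes_kk_v))$. Since $A_0(X_\etabar\otimes K)=0$ and $H^1(X_\etabar,\Q/\Z)=0$, Theorem~\ref{th:isochow} applies (note that $C$ is proper, being the image of the proper variety~$X$ under the surjective morphism~$f$, and that $X$ and $C$ are geometrically irreducible), and yields a finite set $S_0\subset\Omega$, which I enlarge so as to contain $\Omega_\infty$, such that $f_*\colon\CH_0(X\otimes_kk_v)\to\CH_0(C\otimes_kk_v)$ is an isomorphism for every $v\notin S_0$. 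Put $S_1=S_0\cap\Omega_f$.

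Next I would prove the key local statement: for each $v\in S_1$ there is an integer $N'_v>0$, a multiple of~$n_0$, such that $N_v\cap N'_v\mkern2muA_0(X\otimes_kk_v)=0$. Let $I_v$ be the image of $f_*\colon A_0(X\otimes_kk_v)\to A_0(C\otimes_kk_v)$; by Rappel~\ref{rappel:chcourbe} the torsion subgroup of $\CH_0(C\otimes_kk_v)$ is finite, hence so is that of~$I_v$, say of exponent~$e_v$. Applying the snake lemma to the exact sequence $0\to N_v\to A_0(X\otimes_kk_v)\to I_v\to0$ and to multiplication by a multiple~$m$ of~$n_0$, one identifies $N_v\cap m\mkern2muA_0(X\otimes_kk_v)$ with the image of the connecting homomorphism $\tors{m}{I_v}\to N_v/mN_v=N_v$. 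For $\xi\in\tors{m}{I_v}$ one has $e_v\xi=0$, so any lift $\tilde\xi\in A_0(X\otimes_kk_v)$ satisfies $e_v\tilde\xi\in N_v$; as $N_v$ is killed by~$n_0$, the element $m\tilde\xi=(m/e_v)(e_v\tilde\xi)$ then vanishes whenever $m$ is a multiple of~$n_0e_v$. Hence one may take $N'_v=n_0e_v$, and I set $N=\mathrm{lcm}_{v\in S_1}N'_v$, still a multiple of~$n_0$.

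Finally I would conclude. After lifting the archimedean components of~$z_\A$ to honest zero-cycle classes and correcting them using the surjectivity of $f_*$ over~$\kbar_v$ (which holds because the multiplicities of the components of each fibre of~$f$ have greatest common divisor~$1$), one may assume that $z_\A$ is represented by a family $(z_v)_{v\in\Omega}\in\prod_{v\in\Omega}\CH_0(X\otimes_kk_v)$, orthogonal to $\Br_\vert(X/C)$, with $f_*z_v=y$ in $\CH_0(C\otimes_kk_v)$ for every~$v$. Property~$(\Psub{S_0})$, applied with $n=N$, then produces $z\in\CH_0(X)$ with $f_*z=y$ in $\CH_0(C)$, with $z=z_v$ in $\CH_0(X\otimes_kk_v)/N$ for $v\in S_1$, and with $z=z_v+N_{\kbar_v/k_v}(u_v)$ in $\CH_0(X\otimes_kk_v)$ for $v\in S_0\cap\Omega_\infty$. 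For $v\in S_1$ the class $z-z_v$ lies in~$N_v$ (its image under $f_*$ vanishes) and, being of degree~$0$, lies in $N\mkern2muA_0(X\otimes_kk_v)$, whence $z-z_v\in N_v\cap N\mkern2muA_0(X\otimes_kk_v)=0$. For $v\notin S_0$ one has $f_*z=y=f_*z_v$ in $\CH_0(C\otimes_kk_v)$, so $z=z_v$ by injectivity of~$f_*$. For $v\in S_0\cap\Omega_\infty$ the displayed relation says exactly that $z$ and $z_v$ have the same image in $\Coker(N_{\kbar_v/k_v})$. Hence $z$ has image~$y$ in $\CH_0(C)$ and image~$z_\A$ in $\CHzA(X)$, as required.
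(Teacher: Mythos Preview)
Your proof is correct and follows essentially the same approach as the paper's. Both arguments use Theorem~\ref{th:isochow} to reduce to finitely many finite places, apply $(\Psub{S})$ with a modulus of the form $n_0$ times the exponent of the relevant torsion in $\CH_0(C\otimes_kk_v)$, and then show that the resulting congruence forces equality. The only cosmetic difference is that you package the key local step as the vanishing of $N_v\cap N\mkern2muA_0(X\otimes_kk_v)$ via the snake lemma, whereas the paper argues directly with an explicit witness $z_{2,v}$ of the congruence $z\equiv z_v\pmod{N_1N_2}$, observing that $f_*z_{2,v}$ is torsion, hence killed by~$N_2$, so that $N_2z_{2,v}\in N_v$ and $N_1N_2z_{2,v}=0$. (One incidental remark: the surjectivity of $f_*$ over~$\kbar_v$ needs only that the fibres of~$f$ are nonempty, not the gcd condition you mention.)
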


\begin{proof}
D'après le théorème~\ref{th:isochow},
il existe un ensemble fini $S \subset \Omega$ contenant~$\Omega_\infty$ tel que l'application
$f_*:\CH_0(X \otimes_k k_v) \to \CH_0(C \otimes_k k_v)$ soit un isomorphisme pour tout $v \notin S$.
Soit~$N_1$ un entier vérifiant la conclusion du lemme~\ref{lem:decompdiag} pour le morphisme $f:X \to C$.
Soit~$N_2$ un multiple commun des exposants des sous-groupes de torsion des groupes $\CH_0(C \otimes_k k_v)$ pour $v \in S \cap \Omega_f$
(cf.~rappel~\ref{rappel:chcourbe}).
Comme l'application $f_*:\CH_0(X \otimes_k \kbar_v) \to \CH_0(C \otimes_k \kbar_v)$ est surjective pour $v \in \Omega_\infty$
et comme $f_*z_\A=y$ dans $\CHzA(C)$,
il existe un relèvement $(z_v)_{v \in \Omega} \in \prod_{v \in \Omega} \CH_0(X\otimes_k k_v)$ de~$z_\A$
tel que $f_*z_v=y$ dans $\CH_0(C \otimes_k k_v)$ pour tout $v \in \Omega$.
La propriété~$(\Psub{S})$ fournit alors un $z \in \CH_0(X)$
et des $z_{2,v} \in \CH_0(X\otimes_k k_v)$ pour $v \in S \cap \Omega_f$
vérifiant d'une part $f_*z=y$ dans $\CH_0(C)$
et d'autre part
$z=z_v + N_1N_2 z_{2,v}$ dans $\CH_0(X \otimes_k k_v)$ pour tout $v \in S \cap \Omega_f$
et $z=z_v$ dans $\CH_0(X \otimes_k k_v)/N_{\kbar_v/k_v}(\CH_0(X \otimes_k \kbar_v))$ pour tout $v \in \Omega_\infty$.

Pour $v \notin S$, par définition de~$S$, l'égalité $f_*z=f_*z_v$ dans $\CH_0(C \otimes_k k_v)$ entraîne que $z=z_v$ dans $\CH_0(X \otimes_k k_v)$.
Pour $v \in S \cap \Omega_f$, l'égalité $f_*z=f_*z_v$ entraîne que $f_*z_{2,v}$ est de torsion dans $\CH_0(C \otimes_k k_v)$;
il en résulte que $N_2 f_*z_{2,v}=0$.
Ainsi $N_2 z_{2,v}$ appartient-il au noyau de~$f_*$.  D'où
$N_1N_2  z_{2,v}=0$ dans $\CH_0(X \otimes_k k_v)$.  Nous avons donc à nouveau $z=z_v$ dans $\CH_0(X \otimes_k k_v)$ pour tout $v \in S \cap \Omega_f$.
Par conséquent~$z$ et~$(z_v)_{v \in \Omega}$ ont même image dans $\CHzA(X)$.
\end{proof}

Le lemme~\ref{lem:ameliorationps} (avec $y=0$) entraîne immédiatement la validité de l'assertion~(ii) de la proposition~\ref{prop:reduc}.
Il nous reste seulement à établir~(iii); nous pouvons donc supposer que le groupe de Tate--Shafarevich de la jacobienne de~$C$
ne contient pas d'élément infiniment divisible non nul.
Soit $\zchapeau_\A \in \CHzAchapeau(X)$ un élément orthogonal à~$\Br(X)$.  Puisque la seconde ligne de~(\ref{diag:EXEC}) est exacte,
il existe $\ychapeau \in \CHzchapeau(C)$ tel que $f_*\zchapeau_\A=\ychapeau$ dans $\CHzAchapeau(C)$.

\begin{lem}
\mylabel{lem:chCchzXchzC}
L'application $\CH_0(C) \times \CHzchapeau(X) \to \CHzchapeau(C)$, $(y, \zchapeau) \mapsto y + f_*\zchapeau$ est surjective.
\end{lem}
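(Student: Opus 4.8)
Le plan repose sur la finitude du conoyau de~$f_*$. Je commencerais par observer que, $C$ étant une courbe propre et lisse sur le corps de nombres~$k$, le groupe $\CH_0(C)=\Pic(C)$ est de type fini: en effet $A_0(C)$ est un sous-groupe de~$J(k)$, où~$J$ désigne la jacobienne de~$C$, lequel est de type fini d'après le théorème de Mordell--Weil, et $\CH_0(C)/A_0(C)$ s'injecte dans~$\Z$ par le degré. D'autre part, le conoyau de $f_*:\CH_0(X)\to\CH_0(C)$ est d'exposant fini d'après le lemme~\ref{lem:conoyaumultisection}; étant un quotient de $\CH_0(C)$, il est donc fini. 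Notons-le~$C_0$ et posons $I=\Im(f_*)\subset\CH_0(C)$, de sorte que~$I$ est de type fini et que l'on dispose d'une suite exacte courte $0\to I\to\CH_0(C)\to C_0\to 0$.

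J'appliquerais alors à cette suite le foncteur $M\mapsto\widehat M$. Sur les groupes abéliens de type fini, ce foncteur coïncide avec $M\mapsto M\otimes_\Z\widehat\Z$ (comme on le vérifie sur~$\Z$ et sur~$\Z/n\Z$), et $\widehat\Z$ étant plat sur~$\Z$, on en déduit une suite exacte courte $0\to\widehat I\to\CHzchapeau(C)\to\widehat{C_0}\to 0$; comme~$C_0$ est fini, $\widehat{C_0}=C_0$. (Cette suite exacte s'obtient aussi en combinant les lemmes~\ref{lem:chapeausurjectif}, \ref{lem:conoyauchapeau} et~\ref{lem:noyauchapeau} appliqués à l'inclusion $I\hookrightarrow\CH_0(C)$, dont le noyau est nul et le conoyau~$C_0$ fini.) Par fonctorialité, la flèche composée $\CH_0(C)\to\CHzchapeau(C)\to\widehat{C_0}=C_0$ n'est autre que la projection canonique $\CH_0(C)\twoheadrightarrow C_0$, donc elle est surjective. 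Enfin, l'application $\CH_0(X)\to I$ induite par~$f_*$ étant surjective, le lemme~\ref{lem:chapeausurjectif} montre que $\CHzchapeau(X)\to\widehat I$ l'est aussi; l'image de $f_*:\CHzchapeau(X)\to\CHzchapeau(C)$ coïncide donc avec l'image de~$\widehat I$ dans $\CHzchapeau(C)$, c'est-à-dire avec le noyau de $\CHzchapeau(C)\to C_0$.

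Il ne resterait plus qu'à conclure: étant donné $\xi\in\CHzchapeau(C)$, la surjectivité de $\CH_0(C)\to C_0$ fournit un $y\in\CH_0(C)$ dont l'image dans~$C_0$ coïncide avec celle de~$\xi$; alors $\xi-y$ appartient au noyau de $\CHzchapeau(C)\to C_0$, donc à l'image de $f_*:\CHzchapeau(X)\to\CHzchapeau(C)$, d'où l'existence de $\zchapeau\in\CHzchapeau(X)$ tel que $\xi=y+f_*\zchapeau$. La seule subtilité, assez légère, tient à ce que $\CH_0(X)$ n'est pas connu comme étant de type fini: on raisonne donc sur l'image~$I$ et sur le conoyau~$C_0$ de~$f_*$, qui le sont (et~$C_0$ est même fini), plutôt que sur $\CH_0(X)$ lui-même; tout le reste est purement formel.
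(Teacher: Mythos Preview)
Your proof is correct, but it takes a different route from the paper's. The paper observes that the cokernel of the map in question is simultaneously divisible (since $\Coker(\CH_0(C)\to\CHzchapeau(C))$ is divisible by lemme~\ref{lem:divisible}) and of finite exponent (since the argument of lemme~\ref{lem:conoyaumultisection} applies verbatim with $\CHzchapeau$ in place of~$\CH_0$, so $\Coker(f_*:\CHzchapeau(X)\to\CHzchapeau(C))$ has finite exponent); hence it vanishes. This is a two-line argument that uses nothing about~$C$ beyond what is already available. You instead invoke Mordell--Weil to deduce that $\CH_0(C)$ is finitely generated, which upgrades ``finite exponent'' to ``finite'' for the cokernel~$C_0$ and lets you exploit the exactness of $M\mapsto M\otimes_\Z\widehat\Z$ on finitely generated groups. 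Your approach is perfectly valid and gives a more concrete picture of the situation (an explicit short exact sequence $0\to\widehat I\to\CHzchapeau(C)\to C_0\to 0$), at the cost of bringing in an arithmetic input that the paper's argument avoids.
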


\begin{proof}
Le conoyau de l'application $\CH_0(C) \to \CHzchapeau(C)$ est divisible d'après le lemme~\ref{lem:divisible}.
Celui de $f_*:\CHzchapeau(X) \to \CHzchapeau(C)$ est d'exposant fini; en effet
le lemme~\ref{lem:conoyaumultisection} et sa démonstration restent valables si l'on remplace
chaque occurrence de~$\CH_0$ par~$\CHzchapeau$.
Ainsi le conoyau de l'application apparaissant dans l'énoncé du lemme~\ref{lem:chCchzXchzC} est à la fois divisible et d'exposant fini. Il est donc nul.
\end{proof}

D'après le lemme~\ref{lem:chCchzXchzC}, il existe
 $y \in \CH_0(C)$ et $\zchapeau \in \CHzchapeau(X)$
tels que $\ychapeau=y+f_*\zchapeau$.
Quitte à remplacer $\zchapeau_\A$ par $\zchapeau_\A-\zchapeau$,
on peut supposer que $\zchapeau=0$, de sorte que l'image de~$y$ dans $\CHzAchapeau(C)$ est égale à $f_*\zchapeau_\A$.
Appliquant le lemme~\ref{lem:conoyauchapeau}
à l'homomorphisme $f_*:\CHzA(X) \to \CHzA(C)$,
dont le conoyau est d'exposant fini d'après le lemme~\ref{lem:conoyaumultisection},
on en déduit que l'image de~$y$ dans $\CHzA(C)$ s'écrit $f_*z_{0,\A}$ pour un $z_{0,\A} \in \CHzA(X)$.
Ainsi $f_*(\zchapeau_\A - z_{0,\A})=0$ dans $\CHzAchapeau(C)$.

Pour $v \in \Omega_f$, le noyau et le conoyau de $f_*:\CH_0(X \otimes_k k_v) \to \CH_0(C \otimes_k k_v)$
sont d'exposant fini grâce aux lemmes~\ref{lem:decompdiag} et~\ref{lem:conoyaumultisection}.
Comme~$C$ est une courbe, on peut appliquer le lemme~\ref{lem:noyauchapeau} à cet homomorphisme pour tout $v \in \Omega_f$.
Il en résulte que $\zchapeau_\A - z_{0,\A}$, et par conséquent~$\zchapeau_\A$, appartient à l'image de $\CHzA(X) \to \CHzAchapeau(X)$.
Soit $z_\A \in \CHzA(X)$ un antécédent de~$\zchapeau_\A$.
La flèche naturelle $\CHzA(C) \to \CHzAchapeau(C)$ étant injective (cf.~rappel~\ref{rappel:chcourbe}),
l'égalité $f_*\zchapeau_\A=\ychapeau$ entraîne $f_*z_\A=y$ dans $\CHzA(C)$.
D'après le lemme~\ref{lem:ameliorationps}, il s'ensuit que $z_\A$ provient de $\CH_0(X)$, ce qui conclut la démonstration de la proposition~\ref{prop:reduc}.
\end{proof}

\section{Démonstration des théorèmes~\ref{th:principalEun}, \ref{th:principalE} et~\ref{th:principalErelatif}}
\mylabel{sec:demPS}

\medskip
Nous démontrons dans ce paragraphe le

\begin{thm}
\mylabel{th:propPvraie}
Soit~$X$ une variété irréductible, propre et lisse sur un corps de nombres~$k$,
munie d'un morphisme $f:X \to C$
de fibre générique géométriquement irréductible, où~$C$ est une courbe lisse et géométriquement irréductible.
Si les hypothèses~(a) et~(b) du théorème~\ref{th:principalEun} sont satisfaites,
la propriété~$(\Psub{\emptyset})$ l'est aussi.
Si l'hypothèse~(a) du théorème~\ref{th:principalEun} et l'hypothèse~(b') du théorème~\ref{th:principalE} sont satisfaites,
la propriété~$(\Psub{S})$ l'est aussi pour tout ensemble fini $S \subset \Omega$.
\end{thm}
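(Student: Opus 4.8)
The plan is to deduce $(\Psub{S})$ from the already-settled case $C=\P^1_k$, treated in~\cite[\textsection4]{ctsksd98} (and reproved in the form of Theorem~\ref{th:variantectsksd} below), by means of a restriction of scalars along a well-chosen finite morphism $\pi\colon C\to\P^1_k$. Concretely, starting from $y\in\CH_0(C)$, a family $(z_v)_{v\in\Omega}$ orthogonal to $\Br_\vert(X/C)$ with $f_*z_v=y$ in $\CH_0(C\otimes_k k_v)$ for all~$v$, and an integer $n>0$, I must produce $z\in\CH_0(X)$ with $f_*z=y$ in $\CH_0(C)$ and with $z=z_v$ in $\CH_0(X\otimes_k k_v)/n$ for the finite $v\in S$ (resp.\ $z=z_v$ modulo norms from~$\kbar_v$ for the real $v\in S$).

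\emph{Geometric set-up.} After a sequence of reductions — replacing~$y$ by a suitable representative of large degree, making each~$z_v$ effective, and arranging that all the direct images $f_*z_v$ lie on a common line $L\subset\abs{y}=\P(H^0(C,\sO_C(y)))$ in general position, the auxiliary cycles used in this process being multiples of~$n$ (resp.\ norms at the real places) and $f_*$ being surjective on $\CH_0(-\otimes_k\kbar_v)$ — one brings the problem into geometric form. Since $\deg(y)$ is large, $L$ may be taken base-point-free, hence defines a finite morphism $\pi\colon C\to\P^1_k$ with $\pi^*\sO_{\P^1}(1)\cong\sO_C(y)$; and general position can and must be arranged so that~$\pi$ is generically étale, its ramification is simple and disjoint from the finite set~$B$ of closed points of~$C$ above which~$f$ fails to be smooth with geometrically irreducible fibres, and — this is the crux of the construction — so that each $c_0\in B$ lies in a fibre $\pi^{-1}(t_0)$ over which~$\pi$ is étale, with $k(t_0)=k(c_0)$ and with every other point of $\pi^{-1}(t_0)$ lying outside~$B$. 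All of this amounts to finitely many conditions, satisfiable because $\deg(y)$ is large.

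\emph{Passage to a fibration over~$\P^1_k$.} Let $p\colon W\to\P^1_k$ be the scheme obtained from~$f$ by restriction of scalars along~$\pi$, and let $W'$ be a smooth compactification of~$W$ to which~$p$ extends. Each effective~$z_v$, being finite of degree~$1$ over the divisor $\pi^{-1}(t_y)$ on which it is supported (where $t_y\in\P^1(k)$ represents~$y$), tautologically defines a point $[z_v]\in W(k_v)\subset W'(k_v)$. The generic fibre of~$p$ is geometrically irreducible because restriction of scalars of a geometrically irreducible variety along a separable field extension is geometrically irreducible; the smooth fibres of~$p'$, being built from smooth fibres of~$f$ by products and restriction of scalars, satisfy the Hasse principle (resp.\ weak approximation) for $0$-cycles of degree~$1$ by hypothesis~(b) (resp.~(b')) and the stability results of~\cite[\textsection2.1]{ctsd94}; and the fibre of~$p'$ above each closed point of~$\P^1_k$ has a component of multiplicity~$1$ split by an abelian extension of its residue field: above a point where~$\pi$ is étale and misses~$B$ the relevant restrictions of scalars are of smooth geometrically irreducible varieties, above a ramification point of~$\pi$ the corresponding factor is a restriction of scalars, along a local Artinian algebra, of a smooth family with geometrically irreducible special fibre (hence smooth and geometrically irreducible), and above $t_0=\pi(c_0)$ with $c_0\in B$ the equality $k(t_0)=k(c_0)$ makes the restriction of scalars trivial in the~$c_0$-factor, so that hypothesis~(a) for $X_{c_0}$ furnishes hypothesis~(a) for the fibre of~$p'$. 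Now Proposition~\ref{prop:pointorthogonal} (proved in~\textsection\ref{sec:demPS} when~$k$ is totally imaginary, and in~\textsection\ref{sec:vanhamel} in general) shows that $([z_v])_{v\in\Omega}$ is orthogonal to $\Br_\vert(W/\P^1_k)=\Br(W')\cap p^*\Br(k(\P^1))$. Applying Theorem~\ref{th:variantectsksd} to $p'\colon W'\to\P^1_k$ then produces a $0$-cycle of degree~$1$ on~$W'$ whose class in $\CH_0(W'\otimes_k k_v)/n$ equals that of $[z_v]$ for $v\in S$; moving it onto the dense open part of~$W$ lying above the points where~$\pi$ is étale and avoids~$B$, over which a closed point of~$W$ tautologically defines an effective $0$-cycle on~$X$, and summing, one obtains $z\in Z_0(X)$ with $f_*z$ rationally equivalent to the pullback by~$\pi$ of the image of that $0$-cycle in $\CH_0(\P^1_k)$, hence to~$y$. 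Finally, the $v$-adic continuity of the passage from $0$-cycles on~$W$ to $0$-cycles on~$X$, together with Lemma~\ref{lem:continuite}, yields $z=z_v$ in $\CH_0(X\otimes_k k_v)/n$ for the finite $v\in S$ and the analogous congruence at the real places.

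I expect the main obstacles to be, first, the reduction step — producing effective local cycles whose images $f_*z_v$ sit on a common general line in a linear system on~$C$ — and, second, the verification that the \emph{bad} fibres of $p'\colon W'\to\P^1_k$ still satisfy hypothesis~(a); it is precisely this second point that dictates the delicate requirement above, that~$\pi$ induce an isomorphism of residue fields at every point of~$B$ so that the restriction of scalars becomes trivial there, and it also requires a careful analysis of the ramification fibres of~$\pi$ and of the generic behaviour of the restriction of scalars. The transfer of Brauer--Manin orthogonality from~$X$ to~$W'$ (Proposition~\ref{prop:pointorthogonal}) is the other essential input — and the source of the extra care needed at the real places, where the~$z_v$ must already be chosen suitably at the reduction step — but it is established separately; everything else is routine once the geometry is in place.
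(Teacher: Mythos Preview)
Your proposal is correct and follows essentially the same approach as the paper: reduce to effective local cycles whose direct images lie on a common line in~$\abs{y}$, construct~$\pi$ from this pencil so that~$\pi$ is \'etale over~$\pi(M)$ and induces $M\isoto\pi(M)$, form the Weil restriction $W=\sR_{C/\P^1_k}X$, invoke Proposition~\ref{prop:pointorthogonal} for the Brauer--Manin orthogonality on~$W'$, and then apply the $\P^1$-case (Theorem~\ref{th:variantectsksd}) to extract the desired $z\in Z_0(X)$ via the adjunction morphism. Two minor points of precision worth noting: first, Theorem~\ref{th:variantectsksd} does not directly yield a $0$-cycle of degree~$1$ with the right congruences---one applies it in degree $d=1+\deg(w_0)nm$ to the family $(d[z_v])_{v\in\Omega}$, obtains a closed point~$t$ of degree~$d$ with $\Wsub{t}(\A_{k(t)})\neq\emptyset$, then uses~(b') on~$\Wsub{t}$ (Proposition~\ref{prop:rwtransmissionhyp}) to find~$z'\in Z_0(\Wsub{t})$, and finally corrects the degree by a multiple of~$2nw_0$; second, the passage from~$z''$ on~$W'$ back to~$z$ on~$X$ is not merely ``continuity'' but the explicit push-pull $z=\sigma''_*\pr_1''^*z''$ along a smooth compactification of $W\times_{\P^1_k}U$, which is what makes $f_*z=\pi^*p'_*z''=y$ and $z=z_v$ in $\CH_0(X\otimes_kk_v)/(2n)$ hold on the nose.
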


Grâce à la proposition~\ref{prop:reduc}, la validité des théorèmes~\ref{th:principalEun}, \ref{th:principalE} et~\ref{th:principalErelatif} s'ensuit.

\subsection{Lemmes d'effectivité}

On dira qu'un $0$\nobreakdash-cycle effectif sur~$X$ ou sur~$C$ est \emph{sans multiplicités} s'il est somme de points fermés deux à deux distincts.

\begin{lem}
\mylabel{lem:cyclerelevelocal}
Soit~$R$ un anneau local intègre hensélien, de corps des fractions~$K$ infini et de corps résiduel~$F$.
Soit~$\sC$ une courbe projective et lisse sur~$R$, de genre~$g$.
Posons $C = \sC \otimes_R K$ et fixons un fermé strict $M \subset C$.
Si $\Card(F)\geq \deg(M)$, alors tout diviseur~$D$ sur~$C$ de degré~$>2g$ est linéairement équivalent à un diviseur~$D'$ effectif, sans multiplicités et de support lisse,
tel que l'adhérence dans~$\sC$ du support de~$D'$ ne rencontre pas l'adhérence de~$M$.
\end{lem}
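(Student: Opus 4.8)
The plan is to produce $D'$ as the divisor of a carefully chosen global section of a line bundle on the model $\sC$: I will use Riemann--Roch on both fibres of $\sC\to\Spec R$ to have enough sections at hand, Bertini over the infinite field $K$ to control the generic fibre, and a point count over $F$ to control the special fibre — it is in this last count that the hypothesis $\Card(F)\geq\deg(M)$ enters.

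First I would fix a line bundle $\mathscr{L}$ on $\sC$ restricting to $\sO_C(D)$ on $C$ (for instance $\sO_\sC(\overline D)$, where $\overline D$ is the Zariski closure of $D$, when $\sC$ is regular; in general one may invoke that $\Pic^0_{\sC/R}$ is an abelian scheme). Put $r=\deg(D)-g\geq g+1\geq1$. Since $\deg(D)>2g$, the line bundle $\mathscr{L}$ has vanishing $H^1$ and exactly $r+1\geq2$ sections on the generic fibre $C$ and on the special fibre $\sC\otimes_R F$ (both of degree $\deg(D)$). By cohomology and base change, the pushforward of $\mathscr{L}$ to $\Spec R$ is locally free of rank $r+1$, so its projectivisation is a projective bundle isomorphic to $\P^r_R$ whose generic fibre is the complete linear system $|D|=\P(H^0(C,\sO_C(D)))$ and whose special fibre is the complete linear system of $\mathscr{L}\otimes_R F$ on $\sC\otimes_R F$.

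For the special fibre, let $\overline M\subset\sC$ be the scheme-theoretic closure of $M$; it is finite flat over $R$ of degree $\leq\deg(M)$, so $\overline M\otimes_R F$ is a finite $F$-scheme of degree $\leq\deg(M)\leq\Card(F)$. As $\mathscr{L}\otimes_R F$ has degree $>2g$ it is globally generated, so for each closed point $q$ of $\overline M\otimes_R F$ the sections of $\mathscr{L}\otimes_R F$ vanishing at $q$ form a linear subspace of $\P^r_F$ of codimension $[\kappa(q):F]\geq1$; since there are at most $\deg(M)\leq\Card(F)$ such points, an elementary count of $F$-points shows that these subspaces do not cover $\P^r_F$. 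I would fix $\bar s_0\in\P^r(F)$ outside all of them, i.e.\ a section of $\mathscr{L}\otimes_R F$ nowhere vanishing on $\overline M\otimes_R F$. Lifting $\bar s_0$ to an $R$-point of $\P^r_R$ (which exists since $R$ is local) and letting $\Delta$ be the set of all such lifts, $\Delta$ is non-empty and Zariski-dense in the generic fibre $\P^r_K$ because $\mathfrak m_R$ is infinite. Now $\sO_C(D)$, being of degree $\geq2g+1$, is very ample, so by Bertini over the infinite field $K$ the members of $|D|$ that are not smooth over $K$ form a proper Zariski-closed subset $B\subsetneq\P^r_K$; since $\Delta$ is Zariski-dense it meets the complement of $B$, and I pick $s\in\Delta$ with generic fibre outside $B$.

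It then remains to check that $D':=\operatorname{div}(s)$ on $C$ has the required properties. It is effective and linearly equivalent to $D$, being the divisor of a section of $\sO_C(D)$; it is without multiplicities with smooth support over $K$, since its class lies outside $B$. Finally, $s$ reduces modulo $\mathfrak m_R$ to $\bar s_0$, which is nowhere zero on $\overline M\otimes_R F$; as $\overline M$ is finite over the Henselian local ring $R$, a section of $\mathscr{L}$ on $\overline M$ is a unit as soon as it is a unit modulo $\mathfrak m_R$, so $s$ is nowhere zero on the whole of $\overline M$. Hence the divisor of $s$ on $\sC$, which contains the closure $\overline{D'}$ of $D'$, is disjoint from $\overline M$; in particular $\overline{D'}\cap\overline M=\emptyset$, whence also $D'\cap M=\emptyset$, as wanted. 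The crux of the argument — and the only place where $\Card(F)\geq\deg(M)$ is used — is the point count over $F$ on the special fibre; the remaining delicate points are that the two constraints on the two fibres can be imposed simultaneously (handled by the Zariski-density of the residue disk $\Delta$) and the reduction of disjointness from $\overline M$ to a condition modulo $\mathfrak m_R$ (using that $R$ is Henselian local and $\overline M$ finite over $R$).
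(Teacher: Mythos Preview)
Your argument is correct and follows essentially the same route as the paper's: both extend the class of $D$ to an $R$-point of the relative Picard scheme (equivalently, to a line bundle $\mathscr{L}$ on~$\sC$), identify the resulting linear system with a $\P^r_R$ via Riemann--Roch and base change, use the hyperplane count over~$F$ together with $\Card(F)\geq\deg(M)$ to produce a suitable $F$-point on the special fibre, and combine Henselian lifting with Bertini over the infinite field~$K$ to conclude. The only cosmetic difference is that the paper builds the avoidance of~$\overline M$ into the parameter scheme from the outset by working in $\Sym^d_{\sC^0/R}$ with $\sC^0=\sC\setminus\overline M$, whereas you impose it only on the special fibre and then verify separately (via your unit argument over a finite algebra over the Henselian ring~$R$) that nonvanishing on~$\overline M\otimes_R F$ forces nonvanishing on all of~$\overline M$.
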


\begin{proof}
Soit~$D$ un diviseur sur~$C$ de degré $d>2g$.

Le produit symétrique relatif $d$\nobreakdash-ème $\Sym^d_{\sC/R}$ et la composante de degré~$d$ du foncteur de Picard relatif $\Pic^d_{\sC/R}$
sont des $R$\nobreakdash-schémas plats dont la formation est compatible aux changements de base (cf.~\cite[II.1]{iversen}, \cite[8.4/2 et~8.4/3]{blr}).
Comme $d>2g-2$, la restriction du morphisme canonique $\rho:\Sym^d_{\sC/R} \to \Pic^d_{\sC/R}$ au-dessus de chaque point de~$R$
est lisse (cf.~\cite[Remark~5.6(c)]{milnejac}).
Il s'ensuit que~$\rho$ est lui-même lisse
(cf.~\cite[11.3.11]{ega43}).
Notons $\sM\subset \sC$ l'adhérence de~$M$ et posons $\sC^0=\sC \setminus \sM$ et $C^0=C \setminus M$.
Notons enfin $U \subset \Sym^d_{C^0/K}$ l'ouvert paramétrant les diviseurs sans multiplicités et de support lisse
et~$\rho^0$ la composée de~$\rho$ et de l'immersion ouverte $\Sym^d_{\sC^0/R} \subset \Sym^d_{\sC/R}$.

Par propreté de $\Pic^d_{\sC/R}$ sur~$R$, le $K$\nobreakdash-point $x_K$ de $\Pic^d_{\sC/R}$ défini par~$D$ s'étend en un $R$\nobreakdash-point $x \in \Pic^d_{\sC/R}(R)$.
Le $R$\nobreakdash-schéma $\Sym^d_{\sC^0/R}$ paramètre les sous-schémas fermés de~$\sC^0$ finis et plats de degré~$d$ sur~$R$
(cf.~\cite[II.3 et~II.4]{iversen}).  Il suffit donc, pour conclure, de vérifier que~$x$ est l'image par~$\rho^0$ d'un $R$\nobreakdash-point de $\Sym^d_{\sC^0/R}$ rencontrant~$U$.

La restriction~$S$ de~$\rho^0$ au-dessus de~$x$ est un
$R$\nobreakdash-schéma lisse. Comme $d>2g$, le diviseur~$D$ est très
ample. La fibre générique de~$S$ rencontre donc~$U$, par le théorème de
Bertini (cf.~\cite[Theorem~8.18]{hartshorne}).
D'après le théorème de Riemann--Roch, il résulte de l'inégalité $d\geq 2g$ que la fibre spéciale de~$S$
est le complémentaire, dans un espace projectif non vide,
de la réunion d'au plus~$\deg(M)$ hyperplans; l'hypothèse $\Card(F) \geq \deg(M)$ entraîne donc que $S(F)\neq \emptyset$.
Or, si~$R\neq K$, tout $F$\nobreakdash-point de~$S$ se relève en un $R$\nobreakdash-point rencontrant~$U$
puisque~$R$ est hensélien et~$S$ est lisse sur~$R$.
\end{proof}

\begin{lem}
\mylabel{lem:effectivitecourbecdn}
Soit~$X$ une variété sur un corps de nombres~$k$,
munie d'un morphisme $f:X \to C$
de fibre générique lisse et géométriquement irréductible, où~$C$ est une courbe propre, lisse et géométriquement irréductible sur~$k$, de genre~$g$.
Il~existe un ensemble fini $S \subset \Omega$ tel que pour toute place $v \notin S$, tout diviseur sur $C \otimes_k k_v$ de degré~$>2g$
soit linéairement équivalent à un diviseur effectif sans multiplicités de la forme $f_*z$ où~$z$ est un $0$\nobreakdash-cycle effectif sur $X\otimes_k k_v$ supporté par des fibres
lisses de~$f$.
\end{lem}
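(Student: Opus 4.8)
Le plan est de r\'epandre $f$ en une fibration projective et lisse sur un anneau de $S$\nobreakdash-entiers, puis de combiner le lemme~\ref{lem:cyclerelevelocal} avec un argument de bonne r\'eduction --- estimations de Lang--Weil et lemme de Hensel --- afin de rel\'ever les points ferm\'es d'un repr\'esentant convenable de la classe lin\'eaire de~$D$ en des points rationnels de fibres lisses de~$f$.

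Je commencerais par choisir un ensemble fini $S \subset \Omega$, contenant~$\Omega_\infty$, tel que $C$ et~$X$ s'\'etendent en des $\sO_S$\nobreakdash-sch\'emas projectifs et lisses~$\sC$ et~$\sX$ \`a fibres g\'eom\'etriquement irr\'eductibles, $\sC$ \'etant de genre~$g$, et tel que $f$ s'\'etende en un $\sO_S$\nobreakdash-morphisme plat $\sX \to \sC$. Notons $\sC^0 \subset \sC$ l'ouvert, dense car contenant le point g\'en\'erique de~$\sC$, au-dessus duquel $\sX \to \sC$ est lisse \`a fibres g\'eom\'etriquement irr\'eductibles, et $M_0 \subset C$ la trace (un ferm\'e strict) de $\sC \setminus \sC^0$ sur la fibre g\'en\'erique. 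Comme $\sC \setminus \sC^0$ n'a qu'un nombre fini de composantes irr\'eductibles, je pourrais supposer~$S$ assez grand pour que celles qui sont verticales se situent toutes au-dessus de places de~$S$~: pour $v \notin S$, l'image inverse de $\sC\setminus\sC^0$ dans $\sC\otimes_{\sO_S}\sO_v$ co\"incide alors avec l'adh\'erence de $M_0\otimes_kk_v$. Enfin, toutes les fibres de $\sX\to\sC$ aux points ferm\'es \'etant plong\'ees dans un m\^eme espace projectif avec le m\^eme polyn\^ome de Hilbert (par platitude et connexit\'e de~$\sC$), les estimations de Lang--Weil fournissent un entier~$q_0$ tel que toute fibre g\'eom\'etriquement irr\'eductible de $\sX\to\sC$ au-dessus d'un point de~$\sC^0$ \`a corps r\'esiduel fini de cardinal $\geq q_0$ poss\`ede un point rationnel~; quitte \`a agrandir~$S$, j'imposerais $\Card(\Fv)\geq\max(q_0,\deg(M_0))$ pour tout $v\notin S$.

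Soit maintenant $v\notin S$ et soit~$D$ un diviseur sur $C\otimes_kk_v$ de degr\'e~$>2g$. Le lemme~\ref{lem:cyclerelevelocal}, appliqu\'e \`a l'anneau hens\'elien~$\sO_v$, \`a la courbe $\sC\otimes_{\sO_S}\sO_v$ et au ferm\'e strict $M = M_0\otimes_kk_v$, fournit un diviseur $D' = \sum_i c_i$ lin\'eairement \'equivalent \`a~$D$, effectif et sans multiplicit\'es (les $c_i$ sont des points ferm\'es deux \`a deux distincts), tel que l'adh\'erence de son support dans $\sC\otimes_{\sO_S}\sO_v$ soit disjointe de l'adh\'erence de~$M$. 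Fixons un indice~$i$. L'adh\'erence~$\overline{c_i}$ de~$c_i$ \'evite donc $\sC\setminus\sC^0$ (base chang\'ee \`a~$\sO_v$), et en particulier $c_i\in C^0\otimes_kk_v$, o\`u $C^0 = C\setminus M_0$~; le crit\`ere valuatif de propret\'e appliqu\'e au morphisme propre $\sC\otimes_{\sO_S}\sO_v \to \Spec(\sO_v)$ \'etend alors $c_i$ en un $\sO_v$\nobreakdash-morphisme $\Spec(\sO_{k(c_i)}) \to \sC\otimes_{\sO_S}\sO_v$, o\`u $\sO_{k(c_i)}$ d\'esigne l'anneau des entiers du corps $p$\nobreakdash-adique~$k(c_i)$, lequel se factorise par~$\sC^0$ puisque son image, contenue dans~$\overline{c_i}$, \'evite $\sC\setminus\sC^0$. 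Tirant $\sX\to\sC$ en arri\`ere le long de ce morphisme, on obtient un $\sO_{k(c_i)}$\nobreakdash-sch\'ema projectif et lisse \`a fibres g\'eom\'etriquement irr\'eductibles~; sa fibre sp\'eciale est d\'efinie sur un corps fini de cardinal $\geq\Card(\Fv)\geq q_0$, donc poss\`ede un point rationnel (trivialement si la dimension relative de~$f$ est nulle, par Lang--Weil sinon), lequel se rel\`eve par le lemme de Hensel en un $\sO_{k(c_i)}$\nobreakdash-point dont la fibre g\'en\'erique est un point~$P_i\in X_{c_i}(k(c_i))$. Comme $c_i\notin M_0$, la fibre~$X_{c_i}$ est lisse. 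Le $0$\nobreakdash-cycle effectif $z = \sum_i P_i$ sur $X\otimes_kk_v$ est ainsi support\'e par des fibres lisses de~$f$, et $f_*z = \sum_i [k(P_i):k(c_i)]\,c_i = \sum_i c_i = D'$ est effectif, sans multiplicit\'es et lin\'eairement \'equivalent \`a~$D$.

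Le point d\'elicat est, \`a mon sens, d'obtenir un~$S$ ind\'ependant de~$v$ et de~$D$. Il faut pour cela contr\^oler uniform\'ement trois choses~: le degr\'e du lieu $M_0\subset C$ hors duquel les fibres de~$f$ sont lisses et g\'eom\'etriquement irr\'eductibles~; la constante de Lang--Weil le long de la famille $\sX\to\sC$ (ce qui tient \`a ce que les fibres forment une famille born\'ee)~; et l'absence de composante verticale du lieu exceptionnel $\sC\setminus\sC^0$ au-dessus des places hors de~$S$. C'est ce dernier point qui permet de d\'eduire de la seule disjonction des adh\'erences fournie par le lemme~\ref{lem:cyclerelevelocal} --- laquelle ne dit rien \emph{a priori} en fibre sp\'eciale --- que l'adh\'erence de~$D'$ reste enti\`erement dans l'ouvert~$\sC^0$, condition indispensable \`a l'\'etape de bonne r\'eduction.
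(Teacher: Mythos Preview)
Your proof is correct and follows essentially the same route as the paper's: spread out $f$ over $\sO_S$, control the locus where the fibers fail to be smooth and geometrically irreducible so that over $\sO_v$ it is exactly the closure of $M_0\otimes_k k_v$, apply lemma~\ref{lem:cyclerelevelocal} to move $D$ into this good locus, then use Lang--Weil and Hensel to lift. Your write-up is in fact more explicit than the paper's at the final step (passing through $\Spec(\sO_{k(c_i)})$), and your closing paragraph isolating the three uniformity issues is a useful gloss. One small point: the lemma as stated does not assume $X$ projective or smooth, so your demand that $\sX$ be projective and smooth over $\sO_S$ is stronger than warranted; the paper only asks that $\sX$ be flat over $\sO_S$ and that $f$ extend smoothly over $\sC\setminus\sM$, which suffices for the argument.
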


\begin{proof}
Soit $M \subset C$ un ensemble fini assez grand pour que~$f$ soit lisse et à fibres géométriquement irréductibles au-dessus de $C \setminus M$.
Soit $S \subset \Omega$ un ensemble fini assez grand pour que~$C$ s'étende en une courbe projective et lisse~$\sC$ sur~$\sO_S$,
pour que~$X$ s'étende en un $\sO_S$\nobreakdash-schéma plat~$\sX$ et pour que~$f$ s'étende en un $\sO_S$\nobreakdash-morphisme $f:\sX \to \sC$
lisse au-dessus de $\sC \setminus \sM$, où $\sM \subset \sC$ désigne l'adhérence de~$M$.
Quitte à agrandir~$S$, on peut supposer, grâce au théorème de Lang--Weil--Nisnevi\v{c}~\cite{langweil}, que les fibres de~$f$ au-dessus des points fermés
de $\sC\setminus \sM$ possèdent toutes un point rationnel.  On peut supposer de plus que~$S$ contient les places archimédiennes ainsi que les places finies dont le corps résiduel
est de cardinal $<\deg(M)$.

Il résulte du lemme~\ref{lem:cyclerelevelocal}
que pour tout $v\notin S$, tout diviseur sur $C\otimes_kk_v$ de degré~$>2g$
est linéairement équivalent à un diviseur effectif sans multiplicités~$D'$ tel que l'adhérence~$\sD'$ du support de~$D'$ dans $\sC \otimes_{\sO_S} \sO_v$ ne rencontre pas $\sM \otimes_{\sO_S} \sO_v$.
La restriction du morphisme~$f$ au-dessus de $\sD'$ est donc lisse et ses fibres fermées contiennent toutes un point rationnel.
Il s'ensuit, grâce au lemme de Hensel, que~$D'$ s'écrit $f_*z$ pour un $0$\nobreakdash-cycle effectif~$z$ sur $X \otimes_k k_v$.
\end{proof}

Rappelons enfin le lemme suivant, démontré dans \cite[Lemmes~3.1 et~3.2]{ctreglees}.

\begin{lem}
\mylabel{lem:effectiflocal}
Soit~$C$ une courbe propre, lisse et géométriquement irréductible sur un corps parfait infini~$k$.
Soit~$X$ une variété projective, lisse et géométriquement irréductible sur~$k$, munie d'un morphisme surjectif $f:X\to C$.
Soit enfin $M \subset C$ un fermé strict.  Pour tout sous-schéma fermé $F \subset X$ étale sur~$k$,
il existe un entier~$N_0$ et une
courbe propre, lisse et géométriquement irréductible $Z \subset X$ dominant~$C$ et contenant~$F$
tels que pour tout $z \in Z_0(X)$, si~$z$ est supporté par~$Z$ et si $\deg(z)\geq N_0$
alors~$z$ est rationnellement équivalent à un $0$\nobreakdash-cycle effectif~$z'$
supporté par~$Z$ tel que $f_*z'$ soit sans multiplicités et de support disjoint de~$M$.
\end{lem}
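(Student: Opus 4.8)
The plan is to build the multisection $Z$ by a Bertini-type argument, then to choose $N_0$ large enough that Riemann--Roch makes the complete linear system of any divisor of degree $\geq N_0$ on $Z$ very ample, and finally to observe that a sufficiently general member of such a linear system has a pushforward by $f$ which is automatically without multiplicities. This last point plays the role of the ``substitute for Euclidean division'' used throughout \cite{ctreglees}.

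\emph{Construction of $Z$.} Since $f$ is surjective onto a curve one has $\dim X\geq 1$. Fix a projective embedding $X\subset\P^N_k$ and, exactly as in the proof of Lemma~\ref{lem:continuite}, apply Altman--Kleiman~\cite[(7)]{altmankleiman}: after composing the embedding with a Veronese embedding of sufficiently high degree, there is a linear subspace $L\subset\P^N_k$ of codimension $\dim X-1$ containing $F$ such that $L\cap X$ is a smooth curve. For $L$ general among the linear subspaces of codimension $\dim X-1$ containing the fixed finite scheme $F$ (the Veronese degree being large), Bertini's irreducibility theorem shows that $Z:=L\cap X$ is geometrically irreducible, and, the geometric generic fibre $X_\etabar$ of $f$ having dimension $\dim X-1$, one may moreover arrange that $Z$ dominates $C$; thus $\phi:=f|_Z:Z\to C$ is a non-constant morphism between complete curves, hence finite and surjective. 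As $F$ is reduced, $F\subset Z$ as closed subschemes. Let $g$ be the genus of $Z$ and set $N_0=2g+2$.

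\emph{Moving a divisor on $Z$.} Let $z\in Z_0(X)$ be supported by $Z$, with $n:=\deg(z)\geq N_0$, and regard $z$ as a divisor on $Z$. By Riemann--Roch the complete linear system $|z|=\P(H^0(Z,\sO_Z(z)))$ has dimension $n-g\geq 2$ and is very ample; embed $Z$ accordingly in the dual projective space. Inside $|z|$, the locus of hyperplanes $H$ such that $Z\cap H$ is non-reduced, or meets the finite set $\phi^{-1}(M)$, or contains two distinct geometric points lying over the same point of $C$, is a proper closed subset: only the last condition is not immediate, and for it the incidence variety over the curve $\{(x,y)\in Z\times_C Z:x\neq y\}$ consisting of such a pair together with a hyperplane through both points has dimension at most $1+(\dim|z|-2)<\dim|z|$. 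Since $k$ is infinite, choose a $k$-rational $H$ avoiding this locus and put $z':=Z\cap H$; it is an effective $0$-cycle supported by $Z$, linearly equivalent to $z$ on $Z$, hence rationally equivalent to $z$ on $X$.

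\emph{Controlling $f_*z'$.} Write $z'\otimes_k\kbar=\bar P_1+\dots+\bar P_n$ with the $\bar P_j$ pairwise distinct; by the choice of $H$ their images $\phi(\bar P_1),\dots,\phi(\bar P_n)$ are pairwise distinct in $C(\kbar)$. For $\sigma\in\Gal(\kbar/k)$, equivariance of $\phi$ shows that the permutation of $\{\bar P_j\}$ induced by $\sigma$ coincides with the permutation it induces on $\{\phi(\bar P_j)\}$ (the latter being well defined precisely because these images are distinct); hence $\phi$ carries the Galois orbits bijectively and identifies stabilizers, so that, writing $z'=\sum_i P_i$ with the $P_i$ distinct closed points of $Z$, the points $c_i:=\phi(P_i)$ are distinct closed points of $C$ with $k(P_i)=k(c_i)$. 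Consequently $f_*z'=\sum_i[k(P_i):k(c_i)]\,c_i=\sum_i c_i$ is a sum of pairwise distinct closed points of $C$, i.e.\ is without multiplicities, and its support avoids $M$ by the choice of $H$. Since $N_0$ depends only on $X$, $C$, $f$, $M$ and $F$ (through $Z$), this proves the lemma. The main obstacle is obtaining $Z$ with all the required properties simultaneously — smooth, geometrically irreducible, passing through $F$, dominating $C$ — which is where the Veronese re-embedding is essential (this is Lemme~3.1 of \cite{ctreglees}); once $Z$ is at hand, the argument above is a variant of Lemme~3.2 of \emph{loc.\ cit.}
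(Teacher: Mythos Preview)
Your proof is correct and follows essentially the same approach as the paper. The paper itself does not prove this lemma but simply records it as a known fact, citing \cite[Lemmes~3.1 et~3.2]{ctreglees}; your argument is a faithful reconstruction of that proof --- Bertini/Altman--Kleiman after a Veronese re-embedding to produce the multisection~$Z$ (Lemme~3.1 of \emph{loc.\ cit.}), then Riemann--Roch and a dimension count on the incidence locus $\{(x,y,H):x\neq y,\ \phi(x)=\phi(y),\ x,y\in H\}$ to move~$z$ within~$|z|$ (Lemme~3.2) --- exactly as you acknowledge at the end.
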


\subsection{Restriction des scalaires à la Weil}
\mylabel{sec:restrictionWeil}

Divers résultats généraux concernant la restriction des scalaires à la Weil
le long d'un revêtement de courbes serviront dans la preuve du
théorème~\ref{th:propPvraie}. Par souci de clarté, nous les avons regroupés ci-dessous.

Étant donnés des morphismes de schémas $X' \to S' \to S$, nous noterons $\sR_{S'/S}X'$ la restriction des scalaires à la Weil
du $S'$\nobreakdash-schéma~$X'$ le long de $S' \to S$.  Rappelons que si $S' \to S$ est fini localement libre
et si $X' \to S'$ est quasi-projectif,
alors
 $\sR_{S'/S}X'$ est
un $S$\nobreakdash-schéma (cf.~\cite[7.6/4]{blr}); c'est un $S$\nobreakdash-schéma quasi-projectif si de plus~$S$ est localement noethérien
(cf.~\cite[Proposition~A.5.8]{cgp}; noter que la preuve de \emph{loc.\ cit.} est encore valable lorsque la base n'est pas affine).

Dans tout le~\textsection\ref{sec:restrictionWeil}, nous fixons
une variété~$X$ lisse sur un corps~$k$,
un morphisme fini et plat $\pi:C \to B$ entre courbes lisses sur~$k$
et un morphisme $f:X\to C$ projectif et plat.
Nous supposons que~$B$ est irréductible
et qu'il existe un ensemble fini $M \subset C$ (considéré par la suite comme sous-schéma fermé réduit de~$C$)
tel que les fibres de~$f$ au-dessus de $C \setminus M$ soient lisses et géométriquement irréductibles.
Notons $R \subset C$ le lieu de ramification de~$\pi$.
Posons enfin $W = \sR_{C/B}X$ et notons $p:W \to B$ le morphisme structural.
Comme~$p$ est quasi-projectif, il existe une variété~$W'$ sur~$k$ contenant~$W$ comme ouvert dense, telle que~$p$ se prolonge en un morphisme projectif $p':W'\to B$.

\begin{prop}
\mylabel{prop:rwproprietes}
Supposons que~$\pi$ induise un isomorphisme $M \isoto \pi(M)$ et supposons que $\pi(M) \cap \pi(R)=\emptyset$.
Alors:
\begin{enumerate}
\item[(i)] La variété~$W$ est irréductible et lisse.
\item[(ii)]
Au-dessus de $B \setminus \left(\pi(M) \cup \pi(R)\right)$,
le morphisme~$p'$ est lisse et ses fibres sont géométriquement irréductibles.
\item[(iii)]
La fibre de~$p'$ au-dessus de chaque point de~$\pi(R)$ contient une composante irréductible de multiplicité~$1$ qui est géométriquement irréductible.
\item[(iv)]
Pour tout $c \in M$, il existe
une variété~$V$ lisse et géométriquement irréductible sur~$k(c)$
telle que la fibre de~$p'$ au-dessus de~$\pi(c)$ s'écrive $X_c \times_{k(c)} V$, où~$X_c$ désigne la fibre de~$f$.
\end{enumerate}
\end{prop}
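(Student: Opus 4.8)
\emph{Plan de d\'emonstration.} On utilisera librement les propri\'et\'es suivantes de la restriction des scalaires le long d'un morphisme fini localement libre : elle commute aux changements de base ; elle transforme un morphisme lisse en un morphisme lisse et un morphisme propre en un morphisme propre (cf.~\cite[7.6/5]{blr}, \cite[\textsection A.5]{cgp}) ; enfin $\sR_{C/B}X$ devient un produit fibr\'e d\`es que~$C$ se d\'ecompose en r\'eunion disjointe, ce qui, joint \`a l'examen des fibres g\'eom\'etriques, montrera au passage qu'elle pr\'eserve la propri\'et\'e d'avoir des fibres g\'eom\'etriquement irr\'eductibles lorsque le morphisme de base est \'etale. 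On observe d'embl\'ee que, $\pi|_M$ \'etant injective, on a $\pi^{-1}(B \setminus \pi(M)) \subset C \setminus M$ ; comme~$f$ est plat et projectif, il est donc propre et lisse au-dessus de $C \setminus M$, et, $\pi$ \'etant fini, $p$ est propre et lisse au-dessus de $B \setminus \pi(M)$.

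On montrera ensuite que~$W$ est lisse et irr\'eductible. Pour la lissit\'e, on v\'erifiera que~$W$ est lisse sur~$k$ au voisinage de chaque fibre $p^{-1}(b)$ : si $b \notin \pi(M)$, cela d\'ecoule de la lissit\'e de~$p$ au-dessus de~$b$ et de celle de~$B$ ; si $b = \pi(c)$ avec $c \in M$, l'hypoth\`ese $\pi(M) \cap \pi(R) = \emptyset$ donne $b \notin \pi(R)$, donc~$\pi$ \'etale au-dessus de~$b$, et l'on effectuera un changement de base \'etale $B' \to B$ au voisinage de~$b$ apr\`es lequel $C \times_B B' = C_1 \amalg C_2$, o\`u $C_1 \to B'$ est un isomorphisme --- la section prolongeant~$c$, licite puisque $k(c) = k(b)$ d'apr\`es $\pi|_M : M \isoto \pi(M)$ --- et o\`u $C_2 \to C$ se factorise par $C \setminus M$ ; alors $W \times_B B' = (X \times_C C_1) \times_{B'} \sR_{C_2/B'}(X \times_C C_2)$, dont le premier facteur est \'etale sur~$X$ donc lisse sur~$k$ et le second lisse sur~$B'$ donc lisse sur~$k$, de sorte que $W \times_B B'$ est lisse sur $X \times_C C_1$, partant sur~$k$ ; comme $W \times_B B' \to W$ est \'etale et surjectif au-dessus d'un voisinage de $p^{-1}(b)$, la vari\'et\'e~$W$ est lisse sur~$k$. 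Pour l'irr\'eductibilit\'e, toute composante irr\'eductible de~$W$ est ouverte ($W$ \'etant lisse) et l'on v\'erifiera qu'elle domine~$B$ : au-dessus de $B \setminus \pi(M)$ parce que~$p$ y est lisse donc ouvert, et au voisinage de~$\pi(M)$ parce que $W \times_B B'$ est lisse sur $X \times_C C_1$, lequel est plat sur $C_1 \cong B'$, si bien que toutes les composantes de $W \times_B B'$ dominent~$B'$ ; le point g\'en\'erique d'une composante de~$W$ appartient alors \`a la fibre g\'en\'erique $W_{\eta_B} = \sR_{k(C)/k(B)}(X_\eta)$, qui est g\'eom\'etriquement irr\'eductible (produit de conjugu\'es de~$X_\etabar$ apr\`es extension \`a $\overline{k(B)}$), d'o\`u l'unicit\'e de la composante.

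La vari\'et\'e~$W$ \'etant irr\'eductible, il en va de m\^eme de~$W'$ ; comme~$p$ est propre au-dessus de $B \setminus \pi(M)$ et, $f$ \'etant propre et $\pi$ fini, au voisinage de chaque point de~$\pi(M)$, l'ouvert~$W$ de~$W'$ y est aussi ferm\'e, donc \'egal \`a~$W'$ : partout o\`u cela servira on a $p' = p$, et la fibre de~$p'$ en~$b$ s'identifie \`a $\sR_{C_b/k(b)}(X_{C_b})$ avec $C_b = \pi^{-1}(b)$. On en d\'eduit~(ii) : si $b \notin \pi(M) \cup \pi(R)$, $\pi$ est \'etale en~$b$ et~$f$ lisse \`a fibres g\'eom\'etriquement irr\'eductibles au-dessus de $C_b \subset C \setminus M$. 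Puis~(iv) : si $c \in M$ et $b = \pi(c)$, $\pi$ est \'etale au-dessus de~$b$, donc $C_b = \Spec(k(c)) \amalg C_b'$ --- le premier facteur \'etant le point~$c$, avec $k(c) = k(b)$ --- et $\sR_{C_b/k(b)}(X_{C_b}) = X_c \times_{k(c)} \sR_{C_b'/k(b)}(X_{C_b'})$, le second facteur \'etant, comme produit de restrictions des scalaires le long d'extensions finies s\'eparables des corps r\'esiduels de vari\'et\'es lisses g\'eom\'etriquement irr\'eductibles (les fibres de~$f$ aux points de $C_b' \subset C \setminus M$), une vari\'et\'e~$V$ lisse et g\'eom\'etriquement irr\'eductible sur~$k(c)$. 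Puis~(iii) : si $b \in \pi(R)$ --- donc $b \notin \pi(M)$ --- le morphisme $X_{C_b} \to C_b$ est lisse, et l'\'epaississement nilpotent $C_{b,\mathrm{red}} \hookrightarrow C_b$ fournit un morphisme $\sR_{C_b/k(b)}(X_{C_b}) \to \sR_{C_{b,\mathrm{red}}/k(b)}(X_{C_{b,\mathrm{red}}})$ lisse et surjectif (par le crit\`ere infinit\'esimal de lissit\'e), \`a fibres isomorphes \`a des espaces affines donc connexes ; le but \'etant g\'eom\'etriquement irr\'eductible (m\^eme argument qu'en~(iv)), la source est connexe, donc irr\'eductible puisque lisse, et le m\^eme raisonnement apr\`es extension des scalaires montre qu'elle est g\'eom\'etriquement irr\'eductible : c'est l'unique composante de la fibre de~$p'$, de multiplicit\'e~$1$ puisque celle-ci est lisse, ce qui entra\^ine~(iii).

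Le point d\'elicat sera l'\'etude locale de~(i) aux points de~$\pi(M)$ : l'hypoth\`ese $\pi(M) \cap \pi(R) = \emptyset$ y est essentielle, puisqu'elle permet d'isoler, apr\`es un changement de base \'etale sur~$B$, la composante $C_1$ de $C \times_B B'$ passant par le point singulier~$c$ --- au-dessus de laquelle~$f$ n'est pas lisse mais qui est isomorphe \`a~$B'$ --- de sorte que le produit fibr\'e d\'ecrivant~$W$ reste lisse sur~$k$. Les autres assertions se ram\`enent, une fois acquise l'\'egalit\'e $p' = p$ aux points utiles, \`a des propri\'et\'es formelles de la restriction des scalaires appliqu\'ees aux fibres de~$\pi$.
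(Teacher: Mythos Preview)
Your overall strategy follows the paper's, and your treatments of~(i), (ii) and~(iv) are correct. There is, however, a genuine error concerning properness. You assert that ``$\pi$ \'etant fini, $p$ est propre et lisse au-dessus de $B\setminus\pi(M)$'' and later conclude that $p$ is proper everywhere, hence $W=W'$. But Weil restriction along a finite flat morphism does \emph{not} preserve properness at points where the base morphism is ramified: take $B=\Spec(k)$, $C=\Spec(k[\varepsilon]/(\varepsilon^2))$ and $X=\P^1_C$; then $\sR_{C/B}X$ is the total space of the tangent bundle $T\P^1_k$, which is not proper. In the paper's situation, $p$ is proper only over $B\setminus\pi(R)$ (where $\pi$ is \'etale, so that \'etale-locally $W$ is a fibre product of proper $B$-schemes); over $\pi(R)$ one may well have $W\subsetneq W'$. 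Your proofs of~(ii) and~(iv) are unaffected, since they only use $p=p'$ over $B\setminus\pi(R)$.

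Your argument for~(iii), on the other hand, rests on the identification $p'^{-1}(b)=p^{-1}(b)$ for $b\in\pi(R)$, which need not hold, and on the claim that this is ``l'unique composante de la fibre de~$p'$'', which is unwarranted (there may be further components lying in $W'\setminus W$). Your Greenberg-functor argument does correctly show that $p^{-1}(b)=\sR_{C_b/k(b)}(X_{C_b})$ is smooth and geometrically irreducible over~$k(b)$ --- this is a pleasant alternative to the paper's direct citation of \cite[Proposition~A.5.11]{cgp} --- but $p^{-1}(b)$ is only an open subscheme of $p'^{-1}(b)$. The fix is immediate: since $W$ is open in~$W'$, the closure of $p^{-1}(b)$ in $p'^{-1}(b)$ is an irreducible component, geometrically irreducible (it contains a dense geometrically irreducible open) and of multiplicity~$1$ (being generically smooth over~$k(b)$). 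This is exactly how the paper handles~(iii): it records only that $p$ is smooth with geometrically irreducible fibres over $B\setminus\pi(M)\supset\pi(R)$ and that $p,p'$ coincide over $B\setminus\pi(R)$, then lets the passage from $p^{-1}(b)$ to a component of $p'^{-1}(b)$ remain implicit.
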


\begin{proof}
Comme~$f$ est lisse et à fibres géométriquement irréductibles
au-dessus de $C \setminus \pi^{-1}(\pi(M))$ et est propre et plat au-dessus de $C \setminus \pi^{-1}(\pi(R))$,
le morphisme~$p$ est lisse et à fibres géométriquement irréductibles
au-dessus de $B \setminus \pi(M)$ et est propre et plat au-dessus de $B \setminus \pi(R)$ (cf.~\cite[Proposition~A.5.11]{cgp}
et~\cite[7.6/5]{blr}).
Par conséquent, la variété $p^{-1}(B\setminus\pi(M))$ est lisse,
le morphisme~$p$ est plat (ce qui entraîne que toute composante irréductible de~$W$ domine~$B$)
et les morphismes~$p$ et~$p'$ s'identifient canoniquement au-dessus de $B \setminus \pi(R)$.
De ces remarques découlent l'irréductibilité de~$W$ et les assertions~(ii) et~(iii).

Pour $c \in M$, la fibre $\pi^{-1}(\pi(c))$ est la réunion
disjointe de~$c$ et d'un sous-schéma fermé~$F$ de $C \setminus M$ étale sur $k(c)=k(\pi(c))$.
Comme la restriction des scalaires à la Weil est compatible au changement de base et transforme union disjointe en produit
(cf.~\cite[(4.2.3) et~(4.2.6)]{scheiderer}), la fibre de~$p$ (et donc de~$p'$) au-dessus de~$\pi(c)$ s'écrit
$X_c \times_{k(c)} V$, où $V=\sR_{F/k(c)} p^{-1}(F)$.
D'après~\cite[Proposition~A.5.11]{cgp},
la lissité et l'irréductibilité géométrique des fibres de $p^{-1}(F) \to F$ entraînent ensemble les mêmes propriétés pour~$V$ sur~$k(c)$, d'où l'assertion~(iv).

Il nous reste seulement à établir la lissité de~$W$ au voisinage de $p^{-1}(\pi(c))$ pour chaque $c \in M$.
Pour cela, il est loisible de remplacer~$B$ par un voisinage étale de~$\pi(c)$.  Comme $\pi(M) \cap \pi(R)=\emptyset$ et $M \isoto \pi(M)$, on peut supposer de cette façon
que $M=\{c\}$ et que $C = C_1 \amalg \dots \amalg C_d$, où $c \in C_1$ et où la restriction de~$\pi$ à chaque~$C_i$ est un isomorphisme.
Le $B$\nobreakdash-schéma $X_i = X \times_C C_i$ est alors lisse pour $i>1$
et la décomposition $X = X_1 \amalg \dots \amalg X_d$ se traduit par $W = X_1 \times_B \dots \times_B X_d$
(cf.~\cite[(4.2.6)]{scheiderer}).
Comme~$X_i$ est lisse sur~$B$ pour $i>1$, la projection $W \to X_1$ est lisse.
D'autre part, la variété~$X_1$ est lisse sur~$k$, étant un ouvert de~$X$;
par conséquent~$W$ est bien lisse sur~$k$.
\end{proof}

Il résulte de la proposition~\ref{prop:rwproprietes} que si les fibres de~$f$ satisfont l'hypothèse~(a) du théorème~\ref{th:principalEun},
les fibres de~$p'$ la satisfont aussi.
Supposons maintenant que~$k$ soit un corps de nombres et vérifions que les hypothèses~(b) et~(b') des théorèmes~\ref{th:principalEun} et~\ref{th:principalE}
se transmettent également de~$f$ à~$p'$.

\begin{prop}
\mylabel{prop:rwtransmissionhyp}
Soit $t \in B \setminus (\pi(M)\cup \pi(R))$ un point fermé.  Notons $\Wsub{t}=p'^{-1}(t)$.
\begin{enumerate}
\item[(i)] Supposons que~$X_c$ vérifie la condition~(b) du théorème~\ref{th:principalEun} pour tout $c \in \pi^{-1}(t)$.
Si $\Wsub{t}(\A_{k(t)})\neq\emptyset$, alors~$\Wsub{t}$ admet un $0$\nobreakdash-cycle de degré~$1$ sur~$k(t)$.
\item[(ii)]
Supposons que~$X_c$ vérifie la condition~(b') du théorème~\ref{th:principalE} pour tout $c \in \pi^{-1}(t)$.
Pour tout $n>0$ et tout ensemble fini~$T$ de places de~$k(t)$,
l'image de $\CH_0(\Wsub{t})$ dans $$\prod_{w \in T} \CH_0(\Wsub{t} \otimes_{k(t)} k(t)_w)/n$$
contient l'image de $\Wsub{t}(\A_{k(t)})$ dans ce même groupe.
\end{enumerate}
\end{prop}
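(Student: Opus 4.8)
On se propose d'identifier d'abord la fibre $\Wsub{t}$ à un produit de restrictions des scalaires à la Weil, puis de ramener les deux assertions aux hypothèses faites sur les fibres~$X_c$ de~$f$. Comme $t\notin\pi(R)$, le schéma $\pi^{-1}(t)$ est fini et étale sur~$k(t)$, donc $\pi^{-1}(t)=\coprod_{c\in\pi^{-1}(t)}\Spec(k(c))$ avec $k(c)/k(t)$ finie séparable pour chaque~$c$; comme $t\notin\pi(M)$, chaque~$X_c$ est lisse et géométriquement irréductible sur~$k(c)$. Puisque la restriction des scalaires à la Weil commute au changement de base et transforme union disjointe en produit (cf.~\cite[(4.2.3) et~(4.2.6)]{scheiderer}) et que~$p$ et~$p'$ coïncident au-dessus de $B\setminus\pi(R)$, la fibre $\Wsub{t}=p'^{-1}(t)=p^{-1}(t)$ s'identifie canoniquement à $\prod_{c\in\pi^{-1}(t)}\sR_{k(c)/k(t)}(X_c)$ (produit fibré sur~$k(t)$), qui est propre sur~$k(t)$ d'après~\cite[7.6/5]{blr}. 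Les mêmes compatibilités fournissent, pour toute place~$w$ de~$k(t)$,
$$\Wsub{t}(k(t)_w)=\prod_{c}\prod_{w'\mid w}X_c(k(c)_{w'})\quad\text{et}\quad\Wsub{t}(\A_{k(t)})=\prod_c X_c(\A_{k(c)})\rlap{\text{,}}$$
en utilisant $k(c)\otimes_{k(t)}k(t)_w=\prod_{w'\mid w}k(c)_{w'}$.

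Pour~(i): si $\Wsub{t}(\A_{k(t)})\neq\emptyset$, alors $X_c(\A_{k(c)})\neq\emptyset$ pour tout~$c$, et l'hypothèse~(b) fournit sur chaque~$X_c$ des points fermés de degrés $d_{c,1},\dots,d_{c,r_c}$ premiers entre eux dans leur ensemble. Le point clef est que pour un point fermé~$Q$ de~$X_c$ de degré~$d$ sur~$k(c)$, l'immersion fermée $Q\hookrightarrow X_c$ donne une immersion fermée $\sR_{k(c)/k(t)}(Q)\hookrightarrow\sR_{k(c)/k(t)}(X_c)$ dont la source est un $k(t)$-schéma fini et étale de degré $d^{\,[k(c):k(t)]}$ (compter les points géométriques au moyen de $\sR_{k(c)/k(t)}(Q)\otimes_{k(t)}\overline{k(t)}=\prod_{\sigma}Q\otimes_{k(c),\sigma}\overline{k(t)}$, chaque facteur ayant~$d$ points). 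Ainsi $\sR_{k(c)/k(t)}(X_c)$ possède un $0$-cycle de degré $d_{c,j}^{\,[k(c):k(t)]}$ pour chaque~$j$; comme $\gcd_j d_{c,j}^{\,[k(c):k(t)]}=\bigl(\gcd_j d_{c,j}\bigr)^{[k(c):k(t)]}=1$, il possède un $0$-cycle de degré~$1$ sur~$k(t)$. Enfin l'indice d'un produit de $k(t)$-variétés divise le produit des indices, de sorte que $\Wsub{t}=\prod_c\sR_{k(c)/k(t)}(X_c)$ admet un $0$-cycle de degré~$1$ sur~$k(t)$.

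Pour~(ii): étant donnés $n>0$, un ensemble fini~$T$ de places de~$k(t)$ et $(P_v)\in\Wsub{t}(\A_{k(t)})$, on décompose $(P_v)$ en points locaux $P_{c,w'}\in X_c(k(c)_{w'})$; pour chaque~$c$, la famille $(P_{c,w'})_{w'}$ est un point adélique de~$X_c$, et l'hypothèse~(b') appliquée à~$X_c$ avec l'entier~$n$ et l'ensemble fini des places de~$k(c)$ au-dessus de~$T$ fournit $\zeta_c\in\CH_0(X_c)$ congru à $[P_{c,w'}]$ modulo~$n$ en ces places. Il resterait à assembler les~$\zeta_c$ en une classe $\zeta\in\CH_0(\Wsub{t})$ ayant, pour tout $w\in T$, même image que le point rationnel $(P_{c,w'})_{c,\,w'\mid w}$ dans $\CH_0(\Wsub{t}\otimes_{k(t)}k(t)_w)/n$. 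Le procédé naturel consiste à représenter chaque~$\zeta_c$ par des points fermés en position générale sur la variété lisse~$X_c$ (lemme de déplacement, comme dans la preuve du lemme~\ref{lem:continuite}), à leur appliquer $\sR_{k(c)/k(t)}(-)$ point par point pour former $\eta_c\in\CH_0(\sR_{k(c)/k(t)}(X_c))$, puis à poser $\zeta=\eta_{c_1}\times\dots\times\eta_{c_s}$, produit extérieur de $0$-cycles sur~$k(t)$. Par $\Z$-multilinéarité du produit extérieur et sa compatibilité au changement de base, et puisque la classe d'un point rationnel d'un produit est le produit extérieur des classes de ses composantes, la congruence voulue en~$w$ se ramène à une congruence sur chaque facteur~$c$, puis, via $k(c)\otimes_{k(t)}k(t)_w=\prod_{w'}k(c)_{w'}$, à comparer place par place $\sR_{k(c)_{w'}/k(t)_w}$ des représentants locaux de~$\zeta_c$ avec le point local $P_{c,w'}$.

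La principale difficulté est précisément cette dernière comparaison: la restriction des scalaires à la Weil n'étant pas additive sur les cycles, il n'existe pas d'application toute faite $\CH_0(X_c)\to\CH_0(\sR_{k(c)/k(t)}(X_c))$, et il faut suivre la construction à la main à travers la décomposition $k(c)\otimes_{k(t)}k(t)_w=\prod_{w'}k(c)_{w'}$, en exploitant la congruence $\zeta_c\equiv[P_{c,w'}]\pmod n$ et la continuité locale de l'application $\Sym_{X_c/k(c)_{w'}}(k(c)_{w'})\to\CH_0(X_c\otimes_{k(c)}k(c)_{w'})/n$ (lemme~\ref{lem:continuite}) pour tenir compte des choix de position générale. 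Tout le reste résulte formellement du comportement de la restriction des scalaires à la Weil vis-à-vis du changement de base.
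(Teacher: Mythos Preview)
Pour la partie~(i), votre argument est correct et co\"incide avec celui de l'article: l'article \'enonce l'observation clef sous la forme \og si~$V$ admet un $0$-cycle de degr\'e~$1$ sur~$L$, alors $\sR_{L/K}V$ en admet un sur~$K$\fg, ce qui revient \`a votre calcul de pgcd de puissances.

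Pour la partie~(ii), vous avez identifi\'e la difficult\'e sans la r\'esoudre. Votre \'etape d'assemblage --- repr\'esenter $\zeta_c$ par un cycle $\sum n_jP_j$ et poser $\eta_c=\sum n_j[\sR_{k(c)/k(t)}(P_j)]$ --- d\'efinit bien un \'el\'ement de $Z_0(\sR_{k(c)/k(t)}(X_c))$, mais la congruence locale que vous devez v\'erifier vit dans $\CH_0/n$: vous ne savez que $\zeta_c\equiv[P_{c,w'}]\pmod n$ \emph{\`a \'equivalence rationnelle pr\`es}, et votre construction additive n'a aucune raison d'\^etre compatible \`a l'\'equivalence rationnelle. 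Concr\`etement, si $z_c-P_{c,w'}-n\xi_{w'}$ est le diviseur d'une fonction sur une courbe de $X_c\otimes_{k(c)}k(c)_{w'}$, rien ne dit que l'image de ce diviseur par votre extension additive de~$\sR$ soit rationnellement \'equivalente \`a~$0$. L'invocation du lemme~\ref{lem:continuite} ne comble pas ce trou: ce lemme concerne la constance locale de l'application des cycles effectifs vers $\CH_0/n$ sur un corps local, et n'a rien \`a voir avec la compatibilit\'e de la restriction des scalaires \`a l'\'equivalence rationnelle.

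L'article contourne enti\`erement cette difficult\'e en invoquant un r\'esultat de Karpenko (le \emph{transfert de Weil} sur les groupes de Chow, cf.~\cite[\S4 et Proposition~4.4]{karpenkoweil}): pour $L/K$ finie et~$V$ projective sur~$L$, Karpenko construit une application $\CH_0(V)\to\CH_0(\sR_{L/K}V)$, non additive mais bien d\'efinie sur les groupes de Chow, compatible au changement de base, et envoyant la classe d'un point rationnel sur celle du point rationnel correspondant de $\sR_{L/K}V$. Sur~$\bar K$, cette application est essentiellement le produit ext\'erieur $z\mapsto\boxtimes_\tau z_\tau$ sur les $[L:K]$ plongements, ce qui montre par multilin\'earit\'e qu'elle pr\'eserve les congruences modulo~$n$. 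En combinant les applications de Karpenko pour chaque~$c$ avec le produit ext\'erieur sur les facteurs, l'article obtient un diagramme commutatif (\`a fl\`eches verticales non additives) d'o\`u~(ii) r\'esulte formellement.
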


\begin{proof}
Remarquons que si $L/K$ est une extension finie de corps de caractéristique nulle, si~$V$ est une variété projective sur~$L$ et si
$P \in V$ est un point fermé de degré~$d$ sur~$L$, alors $\sR_{L/K}P$ est un $0$\nobreakdash-cycle sur $\sR_{L/K}V$ de degré $d^{[L:K]}$ sur~$K$.
Ainsi, l'existence d'un $0$\nobreakdash-cycle sur~$V$ de degré~$1$ sur~$L$ entraîne-t-elle celle d'un $0$\nobreakdash-cycle sur $\sR_{L/K}V$ de degré~$1$ sur~$K$.

De l'égalité
\begin{align}
\mylabel{eq:wtxc}
\Wsub{t} = \prod_{c \in \pi^{-1}(t)}\!\sR_{k(c)/k(t)}X_c
\end{align}
on tire que
\begin{align}
\mylabel{eq:wtxca}
\Wsub{t}(\A_{k(t)})=\prod_{c \in \pi^{-1}(t)}\!X_c(\A_{k(c)})\rlap{\text{.}}
\end{align}
Si $\Wsub{t}(\A_{k(t)})\neq\emptyset$
et si~$X_c$ satisfait, 
pour chaque $c \in \pi^{-1}(t)$,
la condition~(b) du théorème~\ref{th:principalEun},
il suit de~(\ref{eq:wtxca}) que~$X_c$ possède un $0$\nobreakdash-cycle de degré~$1$ sur~$k(c)$
pour chaque $c \in \pi^{-1}(t)$.
D'après la remarque précédente
et l'égalité~(\ref{eq:wtxc}),
cela implique que~$\Wsub{t}$ admet un $0$\nobreakdash-cycle de degré~$1$ sur~$k(t)$.
L'assertion~(i) du lemme est donc établie.

Soient~$n>0$ un entier et~$T$ un ensemble fini de places de~$k(t)$.
D'après Karpenko \cite[\textsection4 et Proposition~4.4]{karpenkoweil}, il existe des flèches verticales (non additives) rendant commutatif le diagramme suivant:
$$
\xymatrix@C=1.924em{
\vphantom{C_)}\CH_0(\Wsub{t}) \ar[r]^(.32)\alpha &
\vphantom{C_)}\smash[b]{\displaystyle\prod_{w \in T} \CH_0(\Wsub{t} \otimes_{k(t)} k(t)_w)/n}
& \Wsub{t}(\A_{k(t)}) \ar[l]_(.32)\beta \\
\displaystyle\prod_{c \in \pi^{-1}(t)}\!\CH_0(X_c) \ar[u] \ar[r]^(.32)\gamma & \ar[u]
\vphantom{\displaystyle\prod}\smash[t]{\displaystyle\prod_{w\in T}\left(\prod_{c \in \pi^{-1}(t)}\!\CH_0(X_c\otimes_{k(t)}k(t)_w)/n\right)} &
\displaystyle\prod_{c \in \pi^{-1}(t)}\!X_c(\A_{k(c)})\rlap{\text{.}} \ar[l]_(.32)\delta \ar[u]_\wr
}
$$
Si~$X_c$ satisfait la condition~(b') du théorème~\ref{th:principalE}
pour tout $c \in \pi^{-1}(t)$, l'image de~$\gamma$ contient celle de~$\delta$;
comme la flèche verticale de droite est bijective, il s'ensuit que l'image de~$\alpha$ contient celle de~$\beta$.
\end{proof}

\subsection{Démonstration du théorème~\texorpdfstring{\ref{th:propPvraie}}{4.1}}
\mylabel{sec:preuveP}

Soit $f:X \to C$ un morphisme vérifiant les hypothèses du théorème~\ref{th:propPvraie} et les hypothèses~(a) et~(b) du théorème~\ref{th:principalEun}.
Nous devons établir
la propriété~$(\Psub{\emptyset})$, ainsi que~$(\Psub{S})$ pour tout $S \subset \Omega$ fini si l'hypothèse~(b') du théorème~\ref{th:principalE} est satisfaite.

D'après le lemme de Chow et le théorème de Hironaka, il existe
une variété~$X'$ irréductible, projective et lisse sur~$k$, munie d'un morphisme birationnel $X' \to X$.
Notons $f':X' \to C$ la composée de ce morphisme avec~$f$.
Quitte à remplacer~$X'$ par la variété obtenue en faisant éclater dans~$X'$ un nombre fini de points fermés contenus dans des fibres lisses de~$f'$,
on peut supposer que pour tout $c \in C$, si~$X'_c$ est lisse alors~$X_c$ est lisse et est birationnellement équivalente à~$X'_c$.
La remarque~\ref{rq:hypothesesth}~(v) entraîne alors
que les fibres lisses de~$f'$ vérifient l'hypothèse~(b) du théorème~\ref{th:principalEun},
ainsi que l'hypothèse~(b') du théorème~\ref{th:principalE} si les fibres lisses de~$f$ la satisfont.
D'autre part, comme les fibres de~$f$ vérifient l'hypothèse~(a) du théorème~\ref{th:principalEun}, il résulte de \cite[Lemme~3.8]{owlnm} que les fibres de~$f'$ la vérifient aussi.
La propriété~$(\Psub{S})$ étant un invariant birationnel (cf.~remarque~\ref{rqs:conje}~(vi)),
il suffit donc,
pour démontrer le théorème~\ref{th:propPvraie} pour~$X$, de le démontrer pour~$X'$.
Quitte à remplacer~$X$ et~$f$ par~$X'$ et~$f'$, nous supposerons donc
désormais la variété~$X$ projective (ce qui nous permettra de considérer les produits symétriques de~$X$ et les restrictions des scalaires à la Weil de~$f$
sans sortir de la catégorie des schémas).

Fixons un ensemble fini $S \subset \Omega$ et des $0$\nobreakdash-cycles $y \in Z_0(C)$ et $(z_v)_{v \in \Omega} \in \prod_{v \in \Omega} Z_0(X \otimes_k k_v)$
tels que $f_*z_v=y$ dans $\CH_0(C\otimes_k k_v)$ pour tout $v \in \Omega$.
Supposons la famille $(z_v)_{v \in \Omega}$ orthogonale à $\Br_\vert(X/C)$ pour l'accouplement de Brauer--Manin.
Nous devons montrer qu'il existe un $0$\nobreakdash-cycle $z \in Z_0(X)$
tel que $f_*z=y$ dans $\CH_0(C)$;
si de plus
l'hypothèse~(b') du théorème~\ref{th:principalE} est satisfaite, nous devons montrer que pour tout entier $n>0$, le cycle~$z$ peut être choisi de telle façon que
 $z=z_v$ dans
$\CH_0(X \otimes_k k_v)/n$ pour $v \in S \cap \Omega_f$ et $z=z_v$ dans $\CH_0(X \otimes_k k_v)/N_{\kbar_v/k_v}(\CH_0(X \otimes_k \kbar_v))$ pour $v \in S \cap \Omega_\infty$.

Soit $B \subset \Br_\vert(X/C)$ un sous-groupe fini tel que $\Br_\vert(X/C)=B + f^*\Br(C)$ (cf.~lemme~\ref{lem:finitudebrvert}).
Quitte à agrandir~$S$, on peut supposer que $\Omega_\infty \subset S$ et que~$X$ et les éléments de~$B$ ont bonne réduction hors de~$S$ (cf.~\cite[p.~69]{ctsd94}), de sorte que

\begin{enumerate}
\item[(i)] $\mylangle b, u_v \myrangle=0$ pour tout $b \in B$, tout $v \in \Omega\setminus S$ et tout $u_v \in Z_0(X \otimes_k k_v)$.
\end{enumerate}

Notons~$g$ le genre de~$C$ et $M \subset C$ l'ensemble des points au-dessus desquels la fibre de~$f$ n'est pas lisse.
D'après le lemme~\ref{lem:effectivitecourbecdn}, quitte à agrandir encore~$S$, on peut supposer que

\begin{enumerate}
\item[(ii)] pour tout $v \in \Omega\setminus S$, tout diviseur sur $C \otimes_k k_v$ de degré $>2g$ est linéairement équivalent à un diviseur effectif
sans multiplicités,
de la forme $f_*z$ pour un $z \in Z_0^\eff(X \otimes_k k_v)$
et de support disjoint de $M \otimes_k k_v$.
\end{enumerate}

Soit $P \in X$ un point fermé.  Pour chaque $v \in S$, le
lemme~\ref{lem:effectiflocal} appliqué au morphisme $f \otimes_k k_v : X\otimes_k
k_v \to C \otimes_k k_v$ et au fermé constitué de la réunion des supports des $0$\nobreakdash-cycles $P \otimes_k k_v$
et~$z_v$ fournit une courbe $Z_v \subset X \otimes_k k_v$ et un entier~$N_{0,v}$.  Soit~$N$ un entier
assez grand pour que $\deg(z_v+NP)\geq N_{0,v}$ pour tout $v \in S$
et pour que $\deg(y)+N\deg(P)>2g$.
Par définition de~$N_{0,v}$, pour $v \in S$, le $0$\nobreakdash-cycle $z_v + NP$
est rationnellement équivalent, sur $X \otimes_k k_v$, à un $0$\nobreakdash-cycle effectif~$z_v'$ supporté par~$Z_v$ tel que $f_*z_v'$ soit sans multiplicités et de support
disjoint de $M \otimes_k k_v$.  Posons $z_v^1=z_v'$ pour $v \in S$ et $z_v^1=z_v+NP$ pour $v \in \Omega\setminus S$.
Comme les classes dans $\prod_{v \in \Omega} \CH_0(X \otimes_k k_v)$ des familles $(z_v)_{v \in \Omega}$ et $(z_v^1)_{v \in \Omega}$ diffèrent
par l'image d'un élément de $\CH_0(X)$, la famille $(z_v^1)_{v \in \Omega}$ est encore orthogonale à $\Br_\vert(X/C)$.
Quitte à remplacer~$z_v$ par~$z_v^1$ pour tout $v \in \Omega$ et~$y$ par $y+Nf_*P$, on peut donc supposer que

\begin{enumerate}
\item[(iii)] pour tout $v \in S$, le $0$\nobreakdash-cycle~$z_v$ est effectif et~$f_*z_v$ est sans multiplicités et de support disjoint de $M \otimes_k k_v$;
par ailleurs $\deg(y)>2g$ et pour tout $v \in \Omega_\infty$, le $0$\nobreakdash-cycle~$z_v$ est supporté par~$Z_v$.
\end{enumerate}

Le diviseur~$y$ sur~$C$ est alors très ample.  Il est donc linéairement équivalent à un diviseur effectif $y_0 \in Z_0(C)$ de support disjoint de~$M$
et sans multiplicités.

Comme les fibres de~$f$ au-dessus de $C \setminus M$ sont géométriquement irréductibles,
le théorème de Lang--Weil--Nisnevi\v{c}~\cite{langweil} entraîne l'existence d'un ensemble fini $S' \subset \Omega$ contenant~$S$ et, pour chaque place $v \in \Omega\setminus S'$,
d'un $0$\nobreakdash-cycle $z_v^2 \in Z_0(X\otimes_k k_v)$ effectif et sans multiplicités tel que $f_*z_v^2=y_0$ dans $Z_0(C \otimes_kk_v)$.
Pour $v \in S' \setminus S$, fixons, à l'aide de la propriété~(ii), un $0$\nobreakdash-cycle effectif~$z_v^2$ sur~$X \otimes_k k_v$ tel que $f_*z_v^2$
soit sans multiplicités, de support disjoint de $M \otimes_k k_v$ et linéairement équivalent à~$y$.
Enfin, pour $v \in S$, posons $z_v^2=z_v$.
La famille $(z_v^2)_{v \in \Omega}$ ainsi définie est encore orthogonale à~$B$ pour l'accouplement de Brauer--Manin,
grâce à la propriété~(i).  D'autre part, elle est orthogonale à $f^*\Br(C)$
en vertu de la formule de projection,
puisque $f_*z_v^2=y$ dans $\CH_0(C \otimes_k k_v)$ pour toute place $v \in \Omega$.
Par conséquent $(z_v^2)_{v \in \Omega}$ est orthogonale à $\Br_\vert(X/C)$.  Ainsi, compte tenu de~(iii),
on peut supposer, quitte à remplacer~$S$ par~$S'$ et~$z_v$ par~$z_v^2$ pour tout $v \in \Omega$,
que

\begin{enumerate}
\item[(iv)] pour tout $v \in \Omega$, le $0$\nobreakdash-cycle~$z_v$ est effectif et~$f_*z_v$ est sans multiplicités et de support disjoint de $M \otimes_k k_v$;
de plus, pour tout $v \in \Omega \setminus S$, l'égalité
$f_*z_v=y_0$ vaut dans $Z_0(C\otimes_k k_v)$.
\end{enumerate}

Les propriétés~(i) et~(ii) ne nous serviront plus.

La prochaine réduction concerne uniquement les places réelles;
le lecteur prêt à supposer~$k$ totalement imaginaire
peut l'ignorer.
Employons les notations du paragraphe~\ref{sec:vanhamel}.
D'après le corollaire~\ref{cor:existeubis}, il existe
des $0$\nobreakdash-cycles
$d_v \in Z_0(Z_v\otimes_{k_v} \kbar_v)$  pour $v \in \Omega_\infty$,
tous de degré~$0$,
tels que
si l'on pose
$z''_v=z_v$ pour $v \in \Omega_f$ et $z''_v=z_v+N_{\kbar_v/k_v}(d_v)$ pour $v \in \Omega_\infty$,
la famille $(f_{*+}z''_v)_{v \in \Omega}$
appartienne à l'image de la flèche diagonale
$\Picplus(C) \to \prod_{v \in \Omega} \Picplus(C\otimes_k k_v)$.
Comme $\deg(z_v)=\deg(z''_v)$ et comme, d'après~(iii), le cycle~$z_v$ est supporté par~$Z_v$ pour $v \in \Omega_\infty$,
il résulte de la définition de~$Z_v$ que~$z''_v$, pour $v \in \Omega_\infty$, est rationnellement équivalent à un $0$\nobreakdash-cycle effectif~$z'''_v$ tel que $f_*z'''_v$ soit sans multiplicités et de support
disjoint de $M \otimes_k k_v$.
Quitte à remplacer~$z_v$
 par~$z_v'''$
pour chaque $v \in \Omega_\infty$,
on peut maintenant supposer que

\begin{enumerate}
\item[(v)]
la famille $(f_{*+}z_v)_{v \in \Omega}$ appartient à l'image de l'application
diagonale
$$\Picplus(C) \to
\prod_{v \in \Omega} \Picplus(C\otimes_k k_v)\rlap{\text{.}}$$
\end{enumerate}

Notons $\abs{y} \subset \Sym_{C/k}=\displaystyle\coprod \Sym^d_{C/k}$ le système linéaire complet associé à~$y$. C'est un espace projectif sur~$k$.
Soit $F_M \subset \abs{y}$ le lieu des diviseurs dont le support rencontre~$M$
et $F_R \subset \abs{y}$ le complémentaire du lieu des diviseurs sans multiplicités.
Notons enfin $F \subset F_M \cup F_R$ l'ensemble des points situés sur au moins deux composantes irréductibles distinctes de $(F_M \cup F_R) \otimes_k \kbar$.
Les fermés~$F_M$ et~$F_R$ ne contiennent pas le point~$y_0$.
Il existe donc un ouvert dense $\abs{y}^0 \subset \abs{y}$ tel que pour tout $y_\infty \in \abs{y}^0$,
la droite de~$\abs{y}$ passant par~$y_0$ et par~$y_\infty$ ne rencontre pas~$F$.
Quitte à rétrécir~$\abs{y}^0$, on peut supposer~$\abs{y}^0$ disjoint de $F_M \cup F_R$ et du lieu des diviseurs dont le support rencontre le support de~$y_0$.

Par approximation faible dans l'espace projectif~$\abs{y}$, il existe $y_\infty \in \abs{y}^0(k)$ arbitrairement proche de $f_*z_v \in \abs{y}(k_v)$ pour $v \in S$.

Comme~$f_*z_v$ est sans multiplicités et de support disjoint de $M \otimes_k k_v$,
le morphisme $f_*:\Sym_{X/k} \to \Sym_{C/k}$ est lisse en~$z_v$.
Par le théorème des fonctions implicites, il s'ensuit
que pour chaque $v \in S$,
le point $y_\infty \in \Sym_{C/k}(k_v)$, qui est par hypothèse arbitrairement proche de~$f_*z_v$, se relève
en un $z_v^3 \in \Sym_{X/k}(k_v)$ arbitrairement proche de~$z_v$.
D'après le lemme~\ref{lem:continuite}, pour $v \in S \cap \Omega_f$ (resp.~$v \in S \cap \Omega_\infty$),
si~$z_v^3$ est suffisamment proche de~$z_v$, les classes de~$z_v^3$ et de~$z_v$ dans $\CH_0(X\otimes_k k_v)/n$ (resp.~dans $\CH_0(X\otimes_k k_v)/2$
et donc dans $\CH_0(X\otimes_k k_v)/N_{\kbar_v/k_v}(\CH_0(X\otimes_k\kbar_v))$)
seront égales.
En conclusion, quitte à choisir~$y_\infty$ assez proche de~$f_*z_v$ pour $v \in S$ puis à remplacer~$z_v$ par~$z_v^3$ pour $v \in S$, on peut supposer que

\begin{enumerate}
\item[(vi)] pour tout $v \in S$, l'égalité $f_*z_v=y_\infty$ vaut dans $Z_0(C \otimes_k k_v)$.
\end{enumerate}

Les diviseurs~$y_0$ et~$y_\infty$ sur~$C$ étant de supports disjoints,
la droite~$D$ de~$\abs{y}$ passant par~$y_0$ et par~$y_\infty$ détermine un système linéaire sans point base.
Il existe donc un morphisme $\pi : C \to \P^1_k$ tel que $\pi^{-1}(0)=y_0$ et $\pi^{-1}(\infty)=y_\infty$.
Notons $R \subset C$ le lieu de ramification de~$\pi$
et $p:W \to \P^1_k$ la restriction des scalaires à la Weil du morphisme $f:X \to C$ le long de~$\pi$ (cf.~\textsection\ref{sec:restrictionWeil}).
Comme $D \cap F = \emptyset$, toute fibre géométrique de~$\pi$ rencontrant~$M$ est étale et ne rencontre~$M$ qu'en un point.
Il~s'ensuit que $\pi(M) \cap \pi(R)=\emptyset$ et que~$\pi$ induit un isomorphisme $M \isoto \pi(M)$.
Comme au~\textsection\ref{sec:restrictionWeil}, notons~$W'$ une compactification projective de~$W$ telle que~$p$ s'étende
en un morphisme $p':W' \to \P^1_k$.  La variété~$W$ étant lisse par la proposition~\ref{prop:rwproprietes}~(i), on peut choisir~$W'$ lisse,
grâce à Hironaka.

D'après les propriétés~(iv) et~(vi) ci-dessus, le $0$\nobreakdash-cycle~$z_v$ définit,
quel que soit $v \in \Omega$,
une section de la seconde projection $X \times_C \pi^{-1}(t_v) \to \pi^{-1}(t_v)$ pour un $t_v \in \P^1(k_v)$ (à savoir, $t_v=\infty$ si $v \in S$ et $t_v=0$ sinon).
La donnée d'une telle section équivaut à celle d'un point $[z_v] \in W(k_v)$ vérifiant $p([z_v])=t_v$.

\begin{prop}
\mylabel{prop:pointorthogonal}
Le point adélique $([z_v])_{v \in \Omega} \in W'(\A_k)$ est orthogonal au groupe $\Br_\vert(W'/\P^1_k)$ pour l'accouplement de Brauer--Manin.
\end{prop}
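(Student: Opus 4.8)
The plan is to fix $\mathcal{A} \in \Br_\vert(W'/\P^1_k)$, write $\mathcal{A} = p'^*\alpha$ with $\alpha \in \Br(k(\P^1))$, and show that $\sum_{v \in \Omega} \inv_v \mylangle \mathcal{A}, [z_v] \myrangle = 0$; throughout one may assume $k$ totally imaginary (so that the archimedean places play no role), the general case being postponed to \textsection\ref{sec:vanhamel} by means of van~Hamel's formalism. First I would locate the ramification of~$\alpha$. By Proposition~\ref{prop:rwproprietes}, $p'$ is smooth with geometrically irreducible fibres over $\P^1_k \setminus (\pi(M) \cup \pi(R))$, and its fibre over each point of~$\pi(R)$ carries a geometrically irreducible component of multiplicity~$1$; computing the residue of $p'^*\alpha$ along such a component and using that restriction to the function field of a geometrically irreducible variety is injective on $H^1(-, \Q/\Z)$, one finds that $\alpha$ is unramified on $\P^1_k \setminus \pi(M)$. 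As $y_0 = \pi^{-1}(0)$ and $y_\infty = \pi^{-1}(\infty)$ are reduced divisors whose supports avoid~$M$, the points~$0$ and~$\infty$ lie outside $\pi(M) \cup \pi(R)$, hence $\alpha$ is unramified at~$0$ and at~$\infty$.

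Consequently, for each~$v$, since $[z_v] \in W(k_v) \subset W'(k_v)$ maps under~$p'$ to $t_v \in \{0,\infty\}(k_v)$ (with $t_v = \infty$ for $v \in S$ and $t_v = 0$ otherwise), one has $\mylangle \mathcal{A}, [z_v] \myrangle = \mathcal{A}([z_v]) = \alpha(t_v)$, the image in $\Br(k_v)$ of the value $\alpha(0)$, resp.\ $\alpha(\infty)$, of~$\alpha$ at the $k$-point~$0$, resp.\ $\infty$, of~$\P^1_k$. Putting $\beta = \alpha(\infty) - \alpha(0) \in \Br(k)$ and using the global reciprocity law for $\alpha(0)$, the sum telescopes:
\[
\sum_{v \in \Omega} \inv_v \mylangle \mathcal{A}, [z_v] \myrangle = \sum_{v \in \Omega} \inv_v \alpha(0) + \sum_{v \in S} \inv_v \beta = \sum_{v \in S} \inv_v \beta,
\]
so that it remains to prove $\sum_{v \in S} \inv_v \beta = 0$, or equivalently (again by reciprocity) $\sum_{v \notin S} \inv_v \beta = 0$.

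The heart of the matter is then to descend~$\alpha$ to~$C$. Since $\alpha$ is unramified at~$0$ and~$\infty$, Faddeev reciprocity on $\P^1_k$ gives $\sum_{c \in M} \mathrm{cores}_{k(c)/k}(\partial_{\pi(c)}\alpha) = 0$, where $\pi$ induces an isomorphism $M \isoto \pi(M)$ and we identify $k(c)$ with $k(\pi(c))$; I expect this to provide a class $\gamma \in \Br(k(C))$ unramified on $C \setminus M$ with $\partial_c\gamma = \partial_{\pi(c)}\alpha$ for all $c \in M$. On the other hand, that $p'^*\alpha \in \Br(W')$ forces, using the description $p'^{-1}(\pi(c)) = X_c \times_{k(c)} V$ with~$V$ geometrically irreducible over~$k(c)$ provided by Proposition~\ref{prop:rwproprietes}~(iv), the vanishing of $e_{c,i}\Res(\partial_{\pi(c)}\alpha)$ in $H^1(K_{c,i}, \Q/\Z)$ for each irreducible component $X_{c,i}$ of~$X_c$, where $e_{c,i}$ denotes its multiplicity in~$X_c$ and $K_{c,i}$ the algebraic closure of~$k(c)$ in its function field; by the residue description of~$\Br(X)$ (cf.~\cite[Th.~6.1]{grbr3}, \cite[Proposition~1.1.1]{ctsd94}) this is precisely the condition ensuring that $f^*\gamma \in \Br(X)$, hence $f^*\gamma \in \Br_\vert(X/C)$.

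To conclude, the orthogonality of $(z_v)_{v \in \Omega}$ to $\Br_\vert(X/C)$ together with the projection formula gives $\sum_v \inv_v \mylangle \gamma, f_*z_v \myrangle = 0$; since $f_*z_v = y_\infty$ for $v \in S$ and $f_*z_v = y_0$ for $v \notin S$ (properties~(iv) and~(vi)), this together with the reciprocity relation $\sum_v \inv_v \mylangle \gamma, y_\infty \myrangle = 0$ gives $\sum_{v \notin S} \inv_v \mylangle \gamma, y_0 - y_\infty \myrangle = 0$. Finally, $y_0$ and $y_\infty$ have supports disjoint from~$M$ and are linearly equivalent through the function $\pi^*t$ of divisor $y_0 - y_\infty$ (with~$t$ the coordinate on~$\P^1_k$ of divisor $[0] - [\infty]$), so reciprocity on the curve~$C$ applied to $\gamma \cup (\pi^*t)$ gives $\mylangle \gamma, y_0 - y_\infty \myrangle = -\sum_{c \in M} \mathrm{cores}_{k(c)/k}(\partial_c\gamma \cup (t(\pi(c))))$; carrying out the same computation on~$\P^1_k$ with~$\alpha$ and~$t$ identifies this class with $\alpha(0) - \alpha(\infty) = -\beta$. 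Hence $\sum_{v \notin S} \inv_v \beta = 0$, which closes the argument. The step I expect to be the main obstacle is this descent: producing~$\gamma$ with the prescribed residues (a realizability statement for residue data on the curve~$C$, whose essential input is the identity $\sum_c \mathrm{cores}_{k(c)/k}(\partial_{\pi(c)}\alpha) = 0$ coming from the unramifiedness of~$\alpha$ at~$0$ and~$\infty$) and checking $f^*\gamma \in \Br(X)$ — which is exactly the point at which the constructions of \textsection\ref{sec:restrictionWeil} identify the vertical Brauer group of $W'/\P^1_k$ with that of $X/C$.
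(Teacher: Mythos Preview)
Your approach is correct and in essence the same as the paper's, but you have under-justified the crux and made the endgame harder than necessary.

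The class you call~$\gamma$ is exactly the paper's~$A'$. Indeed, once $\gamma\in\Br(C\setminus M)$ satisfies $\partial_c\gamma=\partial_{\pi(c)}\alpha$ for all $c\in M$, the difference $\alpha-\pi_*\gamma$ is unramified on all of~$\P^1_k$ and hence lies in~$\Br(k)$; so producing your~$\gamma$ is equivalent to the paper's Lemma~\ref{lem:totimagpisurj}, which asserts that every $A\in\Br(\P^1_k\setminus\pi(M))$ has the form $\pi_*A'+\delta$ with $A'\in\Br(C\setminus M)$ and $\delta\in\Br(k)$. Where your write-up goes wrong is in naming the Faddeev identity $\sum_c\mathrm{cores}_{k(c)/k}(\partial_{\pi(c)}\alpha)=0$ as ``the essential input'' for this realizability on~$C$. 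It is not: the obstruction to lifting a prescribed family of residues to $\Br(C\setminus M)$ lives in $H^3(C,\Gm)\simeq H^2(k,\Pic(C\otimes_k\kbar))$, and for a curve of positive genus this carries a contribution from $H^2(k,\Pic^0(C\otimes_k\kbar))$ that the Faddeev identity does not see. What kills this extra term is Tate's theorem that $H^2$ of an abelian variety over a totally imaginary number field vanishes; this is precisely where the hypothesis ``$k$ totally imaginary'' is used, and the paper's proof of Lemma~\ref{lem:totimagpisurj} isolates exactly this point (showing $\pi_*:H^3(C,\Gm)\to H^3(\P^1_k,\Gm)$ is injective). You have the hypothesis in force, but you should say how it enters.

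Once~$\gamma$ exists, your route to the conclusion via $\mylangle\gamma,y_0-y_\infty\myrangle$ and Weil reciprocity on~$C$ is correct but circuitous. The paper simply writes $\alpha=\pi_*A'+\delta$ and computes, for every~$v$,
\[
\mylangle p'^*\alpha,[z_v]\myrangle
=\mylangle p'^*\pi_*A',[z_v]\myrangle+\delta
=\mylangle \pi_*A',p'_*[z_v]\myrangle+\delta
=\mylangle A',\pi^*p'_*[z_v]\myrangle+\delta
=\mylangle A',f_*z_v\myrangle+\delta
=\mylangle f^*A',z_v\myrangle+\delta,
\]
using the projection formula for~$\pi_*$ and the equality $f_*z_v=\pi^*p'_*[z_v]$ in $Z_0(C\otimes_kk_v)$; summing over~$v$, global reciprocity kills~$\delta$ and the orthogonality hypothesis on $(z_v)_{v\in\Omega}$ kills the rest. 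This replaces your page of residue-chasing by a single line. Your verification that $f^*\gamma\in\Br_\vert(X/C)$, using the product description $p'^{-1}(\pi(c))=X_c\times_{k(c)}V$ from Proposition~\ref{prop:rwproprietes}~(iv), is correct and matches the paper's argument.
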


Nous prouverons la proposition~\ref{prop:pointorthogonal} à la fin de ce paragraphe pour~$k$ totalement imaginaire et au \textsection\ref{sec:demproppointorthogonal} en général.
Admettons-la pour le moment et terminons la démonstration du théorème~\ref{th:propPvraie}.
Nous aurons besoin pour cela d'une variante de \cite[Theorem~4.1]{ctsksd98}, que nous énonçons avec des notations indépendantes:

\newcommand{\citationctsksdtemp}{$\cong$\cite[Theorem~4.1]{ctsksd98}}
\begin{thm}[\citationctsksdtemp]
\mylabel{th:variantectsksd}
Soit~$X$ une variété irréductible, projective et lisse sur un corps de nombres~$k$.  Soit $f:X\to \P^1_k$ un morphisme de fibre générique géométriquement irréductible,
dont la fibre au-dessus de tout point fermé de~$\P^1_k$ vérifie l'hypothèse~(a) du théorème~\ref{th:principalEun}.
Si $d>0$ est un entier, notons
$\sS^d \subset \Sym_{X/k}^d(\A_k)$
l'ensemble des familles de $0$\nobreakdash-cycles locaux effectifs de degré~$d$ orthogonales à $\Br_\vert(X/\P^1_k)$ pour
l'accouplement de Brauer--Manin
et $\sS^d_\vert \subset \sS^d$ l'ensemble
des familles $(z_v)_{v \in \Omega} \in \Sym_{X/k}^d(\A_k)$
pour lesquelles il existe un point fermé $t \in \P^1_k$, de degré~$d$, tel que la fibre~$X_t$ de~$f$ en~$t$ soit lisse
et que l'égalité $f_*z_v=t$ soit satisfaite dans $Z_0(\P^1_{\mkern-2muk_v}\mkern-.75mu)$ pour tout $v \in \Omega$.
Notons enfin~$d_0$ le nombre de fibres géométriques singulières de~$f$.
Si $d\geq d_0$, alors~$\sS^d_\vert$ est dense dans~$\sS^d$.
\end{thm}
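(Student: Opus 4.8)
Le plan est de ramener cet énoncé au théorème~4.1 de~\cite{ctsksd98}, dont il constitue le cœur géométrique: \emph{loc.\ cit.} impose en sus une propriété arithmétique des fibres lisses de~$f$, dont il tire l'existence d'un $0$\nobreakdash-cycle de degré~$1$ sur~$X$, et l'assertion de densité voulue est précisément ce qui est établi dans sa preuve \emph{avant} que cette propriété n'intervienne. Le travail consistera donc surtout à traduire l'énoncé de~\emph{loc.\ cit.} dans les notations présentes et à en parcourir la démonstration.

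D'abord, je réduirais la condition d'orthogonalité. Comme $\Br(\P^1_k)=\Br(k)$ (car $H^1(k,\Pic(\P^1_\kbar))=0$), le sous-groupe $f^*\Br(\P^1_k)\subset\Br(X)$ n'est constitué que de classes constantes, et toute famille $(w_v)_{v\in\Omega}$ de $0$\nobreakdash-cycles de degré~$d$ sur~$X$ lui est orthogonale, puisque $\sum_{v\in\Omega}\inv_v\mylangle f^*\beta,w_v\myrangle=d\sum_{v\in\Omega}\inv_v(\beta_{k_v})=0$ pour tout $\beta\in\Br(k)$, d'après la suite exacte~(\ref{eq:sebrglobal}). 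En vertu du lemme~\ref{lem:finitudebrvert}, l'orthogonalité d'une telle famille à~$\Br_\vert(X/\P^1_k)$ équivaut alors à son orthogonalité à un sous-groupe \emph{fini} $B\subset\Br(X)$. Je prendrais ensuite l'ensemble fini~$S$ — au voisinage duquel on veut approcher la famille $(z_v)_{v\in\Omega}$ donnée — assez grand pour contenir~$\Omega_\infty$, les places de mauvaise réduction de~$X$ et des éléments de~$B$, et les places en deçà desquelles le théorème de Lang--Weil--Nisnevi\v{c}~\cite{langweil} ne garantit pas déjà l'existence de points sur les réductions des fibres lisses de~$f$. Comme~$X$ est propre et que les éléments de~$B$ ont bonne réduction hors de~$S$ (cf.~\cite[p.~69]{ctsd94}), on a $\mylangle b,z_v\myrangle=0$ pour tout $b\in B$ et tout $v\notin S$, de sorte que l'hypothèse d'orthogonalité sur $(z_v)_{v\in\Omega}$ se réduit à $\sum_{v\in S}\inv_v\mylangle b,z_v\myrangle=0$ pour tout $b\in B$.

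Le cœur de la preuve sera ensuite la méthode de fibration de~\cite[\textsection4]{ctsksd98}. Un argument de déformation du type \og{}lemme formel\fg{} permet, quitte à introduire un nombre fini de places auxiliaires, de se ramener au cas où chaque~$z_v$ est supporté par des fibres lisses de~$f$, avec $f_*z_v$ sans multiplicités, en conservant l'orthogonalité à~$B$. On invoque alors le dispositif de Salberger: par approximation faible dans l'espace projectif $\Sym^d_{\P^1_k/k}$, on choisit un point fermé $t\in\P^1_k$ de degré~$d$ arbitrairement proche de $f_*z_v$ pour $v\in S$, de fibre~$X_t$ lisse, et dont la réduction en une place auxiliaire — fournie par un argument de type Chebotarev reposant sur l'hypothèse~(a) — contient les réductions de tous les points fermés de~$\P^1_k$ à fibre singulière; la loi de réciprocité force alors la solubilité locale de~$X_t$ en cette place, et la bonne réduction la garantit partout ailleurs, de sorte que~$X_t$ possède un point sur chaque complétion de~$k(t)$, les $0$\nobreakdash-cycles locaux au-dessus de~$t$ formant une famille encore orthogonale à~$B$. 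C'est ici qu'intervient le seuil $d\geq d_0$: le degré du point~$t$ doit être au moins égal au nombre~$d_0$ de fibres géométriques singulières pour que sa réduction en la place auxiliaire puisse contenir tous les points singuliers. Le théorème des fonctions implicites, appliqué à l'application d'image directe $\Sym^d_{X/k}\to\Sym^d_{\P^1_k/k}$, lisse en~$z_v$ pour $v\in S$ puisque $f_*z_v$ y est sans multiplicités, et le lemme de Hensel aux autres places, relèvent enfin~$t$ en une famille $(z'_v)_{v\in\Omega}$ de $0$\nobreakdash-cycles effectifs vérifiant $f_*z'_v=t$ dans $Z_0(\P^1_{k_v})$ et proche de $(z_v)_{v\in\Omega}$ aux places de~$S$; cette famille appartient à~$\sS^d_\vert$ et demeure dans~$\sS^d$ grâce à la locale constance de l'accouplement de Brauer--Manin aux places de~$S$ (via le lemme~\ref{lem:continuite}) et à la bonne réduction ailleurs.

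La difficulté principale ne sera pas dans ces vérifications, mais dans la mise en correspondance scrupuleuse de la démonstration de~\cite[Theorem~4.1]{ctsksd98} avec l'énoncé ci-dessus, et tout particulièrement dans le contrôle du degré: il faudra s'assurer que cette démonstration fournit bien le résultat sous la seule hypothèse $d\geq d_0$. C'est précisément sur ce point que porte la remarque de Y.~Liang signalée dans l'introduction.
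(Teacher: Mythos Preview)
Your proposal is correct and follows essentially the same approach as the paper: both recognise that the statement isolates the part of the proof of~\cite[Theorem~4.1]{ctsksd98} which precedes the use of the arithmetic hypothesis on smooth fibres, and both defer to that proof for the actual work. The paper's sketch is somewhat more precise about which ingredients of~\emph{loc.\ cit.} are invoked (the classes~$A_{i,j}$, Lemma~4.5, condition~(4.3)), while you give a more discursive account of the mechanism (Chebotarev, the role of the threshold $d\geq d_0$); but the substance is the same.
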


Indiquons brièvement pourquoi
\cite[Theorem~4.1]{ctsksd98} résulte du théorème~\ref{th:variantectsksd}.
S'il existe une famille $(z_v)_{v \in \Omega} \in \prod_{v \in \Omega} \CH_0(X\otimes_k k_v)$ orthogonale à~$\Br(X)$ et vérifiant $\deg(z_v)=1$ pour tout~$v$,
les arguments généraux de réduction aux cycles effectifs que nous avons utilisés au début de la preuve du théorème~\ref{th:propPvraie}
entraînent l'existence, pour tout entier $n>0$, d'un $d>0$ premier à~$n$ tel que $\sS^d\neq\emptyset$. Il suffit alors, pour conclure, d'appliquer le

\begin{cor}
\mylabel{cor:thctsksd}
Dans la situation du théorème~\ref{th:variantectsksd}, supposons que les
fibres de~$f$ au-dessus d'un ouvert dense de~$\P^1_k$ vérifient
l'hypothèse~(b) du théorème~\ref{th:principalEun}.
Si pour tout $n>0$, il existe $d>0$ premier à~$n$ tel que $\sS^d\neq\emptyset$, alors~$X$ possède un $0$\nobreakdash-cycle de degré~$1$.
\end{cor}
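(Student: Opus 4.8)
The plan is to produce, via Theorem~\ref{th:variantectsksd}, a smooth fibre $X_t$ of~$f$ over a closed point~$t$ of large degree which carries an adelic point and lies over the locus where hypothesis~(b) is available; hypothesis~(b) then yields a zero-cycle of degree~$1$ on~$X_t$, whose pushforward to~$X$ has degree $[k(t):k]$, and a coprimality argument brings this down to degree~$1$.

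First I would fix the good locus. Intersecting the dense open subset of~$\P^1_k$ over which the fibres of~$f$ satisfy~(b) with the open locus over which~$f$ is smooth (dense by generic smoothness in characteristic zero), I obtain a dense open $U \subset \P^1_k$ whose complement is a finite set of closed points and such that $X_t$ is smooth and satisfies~(b) for every closed point $t \in U$. Let~$e$ denote the largest degree of a closed point of $\P^1_k \setminus U$ (with $e=0$ if that set is empty). Next, fix any closed point $P \in X$ and put $m=\deg(P)$. Applying the hypothesis with $n=m$ gives a $d_1>0$ prime to~$m$ with $\sS^{d_1}\neq\emptyset$; and adding $k$ copies of~$P$ to a member of $\sS^{d_1}$ yields, for each $k\geq 0$, an element of $\sS^{d_1+km}$, since effectivity and the degree are clear and orthogonality to $\Br_\vert(X/\P^1_k)\subseteq\Br(X)$ is preserved because the global zero-cycle~$P$ pairs to~$0$ with every element of~$\Br(X)$ by the reciprocity law~(\ref{eq:sebrglobal}). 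Choosing $k$ so that $d:=d_1+km>\max(d_0,e)$, I get $\sS^d\neq\emptyset$ with $d\geq d_0$, with $\gcd(d,m)=1$, and with~$d$ distinct from the degree of every closed point of $\P^1_k\setminus U$.

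Then I would invoke Theorem~\ref{th:variantectsksd}: as $d\geq d_0$, the set $\sS^d_\vert$ is dense in $\sS^d$, hence nonempty. Pick $(z_v)_{v\in\Omega}\in\sS^d_\vert$ and the attached closed point $t\in\P^1_k$ of degree~$d$, with $X_t$ smooth and $f_*z_v=t$ in $Z_0(\P^1_{k_v})$ for all~$v$. Since $\deg(t)=d$ and no closed point of $\P^1_k\setminus U$ has degree~$d$, we have $t\in U$, so $X_t$ satisfies~(b). Moreover $X_t(\A_{k(t)})\neq\emptyset$: for each place~$w$ of $k(t)$, lying above the place~$v$ of~$k$, the relation $f_*z_v=t$ together with $\deg(z_v)=\deg(t)$ forces the part of~$z_v$ lying over the factor $\Spec k(t)_w$ of $t\otimes_k k_v$ to be a single $k(t)_w$-point of~$X_t$, and these points assemble into an adelic point of the projective $k(t)$-variety~$X_t$. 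As $X_t$ is smooth and has an adelic point, hypothesis~(b) provides a zero-cycle~$\zeta$ of degree~$1$ on~$X_t$ over $k(t)$. Finally, pushing~$\zeta$ forward along $X_t\hookrightarrow X$ gives a zero-cycle on~$X$ of degree $[k(t):k]=d$ over~$k$; since $\gcd(d,m)=1$ and~$X$ also carries~$P$ of degree~$m$, a $\Z$-linear combination of the two is a zero-cycle of degree~$1$ on~$X$.

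The main obstacle is not analytic at all --- Theorem~\ref{th:variantectsksd} does all the work --- but lies in the choice of the integer~$d$: one must simultaneously arrange $d\geq d_0$ (to apply Theorem~\ref{th:variantectsksd}), $\gcd(d,m)=1$ (to pass from degree~$d$ back to degree~$1$), and $d$ larger than the degrees of the finitely many closed points of $\P^1_k\setminus U$ (so that the fibre~$X_t$ furnished by Theorem~\ref{th:variantectsksd} automatically falls over the locus where~(b) holds). The hypothesis that $\sS^d\neq\emptyset$ for some~$d$ prime to~$n$, for every~$n$, is precisely what makes these three requirements jointly satisfiable.
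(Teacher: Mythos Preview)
Your proof is correct and follows essentially the same approach as the paper's: fix a closed point~$P$, use the hypothesis with $n=\deg(P)$ to find $d_1$ prime to~$\deg(P)$ with $\sS^{d_1}\neq\emptyset$, enlarge to $d=d_1+k\deg(P)$ so that $d\geq d_0$ and $d$ exceeds the degree of any closed point outside the good locus, apply Theorem~\ref{th:variantectsksd} to get $t$ of degree~$d$ with $X_t$ smooth and $X_t(\A_{k(t)})\neq\emptyset$, invoke~(b), and conclude by B\'ezout. The only cosmetic differences are that the paper bounds~$d$ by the total degree $\deg(\P^1_k\setminus V)$ rather than the maximal degree of a closed point, and that you spell out why $f_*z_v=t$ forces $X_t(\A_{k(t)})\neq\emptyset$ whereas the paper simply asserts it.
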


\begin{proof}[Démonstration du corollaire~\ref{cor:thctsksd}]
Soit $V \subset \P^1_k$ un ouvert dense au-dessus duquel les fibres de~$f$ vérifient l'hypothèse~(b) du théorème~\ref{th:principalEun}.
Soient $P \in X$ un point fermé et~$d$ un entier premier à $\deg(P)$ tel que $\sS^{d}\neq\emptyset$.
La condition $\sS^{d}\neq\emptyset$ entraîne que $\sS^{d+\mathrm{deg}(P)}\neq\emptyset$.
Quitte à remplacer~$d$ par $d+m\deg(P)$ pour un~$m$ assez grand,
on peut donc supposer que $d\geq d_0$ et $d>\deg(\P^1_k \setminus V)$.
D'après le théorème~\ref{th:variantectsksd}, l'ensemble $\sS^d_\vert$ est alors non vide: il~existe un point fermé $t \in \P^1_k$
de degré~$d$ tel que la fibre~$X_t$ soit lisse et vérifie $X_t(\A_{k(t)})\neq\emptyset$.
Comme $d>\deg(\P^1_k \setminus V)$, le point~$t$ appartient à~$V$ et
par conséquent~$X_t$ admet un $0$\nobreakdash-cycle~$z$ de degré~$1$ sur~$k(t)$.
Une combinaison linéaire de~$z$ et de~$P$ est alors un $0$\nobreakdash-cycle sur~$X$ de degré~$1$ sur~$k$.
\end{proof}

\begin{proof}[Esquisse de démonstration du théorème~\ref{th:variantectsksd}]
Soit $M \subset \P^1_k$ l'ensemble des points fermés au-dessus desquels la fibre de~$f$ est singulière.
Un changement de variables permet de supposer~$M$ contenu dans $\A^1_k \subset \P^1_k$.
Fixons un entier $d \geq d_0$, une famille $(z_v)_{v \in \Omega} \in \sS^d$, un ensemble fini $S_0 \subset \Omega$ arbitrairement grand et, pour chaque $v \in S_0$, un
voisinage $\sU_v \subset \Sym^d_{X/k}(k_v)$
de~$z_v$.  Montrons que~$\sS^d_\vert$ rencontre $\prod_{v \in S_0} \sU_v \times \prod_{v \notin S_0} \Sym^d_{X/k}(k_v)$.
Quitte à remplacer les~$z_v$ pour $v \in S_0$ par d'autres éléments de~$\sU_v$, on peut supposer
que~$z_v$ est supporté par $f^{-1}(\A^1_k \setminus M)$ pour tout $v \in S_0$.
Des classes $A_{i,j} \in \Br(\A^1_k \setminus M)$ sont définies au début de la preuve de~\cite[Theorem~4.1]{ctsksd98}.
Comme $(z_v)_{v \in \Omega}$ est une famille de $0$\nobreakdash-cycles \emph{effectifs}
supportés par $f^{-1}(\A^1_k \setminus M)$ et orthogonaux à $\Br_\vert(X/\P^1_k)$ pour l'accouplement de Brauer--Manin,
on tire de \emph{op.\ cit.}, Lemma~4.5, l'existence
d'un ensemble fini $S_1 \subset \Omega$ contenant~$S_0$ et,
pour chaque $v \in S_1$, d'un $0$\nobreakdash-cycle \emph{effectif}~$z_v^2$ sur $X \otimes_k k_v$, de degré~$d$,
tels que $z_v^2=z_v$ pour $v \in S_0$ et que
\begin{align*}
\sum_{v \in S_1} \inv_v \mylangle A_{i,j}, z_v^2 \myrangle=0
\end{align*}
pour tous~$i$ et~$j$.  Autrement dit,
la condition~(4.3) de~\emph{op.\ cit.} (p.~19) est satisfaite.
Une fois ces cycles~$z_v^2$ construits,
le reste de la démonstration de \emph{op.\ cit.}, Theorem~4.1 s'applique mot pour mot.
Elle fournit
un point fermé $t \in \P^1_k$ de degré~$d$ dont l'image dans $\Sym^d_{\P^1_k/k}(k_v)$ est arbitrairement proche de~$f_*z_v$ pour $v \in S_0$
et qui est
tel que la fibre~$X_t$ soit lisse et vérifie $X_t(\A_{k(t)})\neq\emptyset$.
Pour $v \not\in S_0$, soit $z'_v \in \Sym^d_{X/k}(k_v)$ tel que $f_*z'_v=t$.
Pour $v \in S_0$, quitte à choisir~$t$ suffisamment proche de~$f_*z_v$, on peut supposer,
grâce au théorème des fonctions implicites, qu'il existe $z'_v \in \sU_v$ tel que $f_*z'_v=t$.
La~famille $(z'_v)_{v \in \Omega}$ appartient alors à~$\sS^d_\vert$ et à
$\prod_{v \in S_0} \sU_v \times \prod_{v \notin S_0} \Sym^d_{X/k}(k_v)$.
\end{proof}

Revenons à la démonstration du théorème~\ref{th:propPvraie}.
Nous sommes à présent en position d'appliquer le théorème~\ref{th:variantectsksd} et le corollaire~\ref{cor:thctsksd} à la fibration $p':W'\to\P^1_k$.
En effet, les fibres de~$p'$ vérifient les hypothèses~(a) et~(b) du théorème~\ref{th:principalEun}
en vertu des propositions~\ref{prop:rwproprietes} et~\ref{prop:rwtransmissionhyp}.
L'ensemble~$\sS^1$ du théorème~\ref{th:variantectsksd} étant non vide d'après la proposition~\ref{prop:pointorthogonal},
il résulte du corollaire~\ref{cor:thctsksd} que la variété~$W'$ possède un $0$\nobreakdash-cycle de degré~$1$ sur~$k$.
Il en va donc de même de~$W$ (cf.~\cite[p.~599]{ctfinitudechow}).
Notons $\pr_1:W \times_{\P^1_k}C \to W$ et $\pr_2:W \times_{\P^1_k}C \to C$ les deux projections et $\sigma:W \times_{\P^1_k}C \to X$ le morphisme d'adjonction
et fixons un $0$\nobreakdash-cycle  $w \in Z_0(W)$ de degré~$1$.
Le $0$\nobreakdash-cycle $z = \sigma_*\pr_1^*w \in Z_0(X)$ vérifie alors $f_*z = (f \circ \sigma)_* \pr_1^*w = \pr_{2*} \pr_1^* w = \pi^* p_*w$,
d'où $f_*z=y$ dans $\CH_0(C)$: la propriété~$(\Psub{\emptyset})$ est donc établie.

Supposons l'hypothèse~(b') du théorème~\ref{th:principalE} satisfaite
et démontrons~$(\Psub{S})$.
Fixons pour cela un entier $n>0$ et un point fermé $w_0 \in W$ tel que $p(w_0) \notin \pi(R)$.
Posons $d=1+\deg(w_0)nm$, où~$m$ est assez grand pour que $d>\deg(\pi(M) \cup \pi(R))$.
D'après la proposition~\ref{prop:pointorthogonal},
la famille de $0$\nobreakdash-cycles $(d[z_v])_{v \in \Omega}$ appartient à
l'ensemble~$\sS^d$ du théorème~\ref{th:variantectsksd}
appliqué
au morphisme $p':W'\to \P^1_k$.
La conclusion de ce théorème et le lemme~\ref{lem:continuite}
permettent d'en déduire l'existence d'un point fermé $t \in \P^1_k$
et d'une famille $(z'_v)_{v \in \Omega} \in \Sym^d_{W/k}(\A_k)$ tels que $p'_*z'_v=t$ dans $Z_0(\P^1_{k_v})$ pour tout $v \in \Omega$
et tels que les classes de~$d[z_v]$ et de~$z'_v$ dans $\CH_0(W' \otimes_k k_v)/(2n)$ coïncident pour $v \in S$.
Comme $d>\deg(\pi(M) \cup \pi(R))$, on a $t \in \P^1_k \setminus (\pi(M) \cup \pi(R))$.
D'après la proposition~\ref{prop:rwtransmissionhyp}~(ii),
il existe donc $z' \in Z_0(\Wsub{t})$ ayant même image que $(z'_v)_{v \in S}$
dans $\prod_{v \in S}\CH_0(\Wsub{t} \otimes_k k_v)/(2n\deg(w_0))$.
Le $0$\nobreakdash-cycle~$z'_v$ étant de degré~$d$, il existe
un entier~$b$ tel que $z''=z' + 2nbw_0 \in Z_0(W')$ soit de degré~$1$ sur~$k$.
Remarquons que pour tout $v \in S$, l'image de~$z''$ dans $\CH_0(W' \otimes_k k_v)/(2n)$ est égale à celle de~$[z_v]$.

Soit $U = C \setminus \pi^{-1}(\pi(R))$.
La variété $W \times_{\P^1_k} U$ est lisse, étant étale sur~$W$. Choisissons-en une compactification lisse~$W''$
telle que les restrictions à $W \times_{\P^1_k}U$ du morphisme d'adjonction $\sigma:W \times_{\P^1_k}C \to X$
et de la projection
$\pr_1:W \times_{\P^1_k}C \to W$
s'étendent en des morphismes $\sigma'':W'' \to X$ et
$\pr_1'':W'' \to W'$.
Comme~$\pr_1''$ est plat au-dessus de $p'^{-1}(\P^1_k \setminus \pi(R))$,
la classe $z=\sigma''_*\pr_1''^*z'' \in \CH_0(X)$
vérifie $f_*z=(f \circ \sigma'')_*\pr_1''^*z''=\pi^*p'_*z''=y$ dans $\CH_0(C)$.
De plus, on a $z=z_v$ dans $\CH_0(X\otimes_kk_v)/(2n)$ pour tout $v \in S$
puisque $\sigma''_*\pr_1''^*[z_v]=z_v$ dans $\CH_0(X \otimes_kk_v)$.
Il s'ensuit que $z=z_v$ dans $\CH_0(X\otimes_kk_v)/n$ pour tout $v \in S \cap \Omega_f$
et $z=z_v$ dans $\CH_0(X\otimes_k k_v)/N_{\kbar_v/k_v}(\CH_0(X\otimes_k\kbar_v))$ pour tout $v\in S \cap \Omega_\infty$.
Ainsi la propriété~$(\Psub{S})$ est-elle établie.

\bigskip
Pour compléter la démonstration des théorèmes~\ref{th:principalEun} et~\ref{th:principalE}, il nous faut encore prouver la proposition~\ref{prop:pointorthogonal}.

\begin{proofnoskip}[Démonstration de la proposition~\ref{prop:pointorthogonal} si~$k$ est totalement imaginaire]
L'absence de places réelles est cruciale pour la validité du lemme suivant.

\begin{lem}
\mylabel{lem:totimagpisurj}
Si~$k$ est totalement imaginaire, toute classe $A \in \Br(\P^1_k \setminus \pi(M))$ s'écrit $A=\pi_*A' + \delta$
pour un $A' \in \Br(C \setminus M)$ et un $\delta \in \Br(k)$,
où~$\pi_*$ désigne
l'application trace
(ou corestriction)
$\pi_*:\Br(C\setminus M) \to \Br(\P^1_k \setminus \pi(M))$ induite par~$\pi$.
\end{lem}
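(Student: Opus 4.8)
The plan is to reduce the statement to the exactness in the middle of the residue sequence of the curve~$C$; the hypothesis that~$k$ is totally imaginary will enter only through the arithmetic duality vanishing $H^i(k,J)=0$ for $i\geq 2$, where $J=\Pic^0_C$.

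\textbf{Step 1 (reduction to residues).} Since $\P^1_k$ has a $k$-rational point, $\Br(\P^1_k)=\Br(k)$, so the residue sequence of $\P^1_k$ restricts to an exact sequence
\[0\to\Br(k)\to\Br(\P^1_k\setminus\pi(M))\xrightarrow{\ \partial\ }\bigoplus_{x\in\pi(M)}H^1(k(x),\Q/\Z)\]
in which the image of the constant classes is exactly $\ker\partial$. It therefore suffices, given $A\in\Br(\P^1_k\setminus\pi(M))$, to exhibit $A'\in\Br(C\setminus M)$ with $\pi_*A'\in\Br(\P^1_k\setminus\pi(M))$ and $\partial(\pi_*A')=\partial(A)$; the class $\delta=A-\pi_*A'$ then lies in $\ker\partial=\Br(k)$.

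\textbf{Step 2 (residue of a trace).} Fix $A'\in\Br(C\setminus M)$. For $x\in\pi(M)$, by hypothesis $\pi$ is étale over~$x$ and restricts to an isomorphism $M\isoto\pi(M)$, so $\pi^{-1}(x)$ contains a unique point $c_x\in M$, with $k(c_x)=k(x)$, the remaining points of $\pi^{-1}(x)$ lying in $C\setminus M$, where $A'$ is unramified. The formula for the residue of a corestriction where $\pi$ is étale gives $\partial_x(\pi_*A')=\sum_{c\mid x}\mathrm{cor}_{k(c)/k(x)}(\partial_c A')=\partial_{c_x}(A')$. For $y\notin\pi(M)$ every point of $\pi^{-1}(y)$ lies in $C\setminus M$, so $A'$ extends to a class over the semilocalization of~$C$ along $\pi^{-1}(y)$ and its trace extends over $\mathcal O_{\P^1_k,y}$; hence $\partial_y(\pi_*A')=0$. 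Thus $\pi_*A'\in\Br(\P^1_k\setminus\pi(M))$, and Step~1 reduces the lemma to realising the family $(\chi_c)_{c\in M}$, $\chi_c:=\partial_{\pi(c)}(A)\in H^1(k(\pi(c)),\Q/\Z)=H^1(k(c),\Q/\Z)$, as the family of residues along~$M$ of a class in $\Br(C\setminus M)$.

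\textbf{Step 3 (residue sequence of $C$; where totally imaginary enters).} Extended by zero, $(\chi_c)$ lies in the kernel of $\bigoplus_{c\in C^{(1)}}H^1(k(c),\Q/\Z)\xrightarrow{\ \sum\mathrm{cor}_{k(c)/k}\ }H^1(k,\Q/\Z)$: indeed $\sum_{c\in M}\mathrm{cor}_{k(c)/k}(\chi_c)=\sum_{x\in\pi(M)}\mathrm{cor}_{k(x)/k}(\partial_x A)=0$ by the reciprocity relation for $\P^1_k$, the residues of~$A$ being supported on $\pi(M)$. It remains to check that this kernel is the image of the residue map $\partial_C\colon\Br(k(C))\to\bigoplus_{c}H^1(k(c),\Q/\Z)$, for a class realising $(\chi_c)$ has residues supported on~$M$ and so defines the sought element of $\Br(C\setminus M)$. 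Since $\kbar(C)$ is a $C_1$-field, $H^q(\kbar(C),\Gm)=0$ for $q\geq 1$, so the Leray spectral sequence for $\Spec k(C)\to\Spec k$ gives $\Br(k(C))=H^2(k,\kbar(C)^\times)$; combining $1\to\kbar^\times\to\kbar(C)^\times\to\mathrm{PDiv}(\overline C)\to0$ (and $H^3(k,\Gm)=0$) with $0\to\mathrm{PDiv}(\overline C)\to\Div(\overline C)\to\Pic(\overline C)\to0$, one identifies $\partial_C$ with the composite $\Br(k(C))\twoheadrightarrow H^2(k,\mathrm{PDiv}(\overline C))\to H^2(k,\Div(\overline C))$, whose image is $\ker\bigl(H^2(k,\Div(\overline C))\to H^2(k,\Pic(\overline C))\bigr)$; here $H^2(k,\Div(\overline C))=\bigoplus_c H^1(k(c),\Q/\Z)$ by Shapiro's lemma. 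Now, because~$k$ has no real place, all local groups $H^i(k_v,J)$ vanish for $i\geq 2$ (Tate local duality at the finite places); with $\Sha^i(k,J)=0$ this gives $H^i(k,J)=0$ for $i\geq 2$, so the extension $0\to J(\kbar)\to\Pic(\overline C)\xrightarrow{\deg}\Z\to0$ yields $H^2(k,\Pic(\overline C))\isoto H^2(k,\Z)=H^1(k,\Q/\Z)$, the induced map from $H^2(k,\Div(\overline C))$ being $\sum_c\mathrm{cor}_{k(c)/k}$. Hence $\mathrm{im}(\partial_C)=\ker(\sum_c\mathrm{cor}_{k(c)/k})$, which contains $(\chi_c)$; choosing $A'\in\Br(k(C))$ with $\partial_C(A')=(\chi_c)$ finishes the construction, and $A=\pi_*A'+\delta$ with $\delta=A-\pi_*A'\in\Br(k)$.

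\textbf{Main obstacle.} The crux is the end of Step~3: the description of $\mathrm{im}(\partial_C)$ and, inside it, the vanishing $H^2(k,J)=0$. This is exactly what fails when~$k$ has a real place~$v$: then $H^2(k_v,J)$ may be non-zero, the residue sequence of~$C$ acquires an Abel--Jacobi obstruction with values in $H^2(k,J)\hookrightarrow\bigoplus_{v\ \text{real}}H^2(k_v,J)$, and one can no longer realise an arbitrary family killed by $\sum\mathrm{cor}$, in agreement with the fact that the lemma breaks down over a formally real field.
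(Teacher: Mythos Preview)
Your proof is correct and follows essentially the same route as the paper's: reduce to matching residues using that $\pi$ induces an isomorphism $M\isoto\pi(M)$, then show the required family of residues lifts to $\Br(C\setminus M)$, the obstruction being killed by the vanishing $H^2(k,\Pic^0(\overline C))=0$ for~$k$ totally imaginary. The only difference is in packaging: the paper sets up a commutative ladder of localisation sequences (via the norm $N_{C/\P^1_k}$ on sheaves) and reduces to the injectivity of $\pi_*:H^3(C,\Gm)\to H^3(\P^1_k,\Gm)$, which it then identifies, through the Hochschild--Serre spectral sequence, with the map $H^2(k,\Pic(\overline C))\to H^2(k,\Z)$ of kernel $H^2(k,J)$; you instead compute the image of the residue map $\partial_C$ directly as $\ker\big(\bigoplus_c H^1(k(c),\Q/\Z)\to H^2(k,\Pic(\overline C))\big)$ and invoke the same vanishing. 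Both arguments unwind to the same cohomological identity.
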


\begin{proof}
Notons
$j:\P^1_k \setminus \pi(M) \hookrightarrow \P^1_k$ et $j' : C\setminus M \hookrightarrow C$
les injections canoniques.
Comme~$\pi$ induit un isomorphisme $M \isoto \pi(M)$, les homomorphismes norme $N_{C/\P^1_k}:\pi_*\Gm\to\Gm$ et $N_{C/\P^1_k}:\pi_*j'_*\Gm\to j_*\Gm$
(cf.~\cite[6.5]{ega2})
s'inscrivent dans un diagramme commutatif de faisceaux étales sur~$\P^1_k$
$$
\xymatrix{
0 \ar[r] & \pi_*\Gm \ar[d]^{N_{C/\P^1_k}} \ar[r] & \pi_* j'_*\Gm \ar[d]^{N_{C/\P^1_k}} \ar[r] &
\vphantom{i_{\pi(m)*}}\smash[b]{\displaystyle\bigoplus_{m \in M} i_{\pi(m)*} \Z} \ar@{=}[d] \ar[r] & 0 \\
0 \ar[r] & \Gm \ar[r] & j_*\Gm \ar[r] & \vphantom{\Z}\smash[t]{\displaystyle\bigoplus_{m \in M} i_{\pi(m)*}\Z}\ar[r] & 0
}
$$
dont les lignes sont exactes.
Compte tenu que $\Br(\P^1_k)=\Br(k)$, que
$H^2(\P^1_k,j_*\Gm)=\Br(\P^1_k\setminus\pi(M))$
et que $H^2(C,j'_*\Gm)=\Br(C\setminus M)$
(cf.~\cite[Lemme~3.19]{owlnm}),
on obtient, en passant aux sections globales, un diagramme commutatif
$$
\xymatrix{
\Br(C) \ar[d] \ar[r] & \Br(C\setminus M) \ar[d]^{\pi_*} \ar[r] &
\vphantom{H^1(\Q/\Z)}\smash[b]{\displaystyle\bigoplus_{m \in M} H^1(k(m), \Q/\Z)} \ar@{=}[d] \ar[r] & H^3(C,\Gm)\ar[d]^{\pi_*} \\
\Br(k) \ar[r] & \Br(\P^1_k \setminus \pi(M)) \ar[r] &
\vphantom{H^1(\Q/\Z)}\smash[t]{\displaystyle\bigoplus_{m \in M} H^1(k(m), \Q/\Z)} \ar[r] & H^3(\P^1_k,\Gm)
}
$$
dont les lignes sont exactes.  Ainsi suffit-il,
pour démontrer le lemme,
d'établir l'injectivité
de $\pi_*:H^3(C,\Gm)\to H^3(\P^1_k,\Gm)$.

Comme $H^q(k,\Gm)=0$ pour tout $q\geq 3$
(cf.~\cite[Ch.~I, Theorem~4.20~(b)]{milneadt})
et que $H^q(C \otimes_k \kbar,\Gm)=0$ et $H^q(\P^1_\kbar,\Gm)=0$ pour $q \geq 2$ d'après le théorème de Tsen,
la suite spectrale de Hochschild--Serre fournit des isomorphismes canoniques $H^3(C,\Gm)=H^2(k,\Pic(C \otimes_k \kbar))$ et $H^3(\P^1_k,\Gm)=H^2(k,\Pic(\P^1_{\kbar}))$.
L'application $\pi_*:H^3(C,\Gm) \to H^3(\P^1_k,\Gm)$ s'identifie donc à la flèche obtenue en appliquant le foncteur $H^2(k,-)$ à l'homomorphisme
$\deg:\Pic(C \otimes_k\kbar)\to \Z$.
Il s'ensuit que son noyau est isomorphe à $H^2(k,\Pic^0(C \otimes_k \kbar))$.
Or
ce dernier groupe est nul,
d'après un théorème de Tate (cf.~\emph{op.\ cit.}, Ch.~I, Corollary~6.24),
car
$\Pic^0(C \otimes_k\kbar)$ est le groupe des $\kbar$\nobreakdash-points d'une variété abélienne et~$k$ est totalement imaginaire.
\end{proof}

Tout élément de $\Br_\vert(W'/\P^1_k)$ s'écrit $p'^*A$ pour un $A \in \Br(\P^1_k \setminus \pi(M))$
puisque les fibres de~$p'$ au-dessus de $\P^1_k \setminus \pi(M)$ contiennent toutes une composante irréductible géométriquement irréductible de multiplicité~$1$
(cf.~proposition~\ref{prop:rwproprietes}
et~\cite[Proposition~1.1.1]{ctsd94}).  Fixons un tel élément et
supposons~$k$ totalement imaginaire.
D'après le lemme~\ref{lem:totimagpisurj}, il existe $A' \in \Br(C \setminus M)$ et $\delta \in \Br(k)$ tels que $A=\pi_*A'+\delta$.

La proposition~\ref{prop:rwproprietes} fournit, pour $m \in M$, une bijection naturelle de l'ensemble des composantes irréductibles de $f^{-1}(m)$ sur l'ensemble des composantes
irréductibles de $p'^{-1}(\pi(m))$.
Cette bijection respecte les multiplicités et pour tout $m \in M$, la fermeture algébrique de~$k(m)$
dans chaque composante irréductible de~$f^{-1}(m)$ est $k(m)$\nobreakdash-isomorphe à la fermeture algébrique de~$k(m)$
dans la composante irréductible correspondante de~$p'^{-1}(\pi(m))$.
Il s'ensuit que la classe $f^*A' \in \Br(f^{-1}(C \setminus M))$ appartient au sous-groupe $\Br_\vert(X/C)$ (cf.~\cite[Proposition~1.1.1]{ctsd94}).
Comme par hypothèse la famille $(z_v)_{v \in \Omega}$ est orthogonale à $\Br_\vert(X/C)$, on a donc
\begin{align}
\mylabel{eq:zvorth}
\sum_{v \in \Omega} \inv_v \mylangle f^*A', z_v \myrangle=0\rlap{\text{.}}
\end{align}
D'autre part,
comme $f_*z_v=\pi^*p'_*[z_v]$, on a par adjonction
\begin{align*}
\sum_{v \in \Omega} \inv_v \mylangle f^*A', z_v \myrangle&=
\sum_{v \in \Omega} \inv_v \mylangle A', f_*z_v \myrangle=
\sum_{v \in \Omega} \inv_v \mylangle A', \pi^*p'_*[z_v]\myrangle\\&=
\sum_{v \in \Omega} \inv_v \mylangle p'^*\pi_*A', [z_v]\myrangle=
\sum_{v \in \Omega} \inv_v \mylangle p'^*A, [z_v]\myrangle\rlap{\text{,}}
\end{align*}
où la dernière égalité résulte de la loi de réciprocité globale.
Compte tenu de~(\ref{eq:zvorth}), cela conclut la démonstration de la proposition~\ref{prop:pointorthogonal} sous l'hypothèse que~$k$ est totalement imaginaire.
\end{proofnoskip}

\section{Places réelles: groupes \texorpdfstring{$\Picplus$}{Pic+} et \texorpdfstring{$\Brplus$}{Br+}}
\mylabel{sec:vanhamel}

\subsection{Introduction}

Ce paragraphe développe et adapte les idées originales de van~Hamel~\cite{vanhamel}
grâce auxquelles il a étendu
aux corps de nombres formellement réels
les résultats principaux de~\cite{ctreglees} et de~\cite{frossard}.

Soient~$X$ et~$C$ des variétés propres, lisses et géométriquement irréductibles sur un corps~$k$ de caractéristique nulle et $f:X \to C$ un morphisme
de fibre générique géométriquement irréductible.
Notons~$\eta$ le point générique de~$C$
et $\Brplus(C) \subset \Br(\eta)$ l'image réciproque de $\Br(X)$ par l'application $f^*:\Br(\eta) \to \Br(X_\eta)$.
Ce groupe est équipé d'une surjection $f^*_+:\Brplus(C) \to \Br_\vert(X/C)$, par laquelle se factorise l'application $f^*:\Br(C) \to \Br_\vert(X/C)$.
Rappelons d'autre part que l'on dispose d'un accouplement naturel $\Br(X) \times \CH_0(X) \to \Br(k)$ (cf.~(\ref{eq:accbrch})).

Dans ce paragraphe, nous allons définir, sous l'hypothèse que~$C$ est une courbe,
un groupe abélien $\Picplus(C)$, un accouplement $\Brplus(C) \times \Picplus(C) \to \Br(k)$
et un homomorphisme $f_{*+}:\CH_0(X) \to \Picplus(C)$ adjoint à droite de~$f^*_+$ par rapport aux deux accouplements considérés et
par lequel
$f_*:\CH_0(X) \to \CH_0(C)=\Pic(C)$
se factorise naturellement.

Ainsi, si~$k$ est un corps de nombres et si $(z_v)_{v \in \Omega}$ est une famille de $0$\nobreakdash-cycles locaux sur~$X$, la condition
\og{}$(z_v)_{v \in \Omega}$
est orthogonale à $\Br_\vert(X/C)$ pour l'accouplement de Brauer--Manin\fg{}
se reformule purement en termes de la famille image $(f_{*+}z_v)_{v \in \Omega} \in \prod_{v \in \Omega} \Picplus(C \otimes_k k_v)$
et du groupe $\Brplus(C)$.  L'intérêt d'une telle reformulation est que l'arithmétique des groupes $\Picplus(C)$ et $\Picplus(C \otimes_k k_v)$
est moins mystérieuse que celle des groupes $\CH_0(X)$ et
$\CH_0(X \otimes_k k_v)$: elle ne fait intervenir que la cohomologie galoisienne des tores, des variétés abéliennes et des modules galoisiens de type fini.
En particulier est-il possible
de caractériser
l'image de l'application naturelle $\Picplus(C) \to \prod_{v \in \Omega} \Picplus(C \otimes_k k_v)$
en établissant
l'analogue de la conjecture~$(E)$ pour le groupe $\Picplus(C)$
à partir des théorèmes standard de dualité arithmétique
(cf.~théorème~\ref{th:dualitearithmetique} ci-dessous).

Cette caractérisation nous a permis, dans la preuve des théorèmes~\ref{th:principalEun} et~\ref{th:principalE},
de supposer que la famille $(f_{*+}z_v)_{v \in \Omega} \in\prod_{v \in \Omega}\Picplus(C \otimes_k k_v)$
est l'image d'un élément de~$\Picplus(C)$,
quitte à modifier le choix des~$z_v$ aux places réelles
(hypothèse~(v) du paragraphe~\ref{sec:preuveP}).
Au paragraphe~\ref{sec:demproppointorthogonal}, nous en déduirons la validité de la proposition~\ref{prop:pointorthogonal} sans supposer~$k$ totalement imaginaire.
Plus précisément, en étudiant le comportement des groupes~$\Picplus$ par restriction des scalaires à la Weil, nous vérifierons que
si $(f_{*+}z_v)_{v \in \Omega}$ est l'image d'un élément de $\Picplus(C)$, alors
 $(p'_{*+}[z_v])_{v \in \Omega} \in \prod_{v \in \Omega} \Picplus(\P^1_{k_v})$
est l'image d'un élément de $\Picplus(\P^1_k)$.  Par adjonction il s'ensuivra que $([z_v])_{v \in \Omega}$ est
orthogonale à $p'^*_+\Brplus(\P^1_k)=\Br_\vert(W'/\P^1_k)$ pour l'accouplement de Brauer--Manin.

\subsection{Notations, définitions}
\mylabel{subsec:vhnotations}

Soit~$C$ une courbe propre, lisse et géométriquement irréductible sur un corps~$k$.
Supposons donné un morphisme $f:X\to C$ de fibre générique lisse et géométriquement irréductible, où~$X$ est une variété irréductible et lisse.
Notons $M \subset C$ l'ensemble des points fermés de~$C$ au-dessus desquels la fibre de~$f$ est singulière.
Pour $m \in M$, notons $(\Ysub{i})_{i \in I_m}$ la famille des composantes irréductibles de la fibre~$X_m$.
Pour $m \in M$ et $i \in I_m$, soit enfin $e_{m,i}$ la multiplicité de~$\Ysub{i}$ dans~$X_m$ et $K_{m,i}$ la fermeture algébrique de~$k(m)$ dans le corps des fonctions de~$\Ysub{i}$.

Si~$S$ est un schéma, notons $\Sm/S$ le site lisse-étale de~$S$, c'est-à-dire
la catégorie des $S$\nobreakdash-schémas lisses munie de la topologie étale.
On dispose d'une suite exacte de faisceaux sur $\Sm/C$
\begin{align}
\mylabel{se:weilsmc}
\xymatrix{
0\ar[r] & \Gm\ar[r] &j_*\Gm \ar[r]& \displaystyle\bigoplus_{m \in M} i_{m*}\Z\ar[r]&0
}
\end{align}
où $j: C \setminus M \hookrightarrow C$ et $i_m:\Spec(k(m)) \hookrightarrow C$ désignent les injections canoniques.
Pour $m \in M$, le
$\Z$\nobreakdash-module libre sur l'ensemble des composantes irréductibles de la fibre géométrique de~$f$ au-dessus de~$m$
définit, comme tout module galoisien, un schéma en groupes commutatifs étale sur~$k(m)$ et donc un faisceau en groupes abéliens sur~$\Sm/k(m)$; nous le noterons $\Psub{m}$.
Notons de plus $\alpha_m:i_{m*}\Z \to i_{m*}\Psub{m}$ le morphisme qui applique~$1$ sur la famille des multiplicités~$e_{m,i}$.
En composant la seconde flèche de~(\ref{se:weilsmc}) avec $\bigoplus\alpha_m$,
on obtient un morphisme
\begin{align}
\mylabel{eq:vplus}
j_*\Gm \longrightarrow \bigoplus_{m \in M}i_{m*}\Psub{m}\rlap{\text{.}}
\end{align}
Nous considérerons~(\ref{eq:vplus}) comme un complexe de faisceaux en groupes abéliens sur~$\Sm/C$ concentré en degrés~$0$ et~$1$
et noterons celui-ci~$\vplus(C)$.
Désignant par~$\rho$ le morphisme structural $\rho:C \to \Spec(k)$,
définissons des groupes abéliens $\Brplus(C)$ et $\Picplus(C)$ par les formules
$$
\Brplus(C)= H^2(k,\tau_{\leq 1}R\rho_* \vplus(C))
$$
et
$$\Picplus(C)=H^0(k,\RHom(\tau_{\leq 1}R\rho_*\vplus(C),\Gm))$$
(où le foncteur~$\RHom$ est considéré dans la catégorie dérivée des faisceaux abéliens sur~$\Sm/k$).
Le cup-produit fournit un accouplement canonique
\begin{align}
\mylabel{eq:accbrpluspicplus}
\Brplus(C) \times \Picplus(C) \to \Br(k)\rlap{\text{.}}
\end{align}

Rappelons que pour tout schéma~$S$, la cohomologie d'un faisceau~$\sF$ sur $\Sm/S$ coïncide avec la cohomologie de la restriction de~$\sF$ au petit site étale de~$S$
(cf.~\cite[III, Proposition~3.1]{milneet}). Le choix du site lisse-étale au lieu du petit site étale ne joue donc ici un rôle que dans le calcul du foncteur~$\RHom$.
Rappelons d'autre part que la formule des coefficients universels de Deligne~\cite[Théorème~1.5.2]{delignedualite}
fournit un isomorphisme canonique
\begin{align}
\mylabel{eq:coeffuniv}
\tau_{\leq 0}\RHom(\tau_{\leq 1}R\rho_*\Gm,\Gm) = \left(\tau_{\leq 1}R\rho_*\Gm\right)[1]
\end{align}
dans la catégorie dérivée des faisceaux en groupes abéliens sur~$\Sm/k$ (cf.~\cite[\textsection3.3]{vanhamellichtenbaumtate}).
La flèche naturelle $\Gm \to \vplus(C)$ et l'isomorphisme~(\ref{eq:coeffuniv}) induisent un morphisme d'\og{}oubli\fg{}
$\phi:\Picplus(C) \to \Pic(C)$ (dont nous verrons plus bas qu'il est surjectif).

\begin{prop}
\mylabel{prop:fetoileplus}
Il existe des flèches canoniques $f_{*+}:Z_0(X)\to \Picplus(C)$
et $f_+^*:\Brplus(C) \to \Br_\vert(X/C)$ telles que le diagramme
\begin{align}
\mylabel{diag:compatibleacc}
\begin{aligned}
\myxyin
\xymatrix@C=10ex{
\ar@<6ex>[d]^{f_{*+}}
\Br_\vert(X/C) \times \rlap{$Z_0(X)$}\phantom{\Picplus(C)}
\save+<10.125ex,0ex>\ar[r]^(.45){\mylangle-,-\myrangle}\restore
& \Br(k) \ar@{=}[d] \\
\ar@<2.06ex>[u]^{f^*_+}
\mkern36.3mu\rlap{$\Brplus(C)$}\mkern-36.3mu\phantom{\Br_\vert(X/C)} \times \Picplus(C)
 \ar[r]^(.67){\text{(\ref{eq:accbrpluspicplus})}} & \Br(k)
}
\myxyout
\end{aligned}
\end{align}
commute et que la composée $\phi \circ f_{*+}$ coïncide avec $f_*:Z_0(X) \to \CH_0(C)=\Pic(C)$.
\end{prop}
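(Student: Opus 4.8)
The plan is to build both maps inside the derived category of (lisse-)\'etale sheaves, in such a way that their adjointness becomes a formal consequence of the construction, and to reduce every verification to a d\'evissage along the distinguished triangle
$$
\vplus(C) \longrightarrow j_*\Gm \xrightarrow{\ d\ } \bigoplus_{m \in M} i_{m*}\Psub m \longrightarrow \vplus(C)[1]
$$
which expresses $\vplus(C)$ as the fibre of its differential~(\ref{eq:vplus}) (the first arrow being the canonical map in degree~$0$). Combined with Deligne's universal coefficients isomorphism~(\ref{eq:coeffuniv}), applying $\tau_{\leq 1}R\rho_*$ then $H^2(k,-)$, resp.\ $\RHom(-,\Gm)$ then $H^0(k,-)$, exhibits $\Brplus(C)$ and $\Picplus(C)$ as built out of $\Br(C\setminus M)$, resp.\ $\Pic(C\setminus M)=\CH_0(C\setminus M)$, together with the Galois cohomology of the finitely generated modules $\Psub m$ over the fields $k(m)$, and identifies the pairing~(\ref{eq:accbrpluspicplus}) with the one assembled from the cup product $\Br(C\setminus M)\times\Pic(C\setminus M)\to\Br(k)$ and the corestrictions to~$k$ of the local duality pairings between $H^1(k(m),\Psub m)$ and $H^1(k(m),\RHom(\Psub m,\Gm))$.

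I would first produce a morphism $\vplus(C)\to\tau_{\leq 1}Rf_*\Gm$ in the derived category of the small \'etale site of~$C$, refining the unit map $\Gm\to Rf_*\Gm$. Over $C\setminus M$ the fibres of~$f$ are smooth and geometrically irreducible, so $f_*\Gm=\Gm$ there and the refinement is tautological; at a point $m\in M$ the components~$\Ysub i$ of~$X_m$ are prime divisors on~$X$ over a neighbourhood of~$m$, and sending the $i$-th generator of~$\Psub m$ to the class of~$\Ysub i$ extends the map to $\Psub m\to(R^1f_*\Gm)_m$ intertwining the differential~$d$ (multiplicities~$e_{m,i}$) --- the fact that these data glue to a morphism of complexes is exactly the residue computation underlying \cite[Proposition~1.1.1]{ctsd94}. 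Pushing forward by $R\rho_{C*}$, truncating, and composing with $R\rho_{C*}\tau_{\leq 1}Rf_*\Gm\to R\rho_{C*}Rf_*\Gm=R\rho_{X*}\Gm$, then taking $H^2(k,-)$, produces $f_+^*\colon\Brplus(C)\to\Br(X)$. It factors over the generic point of~$C$ through the natural map $\Brplus(C)\to\Br(k(C))$ followed by~$f^*$, hence lands in $f^*\Br(k(C))\cap\Br(X)=\Br_\vert(X/C)$, and it coincides with the map described in~\S\ref{sec:vanhamel}.1 via~$\Br(\eta)$; surjectivity onto $\Br_\vert(X/C)$ is again \cite[Proposition~1.1.1]{ctsd94}, since the cokernel of $\Br(C)\to\Br_\vert(X/C)$ is then a subgroup of $\bigoplus_m H^1(k(m),\Q/\Z)$ visibly in the image of~$\Brplus(C)$.

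The map $f_{*+}$ is then defined dually, point by point. Let $P\in X$ be a closed point and $c=f(P)$, so $k(c)\subseteq k(P)$; write $\iota\colon\Spec k(P)\to C$ for the resulting morphism. Then~$P$ furnishes a $k(P)$-point, hence a section~$s_P$, of the second projection $X\times_C\Spec k(P)\to\Spec k(P)$; composing the pull-back along~$\iota$ of $\vplus(C)\to\tau_{\leq 1}Rf_*\Gm$ with the base-change map to $\tau_{\leq 1}R(\mathrm{pr})_*\Gm$, then with the retraction~$s_P^{\,\ast}$, then pushing forward to $\Spec k$ (using $R\iota_*=\iota_*$) and applying the norm $\rho_{k(P)/k\,*}\Gm\to\Gm$, yields a morphism $\tau_{\leq 1}R\rho_{C*}\vplus(C)\to\Gm$ in $D(\Sm/k)$, i.e.\ an element $f_{*+}(P)\in\Picplus(C)$; extend $\Z$-linearly to~$Z_0(X)$. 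This recipe makes sense on \emph{all} of~$Z_0(X)$ --- no moving lemma is needed --- because the ambiguity ``which component of a singular fibre contains~$P$'' has been packaged into the section~$s_P$ of the \emph{scheme-theoretic} fibre rather than left as a choice. That $\phi\circ f_{*+}=f_*$ follows by precomposing with $\Gm\to\vplus(C)$: the section--norm construction applied to the subcomplex~$\Gm$ collapses, via the self-duality~(\ref{eq:coeffuniv}), to the standard identification of the cycle $f_*[P]=[k(P):k(c)]\cdot c$ with its class in $\Pic(C)=\CH_0(C)$.

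Finally, the commutativity of~(\ref{diag:compatibleacc}) is proved by d\'evissage along the triangle above. Since both $f_+^*$ and $f_{*+}$ were built from the single morphism $\vplus(C)\to\tau_{\leq 1}Rf_*\Gm$ and the same trace/norm maps, the pairing of $f_+^*A$ against $f_{*+}z$ unwinds into two contributions: the part coming from $j_*\Gm$ reduces to the ordinary adjunction $\mylangle f^*A',z\myrangle=\mylangle A',f_*z\myrangle$ together with the compatibility of~(\ref{eq:accbrch}) with the cup product on $C\setminus M$; the part coming from $\bigoplus_m i_{m*}\Psub m$ reduces to the (a priori visible) adjointness of $\Psub m\to(R^1f_*\Gm)_m$ and its dual under local duality at each~$m$. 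I expect the real work --- and the main obstacle --- to be precisely this last bookkeeping: pinning down the base-change, truncation, Deligne-duality and norm isomorphisms with consistent signs so that the two halves fit together, and checking along the way that the ``$\Psub m$-components'' of $f_{*+}(P)$ and of $f_+^*A$ are genuine adjoints, including when~$P$ lies on several irreducible components of~$X_c$ or when~$X_c$ is non-reduced.
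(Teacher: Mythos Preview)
Your approach is essentially the paper's: both construct a single morphism $\vplus(C)\to\tau_{\leq 1}Rf_*\Gm$ in the derived category of $\Sm/C$ and derive $f_+^*$, $f_{*+}$, and their adjointness from it. The paper's execution is much shorter because it obtains this morphism as the \emph{unique} fill-in of a map of distinguished triangles (uniqueness coming from $\Hom(j_*\Gm,\bigoplus_{m\in M} i_{m*}\Psub{m}[-1])=0$ for degree reasons) and, rather than building the cycle class by hand and checking adjunction by d\'evissage, simply composes with van~Hamel's cycle class map $c:Z_0(X)\to\Hom(R(\rho\circ f)_*\Gm,\Gm)$ and invokes its compatibility with the Brauer pairing~\cite[\S3.1 and Lemma~3.1]{vanhamellichtenbaumtate}, where all the bookkeeping you identify as the main obstacle is already carried out.
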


\begin{proof}
Notant encore~$j$ l'inclusion de $f^{-1}(C \setminus M)$ dans~$X$, on vérifie que la suite exacte des diviseurs de Weil sur~$X$ induit un triangle distingué
$$
\xymatrix{
\tau_{\leq 1}Rf_*\Gm \ar[r] & \tau_{\leq 1}Rf_*(j_*\Gm) \ar[r] & \displaystyle\bigoplus_{m \in M}i_{m*}\Psub{m} \ar[r]^(.72){+1} & \vphantom{.}
}
$$
dans la catégorie dérivée des faisceaux en groupes abéliens sur~$\Sm/C$.
Celui-ci s'inscrit dans un diagramme commutatif (sans la flèche en pointillés)
$$
\xymatrix@R=3ex{
\displaystyle\smash[b]{\bigoplus_{m \in M}i_{m*}\Psub{m}}[-1]\vphantom{\lower 1.5ex\hbox{}}
\ar@{=}[d] \ar[r] &
\vplus(C) \ar@{.>}[d] \ar[r] & j_*\Gm \ar[r]\ar[d] & \displaystyle
\smash[b]{\bigoplus_{m \in M}i_{m*}\Psub{m}}\vphantom{\lower 1.5ex\hbox{}}
\ar@{=}[d] \\
\displaystyle\smash[b]{\bigoplus_{m \in M}i_{m*}\Psub{m}}[-1]\vphantom{\lower 1.5ex\hbox{}}
 \ar[r] &
\tau_{\leq 1}Rf_*\Gm \ar[r] & \tau_{\leq 1}Rf_*(j_*\Gm) \ar[r] & \displaystyle\bigoplus_{m \in M}i_{m*}\Psub{m}\rlap{\text{.}}
}
$$
Le
groupe $\Hom(j_*\Gm, \bigoplus_{m\in M}i_{m*}\Psub{m}[-1])$ étant nul (pour des raisons de degré),
il existe une unique flèche en pointillés rendant ce diagramme commutatif.
Cette flèche induit,
\emph{via} les foncteurs $H^2(k,\tau_{\leq 1}R\rho_*-)$
et $H^0(k,\RHom(\tau_{\leq 1}R\rho_*-,\Gm))$,
des applications canoniques
$f^*_+:\Brplus(C) \to \Br_\vert(X/C)$
et
\begin{align}
\mylabel{eq:appcanvhpicplus}
\Hom(R(\rho\circ f)_*\Gm,\Gm) \to \Picplus(C)\rlap{\text{.}}
\end{align}
Notons~$f_{*+}$ la composée de~(\ref{eq:appcanvhpicplus}) et de
l'application
classe de cycle
$$c:Z_0(X) \to \Hom(R(\rho\circ f)_*\Gm,\Gm)$$
construite par van~Hamel dans~\cite[\textsection3.1]{vanhamellichtenbaumtate}.
Il résulte de \emph{op.\ cit.}, Lemma~3.1 et de la fonctorialité de~$c$ que~$f_{*+}$ et $f^*_+$ remplissent les conditions voulues.
\end{proof}

\begin{rmqs}
\mylabel{rq:picplusbrpluscarzero}
(i) Supposons~$k$ de caractéristique~$0$.  Alors $R^q\rho_*\Gm=0$ pour $q \geq 2$ d'après le théorème de Tsen
et le théorème de changement de base propre en cohomologie étale (cf.~\cite[p.~12]{vanhamellichtenbaumtate}).
Par conséquent $\tau_{\leq 1}R\rho_*\vplus(C)=R\rho_*\vplus(C)$ et donc $\Brplus(C)=H^2(C,\vplus(C))$.
Comme $H^2(C,j_*\Gm)=\Br(C\setminus M)$ (cf.~\cite[Lemme~3.19]{owlnm}), il s'ensuit que~$\Brplus(C)$ est le noyau de
l'application
$$
\Br(C \setminus M) \longrightarrow \bigoplus_{m \in M} \bigoplus_{i \in I_m} H^1(K_{m,i},\Q/\Z)
$$
composée de l'application résidu $\Br(C \setminus M) \to \bigoplus_{m \in M}H^1(k(m),\Q/\Z)$
et de la flèche induite par~$\bigoplus \alpha_m$.
D'après le diagramme~(\ref{diag:finitudebrvert}) (avec $Y=C$ et $Y^0=C \setminus M$), le groupe $\Brplus(C)$ s'identifie donc à l'ensemble des éléments de $\Br(C \setminus M)$
dont l'image réciproque dans $\Br(f^{-1}(C \setminus M))$ appartient au sous-groupe $\Br(X) \subset \Br(f^{-1}(C \setminus M))$.
En particulier l'application $f^*_+:\Brplus(C) \to \Br_\vert(X/C)$ est-elle surjective.

(ii) Si $M=\emptyset$, alors $\Picplus(C)=\Pic(C)$, le morphisme d'oubli~$\phi$ étant un isomorphisme.
Si de plus~$k$ est de caractéristique~$0$,
on a aussi $\Brplus(C)=\Br(C)$ d'après la remarque~\ref{rq:picplusbrpluscarzero}~(i).

(iii) Toujours sous l'hypothèse que~$k$ est de caractéristique~$0$, il résulte
de~\cite[Proposition~3.2]{vanhamellichtenbaumtate} que si~$X$ est propre, l'application $f_{*+}:Z_0(X) \to \Picplus(C)$ se factorise par $\CH_0(X)$.
\end{rmqs}

\subsection{Théorème de dualité arithmétique}
\mylabel{sec:thdualitearithmetique}

Supposons que~$k$ soit un corps de nombres. Soit~$\sC$ un modèle de~$C$ au-dessus d'un ouvert dense~$U$ du spectre de l'anneau des entiers de~$k$.
Si~$U$ est choisi assez petit,
les constructions du paragraphe~\ref{subsec:vhnotations} peuvent être effectuées
au-dessus de~$U$. Nous noterons
$\sF=\RHom(\tau_{\leq 1}R\rho_*\vplus(\sC),\Gm)$ l'objet de la catégorie dérivée des faisceaux en groupes abéliens sur $\Sm/U$
ainsi obtenu. En toute rigueur, $\rho$ désigne donc ici le morphisme structural du $U$\nobreakdash-schéma~$\sC$, etc.

Pour $v \in U$, posons
 $\Brplus(\sC \otimes_U \sO_v)=H^2(\sO_v,\tau_{\leq 1}R\rho_*\vplus(\sC))$.
Posons de plus
$\Picplus(\sC \otimes_U \sO_v)=H^0(\sO_v,\sF)$ et
 $\Picpluschapeau(\sC \otimes_U \sO_v)=\widehat{H^0(\sO_v,\sF)}$ et
notons $\PicplusAchapeau(C)$ le groupe
$$
\prod_{v \in \Omega_f}\smash{\rlap{\raise 5pt\hbox{$\mkern-7mu{}'$}}} \Picpluschapeau(C \otimes_k k_v) \times \prod_{v \in \Omega_\infty}\Coker\mkern-1mu\left(N_{\kbar_v/k_v}\!:
\Picplus(C \otimes_k \kbar_v) \to \Picplus(C \otimes_k k_v)\right)\!\rlap{\text{,}}
$$
où le symbole~$'$ désigne le produit restreint des $\Picpluschapeau(C\otimes_kk_v)$
par rapport aux images des flèches de restriction $\Picpluschapeau(\sC \otimes_U \sO_v) \to \Picpluschapeau(C \otimes_kk_v)$.
Autrement dit, les éléments de $\prod_{v \in \Omega_f}' \Picpluschapeau(C \otimes_k k_v)$
sont les familles
$(u_v)_{v \in \Omega_f} \in \prod_{v \in \Omega_f} \Picpluschapeau(C \otimes_k k_v)$ telles que
seul un nombre fini des~$u_v$ pour $v \in U$ ne proviennent pas de $\Picpluschapeau(\sC\otimes_U\sO_v)$.

Comme les groupes $\Brplus(C \otimes_k k_v)$ sont de torsion (cf.~remarque~\ref{rq:picplusbrpluscarzero}~(i)),
l'accouplement~(\ref{eq:accbrpluspicplus}) induit pour chaque $v \in \Omega$
un accouplement canonique
\begin{align}
\mylabel{eq:accbrpluspicpluslocal}
\Brplus(C \otimes_k k_v) \times \Picpluschapeau(C \otimes_k k_v) \to \Br(k_v)\rlap{\text{.}}
\end{align}
Celui-ci s'annule sur l'image de $\Brplus(\sC \otimes_U \sO_v) \times \Picpluschapeau(\sC \otimes_U \sO_v)$
pour tout $v \in U$,
puisque $\Br(\sO_v)=0$.
La somme, sur toutes les places $v \in \Omega$, des accouplements~(\ref{eq:accbrpluspicpluslocal}) suivis de l'invariant local $\inv_v:\Br(k_v) \hookrightarrow \Q/\Z$
détermine donc un accouplement canonique bien défini
\begin{align}
\mylabel{eq:accbrpluspicplusachapeau}
\Brplus(C) \times \PicplusAchapeau(C) \to \Q/\Z \rlap{\text{.}}
\end{align}
Enfin, posons $\Picchapeau(C)=\widehat{\Pic(C)}$ et $\PicAchapeau(C)=\CHzAchapeau(C)$.

\begin{thm}
\mylabel{th:dualitearithmetique}
Supposons que~$k$ soit un corps de nombres.
\begin{enumerate}
\item[(i)] L'image de l'application naturelle
$\Picpluschapeau(C) \to \prod_{v \in \Omega_f} \Picpluschapeau(C\otimes_kk_v)$
est incluse dans
$\prod_{v \in \Omega_f}' \Picpluschapeau(C \otimes_k k_v)$.
\item[(ii)] Si le groupe de Tate--Shafarevich de la jacobienne de~$C$ ne contient pas d'élément infiniment divisible non nul, le complexe
\begin{align*}
\xymatrix{
\Picpluschapeau(C) \ar[r] & \PicplusAchapeau(C) \ar[r] & \Hom(\Brplus(C),\Q/\Z)
}
\end{align*}
est une suite exacte.
\item[(iii)]
Sans hypothèse sur le groupe de Tate--Shafarevich de la jacobienne de~$C$, le groupe d'homologie du complexe ci-dessus s'injecte, \emph{via}~$\phi$, dans le groupe d'homologie
du complexe
\begin{align*}
\xymatrix{
\Picchapeau(C) \ar[r] & \PicAchapeau(C) \ar[r] & \Hom(\Br(C),\Q/\Z)\rlap{\text{.}}
}
\end{align*}
\end{enumerate}
\end{thm}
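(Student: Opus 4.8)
The plan is to reduce the statement for $\Picplus(C)$ to three ingredients: the classical arithmetic duality (Poitou--Tate) for tori and finite Galois modules, which is unconditional; the arithmetic duality for the Jacobian of~$C$, i.e.\ Remark~\ref{rqs:conje}~(iv), which is where the Tate--Shafarevich hypothesis of~(ii) enters; and the homological algebra underlying van~Hamel's formalism~\cite{vanhamellichtenbaumtate}. The device that links them is a ``forgetful'' distinguished triangle. By the exact sequence~(\ref{se:weilsmc}), $\Gm$ is a subobject of the complex $\vplus(C)=[j_*\Gm\to\bigoplus_{m\in M}i_{m*}\Psub{m}]$, the composite $\Gm\hookrightarrow j_*\Gm\to\bigoplus i_{m*}\Psub{m}$ being zero, and the quotient is the two-term complex $Q=[\bigoplus_{m\in M}i_{m*}\Z\to\bigoplus_{m\in M}i_{m*}\Psub{m}]$ (in degrees~$0$ and~$1$, supported on~$M$). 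Whence a triangle $\Gm\to\vplus(C)\to Q\to\Gm[1]$ on $\Sm/C$. I would push it forward by $R\rho_*$ --- which in characteristic~$0$ satisfies $R^q\rho_*\Gm=0$ for $q\geq2$ by Tsen and proper base change, so that the $\tau_{\leq1}$-truncation in the definition of $\Picplus$ is harmless (Remark~\ref{rq:picplusbrpluscarzero}~(i)) --- and then apply $\RHom(-,\Gm)$ on $\Sm/k$, obtaining a triangle
\[
\RHom(R\rho_*Q,\Gm)\to\RHom(R\rho_*\vplus(C),\Gm)\to\RHom(R\rho_*\Gm,\Gm)\xrightarrow{+1}.
\]
Here the middle term has $H^0(k,-)=\Picplus(C)$; by Deligne's coefficients formula~(\ref{eq:coeffuniv}) the third term is $R\rho_*\Gm$ up to a shift, so its hypercohomology recovers the groups $\Pic$ and $\Br$ attached to the curve~$C$; and $R\rho_*Q=\bigoplus_{m\in M}(\rho_m)_*[\Z\to\Psub{m}]$, with $\rho_m$ the structural morphism of $\Spec k(m)$, whence $\RHom_{k(m)}([\Z\to\Psub{m}],\Gm)=[T_m\to\Gm]$, where $T_m$ is the $k(m)$-torus with character module $\Psub{m}$ (a product of Weil restrictions of $\Gm$ along the $K_{m,i}/k(m)$) and the arrow is the norm dual to $\alpha_m$.

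Next I would apply, over a small enough open~$U$ of the spectrum of the ring of integers of~$k$ where all the constructions extend, the functors $H^*(k,-)$, $H^*(\sO_v,-)$ and $H^*(k_v,-)$, the coinvariants $\Coker(N_{\kbar_v/k_v})$ at archimedean places, profinite completion and the restricted-product construction. This turns the triangle above into a morphism between three Poitou--Tate-type three-term complexes: the complex of the theorem for $\Picplus(C)$; the analogous complex $\Picchapeau(C)\to\PicAchapeau(C)\to\Hom(\Br(C),\Q/\Z)$ for $\Pic(C)$, which is exactly the Conjecture~$(E)$ complex for the curve~$C$; and a complex obtained, via Shapiro's lemma over the fields $k(m)$, from the $k(m)$-tori $T_m$ together with the finite modules $\Psub{m}/\alpha_m(\Z)$. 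Assertion~(i) is then immediate from the triangle, once one knows the image-in-restricted-product property for the $\Pic$-complex (which holds because $J(k)$ is finitely generated, cf.\ Remark~\ref{rqs:conje}~(v)) and for the torus pieces (where a global class spreads out over~$U$).

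For~(ii) and~(iii), the key point is that the three-term complexes built from the tori $T_m$ and from the finite modules $\Psub{m}/\alpha_m(\Z)$ are \emph{exact with no hypothesis}: this is the Poitou--Tate nine-term exact sequence for tori, respectively for finite Galois modules, over the fields $k(m)$ (cf.\ \cite[Ch.~I]{milneadt}), the relevant Tate--Shafarevich groups being finite, hence free of nonzero infinitely divisible elements. A diagram chase (the four lemma, applied in each of the three positions to the long exact sequences furnished by the triangle) then forces the homology of the $\Picplus(C)$-complex to inject, through the forgetful map~$\phi$, into the homology of the $\Pic(C)$-complex; this proves~(iii). Under the hypothesis of~(ii), Remark~\ref{rqs:conje}~(iv) (Saito~\cite{saitoarithmeticsurfaces}, Colliot-Th\'el\`ene~\cite{ctconj}) asserts that the $\Pic(C)$-complex is itself exact, so its homology vanishes, and hence so does that of the $\Picplus(C)$-complex; this proves~(ii).

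The main obstacle is the homological bookkeeping in the first two steps: one must make the duality identifications precise on the lisse-\'etale site --- reconciling the $\tau_{\leq1}$-truncations with $R\rho_*$, with $\RHom$, and with the shift in~(\ref{eq:coeffuniv}) (this is the very reason the lisse-\'etale site is used in the definition of $\Picplus$) --- and then carry them intact through the adelic and profinite-completion functors while keeping the real places under control. It is here that the $\Picplus$-formalism earns its keep: one has to check that the forgetful triangle and the Poitou--Tate sequences remain compatible with the coinvariant groups $\Coker(N_{\kbar_v/k_v})$ entering the definition of $\PicplusAchapeau(C)$. The purely arithmetic inputs --- Poitou--Tate duality for tori and finite modules, Tate duality for abelian varieties, the Mordell--Weil theorem, and Remark~\ref{rqs:conje}~(iv) for curves --- are all standard; the work lies entirely in the reduction.
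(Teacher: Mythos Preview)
Your approach coincides with the paper's: both use the distinguished triangle $\Gm\to\vplus(C)\to\bigoplus_m\Coker(\alpha_m)[-1]$ (your $Q$ is quasi-isomorphic to the third term, since each $\alpha_m$ is injective), then dualize and chase. The paper packages the dual of the third term as a single group of multiplicative type $T=\RHom(\bigoplus_m\rho_*\Coker(\alpha_m),\Gm)$ over~$k$, and the Poitou--Tate input is the exactness of $H^1(k,T)\to\prod'H^1(k_v,T)\to\Hom(H^1(k,T^*),\Q/\Z)$; your two-term complexes $[T_m\to\Gm]$ amount to the same thing.

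There is, however, a genuine gap in your sketch of~(iii). The ``four lemma'' applied to your three rows (for~$T$, for $\Picplus$, for $\Pic$) does not suffice, because the right-hand column (duals of Brauer-type groups) is a \emph{long} exact sequence: the kernel of $\Hom(H^1(k,T^*),\Q/\Z)\to\Hom(\Brplus(C),\Q/\Z)$ is the image of $\Hom(H^3(C,\Gm),\Q/\Z)$, which is nonzero whenever~$k$ has a real place. In the chase, once you lift an element of $\PicplusAchapeau(C)$ to $z\in\prod'H^1(k_v,T)$ and observe that its image $w\in\Hom(H^1(k,T^*),\Q/\Z)$ dies in $\Hom(\Brplus(C),\Q/\Z)$, you only learn that~$w$ comes from some $u\in\Hom(H^3(C,\Gm),\Q/\Z)$; to continue, you must lift~$u$ back to the adelic column. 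The paper isolates this step as Proposition~\ref{prop:surjideles}: the map
\[
\prod_{v\in\Omega}\!{}'\;\Htilde^{-1}(k_v,\tau_{\leq 0}\RHom(\tau_{\leq 1}R\rho_*\Gm,\Gm))\longrightarrow\Hom(H^3(C,\Gm),\Q/\Z)
\]
is surjective. Its proof uses the Barsotti--Weil formula, the bijectivity of local Tate duality for the Jacobian at real places, and the surjectivity of the global reciprocity map---precisely the ``Tate duality for abelian varieties'' and class-field-theoretic inputs you list at the end but never locate in the argument. Without this fourth row and its surjectivity, the chase stalls.

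A smaller point on~(i): the claim concerns $\Picpluschapeau(C)$, not $\Picplus(C)$, and ``a global class spreads out over~$U$'' handles only elements coming from $\Picplus(C)$. The paper bridges the gap by showing that the cokernel of $\Picpluschapeau(\sC\otimes_U\sO_v)\to\Picpluschapeau(C\otimes_kk_v)$ is killed by a fixed~$n$ for all $v\in U$, and then writes an arbitrary element of $\Picpluschapeau(C)$ as $x+ny$ with $x\in\Picplus(C)$, using that $\Coker(M\to\widehat{M})$ is always divisible (Lemma~\ref{lem:divisible}).
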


Notons que c'est l'assertion~(i) du théorème qui donne un sens à la première flèche du complexe apparaissant dans~(ii); la seconde flèche est définie par~(\ref{eq:accbrpluspicplusachapeau}).
D'autre part, l'assertion~(ii) est une conséquence immédiate de~(iii) et du théorème selon lequel le complexe apparaissant dans~(iii) est une suite exacte
si le groupe de Tate--Shafarevich de la jacobienne de~$C$ ne contient pas d'élément infiniment divisible non nul (cf.~remarque~\ref{rqs:conje}~(iv)).
Il nous suffira donc d'établir~(i) et~(iii) pour prouver le théorème.

\begin{proof}
Soient $T^*=\bigoplus_{m \in M}\rho_*\!\Coker(\alpha_m)$ et $Q^*=\bigoplus_{m \in M}(\rho \circ i_m)_*\Z$.
Quitte à rétrécir~$U$, on peut supposer que~$\rho \circ i_m$ est le morphisme structural d'un $U$\nobreakdash-schéma fini étale pour tout~$m$,
que~$T^*$ est un schéma en groupes sur~$U$ localement constant pour la topologie étale
et que $T=\RHom(T^*,\Gm)$ est représenté par un schéma en groupes de type multiplicatif.
Notons~$Q$ le tore quasi-trivial $\RHom(Q^*,\Gm)=\prod_{m \in M} (\rho \circ i_m)_*\Gm$.

\begin{lem}
\mylabel{lem:tridistdescvplus}
Dans la catégorie dérivée des faisceaux en groupes abéliens sur~$\Sm/U$,
il existe un triangle distingué canonique
\begin{align}
\mylabel{eq:tridistdescvplus}
\xymatrix{
\left(\tau_{\leq 0}\sF\right)[-1] \ar[r] & \tau_{\leq 1}R\rho_*\Gm \ar[r]^(.6)\alpha & T[1] \ar[r]^(.65){+1} & \vphantom{.}
}
\end{align}
tel que~$\alpha$ se factorise par~$Q$.
\end{lem}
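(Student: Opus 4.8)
Le plan est de comparer le complexe $\vplus(\sC)=[j_*\Gm\to\bigoplus_{m\in M}i_{m*}\Psub{m}]$ (plac\'e en degr\'es~$0$ et~$1$) au sous-complexe $[j_*\Gm\to\bigoplus_{m\in M}i_{m*}\Z]$, lequel est quasi-isomorphe \`a $\Gm$ d'apr\`es la suite exacte~(\ref{se:weilsmc}). Chaque $\alpha_m$ \'etant injectif (les multiplicit\'es $e_{m,i}$ n'\'etant pas toutes nulles) et la fl\`eche de r\'esidu $j_*\Gm\to\bigoplus_m i_{m*}\Z$ \'etant surjective, je calcule d'abord que les faisceaux de cohomologie de $\vplus(\sC)$ sont $\Gm$ en degr\'e~$0$ et $\bigoplus_m i_{m*}\Coker(\alpha_m)$ en degr\'e~$1$; le triangle de troncation canonique de $\vplus(\sC)$ s'\'ecrit donc
\[ \Gm\longrightarrow\vplus(\sC)\longrightarrow\Big(\bigoplus_{m\in M}i_{m*}\Coker(\alpha_m)\Big)[-1]\xrightarrow{\;+1\;} \]
dans la cat\'egorie d\'eriv\'ee des faisceaux en groupes ab\'eliens sur $\Sm/\sC$. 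Sa fl\`eche de bord n'est autre que la classe de Yoneda de la suite exacte \`a quatre termes $0\to\Gm\to j_*\Gm\to\bigoplus i_{m*}\Psub{m}\to\bigoplus i_{m*}\Coker(\alpha_m)\to 0$, laquelle se d\'ecompose en le compos\'e de la classe de $0\to\bigoplus i_{m*}\Z\xrightarrow{\bigoplus\alpha_m}\bigoplus i_{m*}\Psub{m}\to\bigoplus i_{m*}\Coker(\alpha_m)\to 0$ suivie de celle de~(\ref{se:weilsmc}); je retiens que cette fl\`eche de bord se factorise par $\bigoplus_m i_{m*}\Z$ (en degr\'e~$0$).

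J'appliquerai ensuite $R\rho_*$. Les morphismes $\rho\circ i_m$ \'etant finis (et m\^eme \'etales quitte \`a r\'etr\'ecir~$U$), on a $R\rho_*\big(\bigoplus_m i_{m*}\Coker(\alpha_m)\big)=T^*$ et $R\rho_*\big(\bigoplus_m i_{m*}\Z\big)=Q^*$; par ailleurs $R^q\rho_*\Gm=0$ pour $q\geq 2$ (th\'eor\`eme de Tsen, quitte \`a r\'etr\'ecir encore~$U$), d'o\`u $R\rho_*\Gm=\tau_{\leq 1}R\rho_*\Gm$ et, au vu du triangle ci-dessus, $R\rho_*\vplus(\sC)=\tau_{\leq 1}R\rho_*\vplus(\sC)$. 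On obtient ainsi un triangle $R\rho_*\Gm\to R\rho_*\vplus(\sC)\to T^*[-1]\xrightarrow{\partial}R\rho_*\Gm[1]$ dont le bord $\partial$ se factorise par $Q^*$. J'appliquerai alors le foncteur contravariant $\RHom(-,\Gm)$: en posant $\sG=\RHom(\tau_{\leq 1}R\rho_*\Gm,\Gm)$, en utilisant que $\RHom(T^*,\Gm)=T$ est concentr\'e en degr\'e~$0$ (d\'evissage et th\'eorie de Kummer, ce qui red\'emontre au passage que $T$ est de type multiplicatif) et que $\RHom(Q^*,\Gm)=Q$, on trouve un triangle $T[1]\to\sF\to\sG\xrightarrow{\;+1\;}T[2]$ dont la fl\`eche de bord se factorise par $Q[1]$.

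Il restera \`a tronquer ce triangle en degr\'es $\leq 0$. La suite exacte longue de cohomologie montre que la fl\`eche $\sF\to\sG$ induit un isomorphisme sur les faisceaux de cohomologie en degr\'es $\geq 0$ (puisque $T[1]$ est concentr\'e en degr\'e~$-1$), donc un isomorphisme $\tau_{\geq 1}\sF\isoto\tau_{\geq 1}\sG$; un argument d'octa\`edre, fond\'e sur l'absence de fl\`eche de $T[1]$, concentr\'e en degr\'es $\leq 0$, vers les parties de $\sF$ et $\sG$ concentr\'ees en degr\'es $\geq 1$, fournira alors un triangle distingu\'e $T[1]\to\tau_{\leq 0}\sF\to\tau_{\leq 0}\sG\xrightarrow{\;+1\;}T[2]$. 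La formule des coefficients universels~(\ref{eq:coeffuniv}) identifie $\tau_{\leq 0}\sG$ \`a $(\tau_{\leq 1}R\rho_*\Gm)[1]$; en y substituant, en d\'ecalant de~$[-1]$ puis en faisant tourner le triangle, on obtient exactement
\[ (\tau_{\leq 0}\sF)[-1]\longrightarrow\tau_{\leq 1}R\rho_*\Gm\xrightarrow{\;\alpha\;}T[1]\xrightarrow{\;+1\;}\rlap{\text{,}} \]
et la factorisation du bord not\'ee plus haut, restreinte \`a $\tau_{\leq 0}\sG$ puis d\'ecal\'ee de~$[-1]$, exhibe $\alpha$ comme un compos\'e $\tau_{\leq 1}R\rho_*\Gm\to Q\to T[1]$, ce qui est la factorisation par~$Q$ annonc\'ee. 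Tous les foncteurs en jeu (troncations canoniques, $R\rho_*$, $\RHom(-,\Gm)$, isomorphisme de~(\ref{eq:coeffuniv})) \'etant fonctoriels, le triangle obtenu est canonique.

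Le point le plus d\'elicat sera double: d'une part l'\'etape de troncation, o\`u il faut v\'erifier que $\tau_{\leq 0}$ appliqu\'e au triangle $T[1]\to\sF\to\sG$ reste un triangle distingu\'e (d'o\`u l'argument d'octa\`edre) et contr\^oler soigneusement tous les d\'ecalages; d'autre part le suivi de la fl\`eche de bord \`a travers ces identifications successives, afin d'\'etablir la factorisation par~$Q$ dans la cat\'egorie d\'eriv\'ee elle-m\^eme, et non seulement sur les faisceaux de cohomologie.
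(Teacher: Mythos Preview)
Your argument is correct and follows the same route as the paper: construct the truncation triangle for $\vplus(\sC)$, push forward by~$\rho$, dualize via $\RHom(-,\Gm)$, then truncate using that $T[2]$ sits in degree~$-2$ together with~(\ref{eq:coeffuniv}). The only difference is cosmetic: the paper keeps $\tau_{\leq 1}R\rho_*$ throughout (which preserves the triangle because the third term $T^*[-1]$ is already concentrated in degree~$1$) rather than invoke $R^q\rho_*\Gm=0$ for $q\geq 2$ over~$U$, and it compresses your octahedral step into the single observation that $T[2]$ is concentrated in degree~$-2$.
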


\begin{proof}
De la définition de~$\vplus(\sC)$ et de la suite exacte~(\ref{se:weilsmc}),
on tire un triangle distingué
\begin{align}
\mylabel{eq:tridist}
\xymatrix{
\Gm \ar[r] & \vplus(\sC) \ar[r] & \displaystyle\bigoplus_{m \in M} \Coker(\alpha_m)[-1] \ar[r]^(.8){+1} & \vphantom{.}
}
\end{align}
dans la catégorie dérivée des faisceaux en groupes abéliens sur~$\Sm/\sC$, tel que la flèche
$\bigoplus_{m \in M} \Coker(\alpha_m)[-1] \to \Gm[1]$ issue
de ce triangle se factorise par $\bigoplus_{m\in M}i_{m*}\Z$.
Le~complexe $R\rho_*(\bigoplus_{m \in M} i_{m*}\Z)=Q^*$ étant concentré en degré~$0$,
on peut appliquer le foncteur~$\tau_{\leq 1}R\rho_*$ à~(\ref{eq:tridist}) et obtenir
un triangle distingué
\begin{align}
\mylabel{eq:tridistbrplus}
\xymatrix{
\tau_{\leq 1}R\rho_*\Gm \ar[r] & \tau_{\leq 1}R\rho_*\vplus(\sC) \ar[r] & T^*[-1] \ar[r]^(.65){\smash[t]{+1}} &\rlap{\text{.}}
}
\end{align}
Celui-ci induit, dualement, un triangle distingué
\begin{align}
\mylabel{eq:trin}
\xymatrix{
\sF \ar[r] & \RHom(\tau_{\leq 1}R\rho_*\Gm,\Gm) \ar[r]^(.73)\beta & T[2] \ar[r]^(.65){+1} & \vphantom{.}
}
\end{align}
tel que~$\beta$ se factorise par~$Q[1]$.
Comme~$T[2]$ est concentré en degré~$-2$, on tire finalement de~(\ref{eq:trin})
et de~(\ref{eq:coeffuniv})
le triangle distingué~(\ref{eq:tridistdescvplus})
(avec $\alpha=(\tau_{\leq 0}\beta)[-1]$).
\end{proof}

\begin{lem}
\mylabel{lem:descpicplus}
Si~$A$ désigne une extension de~$k$ ou l'un des anneaux~$\sO_v$ pour $v \in U$,
il existe une suite exacte canonique
\begin{align}
\mylabel{se:descpicplus}
\xymatrix@C=2.1em{
H^0(A,\Gm) \ar[r] & H^1(A,T) \ar[r] & \Picplus(\sC \otimes_U A) \ar[r] & \Pic(\sC \otimes_U A) \ar[r] & 0
}
\end{align}
fonctorielle en~$A$.
\end{lem}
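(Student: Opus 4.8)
The plan is to obtain the exact sequence (\ref{se:descpicplus}) by applying $R\Gamma(\Spec A,-)$ to the distinguished triangle (\ref{eq:tridistdescvplus}) and identifying the resulting terms in low degrees. Throughout, set $\sC_A=\sC\otimes_U A$, still write $\rho$ for the structure morphism $\sC_A\to\Spec A$, and recall (from the remark quoted before the statement) that for a complex of lisse-étale sheaves the cohomology over $\Spec A$ is that of its restriction to the small étale site of $\Spec A$; I also take for granted that the formation of $\sF$, and the triangle (\ref{eq:tridistdescvplus}), commute with the base change $\Spec A\to U$, so that $H^0(A,\sF)$ is precisely the group $\Picplus(\sC_A)$ of \textsection\ref{subsec:vhnotations}. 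First I would record the identifications of terms: since $\sC\to U$ is proper with geometrically irreducible fibres (after shrinking $U$), $\rho_*\Gm=\Gm$, whence $H^0(A,\tau_{\leq 1}R\rho_*\Gm)=H^0(A,\Gm)$; since truncation does not change hypercohomology in degrees $\leq 1$, $H^1(A,\tau_{\leq 1}R\rho_*\Gm)=H^1(A,R\rho_*\Gm)=H^1(\sC_A,\Gm)=\Pic(\sC_A)$; and $\tau_{\geq 1}\sF$ being concentrated in degrees $\geq 1$, its hypercohomology over $\Spec A$ vanishes in degrees $\leq 0$, so $H^0(A,\tau_{\leq 0}\sF)=H^0(A,\sF)=\Picplus(\sC_A)$. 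Finally $H^0(A,(\tau_{\leq 0}\sF)[-1])=H^{-1}(A,\tau_{\leq 0}\sF)$, $H^1(A,(\tau_{\leq 0}\sF)[-1])=H^0(A,\tau_{\leq 0}\sF)$, and $H^i(A,T[1])=H^{i+1}(A,T)$.

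Plugging these into the long exact hypercohomology sequence of (\ref{eq:tridistdescvplus}), the part in degrees $0$ and $1$ reads
\[
H^{-1}(A,\tau_{\leq 0}\sF)\longrightarrow H^0(A,\Gm)\longrightarrow H^1(A,T)\longrightarrow\Picplus(\sC_A)\longrightarrow\Pic(\sC_A)\longrightarrow H^2(A,T).
\]
Discarding the first term gives a four-term exact complex. Unwinding how (\ref{eq:tridistdescvplus}) was built from (\ref{eq:tridist}), (\ref{eq:tridistbrplus}), (\ref{eq:trin}) and (\ref{eq:coeffuniv}) shows that the arrow $\Picplus(\sC_A)\to\Pic(\sC_A)$ above is the one obtained by applying $\RHom(\tau_{\leq 1}R\rho_*(-),\Gm)$ and (\ref{eq:coeffuniv}) to the morphism $\Gm\to\vplus(\sC)$, i.e.\ it is the forgetful morphism $\phi$. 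Functoriality of the whole sequence in $A$ follows at once, both $R\Gamma(\Spec A,-)$ and the base change of (\ref{eq:tridistdescvplus}) being functorial.

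It remains to see that $\phi$ is surjective, i.e.\ that the last arrow $\Pic(\sC_A)\to H^2(A,T)$ vanishes. That arrow is induced on $H^1$ by $\alpha\colon\tau_{\leq 1}R\rho_*\Gm\to T[1]$, and by Lemma~\ref{lem:tridistdescvplus} the morphism $\alpha$ factors through the quasi-trivial torus $Q=\prod_{m\in M}(\rho\circ i_m)_*\Gm$; hence it suffices to show $H^1(A,Q)=0$. But $Q$ is a finite product of Weil restrictions of $\Gm$ along finite étale morphisms, so by Shapiro's lemma $H^1(A,Q)$ is a finite product of groups $H^1(A',\Gm)=\Pic(A')$ with $A'$ finite étale over $A$; and $\Pic(A')=0$ both when $A$ is a field (then $A'$ is a finite product of fields) and when $A=\sO_v$ (then $A'$ is a finite product of complete discrete valuation rings). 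Thus $H^1(A,Q)=0$ and (\ref{se:descpicplus}) is exact.

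The only genuinely delicate point is the base-change bookkeeping that I passed over in the first paragraph: one must check that $\sF$, pulled back along $\Spec A\to U$, computes the intrinsic group $\Picplus(\sC_A)$ of \textsection\ref{subsec:vhnotations}, which rests on the compatibility with base change of $R\rho_*$ (proper base change, available after shrinking $U$) and of Deligne's coefficient formula (\ref{eq:coeffuniv}). Once this is in hand, the lemma is a formal consequence of the triangle (\ref{eq:tridistdescvplus}) together with the vanishing $H^1(A,Q)=0$.
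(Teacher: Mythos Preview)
Your argument is correct and is exactly the approach of the paper: take the long exact hypercohomology sequence of the triangle~(\ref{eq:tridistdescvplus}), identify the terms, and kill the boundary $\Pic(\sC_A)\to H^2(A,T)$ using the factorisation of~$\alpha$ through~$Q$ together with $H^1(A,Q)=0$, the latter because $\Pic(B)=0$ for every finite \'etale $A$-algebra~$B$. The paper's proof is the one-sentence version of what you wrote.
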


\begin{proof}
Cette suite exacte se déduit du lemme~\ref{lem:tridistdescvplus}
car $H^1(A,Q)$ est nul; la nullité de $H^1(A,Q)$ vient de ce que
$\Pic(B)=0$ pour toute $A$\nobreakdash-algèbre finie étale~$B$.
\end{proof}

Revenons à la démonstration du théorème~\ref{th:dualitearithmetique}.
Comme~$T$ est un schéma en groupes de type multiplicatif sur~$U$, on peut supposer,
quitte à rétrécir~$U$,
 que~$T$ est lisse sur~$U$
et
que $H^1(k_v,T)$ est
fini pour tout $v \in U$ (cf.~\cite[Ch.~I, Corollary~2.4]{milneadt}).
La lissité de~$T$ entraîne que $H^1(\sO_v,T)=H^1(\Fv,T)$
pour tout $v \in U$ (cf.~\cite[Théorème~11.7]{grbr3}),
de sorte que le groupe $H^1(\sO_v,T)$ est lui aussi fini.
Il en résulte que la suite~(\ref{se:descpicplus}) pour $A=k_v$ ou $A=\sO_v$ avec $v\in U$,
privée de son premier terme,
reste exacte si l'on applique le foncteur $M \mapsto \widehat{M}$ à ses deux derniers termes.
D'où un diagramme commutatif à lignes exactes
\begin{align}
\mylabel{diag:ovkvpicplus}
\begin{aligned}
\xymatrix{
H^1(\sO_v,T) \ar[d] \ar[r] & \Picpluschapeau(\sC \otimes_U \sO_v) \ar[r] \ar[d] & \Picchapeau(\sC \otimes_U \sO_v) \ar[r] \ar[d]^\wr & 0 \\
H^1(k_v,T) \ar[r] & \Picpluschapeau(C \otimes_k k_v) \ar[r] & \Picchapeau(C\otimes_k k_v) \ar[r] & 0\rlap{\text{,}}
}
\end{aligned}
\end{align}
dans lequel la flèche verticale de droite est un isomorphisme puisque~$\sC$ est propre et lisse sur~$U$.
Le schéma en groupes de type multiplicatif~$T$ sur~$U$ est extension d'un schéma en groupes fini~$\Tsub{f}$ par un tore~$\Tsub{t}$.
Soit~$n$ le produit de l'exposant de~$\Tsub{f}$ et du degré d'une extension finie de~$k$ déployant $\Tsub{t} \otimes_Uk$.
Par le théorème~90 de Hilbert
et un argument de trace, l'entier~$n$ annule $H^1(k_v,T)$ pour tout $v \in U$.
Grâce au diagramme~(\ref{diag:ovkvpicplus}), on en déduit que le conoyau de la flèche de restriction
$\Picpluschapeau(\sC \otimes_U \sO_v) \to \Picpluschapeau(C \otimes_k k_v)$
est annulé par~$n$ pour tout $v \in U$.

Vérifions maintenant l'assertion~(i) du théorème.
Compte tenu du lemme~\ref{lem:divisible},
tout élément de $\Picpluschapeau(C)$ s'écrit $x+ny$ avec $x \in \Picplus(C)$ et $y \in \Picpluschapeau(C)$.
D'après ce qui précède, l'image de~$ny$ dans
$\prod_{v \in \Omega_f} \Picpluschapeau(C\otimes_kk_v)$
appartient à
$\prod_{v \in \Omega_f}' \Picpluschapeau(C \otimes_k k_v)$.
Il en va de même pour l'image de~$x$, puisque $\Picplus(C) = \varinjlim_{V\subset U} \Picplus(\sC \otimes_U V)$ si~$V$ parcourt les ouverts non vides de~$U$
(cf.~\cite[Corollaire~5.8]{grothsitetoposetale}).
D'où l'assertion~(i).

Avant de démontrer l'assertion~(iii), introduisons quelques notations.
Si~$\sG$ est un complexe borné de faisceaux étales en groupes abéliens sur~$U$,
notons $\Htilde^i(k_v,\sG)$ pour $v \in \Omega_\infty$
les groupes d'hypercohomologie de Tate de~$\sG$ (cf.~\cite[p.~324]{vanhamel} pour la définition).
Ces groupes sont nuls si~$v$ est complexe; ils sont annulés par~$2$ si~$v$ est réelle.
Pour $v \in \Omega_f$, posons $\Htilde^i(k_v,\sG)=H^i(k_v,\sG)$.
Enfin, notons $\prod'_{v \in \Omega} \Htilde^i(k_v,\sG)$ le produit restreint des groupes $\Htilde^i(k_v,\sG)$ pour $v \in \Omega$
par rapport aux images des flèches de restriction $H^i(\sO_v,\sG)\to H^i(k_v,\sG)=\Htilde^i(k_v,\sG)$ pour $v \in \Omega_f$.
Pour tout $i \in \Z$, la somme des accouplements locaux
\begin{align}
\Htilde^i(k_v,\RHom(\sG,\Gm)) \times \Htilde^{2-i}(k_v,\sG) \to \Br(k_v)
\end{align}
suivis des applications $\inv_v:\Br(k_v)\hookrightarrow \Q/\Z$
définit un accouplement canonique
\begin{align}
\mylabel{eq:accg}
\prod_{v \in \Omega}\smash{\rlap{\raise 5pt\hbox{$\mkern-4mu{}'$}}}\mkern4mu \Htilde^i(k_v,\RHom(\sG,\Gm)) \times H^{2-i}(k,\sG) \to \Q/\Z
\end{align}
fonctoriel en~$\sG$.

\begin{prop}
\mylabel{prop:surjideles}
L'homomorphisme
$$
\prod_{v \in \Omega}\smash{\rlap{\raise 5pt\hbox{$\mkern-4mu{}'$}}}\mkern4mu \Htilde^{-1}(k_v, \tau_{\leq 0}\RHom(\tau_{\leq 1}R\rho_*\Gm,\Gm))
\to \Hom(H^3(k,\tau_{\leq 1}R\rho_*\Gm),\Q/\Z)
$$
déduit de~(\ref{eq:accg}) est surjectif.
\end{prop}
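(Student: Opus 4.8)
The plan is to transpose the statement, via Deligne's universal coefficients isomorphism, into a statement about the complex $\tau_{\leq 1}R\rho_*\Gm$ itself, and then to d\'evisser that complex into $\Gm$, the Jacobian of~$C$, and~$\Z$, so that the surjectivity splits into three cases: a triviality, Tate's global duality for an abelian variety, and the surjectivity of the global reciprocity map.

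Write $\sG = \tau_{\leq 1}R\rho_*\Gm$. By~(\ref{eq:coeffuniv}) one has $\tau_{\leq 0}\RHom(\sG,\Gm) = \sG[1]$, so the source of the homomorphism is $\prod'_{v\in\Omega}\Htilde^{0}(k_v,\sG)$, and by the functoriality in~$\sG$ of the pairing~(\ref{eq:accg}) this homomorphism fits into the long exact sequences attached to any distinguished triangle $\sG_1 \to \sG \to \sG_2 \to \sG_1[1]$. Indeed $\sG \mapsto \prod'_{v}\Htilde^{-1}(k_v,\RHom(\sG,\Gm))$ and $\sG \mapsto \Hom(H^3(k,\sG),\Q/\Z)$ are cohomological functors: the first because $\RHom(-,\Gm)$ sends triangles to triangles and, the local groups being finite at almost every place once~$U$ has been shrunk as in the proof of Theorem~\ref{th:dualitearithmetique}, the restricted product preserves exactness; the second because $\Q/\Z$ is injective. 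It therefore suffices to prove the surjectivity when $\sG$ is replaced by each of the successive quotients of a suitable filtration.

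Since $R^q\rho_*\Gm=0$ for $q\geq 2$ (Tsen's theorem, read off at the generic point of~$U$), there is a distinguished triangle $\Gm \to \sG \to \mathbf{Pic}_{C/k}[-1] \to \Gm[1]$, and, $C$ being a curve, $\mathbf{Pic}_{C/k}$ is an extension of~$\Z$ by the Jacobian~$J$ of~$C$. (Consistently, $H^3(k,\sG)$ is, up to the cohomology of~$\Gm$ over~$k$, the group $H^2(k,\Pic(C\otimes_k\kbar))=H^3(C,\Gm)$ governed by arithmetic duality for the curve~$C$.) One is thus reduced to three building blocks. For~$\Gm$: the target $\Hom(H^3(k,\Gm),\Q/\Z)$ vanishes, as $H^3(k,\Gm)=0$ over a number field. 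For the quotient~$\Z[-1]$: here $\RHom(\Z,\Gm)=\Gm$, so the map reads $\prod'_{v}\Htilde^{0}(k_v,\Gm)\to\Hom(H^{2}(k,\Z),\Q/\Z)=\Hom(H^1(k,\Q/\Z),\Q/\Z)$; identifying $\prod'_{v}\Htilde^{0}(k_v,\Gm)$ with the id\`ele group (the factor at each archimedean place divided by its connected component) and $\Hom(H^1(k,\Q/\Z),\Q/\Z)$ with $\Gal(\kbar/k)^{\ab}$, one recognises the homomorphism induced by the global reciprocity map of class field theory, which is surjective — this is exactly the case $C=\P^1_k$. For the sub~$J[-1]$: by the Barsotti--Weil formula $\RHom(J,\Gm)=J^{\vee}[-1]$, whence $\RHom(J[-1],\Gm)=J^{\vee}$ and the source is $\prod'_{v}\Htilde^{-1}(k_v,J^{\vee})$; since $\Htilde^{-1}(k_v,J^{\vee})=0$ at every finite place, only the archimedean places contribute, and using the canonical principal polarisation $J\cong J^{\vee}$ and local Tate duality at the real places (the Poincar\'e biextension pairing $\Htilde^{-1}(k_v,J^{\vee})\times\Htilde^{2}(k_v,J)\to\Br(k_v)$ being perfect) one identifies the map with the Pontryagin dual of the localisation $H^2(k,J)\to\bigoplus_{v\in\Omega}H^2(k_v,J)$. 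That localisation is injective, because $H^2(k_v,J)=0$ at the finite places and $\Sha^2(k,J)=0$ by Tate's theorem — and this requires no hypothesis on the Tate--Shafarevich group of~$J$, as befits the unconditional nature of the proposition. Feeding the three cases through the two d\'evissage triangles by a diagram chase (using, besides these surjectivity statements, the analogous injectivity statements for $\Gm$, $J$ and~$\Z$, all standard) yields the proposition.

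The real difficulty lies in the treatment of the archimedean places: one must work throughout with Tate hypercohomology, check that the local duality pairings $\Htilde^i(k_v,\RHom(\sG,\Gm))\times\Htilde^{2-i}(k_v,\sG)\to\Br(k_v)$ remain perfect there and are compatible with the d\'evissage triangles, and make sure that the truncation $\tau_{\leq 0}$ appearing in the source entails no loss — this is precisely where van~Hamel's cohomological formalism is needed, together with the verification that the restricted products behave correctly in the long exact sequences.
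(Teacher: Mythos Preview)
Your proposal is correct and follows essentially the same route as the paper. Both proofs d\'evisse $\tau_{\leq 1}R\rho_*\Gm$ via the triangle $\Gm \to \tau_{\leq 1}R\rho_*\Gm \to \Pic_{C/k}[-1]$ and the extension $0 \to J \to \Pic_{C/k} \to \Z \to 0$, reducing to the vanishing of $H^3(k,\Gm)$, the surjectivity of the global reciprocity map, and Tate's theorem that $H^2(k,J) \to \bigoplus_{v\in\Omega_\infty} H^2(k_v,J)$ is an isomorphism; the paper organises the diagram chase around a single commutative square for $\Pic_{C/k}$ (first killing the $\Gm$ contribution), whereas you carry all three pieces through two successive triangles, but the substance is identical.
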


\begin{proof}
Notons $\Pic_{\sC/U}$ le foncteur de Picard relatif de~$\sC$ sur~$U$.
Compte tenu du triangle distingué
$$
\xymatrix{
\Gm \ar[r] & \tau_{\leq 1}R\rho_*\Gm \ar[r] & \Pic_{\sC/U}[-1] \ar[r]^(.71){+1} & \vphantom{.}
}
$$
et de la nullité de $H^3(k,\Gm)$ (cf.~\cite[Ch.~I, Theorem~4.20~(b)]{milneadt}),
le groupe $\Hom(H^2(k,\Pic_{\sC/U}),\Q/\Z)$ se surjecte
sur $\Hom(H^3(k,\tau_{\leq 1}R\rho_*\Gm),\Q/\Z)$.
Il suffit donc de montrer que l'homomorphisme
$$
\beta:\prod_{v \in \Omega}\smash{\rlap{\raise 5pt\hbox{$\mkern-4mu{}'$}}}\mkern4mu \Htilde^0(k_v, \tau_{\leq 1}\RHom(\Pic_{\sC/U},\Gm))
\to \Hom(H^2(k,\Pic_{\sC/U}),\Q/\Z)
$$
déduit de~(\ref{eq:accg}) est surjectif.  À l'aide de la suite exacte
$$
\xymatrix{
0 \ar[r] & \Pic^0_{\sC/U} \ar[r] & \Pic_{\sC/U} \ar[r] & \Z \ar[r] & 0
}
$$
et de
la formule de Barsotti--Weil
$\tau_{\leq 1}\RHom(\Pic^0_{\sC/U},\Gm) = \Pic^0_{\sC/U}[-1]$
(cf.~\cite[Proposition~17.4, Theorem~18.1]{oort}),
on voit que~$\beta$ s'inscrit dans un diagramme commutatif
$$
\xymatrix@R=4ex@C=4em{
\ar[r]^(.56)\alpha\ar[d]
\displaystyle\smash[b]{\prod_{v \in \Omega}\smash{\rlap{\raise 5pt\hbox{$\mkern-4mu{}'$}}}\mkern4mu \Htilde^0(k_v, \Gm)}\vphantom{H^0(k_v)\tau_{\leq 1}}
& \Hom(H^2(k,\Z),\Q/\Z) \ar[d] \\
\ar[r]^(.56)\beta\ar@{->>}[d]^\delta
\displaystyle\smash[b]{\prod_{v \in \Omega}\smash{\rlap{\raise 5pt\hbox{$\mkern-4mu{}'$}}}\mkern4mu \Htilde^0(k_v, \tau_{\leq 1}\RHom(\Pic_{\sC/U},\Gm))}\vphantom{H^0(k_v)\tau_{\leq 1}}
& \Hom(H^2(k,\Pic_{\sC/U}),\Q/\Z) \ar[d] \\
\ar[r]^(.56)\gamma\displaystyle\prod_{v \in \Omega}\smash{\rlap{\raise 5pt\hbox{$\mkern-4mu{}'$}}}\mkern4mu \Htilde^{-1}(k_v, \Pic^0_{\sC/U})
& \Hom(H^2(k,\Pic^0_{\sC/U}),\Q/\Z)
}
$$
dont les colonnes sont exactes.
Le théorème~90 de Hilbert entraîne la surjectivité de~$\delta$.
Comme l'homomorphisme de restriction $H^2(k,\Pic^0_{\sC/U}) \to \bigoplus_{v \in \Omega_\infty} H^2(k_v,\Pic^0_{\sC/U})$ est un isomorphisme
(cf.~\cite[Ch.~I, Corollary~6.24]{milneadt}), la flèche~$\gamma$ est la somme directe, sur les places réelles~$v$ de~$k$,
des applications naturelles $$\gamma_v:\Htilde^{-1}(k_v,\Pic^0_{\sC/U}) \to \Hom(\Htilde^2(k_v,\Pic^0_{\sC/U}),\Q/\Z)\rlap{\text{.}}$$
Celles-ci sont bijectives d'après \cite[Ch.~I, Remark~3.7]{milneadt}.  Il en va donc de même de~$\gamma$.
Enfin, la flèche~$\alpha$ s'identifie d'après~\cite[\textsection2.3, Proposition~1]{serrecasselsfrohlich} à l'application de réciprocité de la théorie du corps de classes global,
qui est surjective (cf.~\cite[5.6]{tatecasselsfrohlich}).
La surjectivité de~$\beta$ résulte maintenant de celle de~$\alpha$, de~$\gamma$ et de~$\delta$.
\end{proof}

Considérons le diagramme commutatif
\begin{align}
\mylabel{diag:grand}
\begin{aligned}
\xymatrix{
& \displaystyle\smash[b]{\prod_{v \in \Omega}\smash{\rlap{\raise 5pt\hbox{$\mkern-4mu{}'$}}}\mkern4mu \Htilde^0(k_v, \tau_{\leq 1}R\rho_*\Gm)}
\vphantom{H^0(C\otimes_kk_v)} \ar[d] \ar@{->>}[r] & \Hom(H^3(C,\Gm),\Q/\Z) \ar[d] \\
H^1(k,T) \ar[d] \ar[r] &
\displaystyle\smash[b]{\prod_{v \in \Omega}\smash{\rlap{\raise 5pt\hbox{$\mkern-4mu{}'$}}}\mkern4mu H^1(k_v,T)}
\vphantom{H^1(k_v)} \ar[d] \ar[r] & \Hom(H^1(k,T^*),\Q/\Z) \ar[d] \\
\Picpluschapeau(C) \ar@{->>}[d] \ar[r] & \PicplusAchapeau(C) \ar[d] \ar[r] & \Hom(\Brplus(C),\Q/\Z) \ar[d] \\
\Picchapeau(C) \ar[r] & \PicAchapeau(C) \ar[r] & \Hom(\Br(C),\Q/\Z) \rlap{\text{.}}
}
\end{aligned}
\end{align}
La colonne de droite est la suite exacte
induite par le triangle distingué~(\ref{eq:tridistbrplus}).
Les autres flèches verticales proviennent du lemme~\ref{lem:tridistdescvplus}.
Les flèches horizontales de droite sont données par~(\ref{eq:accg}).
Il résulte de~\cite[Lemma~1.8]{vanhamel}
que
$$\prod_{v \in \Omega}{\rlap{\raise 5pt\hbox{$\mkern-4mu{}'$}}}\mkern4mu \widehat{\Htilde^0(k_v,\tau_{\leq 0}\sF)} = \PicplusAchapeau(C)\rlap{\text{;}}$$
de plus, on a $H^1(k_v,T)=\Htilde^1(k_v,T)$ pour tout $v \in \Omega$
puisque~$T$ est concentré en degré~$0$. Ces deux remarques donnent un sens à la seconde colonne.
Notons au passage que le groupe $\Htilde^0(k_v,\tau_{\leq 1}R\rho_*\Gm)$ n'est autre que $k_v^*$ si $v \in \Omega_f$; en revanche, si~$v$ est réelle, il dépend de
la jacobienne de~$C$.

La seconde ligne de~(\ref{diag:grand}) est exacte d'après le théorème de Poitou--Tate pour les groupes de type multiplicatif (cf.~\cite[Ch.~I, Theorem~4.20~(b)]{milneadt}).
La flèche verticale en bas à gauche est surjective en vertu des lemmes~\ref{lem:descpicplus} et~\ref{lem:chapeausurjectif}.
La colonne de droite est exacte par construction.
On a déjà établi l'exactitude de la seconde colonne au cran $\PicplusAchapeau(C)$ (voir le diagramme~(\ref{diag:ovkvpicplus})).
Enfin, la flèche horizontale du haut est surjective d'après la proposition~\ref{prop:surjideles}.
De tout cela il résulte, par une chasse au diagramme, que le groupe d'homologie de l'avant-dernière ligne s'injecte dans celui de la dernière ligne.
Le théorème~\ref{th:dualitearithmetique} est ainsi démontré.
\end{proof}

\begin{cor}
\mylabel{cor:existeu}
Soit $(u_v)_{v \in \Omega} \in \prod'_{v \in \Omega} \Picplus(C \otimes_k k_v)$ une famille orthogonale à~$\Brplus(C)$
pour~(\ref{eq:accbrpluspicplusachapeau}).
Supposons que l'image de $(u_v)_{v \in \Omega}$ dans $\prod_{v \in \Omega} \Pic(C \otimes_k k_v)$
provienne de $\Pic(C)$.
Alors il existe un élément~$u$ de~$\Picplus(C)$ dont l'image dans $\Picplus(C \otimes_k k_v)$
coïncide avec~$u_v$ pour $v \in \Omega_f$ et coïncide avec~$u_v$ à une norme de~$\kbar_v$ à~$k_v$ près pour $v \in \Omega_\infty$.
\end{cor}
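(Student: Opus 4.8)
Nous reprenons les notations de~\textsection\ref{sec:thdualitearithmetique}, ainsi que celles de la d\'emonstration du th\'eor\`eme~\ref{th:dualitearithmetique}. La preuve suit de pr\`es celle de ce th\'eor\`eme~: on y m\`ene la m\^eme chasse au diagramme, mais avec des classes de cohomologie locales v\'eritables plut\^ot qu'avec leurs compl\'et\'es, l'hypoth\`ese sur $\Pic(C)$ permettant de se dispenser de toute condition sur le groupe de Tate--Shafarevich.

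En voyant chaque $u_v$ \`a travers $\Picplus(C\otimes_k k_v)\to\Picpluschapeau(C\otimes_k k_v)$ aux places finies et \`a travers $\Picplus(C\otimes_k k_v)\to\Coker(N_{\kbar_v/k_v})$ aux places infinies, la famille $(u_v)_{v\in\Omega}$ d\'efinit un \'el\'ement $\uchapeau_\A\in\PicplusAchapeau(C)$~; celui-ci appartient au produit restreint en vertu de l'hypoth\`ese faite sur $(u_v)$, et il est orthogonal \`a $\Brplus(C)$ pour~(\ref{eq:accbrpluspicplusachapeau}). D'apr\`es l'hypoth\`ese il existe $y\in\Pic(C)$ dont l'image dans $\prod_{v\in\Omega}\Pic(C\otimes_k k_v)$ est celle de $(u_v)$~; soit $u_0\in\Picplus(C)$ un rel\`evement de $y$ par la surjection $\Picplus(C)\to\Pic(C)$ du lemme~\ref{lem:descpicplus}. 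Comme $u_0$, en tant que classe globale, est orthogonale \`a $\Brplus(C)$ d'apr\`es la loi de r\'eciprocit\'e, on peut, quitte \`a retrancher aux $u_v$ l'image de $u_0$ dans $\Picplus(C\otimes_k k_v)$ et \`a $u$ l'\'el\'ement $u_0$, supposer que l'image de chaque $u_v$ dans $\Pic(C\otimes_k k_v)$ est nulle, la famille $\uchapeau_\A$ demeurant orthogonale \`a $\Brplus(C)$.

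Le lemme~\ref{lem:descpicplus} fournit alors, pour $v$ finie, une classe $t_v\in H^1(k_v,T)$ dont l'image par le morphisme $\partial_v\colon H^1(k_v,T)\to\Picplus(C\otimes_k k_v)$ de sa suite exacte est $u_v$, et la m\^eme assertion vaut aux places r\'eelles~; comme $(u_v)$ appartient au produit restreint et que $\Pic(\sC\otimes_U\sO_v)$ s'injecte dans $\Pic(C\otimes_k k_v)$ pour tout $v\in U$, on peut prendre la famille $(t_v)$ dans $\prod'_{v\in\Omega}H^1(k_v,T)$. Le point d\'ecisif est de v\'erifier qu'en retranchant \`a $(t_v)$ l'image, par le morphisme $\alpha\colon\tau_{\leq 1}R\rho_*\Gm\to T[1]$ du lemme~\ref{lem:tridistdescvplus}, d'une famille $(s_v)\in\prod'_{v\in\Omega}\Htilde^0(k_v,\tau_{\leq 1}R\rho_*\Gm)$ convenable, on obtient une famille orthogonale \`a $H^1(k,T^*)$ tout entier pour les accouplements locaux de Poitou--Tate. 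L'orthogonalit\'e au sous-groupe $\Im\bigl(\Brplus(C)\to H^1(k,T^*)\bigr)$ issu du triangle~(\ref{eq:tridistbrplus}) r\'esulte, par compatibilit\'e des cup-produits, de l'orthogonalit\'e de $\uchapeau_\A$ \`a $\Brplus(C)$~; l'orthogonalit\'e \`a la partie restante, laquelle s'injecte dans $H^3(C,\Gm)$ d'apr\`es ce m\^eme triangle, s'obtient en choisissant $(s_v)$ gr\^ace \`a la surjectivit\'e \'etablie dans la proposition~\ref{prop:surjideles}. C'est exactement la chasse men\'ee dans le diagramme~(\ref{diag:grand}), effectu\'ee sans passer aux compl\'et\'es. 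La suite exacte de Poitou--Tate pour le sch\'ema en groupes de type multiplicatif $T$ (seconde ligne de~(\ref{diag:grand}), exacte d'apr\`es~\cite[Ch.~I, Theorem~4.20~(b)]{milneadt}) fournit alors $t\in H^1(k,T)$ dont la localisation en chaque place~$v$ vaut $t_v-\alpha_*(s_v)$.

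Notons enfin $u'\in\Picplus(C)$ l'image de $t$ par le morphisme $H^1(k,T)\to\Picplus(C)$ du lemme~\ref{lem:descpicplus}~: par fonctorialit\'e en $A$ de la suite exacte de ce lemme, la composante locale de $u'$ en $v$ est $\partial_v\bigl(t_v-\alpha_*(s_v)\bigr)$. Or la compos\'ee $\partial_v\circ\alpha_*$ est nulle aux places finies, et, aux places r\'eelles, sa compos\'ee avec la projection $\Picplus(C\otimes_k k_v)\twoheadrightarrow\Htilde^0(k_v,\tau_{\leq 0}\sF)$ (quotient par $N_{\kbar_v/k_v}$) est nulle, car ceci traduit respectivement, dans la cohomologie ordinaire et dans l'hypercohomologie de Tate, le fait que $\partial_v$ et $\alpha_*$ sont deux morphismes cons\'ecutifs de la suite exacte longue du triangle du lemme~\ref{lem:tridistdescvplus}~; ainsi $\partial_v\bigl(t_v-\alpha_*(s_v)\bigr)$ vaut $u_v$ aux places finies et vaut $u_v$ modulo $N_{\kbar_v/k_v}$ aux places r\'eelles. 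L'\'el\'ement $u:=u_0+u'$ de $\Picplus(C)$ r\'epond donc \`a la question. Je m'attends \`a ce que l'obstacle principal soit l'\'etape d'orthogonalit\'e ci-dessus, en particulier son traitement aux places r\'eelles, o\`u il faut, dans l'esprit de~\cite{vanhamel}, travailler de bout en bout avec les groupes d'hypercohomologie de Tate $\Htilde^\bullet(k_v,-)$ plut\^ot qu'avec la cohomologie galoisienne ordinaire.
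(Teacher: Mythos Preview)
Your argument is correct, but it takes a considerably longer path than the paper. You essentially reproduce, at the level of uncompleted groups, the diagram chase~(\ref{diag:grand}) already carried out in the proof of the th\'eor\`eme~\ref{th:dualitearithmetique}: reduction to $u_v$ with trivial image in~$\Pic$, lift to $(t_v)\in\prod'_vH^1(k_v,T)$, correction via the proposition~\ref{prop:surjideles} to force orthogonality to~$H^1(k,T^*)$, and an appeal to Poitou--Tate for~$T$. This is valid; in particular your treatment of the real places is sound, since $\alpha_*$ is well-defined on~$\Htilde^0$ and $\partial_v\circ\alpha_*$ vanishes already in ordinary cohomology (so your ``modulo $N_{\kbar_v/k_v}$'' is in fact an equality, though the weaker statement suffices).

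The paper's proof is much shorter. It invokes the th\'eor\`eme~\ref{th:dualitearithmetique}~(iii) as a black box: since $(u_v)$ is orthogonal to~$\Brplus(C)$ and its image in~$\PicAchapeau(C)$ comes from~$\Pic(C)$, the class of $(u_v)$ in the homology of the $\Picplus$-complex maps to zero in the homology of the $\Pic$-complex, hence is zero, so $(u_v)$ lifts to some $\uchapeau\in\Picpluschapeau(C)$. One then descends~$\uchapeau$ to $u\in\Picplus(C)$ via the short diagram
\[
\xymatrix{
H^1(k,T) \ar@{=}[d] \ar[r] & \Picplus(C) \ar[r] \ar[d] & \Pic(C) \ar[r] \ar[d] & 0 \\
H^1(k,T) \ar[r] & \Picpluschapeau(C) \ar[r] & \Picchapeau(C)
}
\]
using the injectivity of $\Picchapeau(C)\to\PicAchapeau(C)$ (remarque~\ref{rqs:conje}~(v)) and the lemme~\ref{lem:noyauchapeau}. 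Agreement of~$u$ with~$u_v$ at finite places follows from the fact, deduced from~(\ref{se:descpicplus}), that $\Picplus(C\otimes_kk_v)$ has no nonzero infinitely divisible element. What the paper gains is economy: all the hard work (the surjectivity of the proposition~\ref{prop:surjideles}, Poitou--Tate for~$T$, and the handling of real places) is already packaged into the th\'eor\`eme~\ref{th:dualitearithmetique}, so there is no need to repeat it. What your approach buys is an explicit element-by-element construction that makes the role of each ingredient visible.
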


\begin{proof}
D'après le théorème~\ref{th:dualitearithmetique},
l'image de $(u_v)_{v \in \Omega}$ dans $\PicplusAchapeau(C)$ provient d'un $\uchapeau \in \Picpluschapeau(C)$.
Considérons
le diagramme commutatif
\begin{align}
\mylabel{diag:picpluschapeauc}
\begin{aligned}
\xymatrix{
H^1(k,T) \ar@{=}[d] \ar[r] & \Picplus(C) \ar[r] \ar[d] & \Pic(C) \ar[r] \ar[d] & 0 \\
H^1(k,T) \ar[r] & \Picpluschapeau(C) \ar[r] & \Picchapeau(C)
}
\end{aligned}
\end{align}
fourni par le lemme~\ref{lem:descpicplus}.
Compte tenu du lemme~\ref{lem:noyauchapeau},
ses lignes sont exactes.
Par hypothèse, l'image de~$\uchapeau$ dans $\PicAchapeau(C)$ provient de $\Pic(C)$.
La flèche naturelle $\Picchapeau(C) \to \PicAchapeau(C)$ étant injective (cf.~remarque~\ref{rqs:conje}~(v)), il s'ensuit que
l'image de~$\uchapeau$ dans $\Picchapeau(C)$ provient elle aussi de~$\Pic(C)$.  Grâce au diagramme~(\ref{diag:picpluschapeauc}),
on en conclut que~$\uchapeau$ est l'image d'un $u \in \Picplus(C)$.

Pour $v \in \Omega_f$, la suite exacte~(\ref{se:descpicplus}) entraîne que $\Picplus(C \otimes_k k_v)$ ne contient pas d'élément infiniment divisible non nul,
vu que $H^1(k_v,T)$ est d'exposant fini, que $\Pic(C \otimes_k k_v)$ ne contient pas d'élément infiniment divisible non nul
et que le sous-groupe de torsion de $\Pic(C \otimes_k k_v)$ est d'exposant fini.
En d'autres termes, l'application naturelle $\Picplus(C \otimes_kk_v) \to \Picpluschapeau(C \otimes_k k_v)$ est injective pour tout $v \in \Omega_f$.
L'élément~$u$ vérifie donc les conditions requises.
\end{proof}

Le corollaire suivant est celui invoqué au \textsection\ref{sec:preuveP} afin d'accomplir la réduction~(v).

\begin{cor}
\mylabel{cor:existeubis}
Soient $y \in \Pic(C)$ et $(z_v)_{v \in \Omega} \in \prod_{v \in \Omega}Z_0(X \otimes_k k_v)$ une famille orthogonale à $\Br_\vert(X/C)$ pour l'accouplement de Brauer--Manin.
Pour $v \in \Omega_\infty$, soit $Z_v \subset X \otimes_k k_v$ une courbe dominant $C \otimes_k k_v$.
Supposons que l'on ait $f_*z_v=y$ dans $\Pic(C \otimes_k k_v)$ pour tout $v \in \Omega$.
Alors il existe des $0$\nobreakdash-cycles $d_v \in Z_0(Z_v \otimes_{k_v}\kbar_v)$ pour $v \in \Omega_\infty$ tels que, si l'on pose
$z'_v=z_v$ pour $v \in \Omega_f$ et $z'_v=z_v+N_{\kbar_v/k_v}(d_v)$ pour $v \in \Omega_\infty$,
la famille $(f_{*+}z'_v)_{v \in \Omega}$
appartienne à l'image de la flèche diagonale
$\Picplus(C) \to \prod'_{v \in \Omega} \Picplus(C\otimes_k k_v)$.
\end{cor}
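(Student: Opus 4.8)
The plan is to deduce the statement from Corollaire~\ref{cor:existeu}, applied to the family $(u_v)_{v\in\Omega}$ defined by $u_v=f_{*+}z_v\in\Picplus(C\otimes_kk_v)$, and then to interpret the archimedean indeterminacy it produces as the image under $f_{*+}$ of a norm of a $0$-cycle carried by~$Z_v$.

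First I would check the hypotheses of Corollaire~\ref{cor:existeu} for $(f_{*+}z_v)_{v\in\Omega}$. By Proposition~\ref{prop:fetoileplus} one has $\phi\circ f_{*+}=f_*$, so the image of this family in $\prod_{v\in\Omega}\Pic(C\otimes_kk_v)$ is the constant family $(y)_{v\in\Omega}$, which comes from $\Pic(C)$ since $y\in\Pic(C)$. For the orthogonality, let $B\in\Brplus(C)$; the commutativity of~(\ref{diag:compatibleacc}), which remains valid after base change to each~$k_v$, gives $\mylangle B,f_{*+}z_v\myrangle=\mylangle f^*_+B,z_v\myrangle$ in $\Br(k_v)$. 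As $f^*_+B\in\Br_\vert(X/C)$ and $(z_v)_{v\in\Omega}$ is orthogonal to $\Br_\vert(X/C)$ for the Brauer--Manin pairing, summing the local invariants yields $\sum_v\inv_v\mylangle B,f_{*+}z_v\myrangle=0$; hence $(f_{*+}z_v)_{v\in\Omega}$ is orthogonal to $\Brplus(C)$ for~(\ref{eq:accbrpluspicplusachapeau}). Finally the family must lie in the restricted product $\prod'_{v\in\Omega}\Picplus(C\otimes_kk_v)$: for $v$ outside a suitable finite set (depending only on~$f$), a spreading-out argument over~$\sO_v$ — using that the Zariski closure in a model of any closed point of $X\otimes_kk_v$ is flat over~$\sO_v$, and that the bad locus $M\subset C$ and the group scheme~$T$ of \textsection\ref{sec:thdualitearithmetique} extend nicely over~$\sO_v$ — shows that $f_{*+}z_v$ is the restriction of a class of $\Picplus(\sC\otimes_U\sO_v)$.

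Corollaire~\ref{cor:existeu} then yields $u\in\Picplus(C)$ whose image in $\Picplus(C\otimes_kk_v)$ equals $f_{*+}z_v$ for $v\in\Omega_f$ and equals $f_{*+}z_v+N_{\kbar_v/k_v}(w_v)$ for some $w_v\in\Picplus(C\otimes_k\kbar_v)$ when $v\in\Omega_\infty$. For $v\in\Omega_\infty$ I would use that $\kbar_v$ is algebraically closed: since $H^1(\kbar_v,T)=0$, the exact sequence~(\ref{se:descpicplus}) over $\kbar_v$ shows that $\phi\colon\Picplus(C\otimes_k\kbar_v)\to\Pic(C\otimes_k\kbar_v)$ is an isomorphism identifying $f_{*+}$ with $f_*$. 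Because $Z_v$ dominates $C\otimes_kk_v$, its image in $C\otimes_k\kbar_v$ is a dense open, and a moving-lemma argument on the smooth curve $C\otimes_k\kbar_v$ shows that $(f|_{Z_v})_*\colon Z_0(Z_v\otimes_{k_v}\kbar_v)\to\Pic(C\otimes_k\kbar_v)$ is surjective; I pick $d_v\in Z_0(Z_v\otimes_{k_v}\kbar_v)$ with $f_{*+}d_v=w_v$. Setting $z'_v=z_v$ for $v\in\Omega_f$ and $z'_v=z_v+N_{\kbar_v/k_v}(d_v)$ for $v\in\Omega_\infty$, and using that $f_{*+}$ commutes with the norm maps $N_{\kbar_v/k_v}$ (functoriality of van~Hamel's cycle class map), $f_{*+}z'_v$ equals the image of~$u$ in $\Picplus(C\otimes_kk_v)$ for every~$v$; thus $(f_{*+}z'_v)_{v\in\Omega}$ is the image of~$u$ under the diagonal $\Picplus(C)\to\prod'_{v\in\Omega}\Picplus(C\otimes_kk_v)$, which is what is required.

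I expect the main obstacle to be the archimedean bookkeeping in the last step: converting the clause in Corollaire~\ref{cor:existeu} that the image of~$u$ coincides with $u_v$ only up to a norm from~$\kbar_v$ into an explicit correction $N_{\kbar_v/k_v}(d_v)$ with $d_v$ supported on~$Z_v$. This is exactly what forces the hypothesis that each~$Z_v$ dominate $C\otimes_kk_v$, and it relies on the degeneracy $\Picplus(C\otimes_k\kbar_v)\cong\Pic(C\otimes_k\kbar_v)$ over the algebraically closed field. The other point requiring genuine (though routine) work rather than a formal manipulation is the verification that $(f_{*+}z_v)_{v\in\Omega}$ lies in the restricted product, which rests on the spreading-out argument indicated above.
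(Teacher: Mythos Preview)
Your argument is correct and follows exactly the same route as the paper: apply Corollaire~\ref{cor:existeu} to the family $(f_{*+}z_v)_{v\in\Omega}$, using the adjunction diagram~(\ref{diag:compatibleacc}) to transfer the Brauer--Manin orthogonality to orthogonality against $\Brplus(C)$, and then realise the archimedean norm indeterminacy \emph{via} the surjectivity of $f_{*+}\circ i_*:Z_0(Z_v\otimes_{k_v}\kbar_v)\to\Picplus(C\otimes_k\kbar_v)$, which holds because $\phi$ is an isomorphism over~$\kbar_v$ (Lemme~\ref{lem:descpicplus}) and $Z_v$ dominates $C\otimes_kk_v$. Your explicit verification that $(f_{*+}z_v)_{v\in\Omega}$ lies in the restricted product is a detail the paper leaves implicit.
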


\begin{proof}
Notons $i$ l'inclusion de $Z_v$ dans $X \otimes_k k_v$.
Pour tout $v \in \Omega_\infty$,
l'application $f_* \circ i_*:Z_0(Z_v \otimes_{k_v} \kbar_v) \to \Pic(C \otimes_k \kbar_v)$
 est surjective et
l'homomorphisme d'oubli $\phi:\Picplus(C \otimes_k \kbar_v) \to \Pic(C \otimes_k \kbar_v)$ est un isomorphisme
d'après le lemme~\ref{lem:descpicplus}.
L'application
$f_{*+} \circ i_*:Z_0(Z_v\otimes_{k_v} \kbar_v) \to \Picplus(C \otimes_k \kbar_v)$ est donc surjective.
Compte tenu de cette remarque, le corollaire~\ref{cor:existeubis} résulte
du corollaire~\ref{cor:existeu} appliqué à la famille $(f_{*+}z_v)_{v \in \Omega}$ et
de la commutativité du diagramme~(\ref{diag:compatibleacc}).
\end{proof}

\subsection{Démonstration de la proposition~\texorpdfstring{\ref{prop:pointorthogonal}}{4.7}}
\mylabel{sec:demproppointorthogonal}

Nous pouvons à présent établir la proposition~\ref{prop:pointorthogonal}.
Reprenons les notations du paragraphe~\ref{sec:preuveP}:
on dispose d'une famille $(z_v)_{v \in \Omega} \in \prod_{v \in \Omega} Z_0(X\otimes_k k_v)$
orthogonale à $\Br_\vert(X/C)$ pour l'accouplement de Brauer--Manin, d'un ensemble fini $S \subset \Omega$ et d'un morphisme fini $\pi:C \to \P^1_k$
tels que $f_*z_v=\pi^{-1}(\infty)$ pour tout $v \in S$ et $f_*z_v=\pi^{-1}(0)$ pour tout $v \in \Omega \setminus S$.
Par hypothèse, le lieu de ramification $R \subset C$ de~$\pi$ vérifie $\pi(M) \cap \pi(R)=\emptyset$ et~$\pi$ induit un isomorphisme $M \isoto \pi(M)$.
Notant $p:W \to \P^1_k$ la restriction des scalaires à la Weil de~$f$ le long de~$\pi$ et $[z_v] \in W(k_v)$ le $k_v$\nobreakdash-point de~$W$ défini par~$z_v$,
nous devons prouver que la famille $([z_v])_{v \in \Omega} \in \prod_{v \in \Omega} W(k_v)$ est orthogonale à $\Br_\vert(W/\P^1_k)$.

La variété~$W$ et le morphisme $p:W \to \P^1_k$ vérifient les hypothèses du paragraphe~\ref{subsec:vhnotations}.  Un complexe $\vplus(\P^1_k)$ et un groupe abélien
$\Picplus(\P^1_k)$ leur sont donc associés.
D'après la proposition~\ref{prop:rwproprietes}, il existe une flèche \og{}norme\fg{} canonique $N_{C/\P^1_k}:\pi_*\vplus(C) \to \vplus(\P^1_k)$.
Notons $\pi^*:\Picplus(\P^1_k) \to \Picplus(C)$ l'homomorphisme qui s'en déduit naturellement.

\begin{lem}
\mylabel{lem:descpicplusweil}
Soit $F \in \Pic(C)$ la classe d'une fibre de~$\pi$. Pour toute extension $K/k$, le morphisme $\pi^*$ s'inscrit dans une suite exacte canonique
$$
\xymatrix{
0 \ar[r] & \Picplus(\P^1_K) \ar[r]^(.45){\pi^*} & \Picplus(C \otimes_k K) \ar[r] & \Pic(C\otimes_k K)/\langle F \otimes_k K \rangle \ar[r] & 0\rlap{\text{.}}
}
$$
\end{lem}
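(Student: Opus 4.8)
The plan is to compare the four-term exact sequences attached by Lemma~\ref{lem:descpicplus} to the two fibrations $f:X\to C$ and $p:W\to\P^1_k$. First I would observe that the proofs of Lemma~\ref{lem:tridistdescvplus} and Lemma~\ref{lem:descpicplus} go through verbatim over the base field~$k$ (using $R^q\rho_*\Gm=0$ for $q\geq2$, cf.~remark~\ref{rq:picplusbrpluscarzero}~(i)): there are then groups of multiplicative type $T_C$ and $T_{\P^1}$ over~$k$, obtained by applying that construction to~$f$ and to~$p$, and, for every extension $K/k$, functorial exact sequences
$$
\xymatrix@C=1.7em{K^* \ar[r] & H^1(K,T_C) \ar[r] & \Picplus(C\otimes_kK) \ar[r]^(.58)\phi & \Pic(C\otimes_kK) \ar[r] & 0}
$$
and
$$
\xymatrix@C=1.7em{K^* \ar[r] & H^1(K,T_{\P^1}) \ar[r] & \Picplus(\P^1_K) \ar[r]^(.58)\phi & \Pic(\P^1_K) \ar[r] & 0\rlap{\text{.}}}
$$
The norm morphism $N_{C/\P^1_k}:\pi_*\vplus(C)\to\vplus(\P^1_k)$ restricts on the subsheaf~$\Gm$ to the norm $\pi_*\Gm\to\Gm$; it therefore induces, compatibly with~\ref{eq:coeffuniv}, a morphism between these two exact sequences, whose components are: on the $\Picplus$-terms, the map $\pi^*$ of the statement; on the $K^*$-terms, the identity; and on the $\Pic$-terms, the pullback $\pi^*:\Pic(\P^1_K)\to\Pic(C\otimes_kK)$, which carries the ample generator of $\Pic(\P^1_K)=\Z$ to $F\otimes_kK$ and is thus injective with cokernel $\Pic(C\otimes_kK)/\langle F\otimes_kK\rangle$. (One checks these identifications for consistency, e.g.\ in the case $M=\emptyset$, where $\Picplus=\Pic$ and the sequence reduces to the evident one.)

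The heart of the argument is the claim that $N_{C/\P^1_k}$ induces an \emph{isomorphism} $T_{\P^1}\isoto T_C$, i.e.\ that the singular fibres of~$p$ contribute exactly the same Galois-module data as those of~$f$. On the one hand, the singular fibres of~$p$ lie over $\pi(M)$ only: for $t\in\pi(R)$ one has $\pi^{-1}(t)\subset C\setminus M$ because $\pi(M)\cap\pi(R)=\emptyset$, so $X\times_C\pi^{-1}(t)\to\pi^{-1}(t)$ is smooth, and $p^{-1}(t)=\sR_{\pi^{-1}(t)/k(t)}\bigl(X\times_C\pi^{-1}(t)\bigr)$ is the Weil restriction of a smooth morphism along the finite flat morphism $\pi^{-1}(t)\to\Spec k(t)$, hence is smooth over~$k(t)$ (cf.~\cite[7.6/5]{blr}). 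On the other hand, for $m\in M$ (so $\pi(m)\notin\pi(R)$, whence $p$ and $p'$ agree over~$\pi(m)$), Proposition~\ref{prop:rwproprietes}~(iv) gives $p^{-1}(\pi(m))=X_m\times_{k(m)}V$ with~$V$ smooth, geometrically irreducible; since~$V$ is reduced and geometrically irreducible, the irreducible components of the geometric fibre of~$p$ over~$\pi(m)$, with their multiplicities, are in a $k(m)$-equivariant, multiplicity-preserving bijection with those of~$X_m$. Hence $\Coker(\alpha_{\pi(m)})\cong\Coker(\alpha_m)$ as $k(m)$-Galois modules, the isomorphism being induced by $N_{C/\P^1_k}$; summing over $m\in M$ yields $T^*_{\P^1}\cong T^*_C$, so $T_{\P^1}\isoto T_C$ and $H^1(K,T_{\P^1})\isoto H^1(K,T_C)$ compatibly with the maps from~$K^*$.

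Finally I would conclude by a diagram chase. Passing to the quotient by the image of~$K^*$, the morphism of exact sequences above becomes a commutative ladder with exact rows
$$
\xymatrix@C=1.4em{
0 \ar[r] & \Coker(K^*\to H^1(K,T_{\P^1})) \ar[d]^\wr \ar[r] & \Picplus(\P^1_K) \ar[d]^{\pi^*} \ar[r] & \Pic(\P^1_K) \ar[d] \ar[r] & 0 \\
0 \ar[r] & \Coker(K^*\to H^1(K,T_C)) \ar[r] & \Picplus(C\otimes_kK) \ar[r] & \Pic(C\otimes_kK) \ar[r] & 0\rlap{\text{,}}
}
$$
in which the left vertical arrow is an isomorphism and the right vertical arrow is injective with cokernel $\Pic(C\otimes_kK)/\langle F\otimes_kK\rangle$. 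The snake lemma then gives $\Ker(\pi^*)=0$ and $\Coker(\pi^*)\isoto\Pic(C\otimes_kK)/\langle F\otimes_kK\rangle$, which is the asserted exact sequence; its functoriality in~$K$ is clear from that of all the constructions used. The delicate step is the isomorphism $T_{\P^1}\isoto T_C$, and within it the smoothness of~$p$ over~$\pi(R)$; everything else is formal once Lemma~\ref{lem:descpicplus} and Proposition~\ref{prop:rwproprietes} are available.
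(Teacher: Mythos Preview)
Your proposal is correct and follows essentially the same approach as the paper: the paper's proof simply writes down the commutative ladder obtained by applying Lemma~\ref{lem:descpicplus} to $f:X\to C$ and to $p:W\to\P^1_k$, with the \emph{same} $T$ on both rows (and identities on the first two vertical arrows), and concludes. What you add is an explicit justification of why the group of multiplicative type attached to~$p$ coincides with the one attached to~$f$---namely, that $p$ is smooth over $\pi(R)$ (this is stated in the proof of Proposition~\ref{prop:rwproprietes}: ``le morphisme~$p$ est lisse\ldots\ au-dessus de $B\setminus\pi(M)$'') and that over each $\pi(m)$ the component data of $p^{-1}(\pi(m))=X_m\times_{k(m)}V$ agree with those of~$X_m$; the paper takes this for granted, referring back to Proposition~\ref{prop:rwproprietes}.
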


\begin{proof}
D'après le lemme~\ref{lem:descpicplus} appliqué deux fois, il existe un diagramme commutatif à lignes exactes
$$
\xymatrix{
K^* \ar[r] & H^1(K,T) \ar[r] & \Picplus(C \otimes_kK) \ar[r] & \Pic(C\otimes_kK) \ar[r] & 0 \\
\ar@{=}[u] K^* \ar[r] & \ar@{=}[u] H^1(K,T) \ar[r] & \ar[u]_{\pi^*} \Picplus(\P^1_K) \ar[r] & \ar[u]_{\pi^*} \Pic(\P^1_K) \ar[r] & 0
}
$$
où~$T$ est le schéma en groupes de type multiplicatif sur~$k$ défini au début de la démonstration du théorème~\ref{th:dualitearithmetique}.
Le lemme~\ref{lem:descpicplusweil} en résulte.
\end{proof}

\begin{lem}
\mylabel{lem:compatibleweil}
Pour tout $v \in \Omega$, on a $\pi^* p_{*+}[z_v] = f_{*+}z_v$ dans $\Picplus(C\otimes_kk_v)$.
\end{lem}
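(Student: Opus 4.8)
The plan is to deduce the equality from a cycle-theoretic relation between $z_v$ and $[z_v]$ provided by the adjunction morphism, and then to conclude by the functoriality of van~Hamel's cycle class map together with a compatibility, at the level of complexes of sheaves, between the norm $N_{C/\P^1_k}\colon\pi_*\vplus(C)\to\vplus(\P^1_k)$ and that adjunction. Throughout one base-changes to $k_v$; write $Y=W\times_{\P^1_k}C$, and let $\rho$ (resp.\ $\rho'$) denote the structural morphism of $C$ (resp.\ of $\P^1_k$). First I would record the cycle identity $z_v=\sigma_*\pr_1^*[z_v]$ in $Z_0(X\otimes_kk_v)$. Indeed, by the universal property of the Weil restriction the point $[z_v]\in W(k_v)$, for which $p([z_v])=t_v$, corresponds to a section of $X\times_C\pi^{-1}(t_v)\to\pi^{-1}(t_v)$; composing it with the projection to $X$ gives a closed immersion $g_v\colon\pi^{-1}(t_v)\hookrightarrow X\otimes_kk_v$ with $f\circ g_v$ the inclusion $\pi^{-1}(t_v)\hookrightarrow C$, and since $t_v\notin\pi(R)$ the $k_v$-scheme $\pi^{-1}(t_v)$ is reduced, whence $z_v=(g_v)_*[\pi^{-1}(t_v)]$. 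On the other hand $\pr_1\colon Y\to W$ is finite and flat (base change of $\pi$), the base change of $[z_v]\colon\Spec(k_v)\hookrightarrow W$ along $\pr_1$ is precisely $g_v\colon\pi^{-1}(t_v)\hookrightarrow Y$, and $\sigma\circ g_v$ is $g_v$ by the very definition of the adjunction morphism~$\sigma$; so $\pr_1^*[z_v]=[\pi^{-1}(t_v)]$ as a cycle on $Y$ and $\sigma_*\pr_1^*[z_v]=z_v$. One also has $f\circ\sigma=\pr_2$ and $\pi\circ\pr_2=p\circ\pr_1$.

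Next, recall from the proof of Proposition~\ref{prop:fetoileplus} that $f_{*+}$ is the composite of van~Hamel's cycle class map $c_X\colon Z_0(X)\to\Hom(R(\rho\circ f)_*\Gm,\Gm)$ with the arrow $\Hom(R(\rho\circ f)_*\Gm,\Gm)\to\Picplus(C)$ induced by a canonical morphism $d_C\colon\vplus(C)\to\tau_{\leq1}Rf_*\Gm$; likewise $p_{*+}$ is built from $c_W$ and from $d_{\P^1}\colon\vplus(\P^1_k)\to\tau_{\leq1}Rp_*\Gm$, while $\pi^*\colon\Picplus(\P^1_k)\to\Picplus(C)$ is deduced from $N_{C/\P^1_k}$. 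By the functoriality of $c$ — covariance under the proper morphism $\sigma$ and compatibility with the trace map of the finite flat morphism $\pr_1$ — the identity of the previous paragraph shows that $c_X(z_v)$ is the image of $c_W([z_v])$ under the homomorphism $\Hom(R(\rho'\circ p)_*\Gm,\Gm)\to\Hom(R(\rho\circ f)_*\Gm,\Gm)$ attached to the correspondence $(Y,\pr_1,\sigma)$. Feeding this into the two factorisations, the lemma reduces to the commutativity, in the derived category of sheaves of abelian groups on $\Sm/\P^1_k$, of the square
$$
\xymatrix{
\pi_*\vplus(C)\ar[r]^{N_{C/\P^1_k}}\ar[d]_{\pi_*d_C}&\vplus(\P^1_k)\ar[d]^{d_{\P^1}}\\
\pi_*\tau_{\leq1}Rf_*\Gm\ar[r]&\tau_{\leq1}Rp_*\Gm
}
$$
whose lower arrow ($\,=\tau_{\leq1}R(\pi\circ f)_*\Gm\to\tau_{\leq1}Rp_*\Gm$, using $\pi\circ f=p\circ\pr_1\circ(\cdot)$) is the one furnished by the unit of $\sigma$ followed by the norm of $\pr_1$: applying $R\rho'_*$ and then $\RHom(-,\Gm)$ to this square yields exactly the asserted equality in $\Picplus(C\otimes_kk_v)$, so in fact one need not even invoke the injectivity of $\pi^*$ (Lemma~\ref{lem:descpicplusweil}).

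Finally I would verify the square. Over the dense open $\P^1_k\setminus(\pi(M)\cup\pi(R))$ — above which $p$ has smooth geometrically irreducible fibres (Proposition~\ref{prop:rwproprietes}~(ii)) and above which the fibres of $f$ are smooth — both columns restrict to the canonical morphisms $\Gm\to\tau_{\leq1}Rf_*\Gm$ and $\Gm\to\tau_{\leq1}Rp_*\Gm$, and commutativity is immediate. The substance is concentrated at the finitely many points of $\pi(M)\cup\pi(R)$, where commutativity amounts to matching, under $N_{C/\P^1_k}$, the \og{}boundary\fg{} data entering $\vplus(C)$ and $\vplus(\P^1_k)$: the irreducible components of the singular fibres of $f$ and of $p$, their multiplicities, and the algebraic closures of the base fields in their function fields. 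At a point of $\pi(M)$ this is controlled by the product decomposition $p'^{-1}(\pi(c))=X_c\times_{k(c)}V$ of Proposition~\ref{prop:rwproprietes}~(iv), and at a point of $\pi(R)\setminus\pi(M)$ by the multiplicity-one geometrically irreducible component of Proposition~\ref{prop:rwproprietes}~(iii); in both cases this is the same comparison — \og{}la proposition~\ref{prop:rwproprietes} fournit une bijection naturelle respectant les multiplicit�s\fg{} — that is used to treat the totally imaginary case of Proposition~\ref{prop:pointorthogonal}. I expect this last point to be the main obstacle: one must package the various local comparison isomorphisms over the points of $\pi(M)\cup\pi(R)$ into a single morphism of complexes and check its compatibility with $N_{C/\P^1_k}$ and with the correspondence $(Y,\pr_1,\sigma)$, which is delicate though purely formal.
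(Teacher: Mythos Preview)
Your strategy---reduce to a compatibility of complexes via the cycle identity $z_v=\sigma_*\pr_1^*[z_v]$ and the functoriality of van~Hamel's class map---is exactly the paper's, but the paper avoids the obstacle you flag in your last paragraph by a change of auxiliary complex. Instead of working with $\tau_{\leq 1}Rf_*\Gm$ and $\tau_{\leq 1}Rp_*\Gm$, it factors both morphisms $\vplus(C)\to\tau_{\leq1}Rf_*\Gm$ and $\vplus(\P^1_k)\to\tau_{\leq1}Rp_*\Gm$ through $j_*\Gm$, where~$j$ denotes the inclusion of the complement of the bad fibres (on the base and on the total space alike). Since $z_v$ is supported in $X\setminus f^{-1}(M)$ and $[z_v]$ in $W\setminus p^{-1}(\pi(M))$, the cycle classes may be computed with $j_*\Gm$-coefficients, and the square whose commutativity must be checked becomes the right-hand square in
$$
\xymatrix{
\pi_* \vplus(C) \ar[d]^{N_{C/\P^1_k}} \ar[r] & \pi_* j_*\Gm \ar[d]^{N_{C/\P^1_k}} \ar[r] & \pi_*Rf_*(j_*\Gm) \ar[d]^\theta \\
\vplus(\P^1_k) \ar[r] & j_*\Gm \ar[r] & Rp_*(j_*\Gm)\rlap{\text{,}}
}
$$
with $\theta$ built from the unit for~$\sigma$ followed by the norm for~$\pr_1$. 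Because the source $\pi_*j_*\Gm$ of that square is concentrated in degree~$0$, commutativity can be verified on $H^0$ of the cohomology sheaves, where it is evident. Thus there is no need to analyse the components of the singular fibres or the points of $\pi(M)\cup\pi(R)$; the difficulty you anticipated is bypassed entirely by this device.
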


\begin{proof}
Soient  $\sigma: W \times_{\P^1_k} C \to X$
et $\pr_1:W \times_{\P^1_k}C \to W$
le morphisme d'adjonction et la première projection.
Désignant indifféremment par~$j$ les immersions ouvertes $C \setminus M \hookrightarrow C$, $\P^1_k \setminus \pi(M) \hookrightarrow \P^1_k$,
$X \setminus f^{-1}(M) \hookrightarrow X$, $W \setminus p^{-1}(\pi(M)) \hookrightarrow W$ et $(W \times_{\P^1_k}C) \setminus \sigma^{-1}(f^{-1}(M)) \hookrightarrow W \times_{\P^1_k}C$,
notons $\theta: \pi_* Rf_*(j_*\Gm) \to Rp_*(j_*\Gm)$ la composée de la flèche naturelle
$\pi_* Rf_*(j_*\Gm) \to \pi_*Rf_*R\sigma_*(j_*\Gm)$, de l'isomorphisme canonique
$\pi_*Rf_*R\sigma_*(j_*\Gm) = R(\pi \circ f \circ \sigma)_* (j_*\Gm) = Rp_*  \pr_{1*} (j_*\Gm)$
et de la flèche $Rp_* \pr_{1*} (j_*\Gm) \to Rp_* (j_*\Gm)$ induite par la norme $\pr_{1*}(j_*\Gm) \to j_*\Gm$.
Le morphisme~$\theta$ s'insère dans un diagramme commutatif
\begin{align}
\begin{aligned}
\mylabel{diag:verifcompatzv}
\xymatrix{
\pi_* \vplus(C) \ar[d] \ar[r] & \pi_* j_*\Gm \ar[d]^{N_{C/\P^1_k}} \ar[r] & \pi_*Rf_*(j_*\Gm) \ar[d]^\theta \\
\vplus(\P^1_k) \ar[r] & j_*\Gm \ar[r] & Rp_*(j_*\Gm) \rlap{\text{.}}
}
\end{aligned}
\end{align}
(Pour établir la commutativité du carré de droite, il suffit de la vérifier sur les faisceaux de cohomologie de degré~$0$, où elle est évidente.)
Notons $\rho:\P^1_k \to \Spec(k)$ le morphisme structural de~$\P^1_k$
et posons $\sH_W=\RHom(R(\rho \circ p)_*(j_*\Gm),\Gm)$
et $\sH_X=\RHom(R(\rho \circ \pi \circ f)_*(j_*\Gm),\Gm)$.
Le diagramme~(\ref{diag:verifcompatzv}) induit pour tout $v \in \Omega$ le carré de gauche du diagramme commutatif
\begin{align}
\begin{aligned}
\mylabel{diag:verifcompatzvbis}
\xymatrix{
\Picplus(C \otimes_k k_v) & H^0(k_v,\sH_X) \ar[l] & Z_0((X \setminus f^{-1}(M)) \otimes_k k_v) \ar[l]_(.6)c \\
\ar[u]_{\pi^*} \Picplus(\P^1_{k_v}) & H^0(k_v,\sH_W) \ar[u]_{\RHom(R\rho_*\theta,\Gm)} \ar[l] & Z_0((W \setminus p^{-1}(\pi(M))) \otimes_k k_v) \ar[u]_{\sigma_* \pr_1^*} \ar[l]_(.6)c
}
\end{aligned}
\end{align}
dont les flèches étiquetées~$c$ sont les applications classe de cycle construites par van Hamel~\cite[\textsection3.1]{vanhamellichtenbaumtate}.
Par définition de~$f_{*+}$ (resp.~de~$p_{*+}$), la composée des flèches horizontales de la première (resp.~seconde) ligne de~(\ref{diag:verifcompatzvbis})
coïncide avec~$f_{*+}$ (resp.~$p_{*+}$).
Calculant l'image de $[z_v]\in Z_0((W \setminus p^{-1}(\pi(M))) \otimes_k k_v)$ dans $\Picplus(C \otimes_k k_v)$ par deux chemins différents dans ce diagramme,
on trouve ainsi que $f_{*+}z_v=\pi^*p_{*+}[z_v]$.
\end{proof}

Appliquant le lemme~\ref{lem:descpicplusweil} à $K=k$ et $K=k_v$ pour tout $v \in \Omega$, on obtient un diagramme commutatif
\begin{align}
\mylabel{diag:granddiag}
\begin{aligned}
\demicrochet
\xymatrix@C=1.35em{
0 \ar[r] &\Picplus(\P^1_k) \ar[r]^(.45){\pi^*} \ar[d] & \Picplus(C) \ar[r]\ar[d] & \Pic(C)/\langle F\rangle \ar@{^{ (}->}[d] \\
0 \ar[r] & \displaystyle\prod_{v \in \Omega} \Picplus(\P^1_{k_v}) \ar[r]^(.45){\pi^*} & \displaystyle\prod_{v \in \Omega} \Picplus(C\otimes_k k_v) \ar[r] &
\displaystyle\prod_{v \in \Omega} \Pic(C\otimes_k k_v)/\langle F \otimes_k k_v \rangle
}
\end{aligned}
\end{align}
dont les lignes sont exactes.
D'après le lemme~\ref{lem:compatibleweil}
et l'hypothèse~(v)
du paragraphe~\ref{sec:preuveP}, la famille $(\pi^*p_{*+}[z_v])_{v \in \Omega} \in \prod_{v \in \Omega} \Picplus(C \otimes_kk_v)$
appartient à l'image de la seconde flèche verticale de~(\ref{diag:granddiag}).
La flèche verticale de droite étant injective,
il s'ensuit, par une chasse au diagramme,
que $(p_{*+}[z_v])_{v \in \Omega}$ est l'image d'un élément
de $\Picplus(\P^1_k)$ par l'application diagonale
$\Picplus(\P^1_k) \to \prod_{v \in \Omega} \Picplus(\P^1_{k_v})$.
La famille
 $(p_{*+}[z_v])_{v \in \Omega}$
 est donc orthogonale à $\Brplus(\P^1_k)$ pour l'accouplement~(\ref{eq:accbrpluspicplusachapeau}).
Or il est équivalent que cette famille soit orthogonale à $\Brplus(\P^1_k)$ pour
l'accouplement~(\ref{eq:accbrpluspicplusachapeau})
ou que la famille $([z_v])_{v \in \Omega}$ soit orthogonale à $\Br_\vert(W/\P^1_k)$
pour l'accouplement de Brauer--Manin,
compte tenu de la commutativité du diagramme~(\ref{diag:compatibleacc})
et de la surjectivité de l'application $p^*_+:\Brplus(\P^1_k) \to \Br_\vert(W/\P^1_k)$ (cf.~remarque~\ref{rq:picplusbrpluscarzero}~(i)).
La proposition~\ref{prop:pointorthogonal} est donc établie.

\bibliographystyle{smfalphamodif}
\bibliography{zerocycles}
\end{document}